\documentclass[a4paper,11pt,reqno]{amsart}

\usepackage{amssymb}
\usepackage{latexsym}
\usepackage{amsmath}
\usepackage{mathrsfs}
\usepackage{euscript}
\usepackage{amsthm}
\usepackage{upgreek}
\usepackage{cite}
\usepackage{amscd}
\usepackage{xcolor}
\usepackage{tikz}
\usepackage{calc}

\usepackage{diagbox}

\usepackage{comment}

\usepackage{color}
\definecolor{ceruleanblue}{rgb}
{0.16, 0.32, 0.75}

\newcommand{\scal}[2]{\langle #1,#2\rangle}

\newcommand{\rr}[1]{\mathbf R^{#1}}
\newcommand{\zz}[1]{\mathbf Z^{#1}}

\newcommand{\cc}[1]{\mathbf C^{#1}}
\newcommand{\nn}[1]{\mathbf N^{#1}}

\newcommand{\nm}[2]{\Vert #1\Vert _{#2}}
\newcommand{\NM}[2]{\left \Vert #1\right \Vert _{#2}}
\newcommand{\nmm}[1]{\Vert #1\Vert }

\newcommand{\op}{\operatorname{Op}}

\newcommand{\sets}[2]{\{ \, #1\, ;\, #2\, \} }

\newcommand{\fy}{\varphi}

\newcommand{\cdo}{\, \cdot \, }

\newcommand{\eabs}[1]{\langle #1\rangle}

\newcommand{\tp}{\operatorname{Tp}}
\newcommand{\vrum}{\vspace{0.1cm}}

\newcommand{\wpr}{{\text{\footnotesize $\#$}}}

\newcommand{\GL}{\mathbf{M}}

\newcommand{\sfW}{\mathsf{W}}

\newcommand{\maclL}{\mathcal L}
\newcommand{\maclM}{\mathcal M}

\newcommand{\maclR}{\mathcal R}
\newcommand{\maclS}{\mathcal S}

\newcommand{\mascB}{\mathscr B}

\newcommand{\mascF}{\mathscr F}

\newcommand{\mascP}{\mathscr P}

\newcommand{\mascS }{\mathscr S}

\newcommand{\fka}{\mathfrak a}
\newcommand{\fkb}{\mathfrak b}
\newcommand{\fkc}{\mathfrak c}

\newcommand{\splM}{\EuScript M}




\setcounter{section}{\value{section}-1}

\numberwithin{equation}{section}

\newtheorem{thm}{Theorem}
\numberwithin{thm}{section}

\newcommand{\rubrik}{}
\newtheorem{prop}[thm]{Proposition}
\newtheorem{cor}[thm]{Corollary}
\newtheorem{lemma}[thm]{Lemma}

\theoremstyle{definition}

\newtheorem{defn}[thm]{Definition}
\newtheorem{example}[thm]{Example}

\theoremstyle{remark}

\newtheorem{rem}[thm]{Remark}              

\definecolor{darkred}{rgb}{0.8,0,0}


\author{Joachim Toft}

\address{Department of Mathematics,
Linn{\ae}us University,
V{\"a}xj{\"o}, Sweden}

\email{joachim.toft@lnu.se}

\author{Christine Pfeuffer}

\address{Department of Mathematics, Martin-Luther-Universit\"at Halle-Wittenberg, Halle,  Germany}

\email{christine.pfeuffer@mathematik.uni-halle.de}

\author{Nenad Teofanov}

\address{Department of Mathematics and Informatics,
University of Novi Sad, Novi Sad, Serbia}

\email{nenad.teofanov@dmi.uns.ac.rs}


\title[Modulation 
spaces, and 
Fourier type operators]
{Norm estimates for a broad class of 
modulation spaces, and continuity of 
\\
Fourier type operators}

\keywords{time-frequency analysis,
modulation spaces, Wiener amalgam spaces,
quasi-Banach spaces, pseudo-differential
operators, Toeplitz operators}

\subjclass[2020]{Primary:42B35, 44A15,
46A16, 35S05, 47B35 Secondary:47G30, 47xx} 

\begin{document}

\begin{abstract}
We consider a broad class of
modulation spaces $M(\omega ,\mascB )$,
parameterized with weight function
$\omega$ and a normal
quasi-Banach function space $\mascB$
of order $r_0 \in (0,1]$.
Then we prove that $f\in
M(\omega ,\mascB )$,
iff $V_\phi f$ belongs to
the Wiener amalgam space
$\sfW ^r(\omega ,\mascB )$, and
$$
\nm f {M(\omega, \mascB)} 
\asymp
\nm {V _\phi f \cdo \omega}{\mascB}
\asymp
\nm {V _\phi f} 
{\sfW ^r(\omega, \mascB)},
\quad
r\in [r_0,\infty ]
.
$$

\par

We use the results to
extend and improve continuity and lifting
properties
for pseudo-differential and Toeplitz
operators with symbols in weighted
$M^{\infty,r_0}$-spaces, with $r_0\le 1$,
when acting on $M(\omega ,\mascB )$-spaces.
\end{abstract}

\maketitle

\tableofcontents

\par

\section{Introduction}\label{sec0}

\par

The aim of the first part of the paper is
to derive various types of quasi-norm
equivalences for
a broad family of quasi-Banach
modulation spaces. Any such topological
vector space, $M(\omega ,\mascB)$,
is parameterized by a suitable weight function
$\omega$ and a quasi-Banach function space
$\mascB$,
and is adapted and feasible for various
kinds of Fourier techniques.
Our family of equivalent norms
is significantly larger, compared to what
has been deduced so far in the context of
modulation
space and coorbit space theories.
(See e.{\,}g. 
\cite{BenOko,CorRod,Fei3,FeiGro1,FeiGro2,
FG4,GaSa,Gro2,Rau1,Rau2}.)
In contrast
to the usual approaches in these theories,
our methods do not rely on (Gabor) frame theory
or Gabor expansions. In fact,
the achieved norm equivalences should be
considered as a \emph{complement} to existing Gabor
frame approach for modulation space theory,
where we supply the theory with quasi-norm
estimates as alternatives to estimates
obtained from Gabor expansions. Especially,
our methods eliminate the need to find
suitable dual window functions to a given
Gabor frame.

\par

The aim of the second part of the paper
is to find convenient and general
continuity properties for pseudo-differential
operators with symbols in suitable modulation
spaces when acting on broad families of 
modulation
spaces. These results are significantly more
general than the related results
available in the literature.
(See e.{\,}g. \cite{AbCaTo,AbCoTo,BenOko,
CaTo,CorRod,Gro2,PilTeo2,Sjo,Tac,Teofanov3,
Toft04,Toft07,Toft18,ToBo08,TofUst}.)
The norm equivalences from the first
part serve as cornerstones for achieving
our results, and
it seems that earlier methods are
insufficient for reaching these results.

\par

For an overview of some parts of
our progress, see Table 
\ref{Table:ProgressPSDO} below.

\medspace

Our results and ideas might be
applicable in various parts within
analysis, or in a broader perspective,
to various fields in
science and technology, in the
seek of finding suitable 
(topological) spaces. In several
such situations it is important
that these spaces are adapted to
Fourier analysis, and in particular
to the Fourier transform and 
other related transformations. 

\par

The Fourier transform provides a picture
that only 
displays non-localized information on 
momentum or frequency, while in 
different applications, there is a need
for global
representations which simultaneously 
depict the behavior in the phase space and
time-frequency shift space.

\par


A common choice of 
operators fulfilling such requested
properties is the local Fourier transform,
usually called the short-time Fourier 
transform or voice transform in
time-frequency analysis
(see e.{\,}g.
\cite {Fei1,FeiGro1,Gro2}). In
quantum physics, this transform is
often called the
coherent state transform (cf.
\cite {Fei1,FeiGro1,Gro2,LieSol}). 
It possess several convenient properties,
like continuity
on suitable spaces. It can
also be used for characterizing 
topological vector spaces like the 
Schwartz space, Gelfand-Shilov spaces, 
and their distribution spaces
(see e.{\,}g. \cite{ChuChuKim, Eij, Gro2,
GroZim2001, GroZim, Toft17}).

\par

A convenient way to quantify the phase space
or time-frequency content of a
distribution $f$ on $\rr d$
is to estimate its short-time
Fourier transform $V_\phi f$
in suitable ways. Here $\phi$
is a fixed (window) function which
should belong to a convenient
function space, like
$\mascS (\rr d)\setminus 0$.
(See \cite {Ho1}
or Sections \ref{sec1} and
\ref{sec6}--\ref{sec8}
for notations.)
For this reason, Feichtinger
introduced in \cite{Fei1981, Fei1} the (classical)
modulation spaces,
$M^{p,q}_{(\omega )}(\rr d)$,
$p,q\in [1,\infty ]$,
by imposing a mixed $L^{p,q}_{(\omega )}$
norm estimate on the short-time Fourier
transforms of the involved functions
and distributions. Here $\omega$ should belong
to $\mascP (\rr {2d})$, the set of
weight functions on $\rr {2d}$,
which are moderate with polynomially
bounded functions, cf. Subsection
\ref{subsec1.1}.

\par

Thus the Lebesgue parameters
$p$ and $q$ to some extent quantify the degrees
of asymptotic decays and singularities,
respectively, of the distributions in
$M^{p,q}_{(\omega )}(\rr d)$, and similar facts
hold true for the weight function $\omega$.
For example in \cite{Fei1981, Toft04} it is
observed that
\begin{equation}
\label{Eq:InterSecUnionModSp}
\bigcap _{\omega \in \mascP}M^{p,q}_{(\omega )}(\rr d)
=
\mascS (\rr d)
\quad \text{and}\quad
\bigcup _{\omega \in \mascP}
M^{p,q}_{(\omega )}(\rr d)
=
\mascS '(\rr d).
\end{equation}

\par

In \cite{Fei1981, Fei1}, several fundamental
properties for
$M^{p,q}_{(\omega )}$-spaces were established.
In particular, it is here proved that 
$M^{p,q}_{(\omega )}(\rr d)$ increases
with $p$ and $q$, decreases with $\omega$,
and is independent of the choice of
window function
$\phi \in M^{1}_{(v)}(\rr d)\setminus 0$.

\par

After \cite{Fei1981,Fei1}, the theory of 
modulation 
spaces was further developed in different
ways. In \cite{FeiGro0, FeiGro1, FeiGro2},
Feichtinger and Gr{\"o}chenig
developed the coorbit space theory
in the framework of Banach spaces,
and in \cite{FG4},
they showed that these
spaces admit reconstructible sequence 
space representations using Gabor
frames.
In \cite{GaSa}, Galperin and Samarah showed
that this result
still holds
true in the quasi-Banach situation, after
the assumption $p,q\in [1,\infty ]$
is relaxed into
$p,q\in (0,\infty ]$. A similar extension 
of the whole coorbit space theory in
\cite{FeiGro0, FeiGro1, FeiGro2, Gro1991}, to 
allow quasi-Banach spaces
was thereafter performed by Rauhut in
\cite{Rau2}. Some recent contributions
to this topic is performed in \cite{Voi}
by Voigtlaender.

\par

Some extensions to include more general
weight functions, leading to modulation
spaces in the framework of
ultra-distribution, can also be 
found, see e.{\,}g.
\cite{Gro2,PilTeo1,PilTeo2,Teofanov2,
Toft10,Toft2022}.
For example, let
$ \mascP (\rr {2d})$ in
\eqref{Eq:InterSecUnionModSp}
be replaced by
$\mascP _E(\rr {2d})$,
the set of \emph{all} moderate
weights (without any assumptions
on polynomial bounds).
Then it follows that 
\eqref{Eq:InterSecUnionModSp}
holds true with the
(Fourier invariant) Gelfand-Shilov space
$\Sigma _1$ and its distribution space
$\Sigma _1'$, in place of
$\mascS$ and $\mascS '$, respectively
(see e.{\,}g. \cite{Toft10}).
We remark that $\Sigma _1(\rr d)$ is
significantly smaller (but dense) in
$\mascS (\rr d)$, while its distribution
space $\Sigma _1'(\rr d)$ is strictly 
larger than $\mascS '(\rr d)$,
cf. Subsection \ref{subsec1.2}.

\par

The concept of modulation spaces was also 
extended in \cite{Fei6}, by allowing more 
general bound conditions on the
short-time Fourier transform. For example,
here Feichtinger considers modulation
spaces, with our notations denoted
by $M(\omega ,\mascB _0)$, and
which are parameterized by
the solid Banach function space
$\mascB _0$, and the weight function
$\omega$.

\par

In several situations, modulation spaces are 
convenient to use because it is easy to find 
different characterizations for them, especially 
norm characterizations. For example,
for $p,q,r\in (0,\infty ]$,
the mixed (quasi-)norm space
$L^{p,q}_{(\omega )}(\rr {2d})$ and
the Wiener space
$\sfW ^r (L^{p,q}_{(\omega )}(\rr {2d}))$
are the sets of all measurable
functions $F$ on $\rr {2d}$ such that
$\nm F{L^{p,q}_{(\omega )}}<\infty$
and
$\nm F{\sfW ^r (L^{p,q}_{(\omega )})}
<\infty$, respectively. Here
\begin{align*}
\nm F{L^{p,q}_{(\omega )}}
&=
\left (
\int _{\rr d}
\left (
\int _{\rr d}
|F(x,\xi )\omega (x,\xi )|^p\, dx
\right )^{\frac qp}
\, d\xi 
\right )^{\frac 1q}
\intertext{and}
\nm F{\sfW ^r (L^{p,q}_{(\omega )})}
&=
\left (
\sum _{\iota \in  \zz d}
\left (
\sum _{j\in \zz d}
\big (
\nm {F\cdo \omega }{L^r(Q(j,\iota ))}
\big )^p
\right )^{\frac qp}
\, d\xi 
\right )^{\frac 1q},
\\
Q(j,\iota ) &= (j,\iota )+[0,1]^{2d},
\end{align*}
when $p,q<\infty$.

\par

By a straight-forward application of
H{\"o}lder's inequality, it follows
that $\sfW ^r (L^{p,q}_{(\omega )}(\rr 
{2d}))$ decreases with $r$,
its (quasi-)norm increases with $r$,
and
$$
\sfW ^{r_2}
(L^{p,q}_{(\omega )}(\rr {2d}))
\subseteq
L^{p,q}_{(\omega )}(\rr {2d})
\subseteq
\sfW ^{r_1}
(L^{p,q}_{(\omega )}(\rr {2d})),
\quad
r_1\le p,q\le r_2.
$$

\par

Taking into account that the latter
embeddings are strict,
one might be surprised that for any
$\phi \in \Sigma _1(\rr d)\setminus 0$,
indeed
\begin{equation}
\label{Eq:IntroEquiv1}
\begin{aligned}
f\in M^{p,q}_{(\omega )}(\rr d)
\quad &\Leftrightarrow \quad
V_{\phi}f\cdot \omega
\in
L^{p,q}(\rr {2d})
\\[1ex]
&\Leftrightarrow \quad
V_{\phi}f\cdot \omega
\in
\sfW ^\infty
(L^{p,q}(\rr {2d})),
\end{aligned}
\end{equation}
and that the (quasi-)norm equivalence
\begin{equation}
\label{Eq:IntroEquiv2}
\nm f{M^{p,q}_{(\omega )}}
\asymp
\nm {V_\phi f\cdot \omega}
{L^{p,q}}
\asymp
\nm {V_\phi f\cdot \omega}
{\sfW ^\infty (L^{p,q})},
\qquad
f\in M^{p,q}_{(\omega )}(\rr d)
\end{equation}
holds true.
(See e.{\,}g. \cite{GaSa,Toft2022}.)

\par


\par

In the first part of the paper
we extend the norm equivalences
\eqref{Eq:IntroEquiv1} and
\eqref{Eq:IntroEquiv2} to a broad family of
(quasi-Banach) modulation spaces. More precisely,
we replace the mixed norm spaces $L^{p,q}$
above with a solid translation invariant 
quasi-Banach function space (QBF space)
$\mascB$ with respect to $r_0\in (0,1]$
and weight $v_0\in \mascP _E(\rr {2d})$.
%
%
%
Additionally we assume that
$\mascB$ is
\emph{normal}, i.{\,}e.,
for some solid Banach function space
$\mascB _0$, one has
$$
\mascB
=
\sets {F\ \text{measurable}}
{|F|^{r_0}\in \mascB _0}
\quad \text{and}\quad
\nm F{\mascB}
=
\nm {\, |F|^{r_0}\, }{\mascB _0}^{1/r_0}.
$$
Then our extension of \eqref{Eq:IntroEquiv1} and
\eqref{Eq:IntroEquiv2} is that 
for any
$\phi \in \Sigma _1(\rr d)\setminus 0$,
\emph{and any $r\in [r_0,\infty]$}
(and not only $r=\infty$ as in
\eqref{Eq:IntroEquiv1} and
\eqref{Eq:IntroEquiv2}), we have
\begin{equation}
\tag*{(\ref{Eq:IntroEquiv1})$'$}
\begin{aligned}
f\in M(\omega ,\mascB )
\quad &\Leftrightarrow \quad
V_{\phi}f\cdot \omega 
\in
\mascB
\\[1ex]
&\Leftrightarrow \quad
V_{\phi}f\cdot \omega 
\in
\sfW ^r (\mascB ),
\end{aligned}
\end{equation}
and that the
(quasi-)norm equivalence
\begin{equation}
\tag*{(\ref{Eq:IntroEquiv2})$'$}
\nm f{M(\omega ,\mascB )}
\asymp
\nm {V_\phi f\cdot \omega}
{\mascB}
\asymp
\nm {V_\phi f\cdot \omega}
{\sfW ^r (\mascB )}
\qquad
f\in M(\omega ,\mascB )
\end{equation}
holds, see Section \ref{sec3}. In fact, as in \cite{FeiGro1,GaSa,Gro2,Rau2},
we permit $\phi$ in \eqref{Eq:IntroEquiv1}$'$
and \eqref{Eq:IntroEquiv2}$'$, to belong to
suitable weighted $M^{r_0}$-spaces, which
strictly contain $\Sigma _1(\rr d)$.
Even for
classical modulation spaces,
$M^{p,q}_{(\omega )}$, with $p,q\in (0,\infty ]$
the norm equivalences \eqref{Eq:IntroEquiv1}$'$
and \eqref{Eq:IntroEquiv2}$'$ extend the results
known so far.

\par

In the following table we
give an overview of the progress
for norm equivalences for
modulation spaces.
The extensions are obtained by going
downwards and to the right. Here recall
that $\mascP \subsetneq \mascP _E^0
\subsetneq \mascP _E$. Some more details are
also explained in Remark
\ref{Rem:ProgressNormSpaces} in Section
\ref{sec3}.

\par

\begin{table}[h]
\caption{Progress on norm
equivalences for modulation spaces
of the form $M(\omega ,\mascB )$ when $\mascB$
is a quasi-Banach space of order $r_0\in (0,1]$.}
\label{Table:ProgressModNorms}
\begin{tabular}{l | l l}
\diagbox{{\tiny{Spaces}}}
{\hspace{-0.3cm}{\tiny{Norms}}}
& {\hspace{-0.55cm}\footnotesize{$\begin{matrix}
\nm f{M^{p,q}_{(\omega )}} \asymp 
\nm {V_\phi f\cdot \omega}{L^{p,q}}
\\[1ex]
\hspace{1.8cm}
\asymp
\nm {V_\phi f\cdot \omega}{\sfW ^r(L^{p,q})}
\end{matrix}$}}
&
{\hspace{-0.55cm}\footnotesize{$\begin{matrix}
\nm f{M(\omega ,\mascB )}\asymp
\nm {V_\phi f\cdot \omega}{\mascB}
\\[1ex]
\hspace{2.15cm}
\asymp
\nm {V_\phi f\cdot \omega}{\sfW ^r(\mascB )}
\end{matrix}$}}
\\
\hline
&&
\\
{\footnotesize{Banach}}
& {\tiny{Feichtinger} (1983,1989) in
\cite{Fei1989,Fei1}:} &
{\tiny{Feichtinger-Gr{\"o}chenig (1989)
in \cite{FeiGro1}:}} 
\\[-0.5ex]
{\footnotesize{spaces}}
& {\tiny{$p,q\in [1,\infty]$,}}
& {\tiny{$\mascB$ only a Banach space (i.{\,}e. $r_0=1$),}}
\\[-0.5ex]
& {\tiny{$r=\infty$, $\omega$ only in $\mascP$.}}
& {\tiny{$r=\infty$, $\omega$ only in a subset of
$\mascP _E^0$.}}
\\
\\
{\footnotesize{quasi-Banach}}
& {\tiny{Galperin-Samarah} (2004) in
\cite{GaSa}:} &
{\tiny{Rauhut (2007) in \cite{Rau2}:}} 
\\[-0.5ex]
{\footnotesize{spaces}}
& {\tiny{$p,q\in (0,\infty]$, $r_0\le 1,p,q$,}}
& {\tiny{$\mascB$ a quasi-Banach space,}}
\\[-0.5ex]
& {\tiny{$r=\infty$, $\omega$ only in $\mascP$,}}
& {\tiny{$r=\infty$, $\omega$ only in a subset of
$\mascP _E^0$,}}
\\[-0.5ex]
&
{\tiny{restrictions
on $\phi \in M^{r_0}_{(v)}$.}}
&
{\tiny{no restrictions
on $\phi \in M^{r_0}_{(v)}$ (general).}}
\\
\\
{\footnotesize{quasi-Banach}}
& &
{\footnotesize{Theorem \ref{Thm:EquivNorms2}
in Sec. \ref{sec3}:}}
\\[-0.5ex]
{\footnotesize{spaces,}}
& &
{\tiny{$\mascB$ a normal quasi-Banach space}}
\\[-0.5ex]
{\tiny{(new results)}}
&
& {\tiny{$r \in [r_0,\infty ]$ (general), $\omega$ in
$\mascP _E$ (general),}}
\\[-0.5ex]
&
&
{\tiny{no restrictions on
$\phi \in M^{r_0}_{(v)}$ (general).}}
\end{tabular}
\end{table}

\medspace



\par


In the second part of the paper (Sections 
\ref{sec4}--\ref{sec8}),
we show how our
norm estimate results can be applied in 
different contexts.

\par

In Section \ref{sec4} we 
extend certain 
continuity and compactness properties 
obtained in
\cite{BogTof,PfTo19}.
For example, if $\mascB$ is a 
normal QBF space with respect to
$v_0\equiv 1$ 
and $\omega _j\in \mascP _E(\rr {2d})$, 
and $i$ denotes the inclusion map, then 
we prove
\begin{alignat*}{5}
i &: & \, M(\omega _1,\mascB )
&\to & \, 
M(\omega _2,\mascB )
\quad
&\text{continuous} &
\quad
& \Leftrightarrow &
\quad
\frac {\omega _2(X)}{\omega _1(X)}
&\le C
\intertext{and}
i &: & \, M(\omega _1,\mascB )
&\to & \, 
M(\omega _2,\mascB )
\quad
&\text{compact} &
\quad
& \Leftrightarrow &
\quad
\lim _{|X|\to \infty}
\frac {\omega _2(X)}{\omega _1(X)} &=0.
\end{alignat*}

\par

In Section \ref{sec5} we deduce
convolution estimates for
modulation spaces. In particular
we extend some results given in
\cite{Toft04,Toft26} to include
convolutions of modulation spaces
of the form $M(\omega ,\mascB )$.

\par

In Sections \ref{sec6}--\ref{sec8} 
we extend continuity
and lifting properties for pseudo-differential
and Toeplitz operators, available
in the literature (see e.{\,}g.
\cite{AbCaTo,AbCoTo,BenOko,CaTo,CorGro,
CorRod,Gro2,PilTeo2,Sjo,Tac,Teofanov3,
Toft04,Toft07,Toft18,ToBo08,TofUst}).

\par

One type of such results
originates from \cite{Sjo}
by Sj{\"o}strand, where the following
is proved concerning pseudo-differential
operators $\op (\fka )$ (with symbol $\fka$).
Here recall that the modulation space
$M^2(\rr d)$ is the same as $L^2(\rr d)$.

\par

\begin{prop}
\label{Prop:IntroSjostrand}
Let $\fka \in
M^{\infty ,1}(\rr {2d})$. Then
$\op (\fka )$ is continuous on
$M^2(\rr d)$.
\end{prop}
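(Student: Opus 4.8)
The plan is to realize $\op(\fka)$, after conjugation by a short-time Fourier transform, as an integral operator on $\rr {2d}$ whose kernel is dominated by a convolution kernel, and then to invoke Schur's test. Fix a window $g\in \Sigma _1(\rr d)$ with $\nm g{L^2}=1$, and write $\pi (z)$, $z=(x,\xi )$, for the time-frequency shift (modulation composed with translation), so that $V_gf(z)=\scal f{\pi (z)g}$. Since $M^2(\rr d)=L^2(\rr d)$ and $V_g\colon L^2(\rr d)\to L^2(\rr {2d})$ is an isometry up to a constant, continuity of $\op(\fka)$ on $M^2$ is equivalent to $L^2(\rr {2d})$-boundedness (on the range of $V_g$) of the operator whose integral kernel is the \emph{Gabor matrix}
\[
K(w,z)=\scal{\op(\fka)\pi (z)g}{\pi (w)g},\qquad z,w\in \rr {2d}.
\]

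The key step is to dominate $K$ by the short-time Fourier transform of the symbol. Using the covariance of the quantization $\op$ under time-frequency shifts, together with the identification of rank-one operators $g\otimes \overline g$ with (cross-)Wigner distributions, one produces a fixed window $\Phi \in \Sigma _1(\rr {2d})$ (manufactured from $g$) and an invertible linear map $\mathcal J$ on $\rr {2d}$ such that
\[
|K(w,z)|=\big |V_\Phi \fka\big (\Theta (z,w),\mathcal J(w-z)\big )\big |,
\]
where $\Theta (z,w)$ is the "space" coordinate of the phase space of $\rr {2d}$ and the "frequency" coordinate is the invertible image $\mathcal J(w-z)$ of the off-diagonal displacement. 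The decisive structural feature is that the frequency coordinate depends on $(z,w)$ only through $w-z$.

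Now the membership $\fka \in M^{\infty ,1}(\rr {2d})$ is precisely the statement that
\[
H(\zeta )=\sup _{u\in \rr {2d}}|V_\Phi \fka (u,\zeta )|\in L^1(\rr {2d}),\qquad \nm H{L^1}\asymp \nm \fka{M^{\infty ,1}}.
\]
Hence $|K(w,z)|\le H(\mathcal J(w-z))=:\widetilde H(w-z)$ with $\widetilde H\in L^1(\rr {2d})$ and $\nm{\widetilde H}{L^1}\asymp \nm \fka{M^{\infty ,1}}$. The integral operator with convolution kernel $\widetilde H(w-z)$ is bounded on $L^2(\rr {2d})$ by Young's inequality, equivalently by Schur's test since
\[
\sup _w\int \widetilde H(w-z)\,dz=\sup _z\int \widetilde H(w-z)\,dw=\nm{\widetilde H}{L^1},
\]
with operator norm at most $\nm{\widetilde H}{L^1}$. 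Transferring this estimate back through the isometry $V_g$ gives $\nm{\op(\fka)f}{M^2}\lesssim \nm \fka{M^{\infty ,1}}\nm f{M^2}$, which is the assertion.

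The main obstacle is the Gabor matrix identity of the second paragraph: establishing the exact relation between $|K(w,z)|$ and $|V_\Phi \fka|$, and in particular checking that the off-diagonal displacement enters only through the invertible image $\mathcal J(w-z)$. This is where the fine structure of the symbol calculus enters — the covariance of $\op$ under time-frequency shifts (so that $\pi (w)^{-1}\op(\fka )\pi (z)$ is, up to a phase, the quantization of a time-frequency translate of $\fka$), together with the fact that $\op$ applied to a time-frequency shifted window is a time-frequency shift of a fixed rank-one operator. Everything else is soft: the isometry of the short-time Fourier transform and Young's/Schur's inequality.
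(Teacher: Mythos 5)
Your proof is correct and is essentially the same argument the paper itself relies on: the paper quotes this proposition from Sj{\"o}strand, but its proof of the generalization, Theorem \ref{Thm:PseudoCont2}, runs along exactly your lines --- \eqref{Eq:STFTKernelPsDO} and \eqref{Eq:STFTKernelAction} are your Gabor-matrix identity and the transfer through the short-time Fourier transform, the domination \eqref{Eq:FGHEst} is your bound $|K(w,z)|\le \widetilde H(w-z)$ with $\widetilde H\in L^1$, and the convolution estimate of Corollary \ref{Cor:ConvWienerSp1} plays the role of your Young/Schur step. So this is the same approach, specialized to $\mascB =L^2$, trivial weights and $r_0=1$.
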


\par

We remark that Proposition
\ref{Prop:IntroSjostrand}
extend
Calderon-Vailancourt's theorem
in \cite{CalVai}, which shows that
Proposition \ref{Prop:IntroSjostrand}
holds true with the H{\"o}rmander class
$S^0_{0,0}(\rr {2d})$ in place of
$M^{\infty ,1}(\rr {2d})$. Note that
$S^0_{0,0}(\rr {2d}) \subseteq
M^{\infty ,1}(\rr {2d})$.

\par

Another type of such results
originates from \cite{Tac} by
Tachizawa, from which we obtain
the following continuity properties
for pseudo-differential operators
with symbols in the weighted
$S^0_{0,0}$ class
$S^{(\omega )}(\rr {2d})$.

\par

\begin{prop}
\label{Prop:IntroTachizawa}
Let $p,q\in [1,\infty ]$,
$\omega _0,\omega$ be
suitable weight functions on
$\rr {2d}$, and $\fka\in
S^{(\omega )}(\rr {2d})$.
Then $\op (\fka )$ is continuous
from $M^{p,q}_{(\omega _0\omega )}(\rr d)$
to $M^{p,q}_{(\omega _0 )}(\rr d)$.
\end{prop}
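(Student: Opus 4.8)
The plan is to reduce the statement to a continuity result for pseudo-differential operators whose symbols lie in a weighted Sj{\"o}strand class $M^{\infty ,1}_{(1/\omega )}(\rr {2d})$, and for that the first step is to embed the weighted H{\"o}rmander class into such a modulation space. Concretely, I would prove
$$
S^{(\omega )}(\rr {2d})\hookrightarrow M^{\infty ,1}_{(1/\omega )}(\rr {2d}),
$$
where $1/\omega$ is regarded as a weight on $\rr {4d}$ depending only on the phase-space variable $X\in \rr {2d}$. Fix a window $\Phi \in \mascS (\rr {2d})\setminus 0$ (or $\Phi \in \Sigma _1$) and estimate the short-time Fourier transform $V_\Phi \fka (X,\Xi )$ of a symbol $\fka \in S^{(\omega )}$. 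Using that every derivative of $\fka$ is bounded by a constant times $\omega$, together with the moderateness of $\omega$ (so that the weight survives the translation of $\Phi$), repeated integration by parts in the defining integral yields $|V_\Phi \fka (X,\Xi )|\lesssim C_N\,\omega (X)\,\eabs{\Xi}^{-N}$ for every $N$. Choosing $N>2d$ and dividing by $\omega (X)$ gives $\nm {V_\Phi \fka \cdot (1/\omega )}{L^{\infty ,1}}<\infty$, i.{\,}e. the claimed embedding.

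With the embedding in hand, the second step is the continuity of $\op (\fka )$ for $\fka \in M^{\infty ,1}_{(1/\omega )}$. Here I would use the pointwise bound for the short-time Fourier transform of the output: for suitable windows there is a Young-type estimate
$$
|V_\phi (\op (\fka )f)(x,\xi )|\lesssim \big (\,|V_\Psi \fka |\ast |V_\phi f|\,\big )(x,\xi ),
$$
where the convolution is taken over the phase-space variable and the dual variable of $\fka$ is integrated out. Feeding the decay $|V_\Psi \fka (X,\Xi )|\lesssim \omega (X)\eabs{\Xi}^{-N}$ into this bound, multiplying by the target weight $\omega _0$, and transferring $\omega _0$ across the convolution by moderateness (at the cost of a factor absorbed into the symbol weight $\omega$), one arrives at a genuine convolution inequality between $V_\phi (\op (\fka )f)\cdot \omega _0$ and $V_\phi f\cdot (\omega _0\omega )$ whose kernel is in $L^1$ thanks to the integrable factor $\eabs{\Xi}^{-N}$. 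Applying Young's inequality on the mixed space $L^{p,q}$ then yields
$$
\nm {\op (\fka )f}{M^{p,q}_{(\omega _0)}}\lesssim \nm f{M^{p,q}_{(\omega _0\omega )}},
$$
which is exactly the asserted continuity. The reason the target and source weights combine correctly is that their ratio equals the symbol weight, $(\omega _0\omega )/\omega _0=\omega$, matching the weight $1/\omega$ in the Sj{\"o}strand class.

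The norm equivalences of the first part enter in justifying that these short-time Fourier transform estimates genuinely control the modulation-space quasi-norms: Theorem \ref{Thm:EquivNorms2} guarantees that it suffices to bound $V_\phi (\op (\fka )f)\cdot \omega _0$ in the appropriate space, rather than to produce a Gabor expansion of $\op (\fka )f$ and estimate its coefficients.

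The main obstacle I expect is the weight bookkeeping: one must check that the hypothesis that $\omega _0,\omega$ be \emph{suitable} precisely guarantees that $\omega _0\omega$ and $\omega _0$ are moderate and that the factor picked up when transferring $\omega _0$ across the convolution is absorbed by the symbol weight, so that the convolution inequality closes with an $L^1$ kernel. A secondary point is the endpoint cases $p=\infty$ or $q=\infty$, where density of test functions fails; these are handled either by duality, or by observing that the pointwise estimate above is valid for all $f$ in the source space without any approximation argument.
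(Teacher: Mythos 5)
Your proposal is correct, and it is essentially the approach that underlies the paper's treatment — with one caveat about what "the paper's own proof" is: Proposition \ref{Prop:IntroTachizawa} is never proved in the paper at all, but quoted as background from Tachizawa \cite{Tac}; the paper instead derives strictly more general statements whose proofs follow exactly your outline. Your first step — the decay estimate $|V_\Phi \fka (X,\Xi )|\lesssim C_N\, \omega (X)\eabs \Xi ^{-N}$ for all $N$, i.e. embedding $S^{(\omega )}$ into weighted Sj{\"o}strand-type $M^{\infty ,1}$ classes — is Proposition \ref{Prop:SmoothSymbModSp} (and the intersection formula for $S^{(\omega )}$ displayed in the introduction). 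Your second step — bounding $V_\phi (\op (\fka )f)$ pointwise by a convolution against $V_\phi f$, with kernel obtained from the symbol's short-time Fourier transform, then transferring the weights by moderateness and applying a Young-type inequality — is precisely \eqref{Eq:STFTKernelAction} and \eqref{Eq:FGHEst} in the proof of Theorem \ref{Thm:PseudoCont2}, and the proposition is recovered as the special case $\mascB =L^{p,q}$, $\omega _1=\omega _0\omega$, $\omega _2=\omega _0$ of Proposition \ref{Prop:PseudoContModSmoothSymb}. The only genuine difference is generality: since $p,q\ge 1$ you can finish with plain Young's inequality on $L^{p,q}$, whereas the paper runs the convolution step through Wiener amalgam spaces (Corollary \ref{Cor:ConvWienerSp1}) and the norm equivalences of Theorem \ref{Thm:EquivNorms2}, because it must also cover quasi-Banach $\mascB$ with $r_0<1$, where Young's inequality fails. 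One point deserves emphasis, and you flag it yourself: the embedding $S^{(\omega )}\hookrightarrow M^{\infty ,1}_{(1/\omega )}$ with weight depending on $X$ only is \emph{not} sufficient on its own, since transferring $\omega _0$ across the convolution costs a moderateness factor in the dual variable; you need the full family of estimates with the $\eabs \Xi ^{-N}$ gain (equivalently, the weights $\eabs \Xi ^{N}/\omega (X)$ as in Proposition \ref{Prop:SmoothSymbModSp}), which is exactly why "suitable" must be read as $\omega _0,\omega \in \mascP (\rr {2d})$, so that the transferred factor is polynomial and gets absorbed.
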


\par

There are several extensions of
Propositions \ref{Prop:IntroSjostrand}
and \ref{Prop:IntroTachizawa}.
For example, the following proposition
is a special case of
\cite[Theorem 3.1]{Toft18}, which extends both
Proposition \ref{Prop:IntroSjostrand}
and Proposition 
\ref{Prop:IntroTachizawa}. Here the involved
weigh functions are moderate and satisfy
\begin{equation}
\label{Eq:WeightIneqIntro}
\frac {\omega _2(x,\xi )}
{\omega _1(y,\eta )}
\lesssim
\omega _0(x,\eta ,\xi -\eta ,y-x),
\quad 
x,y,\xi ,\eta \in \rr d.
\end{equation}

\par

\begin{prop}
\label{Prop:IntroToft1}
Let $r_0\in (0,1]$, $p,q\in [r_0,\infty ]$,
$\omega _0$, $\omega _1$
and $\omega _2$ be suitable
weight functions
such that \eqref{Eq:WeightIneqIntro}
holds,
and let $\fka\in
M^{\infty ,r_0}_{(\omega _0)}(\rr {2d})$.
Then $\op (\fka )$ is continuous
from $M^{p,q}_{(\omega _1)}(\rr d)$
to $M^{p,q}_{(\omega _2)}(\rr d)$.
\end{prop}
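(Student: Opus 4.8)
The plan is to prove Proposition~\ref{Prop:IntroToft1} by reducing the continuity of $\op(\fka)$ to a kernel estimate on the time-frequency side, and then to control that kernel using the norm equivalences from the first part of the paper. First I would pass to the integral-kernel representation of $\op(\fka)$ after conjugating with the short-time Fourier transform. Fix a window $\phi\in\Sigma_1(\rr d)\setminus 0$; then for $f\in M^{p,q}_{(\omega_1)}(\rr d)$ one has $\op(\fka)f\in\mascS'(\rr d)$, and the point is to estimate $V_\phi(\op(\fka)f)(x,\xi)\cdot\omega_2(x,\xi)$ in the mixed norm $L^{p,q}$. The standard device is to write
\begin{equation}
\label{Eq:ProofToftKernel}
V_\phi(\op(\fka)f)(x,\xi)
=
\int_{\rr{2d}}
K(x,\xi,y,\eta)\,
V_\phi f(y,\eta)\,
dy\,d\eta,
\end{equation}
where the kernel $K$ is expressed through the short-time Fourier transform of the symbol $\fka$ with respect to a suitable window built from $\phi$ (e.g.\ a tensor-type window adapted to the Weyl or Kohn--Nirenberg correspondence). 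Carrying out this reduction is routine and relies only on the definition of $\op(\fka)$ and the Moyal-type identity for $V_\phi$.

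Next I would estimate the kernel $K$. The key structural fact is that, since $\fka\in M^{\infty,r_0}_{(\omega_0)}(\rr{2d})$, the quantity $|K(x,\xi,y,\eta)|$ is dominated by $\omega_0$ evaluated at the natural phase-space point, times a rapidly decaying convolution kernel coming from the $\Sigma_1$-window. Concretely, one expects a bound of the form
\begin{equation}
\label{Eq:ProofToftKernelBound}
|K(x,\xi,y,\eta)|
\lesssim
H(x-y,\xi-\eta)\,
\omega_0(x,\eta,\xi-\eta,y-x),
\end{equation}
where $H$ is an $L^{r_0}$-type (hence $\sfW^{r_0}$-controllable) function of the difference variables. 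Inserting the weight inequality \eqref{Eq:WeightIneqIntro}, the factor $\omega_0(\cdots)$ absorbs the ratio $\omega_2(x,\xi)/\omega_1(y,\eta)$, so that after multiplying \eqref{Eq:ProofToftKernel} by $\omega_2(x,\xi)$ and dividing the integrand appropriately, the weighted kernel is controlled by $H$ of the difference variables alone. This is exactly the mechanism by which the off-diagonal decay of the symbol in modulation norm translates into a convolution operator acting on $V_\phi f\cdot\omega_1$.

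At this stage the continuity follows from a convolution estimate in the $\sfW^{r_0}$-Wiener amalgam setting. Because $H\in\sfW^{r_0}$ and $r_0\le 1\le p,q$, Young-type inequalities for Wiener amalgam spaces give
\begin{equation}
\label{Eq:ProofToftYoung}
\nm{V_\phi(\op(\fka)f)\cdot\omega_2}{L^{p,q}}
\lesssim
\nm{H}{\sfW^{r_0}(L^1)}\,
\nm{V_\phi f\cdot\omega_1}{L^{p,q}},
\end{equation}
and here is precisely where the first-part norm equivalences \eqref{Eq:IntroEquiv2}$'$ enter: the left-hand side is comparable to $\nm{\op(\fka)f}{M^{p,q}_{(\omega_2)}}$ and the right-hand side to $\nm f{M^{p,q}_{(\omega_1)}}$, because the equivalence lets us move freely between the $L^{p,q}$ norm and the $\sfW^{r}$ norm of $V_\phi f\cdot\omega$ for the relevant $r\ge r_0$. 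I expect the main obstacle to be the convolution estimate \eqref{Eq:ProofToftYoung} in the quasi-Banach range $r_0<1$: the triangle inequality fails, so one must use the $r_0$-subadditivity of $\nmm{\cdot}^{r_0}$ together with a careful discretization over the lattice cubes $Q(j,\iota)$ to sum the local contributions, and it is the availability of the $\sfW^{r}$ formulation for \emph{all} $r\in[r_0,\infty]$ that makes this summation work and that earlier, $r=\infty$-only, methods could not supply.
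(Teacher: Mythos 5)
Your overall route --- conjugating $\op (\fka )$ with the short-time Fourier transform, bounding the resulting kernel so that \eqref{Eq:WeightIneqIntro} absorbs the weight ratio, and closing with a Wiener-amalgam Young-type inequality together with the norm equivalences \eqref{Eq:IntroEquiv2}$'$ --- is exactly the route the paper takes. (Strictly speaking the paper does not reprove this proposition, which it cites from \cite{Toft18}; but its generalization, Theorem \ref{Thm:PseudoCont2}, contains it upon taking $\mascB =L^{p,q}$ and $v_0\equiv 1$, and is proved precisely along your lines via \eqref{Eq:STFTKernelPsDO}, \eqref{Eq:STFTKernelAction}, \eqref{Eq:FGHEst} and Corollary \ref{Cor:ConvWienerSp1}.)

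There is, however, a concrete error in your central kernel estimate, and as written the argument does not close. By \eqref{Eq:STFTKernelPsDO}, $|K(x,\xi ,y,\eta )|$ equals $|V_\Psi \fka |$ evaluated \emph{exactly} at a linearly transformed point (for $A=0$ this point is $(x,\eta ,\xi -\eta ,y-x)$), so the hypothesis $\fka \in M^{\infty ,r_0}_{(\omega _0)}(\rr {2d})$ yields a bound of the form $|K(x,\xi ,y,\eta )|\lesssim H(x-y,\xi -\eta )\big /\omega _0(x,\eta ,\xi -\eta ,y-x)$, where $H$ is the supremum of $|V_\Psi \fka |\cdot \omega _0$ over the non-difference variables and $\nm H{L^{r_0}}\asymp \nm {\fka}{M^{\infty ,r_0}_{(\omega _0)}}$ (or its amalgam counterpart via Theorem \ref{Thm:EquivNorms2}). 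That is, $\omega _0$ must occur as a \emph{divisor}: only then does \eqref{Eq:WeightIneqIntro}, i.e. $\omega _2(x,\xi )/\omega _1(y,\eta )\lesssim \omega _0(x,\eta ,\xi -\eta ,y-x)$, cancel it and give $G\lesssim H*F$ with $F=|V_\phi f|\,\omega _1$ and $G=|V_\phi (\op (\fka )f)|\,\omega _2$. With $\omega _0$ multiplying, as in your second display, the absorption step leaves an uncontrolled factor $\omega _0^{\, 2}$. Relatedly, the decay of $H$ does not ``come from the $\Sigma _1$-window'': for a rough symbol the window contributes no extra decay, and the $L^{r_0}$-summability of $H$ \emph{is} the $M^{\infty ,r_0}_{(\omega _0)}$-norm of the symbol --- this is the entire content of the hypothesis on $\fka$. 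Both slips are fixable in one line, and the remainder of your outline (the $r_0$-subadditive discretized Young inequality and the free passage between $L^{p,q}$ and $\sfW ^r$ norms for $r\ge r_0$) matches the paper. One further point you dismiss as routine: when $p=\infty$ or $q=\infty$, the space $\Sigma _1(\rr d)$ is not dense, so ``uniquely extendable'' and even the validity of the kernel representation for general $f$ require an a priori continuity statement of the type $M^\infty _{(\vartheta _1)}\to M^\infty _{(\vartheta _2)}$; in the paper this is the ``claim'' at the start of the proof of Theorem \ref{Thm:PseudoCont2}, obtained from Theorem \ref{Thm:PseudoCont1}.
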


\par

We observe that if $\omega _j=1$, $j=0,1,2$
and $r_0=1$, then Proposition \ref{Prop:IntroToft1}
essentially agrees with \cite[Theorem 14.5.3]{Gro1},
by Gr{\"o}chenig.

\par

In the next example, which
follows from \cite[Theorem 3.2]{Toft07},
we relax the assumptions
of the involved modulation
spaces in Proposition
\ref{Prop:IntroTachizawa}.

\par

\begin{prop}
\label{Prop:IntroToft2}
Let $\mascB _0$ be a solid invariant
BF space on $\rr {2d}$ with respect
to $v_0\equiv 1$,
$\omega _0,\omega$ be
suitable weight functions on
$\rr {2d}$, and $\fka\in
S^{(\omega )}(\rr {2d})$.
Then $\op (\fka )$ is continuous
from $M(\omega _0\omega ,\mascB _0)$
to $M(\omega _0 ,\mascB _0)$.
\end{prop}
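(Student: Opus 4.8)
The plan is to reduce the operator continuity to a weighted estimate on short-time Fourier transforms, and then to close with a convolution inequality in $\mascB _0$. Fix windows $\phi ,\Phi \in \Sigma _1(\rr d)\setminus 0$. Since $\mascB _0$ is a solid invariant BF space (so the Banach case $r_0=1$) and $\omega _0$, $\omega _0\omega$ are moderate, Theorem \ref{Thm:EquivNorms2} yields $\nm f{M(\omega _0\omega ,\mascB _0)}\asymp \nm {(V_\phi f)\cdot \omega _0\omega}{\mascB _0}$ and $\nm g{M(\omega _0,\mascB _0)}\asymp \nm {(V_\Phi g)\cdot \omega _0}{\mascB _0}$. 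It therefore suffices to prove
\[
\nm {(V_\Phi \op (\fka )f)\cdot \omega _0}{\mascB _0}
\lesssim
\nm {(V_\phi f)\cdot \omega _0\omega}{\mascB _0},
\qquad f\in M(\omega _0\omega ,\mascB _0).
\]

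First I would record the standard kernel representation of the short-time Fourier transform of $\op (\fka )f$: for a suitable window $\Psi$ built from $\phi$ and $\Phi$, one has
\[
V_\Phi (\op (\fka )f)(X)
=
\int _{\rr {2d}} \mathcal K_{\fka}(X,Y)\, V_\phi f(Y)\, dY,
\]
where the kernel $\mathcal K_{\fka}$ is expressed through the short-time Fourier transform of $\fka$ with respect to $\Psi$. The heart of the matter is a pointwise bound on $\mathcal K_{\fka}$. Since $\fka \in S^{(\omega )}(\rr {2d})$ means $|\partial ^\alpha \fka |\lesssim \omega$ for every $\alpha$, repeated integration by parts in the oscillatory integral defining $V_\Psi \fka$ produces rapid decay in the frequency variables while retaining the factor $\omega$ in the phase-space variable, giving for every $N$
\[
|\mathcal K_{\fka}(x,\xi ,y,\eta )|
\lesssim
\omega (x,\xi )\, \langle (x-y,\xi -\eta )\rangle ^{-N}.
\]
This is precisely the statement that $\fka$ lies in the weighted Sjöstrand class $M^{\infty ,1}_{(\omega \otimes 1)}(\rr {2d})$.

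Using that $\omega _0,\omega$ are moderate, I would then shift the weight to the input point, bounding $\omega _0(x,\xi )\omega (x,\xi )\lesssim \omega _0(y,\eta )\omega (y,\eta )\, v(x-y,\xi -\eta )$ with $v$ the submultiplicative weight controlling $\omega _0\omega$. Choosing $N$ large enough that $G:=v\cdot \langle \cdot \rangle ^{-N}\in L^1(\rr {2d})$, the kernel estimate gives
\[
\omega _0(x,\xi )|V_\Phi (\op (\fka )f)(x,\xi )|
\lesssim
\int _{\rr {2d}} G(X-Y)\, \big (\omega _0(y,\eta )\omega (y,\eta )|V_\phi f(y,\eta )|\big )\, dY.
\]
Finally, since $\mascB _0$ is solid and translation invariant with respect to $v_0\equiv 1$, it carries the Young-type inequality $\nm {G*H}{\mascB _0}\lesssim \nm G{L^1}\nm H{\mascB _0}$; applying it with $H=(V_\phi f)\cdot \omega _0\omega$ yields the displayed estimate, hence the continuity.

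Alternatively, the kernel bound can be read as the embedding $S^{(\omega )}(\rr {2d})\subseteq M^{\infty ,1}_{(\omega \otimes 1)}(\rr {2d})$, after which one may invoke the $M(\omega ,\mascB )$-analogue of Proposition \ref{Prop:IntroToft1}, once the weight relation \eqref{Eq:WeightIneqIntro} is checked for $\omega _1=\omega _0\omega$ and $\omega _2=\omega _0$. I expect the main obstacle to be the kernel estimate together with the moderateness step: for weights in $\mascP _E(\rr {2d})$ one must guarantee that the off-diagonal decay coming from the smoothness of $\fka$ genuinely dominates the growth of $v$, so that $G\in L^1$. This is where the precise structure of $S^{(\omega )}$ and the choice of $N$ enter, and it is the step demanding the most care.
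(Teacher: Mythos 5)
Your proof is correct, provided "suitable" weights are read as polynomially moderate ones ($\omega ,\omega _0\in \mascP (\rr {2d})$), which is exactly the regime you flag at the end: integration by parts only yields polynomial off-diagonal decay, so the submultiplicative majorant $v$ of $\omega _0\omega$ must have polynomial growth for $G=v\cdot \eabs \cdo ^{-N}$ to lie in $L^1$. Note that the paper never proves this proposition in-text: it imports it from \cite[Theorem 3.2]{Toft07}, and later recovers it (Proposition \ref{Prop:PseudoContModSmoothSymb}) as a corollary of its general quasi-Banach result, Theorem \ref{Thm:PseudoCont2}, combined with the characterization $S^{(\omega )}=\bigcap _{r>0}M^{\infty ,q}_{(1/\omega _{0,r})}$ of Proposition \ref{Prop:SmoothSymbModSp} --- which is precisely the alternative route you sketch in your final paragraph. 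Your primary argument is the more elementary, self-contained one: it has the same skeleton as the proof of Theorem \ref{Thm:PseudoCont2} (the STFT kernel representation \eqref{Eq:STFTKernelPsDO}--\eqref{Eq:STFTKernelAction}, pointwise domination of $V_\Phi (\op (\fka )f)$ by a convolution, then a convolution inequality in the solid space), but since $\mascB _0$ is Banach you can close with Minkowski's inequality \eqref{Eq:MinkIneq} (with $v_0\equiv 1$) instead of the Wiener-amalgam convolution machinery of Section \ref{sec5}; the paper needs that heavier machinery only because its theorem also covers quasi-Banach $\mascB$ and non-smooth symbols in $M^{\infty ,r_0}_{(\omega _0)}$ with $r_0<1$, where $L^1$-convolution estimates are unavailable. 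Two points deserve tightening: (i) by \eqref{Eq:STFTKernelPsDO}, the phase-space argument of $V_\Psi \fka$ in the kernel is a mixed point (for Kohn--Nirenberg essentially $(x,\eta )$, not $(x,\xi )$), so the factor produced is $\omega (x,\eta )$; for $\omega \in \mascP$ this is harmless because $\omega (x,\eta )\lesssim \omega (x,\xi )\eabs {\xi -\eta }^{s}$ and the polynomial loss is absorbed by enlarging $N$, but it should be stated; (ii) one must say why $\op (\fka )f$ is a priori defined for $f\in M(\omega _0\omega ,\mascB _0)$ before invoking the kernel identity --- here Proposition \ref{Prop:PseudoContSmoothSymb} gives continuity of $\op (\fka )$ on $\mascS '(\rr d)$ and Proposition \ref{Prop:BasicEmbModSp2Schw} gives $M(\omega _0\omega ,\mascB _0)\hookrightarrow \mascS '(\rr d)$, which settles the extension issue that the paper's proof of Theorem \ref{Thm:PseudoCont2} handles via Theorem \ref{Thm:PseudoCont1}.
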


\par

We observe that the class of
pseudo-differential operators
is larger in Proposition
\ref{Prop:IntroToft1} compared
to Proposition \ref{Prop:IntroToft2}.
On the other hand, the types of 
modulation spaces in the domains
and images are larger in Proposition
\ref{Prop:IntroToft2} compared
to Proposition \ref{Prop:IntroToft1}.
Consequently, Propositions
\ref{Prop:IntroToft1} and
\ref{Prop:IntroToft2} do not
contain each others. We also remark
that extensions and variations
of Proposition \ref{Prop:IntroToft2}
can be found in \cite{AbCaTo},
where the involved modulation
spaces can be quasi-Banach spaces,
and defined in the framework of
the theory of ultra-distributions.

\par

A natural question
is if it is possible to
deduce similar continuity
properties for more general
types of modulation spaces,
as in Proposition 
\ref{Prop:IntroToft2}, but for
a broader class of operators,
as in Proposition 
\ref{Prop:IntroToft1}.
In Section \ref{sec6} we give
affirmative answers on such
questions. For example, the
following result, covering
both Proposition \ref{Prop:IntroToft1}
and Proposition \ref{Prop:IntroToft2},
is a special case of
Theorem \ref{Thm:PseudoCont2}
in Section \ref{sec6}.

\par

\begin{thm}
\label{Thm:IntroMainPseudo}
Let $\mascB$ be
a normal solid invariant QBF space
on $\rr {2d}$ with respect to
$r_0\in (0,1]$
and $v_0\equiv 1$,
$\omega _0$, $\omega _1$
and $\omega _2$ be
weight functions
such that \eqref{Eq:WeightIneqIntro}
holds,
and let $\fka\in
M^{\infty ,r_0}_{(\omega _0)}(\rr {2d})$.
Then $\op (\fka )$ is continuous
from $M(\omega _1,\mascB )$
to $M(\omega _2,\mascB )$.
\end{thm}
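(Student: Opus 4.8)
The plan is to move the mapping property to the time-frequency side and reduce it to a weighted convolution estimate driven by the short-time Fourier transform of the symbol. Fix a window $\phi\in\Sigma _1(\rr d)\setminus 0$, and choose an associated window $\Phi\in\Sigma _1(\rr {2d})\setminus 0$ for the symbol. By the norm equivalence in Theorem \ref{Thm:EquivNorms2}, applied with $\omega =\omega _1$ and with $\omega =\omega _2$, we have $\nm f{M(\omega _1,\mascB )}\asymp\nm {V_\phi f\cdot\omega _1}{\mascB}$ and $\nm {\op (\fka )f}{M(\omega _2,\mascB )}\asymp\nm {V_\phi (\op (\fka )f)\cdot\omega _2}{\mascB}$. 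Hence it suffices to establish the estimate
\begin{equation*}
\nm {V_\phi (\op (\fka )f)\cdot\omega _2}{\mascB}
\lesssim
\nm {\fka}{M^{\infty ,r_0}_{(\omega _0)}}\,
\nm {V_\phi f\cdot\omega _1}{\mascB},
\end{equation*}
first for $f\in\Sigma _1(\rr d)$ and then, via the estimate itself, for all $f\in M(\omega _1,\mascB )$.

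First I would record the pointwise kernel inequality
\begin{equation*}
|V_\phi (\op (\fka )f)(x,\xi )|
\lesssim
\int _{\rr {2d}}
|V_\Phi \fka (x,\eta ,\xi -\eta ,y-x)|\,
|V_\phi f(y,\eta )|\, dy\, d\eta ,
\end{equation*}
which follows from the definition of $\op (\fka )$ together with the covariance of the short-time Fourier transform under time-frequency shifts, and whose arguments are exactly those appearing in \eqref{Eq:WeightIneqIntro}. Multiplying by $\omega _2(x,\xi )$ and inserting the weight relation $\omega _2(x,\xi )\lesssim\omega _0(x,\eta ,\xi -\eta ,y-x)\,\omega _1(y,\eta )$ from \eqref{Eq:WeightIneqIntro}, the two weights split between the two factors and give
\begin{equation*}
\bigl(|V_\phi (\op (\fka )f)|\cdot\omega _2\bigr)(x,\xi )
\lesssim
\int _{\rr {2d}}
\bigl(|V_\Phi \fka |\cdot\omega _0\bigr)(x,\eta ,\xi -\eta ,y-x)\,
\bigl(|V_\phi f|\cdot\omega _1\bigr)(y,\eta )\, dy\, d\eta .
\end{equation*}

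Next I would dominate the right-hand side by a genuine convolution on $\rr {2d}$. Taking the essential supremum of $|V_\Phi \fka |\cdot\omega _0$ over the phase-space variable (the first $\rr {2d}$ slot) produces a function $H$ on $\rr {2d}$ with $\nm H{L^{r_0}}\lesssim\nm {\fka}{M^{\infty ,r_0}_{(\omega _0)}}$, by the very definition of the mixed $M^{\infty ,r_0}$-norm. After the change of variables $u=y-x$ the last display is then bounded by $(H_0*F)(x,\xi )$, where $F=|V_\phi f|\cdot\omega _1$ and $H_0$ is $H$ up to a reflection of variables, so that $\nm {H_0}{L^{r_0}}=\nm H{L^{r_0}}$. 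Since $\mascB$ is solid, the desired estimate now reduces to the quasi-Banach convolution inequality
\begin{equation*}
\nm {H_0*F}{\mascB}
\lesssim
\nm {H_0}{L^{r_0}}\,\nm F{\mascB},
\end{equation*}
valid since $\mascB$ is a normal solid invariant QBF space of order $r_0$ with respect to $v_0\equiv 1$.

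The main obstacle is precisely this convolution inequality. When $r_0<1$ the space $\mascB$ is only quasi-Banach, the continuous estimate $(\int (\cdot ))^{r_0}\le\int (\cdot )^{r_0}$ fails, and the classical $L^1$-Young inequality is unavailable. The way around is to discretize the convolution into a sum over integer translates, apply the subadditivity $(\sum _j|a_j|)^{r_0}\le\sum _j|a_j|^{r_0}$ at the level of the Wiener amalgam decomposition, and then use the equivalence $\nm \cdot {\mascB}\asymp\nm \cdot {\sfW ^{r_0}(\mascB )}$ from Theorem \ref{Thm:EquivNorms2} together with the translation invariance of $\mascB$ to reassemble the pieces. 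It is exactly here that the full strength of the extended norm equivalence, in the endpoint case $r=r_0$, is indispensable; this is what makes the argument go through uniformly for all admissible $r_0\in (0,1]$, and explains why the first part of the paper serves as the cornerstone for this continuity theorem.
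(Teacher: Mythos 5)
Your overall architecture is the same as the paper's proof of Theorem \ref{Thm:PseudoCont2} (of which this statement is a special case): norm equivalence, the pointwise STFT kernel estimate, weight splitting via \eqref{Eq:WeightIneqIntro}, and domination by a convolution against $H(Y)=\sup _X|V_\Phi \fka (X,Y)|\,\omega _0(X,Y)$. But the step you reduce everything to is false as stated, and your repair does not fix it. The inequality $\nm {H_0*F}{\mascB}\lesssim \nm {H_0}{L^{r_0}}\nm F{\mascB}$ cannot hold for general $H_0\in L^{r_0}$ and $F\in \mascB$ when $r_0<1$: for $\mascB =L^{r_0}$ it is Young's inequality below exponent one, and an $L^{r_0}$ function with $r_0<1$ need not even be locally integrable, so $H_0*F$ need not be defined. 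Your patch invokes ``the equivalence $\nm \cdo {\mascB}\asymp \nm \cdo {\sfW ^{r_0}(\mascB )}$ from Theorem \ref{Thm:EquivNorms2}'', but that theorem provides this equivalence \emph{only for short-time Fourier transforms}; for arbitrary elements of $\mascB$ only the one-sided bounds of Proposition \ref{Prop:BasicEmbWSp} hold (and the inclusions $\sfW ^\infty (\mascB )\subseteq \mascB \subseteq \sfW ^{r_0}(\mascB )$ are strict), and $H_0*F$ is not an STFT, so the equivalence cannot be applied to it. What actually makes the argument close in the paper is that one never leaves the Wiener amalgam scale: the convolution estimate used is Corollary \ref{Cor:ConvWienerSp1}, i.e. $\sfW ^{p_1}(\omega _1,\mascB )\times \sfW ^{p_2}(\omega _2v_0,\ell ^{r_0})\to \sfW ^{p_0}(\omega _0,\mascB )$ with \emph{local} Lebesgue exponents $p_0,p_1,p_2\ge 1$ satisfying the Young condition \eqref{Eq:YoungCond} (so the local pieces can be convolved at all), whose global part is the discrete estimate \eqref{Eq:DiscConvEst}; and Theorem \ref{Thm:EquivNorms2} is applied three times — to $V_\phi f$, to $V_\phi (\op (\fka )f)$, and in its two-parameter form to the symbol, giving $\nm {V_\Psi \fka}{\sfW ^{\infty ,p_2}(\omega _0,\ell ^{\infty ,r_0})}\asymp \nm {\fka}{M^{\infty ,r_0}_{(\omega _0)}}$ — to identify all three amalgam norms with modulation norms. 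In particular the bare identity $\nm H{L^{r_0}}=\nm {\fka}{M^{\infty ,r_0}_{(\omega _0)}}$ is not enough: you must upgrade $H$ from $L^{r_0}$ to the amalgam space $\sfW ^{p_2}(\ell ^{r_0})$ using its STFT origin, and your sketch contains neither this upgrade nor any mention of the local Young exponents.

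There is a second gap in how you obtain the operator on all of $M(\omega _1,\mascB )$: extending ``via the estimate itself'' from $f\in \Sigma _1(\rr d)$ presupposes that $\Sigma _1(\rr d)$ is dense in $M(\omega _1,\mascB )$, which fails in general (for instance when $\mascB$ has $L^\infty$-type components). The paper avoids density entirely: by Proposition \ref{Prop:BasicEmbModSp1} (with $v_0\equiv 1$) one has $M(\omega _1,\mascB )\hookrightarrow M^{\infty}_{(\omega _1)}(\rr d)$, and Theorem \ref{Thm:PseudoCont1} already furnishes $\op (\fka )$ as a uniquely defined continuous map from $M^{\infty}_{(\omega _1)}(\rr d)$ to $M^{\infty}_{(\omega _2)}(\rr d)$; the norm estimate is then proved for this already-defined operator restricted to $M(\omega _1,\mascB )$. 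Without some such device, your uniqueness claim does not go through.
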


\par

In the following table we
give an overview of the progress
for continuity results for
pseudo-differential operators with
smooth or
modulation space symbols
in weighted $M^{\infty ,q_0}$ spaces,
$q_0\in (0,1]$, when
acting on (other) modulation spaces.
The extensions are obtained by going
downwards and to the right. See also
Remark \ref{Rem:OtherPseudoResults}
in Section \ref{sec6} for other
continuity results for pseudo-differential
operators with symbols in modulation spaces.

\par

\begin{table}[h]
\caption{Progress on continuity
results for pseudo-differential operators
with smooth or modulation space symbols.}
\label{Table:ProgressPSDO}
\begin{tabular}{l | l l l l}
\diagbox{{\tiny{Symb.}}}
{\hspace{-0.3cm}{\tiny{Cont.}}}
& {\footnotesize{$L^2=M^2$}} & {\footnotesize{$M^{p,q}_{(\omega )}$}}
& {\footnotesize{$M(\omega ,\mascB)$ Banach }}
& {\footnotesize{$M(\omega ,\mascB)$ quasi-Banach}}
\\
\hline
&&&&
\\
{\footnotesize{Smooth,}}
& {\tiny{Calderon-}} &
{\tiny{Prop. \ref{Prop:IntroTachizawa}}} &
{\tiny{Prop. \ref{Prop:IntroToft2}}} &
\\[-0.5ex]
{\footnotesize{e.g. $S^{(\omega _0)}$}}
& {\tiny{Valaincourt}} & {\tiny{Tachizawa}} &
{\tiny{1st a.}} &
\\[-0.5ex]
& {\tiny{(1971)}} & {\tiny{(1994)}}
& {\tiny{(2009)}} &
\\
\\
{\footnotesize{Non-smooth,}}
& {\tiny{Prop. \ref{Prop:IntroSjostrand}}}
& {\tiny{Prop. \ref{Prop:IntroToft1}}}& &
{\small{Theorem 0.5}} 
\\[-0.5ex]
{\footnotesize{e.g.
$M^{\infty ,r_0}_{(1/\omega _{0})}$,
}}
& {\tiny{{\tiny{Sj{\"o}strand}}}} &
{\tiny{Gr{\"o}chenig, 1st a.}}& &
{\tiny{(Theorem \ref{Thm:PseudoCont2}
in Sect. \ref{sec6})}}
\\[-0.5ex]
{\footnotesize{$0<r_0\le 1$}}
& {\tiny{(1994)}} &
{\tiny{(2001,2017)}}
&&
\end{tabular}
\end{table}

\par

A fundamental ingredient in the proof of
Theorem \ref{Thm:PseudoCont2} (and 
thereby fundamental for Theorem 
\ref{Thm:IntroMainPseudo}),
concerns the achieved norm equivalences
\eqref{Eq:IntroEquiv1}$'$ and 
\eqref{Eq:IntroEquiv2}$'$,
which give some links on applicability 
of these equivalences.

\par

Theorem \ref{Thm:IntroMainPseudo}
also leads to improvements
of existing continuity results for Toeplitz
operators. For a suitable symbol $\fka$ on the
phase space $\rr {2d}$ and window functions
$\phi _j$, the Toeplitz operator on suitable
functions and distributions on $\rr d$ is
defined by the formula
\begin{equation*}
(\tp _{\phi _1,\phi _2}(\fka )f_1,f_2)
_{L^2(\rr d)}
= 
(\fka \cdot V_{\phi _1}f_1,
V_{\phi _2}f_2)_{L^2(\rr {2d})},
\end{equation*}
see \eqref{Eq:ToepDef}. By interpreting
Toeplitz operators as pseudo-differential
operators (see e.{\,}g. \eqref{Eq:ToeplWeyl}), 
Theorem
\ref{Thm:PseudoCont2} leads to Theorem
\ref{Thm:GenContResForToepl} in Section 
\ref{sec7}. A special case of the latter 
result is the following.

\par

\begin{thm}
\label{Thm:GenContResForToeplIntro}
Let $r_0\in (0,1]$, and suppose
$
\mascB
$
is an invariant QBF space on $\rr {2d}$
with respect to  $r_0$
and 
$v_0\equiv 1$,
$\omega \in \mascP _E (\rr {4d})$,
and
$\omega _1, \omega _2, \vartheta_1, 
\vartheta _2
\in \mascP _E (\rr {2d})$, 
are such that
\begin{equation*}
\frac {\omega _2 (x-z, \xi - \zeta)} 
{\omega _1 (x-y, \xi-\eta)}
\lesssim
\omega _0(x,\xi, \eta-\zeta, z-y)
\vartheta _1 (y,\eta )
\vartheta _2 (z, \zeta)
\end{equation*}
for $x,y, z, \xi, \eta, \zeta \in \rr d$. 
Also suppose
$
\phi _j \in 
M ^{r} _{(\vartheta _j)} (\rr d)
$
and
$
\fka \in 
M ^{\infty} _{(\omega _0)}( \rr {2d})
$.
Then 
$
\operatorname{Tp}_{\phi _1,\phi _2}(\fka )
$
is continuous from 
$M(\omega_1, \mascB)$ 
to 
$M(\omega_2, \mascB)$.
\end{thm}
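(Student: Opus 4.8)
The plan is to reduce the statement to the corresponding result for pseudo-differential operators, Theorem~\ref{Thm:PseudoCont2}, by means of the identification of Toeplitz operators with Weyl operators. First I would invoke \eqref{Eq:ToeplWeyl} to write
$$
\tp _{\phi _1,\phi _2}(\fka ) = \op (b),
\qquad
b = \fka * W_{\phi _2,\phi _1},
$$
where $W_{\phi _2,\phi _1}$ is the (cross-)Wigner distribution of the window pair in the normalisation fixed by \eqref{Eq:ToepDef}, and $*$ denotes convolution on the phase space $\rr {2d}$. Once $b$ is exhibited as a genuine symbol, the whole problem is transferred to verifying the two hypotheses of Theorem~\ref{Thm:PseudoCont2}: that $b$ lies in a weighted $M^{\infty ,r_0}$-space, and that the associated symbol weight obeys the pseudo-differential weight inequality \eqref{Eq:WeightIneqIntro}.

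The analytic core is the symbol estimate, which I would split into two steps. First, using $\phi _j\in M^r_{(\vartheta _j)}(\rr d)$, I would show that the cross-Wigner distribution satisfies $W_{\phi _2,\phi _1}\in M^r_{(\vartheta _0)}(\rr {2d})$ for a weight $\vartheta _0$ built from $\vartheta _1$ and $\vartheta _2$; this is the transfer of window regularity to the Wigner side, now needed in the weighted quasi-Banach range $r\le 1$. Second, I would feed $\fka \in M^\infty _{(\omega _0)}(\rr {2d})$ and $W_{\phi _2,\phi _1}\in M^r_{(\vartheta _0)}(\rr {2d})$ into the weighted convolution estimates for modulation spaces from Section~\ref{sec5}, to obtain $b=\fka *W_{\phi _2,\phi _1}\in M^{\infty ,r_0}_{(\widetilde \omega _0)}(\rr {2d})$ with an explicit combined weight $\widetilde \omega _0$ determined by $\omega _0$, $\vartheta _1$ and $\vartheta _2$. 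Concretely, this rests on the identity expressing $V_\Phi b(x,\xi )$ as a convolution of $V\fka (\cdot ,\xi )$ with $VW_{\phi _2,\phi _1}(\cdot ,\xi )$ in the space variable together with a product in the frequency variable, so that the space exponents combine by a (quasi-Banach) Young inequality to $\infty$ and the frequency exponents combine by Hölder to $r_0$.

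It then remains to check that $\widetilde \omega _0$ satisfies \eqref{Eq:WeightIneqIntro}. This is a pure weight computation: substituting the variables coming from the convolution structure, I would verify that the four-variable hypothesis of the statement,
$$
\frac {\omega _2 (x-z,\xi -\zeta )}{\omega _1(x-y,\xi -\eta )}
\lesssim
\omega _0(x,\xi ,\eta -\zeta ,z-y)\,\vartheta _1(y,\eta )\,\vartheta _2(z,\zeta ),
$$
is precisely what forces $\widetilde \omega _0$ to dominate $\omega _2/\omega _1$ in the form demanded by \eqref{Eq:WeightIneqIntro}. With both hypotheses secured, Theorem~\ref{Thm:PseudoCont2} applies to $\op (b)=\tp _{\phi _1,\phi _2}(\fka )$ and yields continuity from $M(\omega _1,\mascB )$ to $M(\omega _2,\mascB )$, which is the claim.

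I expect the main obstacle to be the symbol estimate in the quasi-Banach regime: obtaining $W_{\phi _2,\phi _1}\in M^r_{(\vartheta _0)}$ and the subsequent weighted convolution bound with the \emph{sharp} weight $\widetilde \omega _0$ when $r_0<1$. Here the usual Young and Hölder inequalities require quasi-Banach corrections, and the norm equivalences \eqref{Eq:IntroEquiv1}$'$--\eqref{Eq:IntroEquiv2}$'$ (through the Wiener amalgam description used in Section~\ref{sec5}) are what make these convolution estimates available; the delicate part is to track the weights so that the combined symbol weight lines up exactly with \eqref{Eq:WeightIneqIntro}. The remaining steps—the Toeplitz-to-Weyl identification and the final invocation of Theorem~\ref{Thm:PseudoCont2}—are then formal.
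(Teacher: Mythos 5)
Your proposal is correct and coincides with the paper's own argument: the paper likewise writes $\tp _{\phi _1,\phi _2}(\fka )=\op ^w(\fkb )$ with $\fkb =\fka *u$, $u(X)=(2\pi )^{-\frac d2}W_{\phi _2,\phi _1}(-X)$, obtains the Wigner factor in a weighted $M^{r}$-space (Lemma \ref{Lemma:PropWigDistr}), places $\fkb$ in $M^{\infty ,r_0}_{(\omega _0)}$ by the weighted quasi-Banach convolution estimate (Remark \ref{Rem:ConvClassicMod}, packaged as Theorem \ref{Thm:ToeplPseudoSymbClassMod}), checks that the hypothesis on the weights yields \eqref{Eq:WeightPseudoRel}$'$, and concludes by Theorem \ref{Thm:PseudoCont2}. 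The only points to keep straight when executing your plan are that the identification \eqref{Eq:ToeplWeyl} produces a \emph{Weyl} symbol, so the weight condition to verify is \eqref{Eq:WeightPseudoRel}$'$ with $A=\frac 12 I$ rather than the Kohn--Nirenberg form \eqref{Eq:WeightIneqIntro}, and that the reflection in $u$ enters the weight bookkeeping.
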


\par

Theorem \ref{Thm:GenContResForToeplIntro}
itself extends and generalizes several well-known
results in the literature, e.{\,}g. in 
\cite{CorGro,CoPiRoTe2010,Toft04,Toft10}.
Note that the conditions on the weights in
Theorem
\ref{Thm:GenContResForToeplIntro} are somewhat
natural. In fact, using that
\begin{equation*}
\mascS 
=
\bigcap _{\vartheta _j\in \mascP}
M ^{r} _{(\vartheta _j)},
\quad \text{and}\quad
S^{(\omega )}
=
\bigcap _{r>0}M^{\infty}_{(\omega _{0,r})},
\quad
\omega _{0,r}(X,Y)
=
\frac {(1+|Y|)^r}{\omega (X)},
\end{equation*}
$X=(x,\xi )$, $Y=(\eta ,y)$,
it follows from Theorem
\ref{Thm:GenContResForToeplIntro} that if
$\phi _j \in \mascS (\rr d)$,
$\fka
\in S^{(\omega )}(\rr {2d})$, and
$$
\omega (x,\xi )
\lesssim
\frac {\omega _1 (x, \xi )} 
{\omega _2 (x, \xi )},
$$
then
$
\operatorname{Tp}_{\phi _1,\phi _2}(\fka )
$
is continuous from 
$M(\omega_1, \mascB)$ 
to 
$M(\omega_2, \mascB)$.

\par

\par

In Section \ref{sec8} we extend and improve the
lifting results in \cite{AbCoTo,GroTof1}, and
show that these hold for the broad family
of modulation spaces, considered in this paper.
For example, the following special case of Theorem
\ref{Thm:ToeplLift2} is out of reach in
\cite{AbCoTo,GroTof1}.

\par

\begin{thm}
\label{Thm:ToeplLift2Intro}
Let $s> 1$, 
$\omega ,\omega _0,v,v_1\in
\mascP _{s}(\rr {2d})$ be such that
$\omega _0$ is
$v$-moderate and  $\omega $ is
$v_1$-moderate, and let
$\mascB$ be an invariant BF space
with respect to $v_0\equiv 1$. Set
$\vartheta =\omega _0^{1/2}$.
If  $\phi \in M^{1}_{(v_1v)}(\rr d)$,
then $\tp _\phi (\omega _0)$ is
an isomorphism from $M(\vartheta
\omega ,\mascB )$ to
$M(\omega /\vartheta,\mascB )$.
\end{thm}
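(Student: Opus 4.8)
The plan is to realise $\tp _\phi (\omega _0)$ as a Weyl \psdo\ whose symbol is comparable to the weight $\omega _0$, and then to lift the resulting ellipticity to the whole scale $M(\omega ,\mascB )$ by means of the continuity results obtained earlier. Writing $\vartheta =\omega _0^{1/2}$, the quotient of the image weight by the domain weight is $(\omega /\vartheta )/(\vartheta \omega )=\omega _0^{-1}$, which matches the Toeplitz symbol $\omega _0$ and reflects that $\tp _\phi (\omega _0)$ should multiply time--frequency content by $\omega _0$. The natural candidate for the inverse is therefore $\tp _\phi (1/\omega _0)$, and the isomorphism will follow once I establish two-sided continuity together with invertibility of the two compositions.

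First I would record the continuity. Using that $\omega _0$ is $v$-moderate, $\omega$ is $v_1$-moderate, and that a moderate weight belongs, as a symbol, to the relevant weighted $M^{\infty}$ class, one checks that the weight inequality \eqref{Eq:WeightIneqIntro} holds with $\omega /\vartheta$ and $\vartheta \omega$ in the roles of the image and domain weights and with window weight $v_1v$. Since $\phi \in M^1_{(v_1v)}(\rr d)$, Theorem \ref{Thm:GenContResForToeplIntro} then gives that
\[
\tp _\phi (\omega _0)\colon M(\vartheta \omega ,\mascB )\to M(\omega /\vartheta ,\mascB )
\]
is continuous; applying the same theorem with symbol $1/\omega _0$ (again $v$-moderate, with the domain and image weights interchanged) yields continuity of $\tp _\phi (1/\omega _0)\colon M(\omega /\vartheta ,\mascB )\to M(\vartheta \omega ,\mascB )$.

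Next I would pass to the Weyl calculus. By the identification of Toeplitz operators as \psdo s, cf. \eqref{Eq:ToeplWeyl}, one has $\tp _\phi (\omega _0)=\opw{(\omega _0*\Phi )}$ and $\tp _\phi (1/\omega _0)=\opw{((1/\omega _0)*\Phi )}$, where $\Phi$ is the suitably normalised Wigner distribution of $\phi$. Since $\phi \in M^1_{(v_1v)}(\rr d)$, the function $\Phi$ decays fast enough that the convolved symbols are smooth, strictly positive, and satisfy $\omega _0*\Phi \asymp \omega _0$ and $(1/\omega _0)*\Phi \asymp 1/\omega _0$, each lying in the appropriate weighted $M^{\infty ,r_0}$ class. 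Forming the twisted (Weyl) products that govern the two compositions, I would verify that
\[
c_1=(\omega _0*\Phi )\, \#\, ((1/\omega _0)*\Phi )
\qquad \text{and}\qquad
c_2=((1/\omega _0)*\Phi )\, \#\, (\omega _0*\Phi )
\]
belong to the \emph{unweighted} Sj{\"o}strand-type algebra $M^{\infty ,r_0}_{(1)}(\rr {2d})$ and are elliptic, i.e.\ $c_1,c_2\asymp 1$, the lower-order part of the product being controlled through the moderateness weights $v,v_1$ (the hypothesis $s>1$ is precisely what makes the calculus close).

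Finally I would invoke ellipticity. Since $c_1,c_2\in M^{\infty ,r_0}_{(1)}$ are bounded below, inverse-closedness (spectral invariance) of this algebra in $B(L^2)$ shows that $\opw{c_1}$ and $\opw{c_2}$ are invertible on $L^2=M^2$ with inverses again of the form $\opw{b_j}$, $b_j\in M^{\infty ,r_0}_{(1)}$; by the \psdo\ continuity result, Theorem \ref{Thm:IntroMainPseudo}, these inverses act continuously on every $M(\omega ,\mascB )$. Hence the compositions $\tp _\phi (1/\omega _0)\tp _\phi (\omega _0)$ and $\tp _\phi (\omega _0)\tp _\phi (1/\omega _0)$ are isomorphisms of $M(\vartheta \omega ,\mascB )$ and of $M(\omega /\vartheta ,\mascB )$, respectively, which together with the two-sided continuity of the first step proves that $\tp _\phi (\omega _0)$ is a homeomorphism from $M(\vartheta \omega ,\mascB )$ onto $M(\omega /\vartheta ,\mascB )$. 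The main obstacle is exactly this ellipticity step: establishing inverse-closedness of the quasi-Banach symbol algebra $M^{\infty ,r_0}_{(1)}$ for $r_0<1$ and controlling the twisted-product remainder against the weights $v,v_1$ is the technical heart, and is what permits the improvement over the Banach-space lifting results of \cite{AbCoTo,GroTof1}.
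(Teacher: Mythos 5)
Your overall skeleton (two-sided continuity, then invertibility of the two compositions) is the natural one, and your first step via Theorem \ref{Thm:GenContResForToeplIntro} is sound. The proof breaks down, however, at the ellipticity/spectral-invariance step, in three ways. First, the claim $\omega _0*\Phi \asymp \omega _0$ is false in general: here $\Phi$ is (a reflection of) the Wigner distribution $W_{\phi ,\phi}$, which is real-valued but \emph{not} pointwise non-negative unless $\phi$ is a Gaussian (Hudson's theorem), so for a general window $\phi \in M^1_{(v_1v)}(\rr d)$ the convolved Weyl symbol need not be comparable to $\omega _0$, nor even positive. The positivity that is actually available is an operator-level fact, $(\tp _\phi (\omega _0)f,f)_{L^2}=(\omega _0\, V_\phi f,V_\phi f)_{L^2}\asymp \nm f{M^2_{(\vartheta )}}^2$, not a pointwise symbol bound. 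Second, even granting symbol bounds, there is no gaining calculus here: for symbols of $S^{(\omega )}$ or Sj{\"o}strand type the Weyl remainder $a\wpr b-ab$ is of the \emph{same} order as $ab$ (there is no small parameter), so no lower bound $c_1,c_2\gtrsim 1$ follows. Third, and most seriously, spectral invariance is used backwards: inverse-closedness of $M^{\infty ,1}$ in $B(L^2)$ says that \emph{if} $\op ^w(c)$ is already invertible on $L^2$, \emph{then} its inverse again has a symbol in the class; a pointwise lower bound on the symbol does not imply $L^2$-invertibility (this fails already for elliptic elements of $S^0_{0,0}$). So your argument assumes at the crucial point exactly the invertibility it is supposed to produce. (A side remark: in this theorem $\phi \in M^1$ and $\mascB$ is a Banach space, so effectively $r_0=1$; the improvement over \cite{AbCoTo,GroTof1} lies in the generality of $\mascB$, not in a quasi-Banach symbol algebra.)

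The paper's route avoids all of this by importing genuine invertibility from the Hilbert-space case: by Lemma \ref{Lem:BijectionHilbCase} (that is, \cite[Lemma 5.4]{AbCoTo}, which rests on the positivity identity above together with duality), $\tp _\phi (\omega _0)$ is an isomorphism from $M^2_{(\vartheta )}(\rr d)$ onto $M^2_{(1/\vartheta )}(\rr d)$, so an \emph{exact} inverse exists. Theorem \ref{Thm:ToeplPseudoSymbClassMod} writes $\tp _\phi (\omega _0)=\op ^w(\fkb )$ with $\fkb \in \splM ^{\infty ,1}_{(\omega _1)}(\rr {2d})$, and Lemma \ref{Lemma:CorWeyl2}(2) (from \cite[Lemma 5.8]{AbCoTo}) upgrades the exact inverse to $\op ^w(\fkc )$ with $\fkc \in \splM ^{\infty ,1}_{(\omega _2)}(\rr {2d})$. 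Theorem \ref{Thm:PseudoCont2} then makes both operators continuous on the general scale $M(\vartheta \omega ,\mascB )\leftrightarrow M(\omega /\vartheta ,\mascB )$, and since the two operators are exact inverses of each other on the Hilbert-space couple, their symbols satisfy $\fkb \wpr \fkc =\fkc \wpr \fkb =1$ (Proposition \ref{Prop:Weylprodmod} is what makes sense of these products), whence the identity on $M(\omega /\vartheta ,\mascB )$ factors through $M(\vartheta \omega ,\mascB )$ and the isomorphism follows. To repair your proof, replace the ellipticity/spectral-invariance step by this positivity-plus-duality argument on $M^2_{(\vartheta )}$, and use Lemma \ref{Lemma:CorWeyl2}(2) to keep the exact inverse inside the symbol class.
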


\par

%
%
%
%
%
%
%
%
%

\par

\section*{Acknowledgements}
J. Toft is supported by Vetenskapsr{\aa}det
(Swedish Science Council), within the project
2019-04890.
C. Pfeuffer is supported by the 
Deutsche Forschungsgemeinschaft 
(DFG, German Research Foundation) 
within the project 
442104339.
N. Teofanov is supported by the Science Fund of the Republic of
Serbia, $\#$GRANT No. 2727, \emph{Global and local analysis of operators and
distributions} - GOALS, and by NITRA (Grants No. 451--03--66/2024--03/200125 $\&$ 451--03--65/2024--03/200125).

\par

\section{Preliminaries}\label{sec1}

\par

In this section we recall some basic facts about
weight functions, the short-time Fourier transform 
and some spaces of  function and distributions which will be used in the sequel. 

\par

\subsection{Weight functions}\label{subsec1.1}

\par

A \emph{weight} or \emph{weight function} on $\rr d$ is a
positive function $\omega
\in  L^\infty _{loc}(\rr d)$ such that $1/\omega \in  L^\infty _{loc}(\rr d)$.
If there is a  weight $v$ on $\rr d$ and a constant $C\ge 1$ such that
\begin{equation}\label{moderate}
\omega (x+y) \le C\omega (x)v(y),\qquad x,y\in \rr d,
\end{equation}
then the weight $\omega$ is called
\emph{moderate}, or \emph{$v$-moderate}.
By \eqref{moderate}
we have
\begin{equation}\label{moderateconseq}
C^{-1}v(-x)^{-1}\le \omega (x)\le C v(x),\quad x\in \rr d.
\end{equation}
We let $\mascP _E(\rr d)$ be the set of all moderate weights on $\rr d$.

\par

We say that a  weight $v$ is \emph{submultiplicative} if
\begin{equation}\label{Eq:Submultiplicative}
v(x+y) \le v(x)v(y)
\quad \text{and}\quad
v(-x)=v(x),\qquad x,y\in \rr d.
\end{equation}

\par

If $v$ is positive and locally bounded and
satisfies the  inequality in 
\eqref{Eq:Submultiplicative},
then $v(x)\le Ce^{r|x|}$ for some positive 
constants $C$ and $r$, cf. \cite{Gro2007}.

\par

We observe that given a $v$-moderate weight 
$\omega$, one can find a continuous $v$-
moderate weight $\omega_0$ such that $\omega 
\asymp \omega_0$ (see e.{\,}g.
\cite{Toft10}). In addition, a moderate 
weight $\omega$
is also moderated by a submultiplicative
weight, cf. e.{\,}g. \cite{FeiGro1,Toft10}.
Therefore, if $\omega \in \mascP _E(\rr d)$, 
then
\begin{equation}\label{Eq:weight0}
\omega (x+y) \lesssim \omega (x) e^{r|y|},
\quad x,y\in \rr d,
\end{equation}
for some $r>0$. 
Here $g_1 \lesssim g_2$ means that
$g_1(x ) \le c \cdot  g_2(x)$
holds uniformly for all $x$
in the intersection of the domains of $g_1$ and $g_2$, and
for some constant $c>0$. We
write $g_1\asymp g_2$
when $g_1\lesssim g_2 \lesssim g_1$.

In particular, \eqref{moderateconseq} shows that
for any $\omega \in \mascP_E(\rr d)$,
there is a constant $r>0$ such that
\begin{equation}\label{Eq:weight1}
e^{-r|x|}\lesssim \omega (x)\lesssim e^{r|x|},\quad x\in \rr d.
\end{equation}

\par

In the sequel, $v$ and $v_0$, always stand for
submultiplicative weights if nothing else is stated.

\par

Some parts of our investigations require
more families of weight functions.

\par

\begin{defn}
\label{Definition:Weightclasses}
Let $s,\sigma >0$.
\begin{enumerate}
\item $\mascP (\rr d)$ consists of
all $\omega \in \mascP _E(\rr d)$ such that
$\omega$ is moderate by $v(x)=\eabs x^s$,
for some $s\ge 0$.

\vrum

\item
The set $\mascP _{s}(\rr d)$ 
($\mascP _{s}^0(\rr d)$) consists of
all $\omega \in \mascP _E(\rr d)$ such that
\begin{equation}\label{Eq:Gsmodw}
\omega(x+y)
\lesssim
\omega(x) e^{r|y|^\frac{1}{s}},
\quad x,y\in\rr{d};
\end{equation}
holds for some (every) $r>0$.

\vrum

\item
The set $\mascP _{s,\sigma }(\rr {2d})$ 
($\mascP _{s,\sigma}^0(\rr {2d})$) consists of
all $\omega \in \mascP _E(\rr {2d})$ such that
\begin{equation}\label{Eq:Gsmodw2}
\omega(x+y,\xi +\eta )
\lesssim
\omega(x,\xi )
e^{r(|y|^\frac{1}{s}+|\eta |^{\frac 1\sigma})},
\quad x,y,\xi ,\eta \in\rr{d};
\end{equation}
holds for some (every) $r>0$.
\end{enumerate}
\end{defn}

\par

\begin{rem}
\label{Rem:WeightClGrpProp}
We observe that each class of weight
functions in Definition
\ref{Definition:Weightclasses} is
a group under multiplication.
\end{rem}

\par

By \eqref{Eq:weight0} it follows that
$\mascP _{s_1}^0=\mascP _{s_2}
=\mascP _E$ when $s_1<1$ and $s_2\le 1$.
For convenience we set
$\mascP^0_E(\rr d)=\mascP^0_{1}(\rr d)$,
and note that
$$
\mascP (\rr d)
\subseteq
\mascP _{s_1}(\rr d)
\subseteq
\mascP _{s_2}^0(\rr d)
\subseteq
\mascP _{s_2}(\rr d),
\quad \text{when}\quad
0<s_1<s_2.
$$

\par

A submultiplicative weight $v$
on $\rr d$ is called GRS weight
(i.{\,}e. Gelfand-Raikov-Shilov weight),
if
$$
\lim _{n\to \infty}v(nx)^{\frac 1n}=1,
\quad x\in \rr d.
$$
A general weight $\omega$ in
$\mascP _E(\rr d)$ is called GRS weight,
if $\omega$ is $v$-moderate for some
submultiplicative GRS weight $v$ on $\rr d$.
We recall that $\mascP^0_E(\rr d)$ agrees
with is the set of all GRS weights
on $\rr d$.
We refer to \cite{Fei0,Gro2,Gro2.5,Toft10}
for these and other facts about weights in 
time-frequency analysis.

\par


\subsection{Gelfand-Shilov spaces}
\label{subsec1.2}

\par

In what follows we let $\mathscr F$ be the
Fourier transform which takes the form
$$
(\mathscr Ff)(\xi )= \widehat f(\xi ) \equiv (2\pi )^{-\frac
d2}\int _{\rr
{d}} f(x)e^{-i\scal  x\xi }\, dx, 
\quad 
\xi \in \rr d,
$$
when $f\in L^1(\rr d)$. Here $\scal \cdo \cdo$ denotes the usual
scalar product
on $\rr d$. 
The map $\mathscr F$ extends
uniquely to a homeomorphism on the space of tempered distributions
$\mascS '(\rr d)$,
to a unitary operator on $L^2(\rr d)$ and restricts
to a homeomorphism on the Schwartz space of smooth rapidly
decreasing functions $\mathscr S(\rr d)$.
Recall that $f\in C^\infty (\rr d)$
belongs to $\mascS (\rr d)$, if
and only if
$$
\nm {x^\alpha \partial ^\beta f}
{L^\infty}<\infty ,
$$
for every $\alpha ,\beta \in \nn d$.
Here $\mathbf N =\{ 0,1,\dots \}$
is the set of natural numbers,
and we use the standard terminology
on multi-indices.

\par

We observe that with our choice of the Fourier
transform, the usual
convolution identity for the Fourier transform
takes the forms
\begin{equation}\label{Eq:FourTransfConv}
\mascF (f\cdot g)
=
(2\pi )^{-\frac d2}\widehat f *\widehat g
\quad \text{and}\quad
\mascF (f* g)
=
(2\pi )^{\frac d2}\widehat f \cdot \widehat g
\end{equation}
when $f,g\in \mascS (\rr d)$. 
As usual,
$
(f* g) (x) = 
\int _{\rr {d}} f(x-y) g (y)\, dy$, 
$ y \in \rr d.$

\par

Since we are interested in general weights $\omega \in \mascP _E(\rr d)$, instead of the framework of
tempered distributions $\mascS '(\rr d)$, which is natural when dealing with weights of polynomial growth,
we consider the Gelfand-Shilov space 
$\Sigma _1(\rr d)$ 
and its dual space of (ultra-)distributions 
$\Sigma _1 '(\rr d)$.

\par

Let $r,s,\sigma \in \mathbf R_+$. Then
the space $\maclS _{s,r}^\sigma (\rr d)$
consists of all
$f\in C^\infty (\rr d)$
such that
\begin{equation}
\label{Eq:GSNorm}
\nm f{\maclS _{s,r}^\sigma}
\equiv
\sup _{\alpha ,\beta \in \nn d}
\left (
\frac {\nm {x^\alpha \partial ^\beta f}
{L^\infty}}
{r^{|\alpha +\beta |}
\alpha !^s\beta !^\sigma}
\right )
<\infty. 
\end{equation}
We let the topology
of $\maclS _{s,r}^\sigma (\rr d)$
be defined by the norm \eqref{Eq:GSNorm},
giving that $\maclS _{s,r}^\sigma (\rr d)$
is a Banach space.

\par

The \emph{Gelfand-Shilov
spaces} $\maclS _s^\sigma (\rr d)$
and $\Sigma _s^\sigma (\rr d)$,
of \emph{Roumieu} and \emph{Beurling types},
respectively, are given by
$$
\maclS _s^\sigma (\rr d)
=
\bigcup _{r>0}
\maclS _{s,r}^\sigma (\rr d)
\quad \text{and}\quad
\Sigma _s^\sigma (\rr d)
=
\bigcap _{r>0}
\maclS _{s,r}^\sigma (\rr d).
$$
We equipp $\maclS _s^\sigma (\rr d)$
and $\Sigma _s^\sigma (\rr d)$
with inductive respectively projective limit
topologies of
$\maclS _{s,r}^\sigma (\rr d)$
with respect to $r$. It follows that
$\Sigma _s^\sigma (\rr d)$
is a Fr{\'e}chet space (under
the semi-norms \eqref{Eq:GSNorm}),
(See e.{\,}g. \cite{GeSh,Pil}.)

\par

There are several characterizations
of $\maclS _s^\sigma (\rr d)$
and $\Sigma _s^\sigma (\rr d)$.
For example,
in several situations it is convenient
to use that if
$f\in \mascS '(\rr d)$,
then 
$f\in \maclS _s^\sigma (\rr d)$
($f\in \Sigma _s^\sigma (\rr d)$),
if and only if
\begin{equation}\label{Eq:GSFtransfChar}
|f(x)|\lesssim e^{-r|x|^{\frac 1s}}
\quad \text{and}\quad
|\widehat f(\xi )|
\lesssim
e^{-r|\xi |^{\frac 1\sigma}},
\quad \text{for some (every)}\quad
r>0,
\end{equation}
cf. \cite{ChuChuKim, Eij}.

\par

As usual, the dual of
$\maclS _{s,r}^\sigma (\rr d)$
is denoted by
$(\maclS _{s,r}^\sigma )'(\rr d)$.
The \emph{Gelfand-Shilov
distribution space}
$(\maclS _s^\sigma )'(\rr d)$
of \emph{Roumieu type}
is the projective limit
of $(\maclS _{s,r}^\sigma )'(\rr d)$
with respect to $r$. 
The \emph{Gelfand-Shilov
distribution space}
$(\Sigma _s^\sigma )'(\rr d)$
of \emph{Beurling type}
is the inductive limit
of $(\maclS _{s,r}^\sigma )'(\rr d)$
with respect to $r$. It follows that
$(\maclS _s^\sigma )'(\rr d)$ is a
Fr{\'e}chet space,
$$
(\maclS _s^\sigma )'(\rr d)
=
\bigcap _{r>0}
(\maclS _{s,r}^\sigma )'(\rr d)
\quad \text{and}\quad
(\Sigma _s^\sigma )'(\rr d)
=
\bigcup _{r>0}
(\maclS _{s,r}^\sigma )'(\rr d).
$$
For convenience we set $\maclS _s=\maclS _s^s$
and $\Sigma _s = \Sigma _s^s$.

\par

If $s+\sigma \ge 1$, then
the $L^2$ scalar product,
$(\cdo ,\cdo )_{L^2}$ on
$\maclS _s^\sigma (\rr d)\times
\maclS _s^\sigma (\rr d)$ is uniquely
extendable to a continuous sesqui-linear
form $(\cdo ,\cdo )$ on $(\maclS _s^\sigma )'(\rr d)\times
\maclS _s^\sigma (\rr d)$,
and it follows that
$(\maclS _s^\sigma )'(\rr d)$
is the (strong) dual of
$\maclS _s^\sigma (\rr d)$
under this form. Similarly when $\maclS _s^\sigma $
is replaced by $\Sigma _s^\sigma $ at each occurance.
We have
\begin{equation}
\label{Eq:GSEmb}
\begin{aligned}
\maclS _s^\sigma (\rr d)
&\hookrightarrow
\Sigma _t^\tau (\rr d)
\hookrightarrow
\maclS _t^\tau (\rr d)
\\[1ex]
&\hookrightarrow
\mascS (\rr d)
\hookrightarrow
L^2(\rr d)
\hookrightarrow
\mascS '(\rr d)
\\[1ex]
&\hookrightarrow
(\maclS _t^\tau )'(\rr d)
\hookrightarrow
(\Sigma _t^\tau )'(\rr d)
\hookrightarrow
(\maclS _s^\sigma )'(\rr d),
\\[1ex]
&\text{when}\quad
s<t,\ \sigma <\tau ,\ s+\sigma \ge 1.
\end{aligned}
\end{equation}
Here $A \hookrightarrow B$
means that $A \subseteq B$ with
continuous inclusion. Furthermore,
the inclusions in \eqref{Eq:GSEmb}
are dense and strict. For other choices of
involved parameters we have
$$
\maclS _s^\sigma (\rr d)
=
\Sigma _t^\tau (\rr d)=\{ 0\}
\quad \text{when}\quad
\sigma + s < 1 ,\ t+\tau \le 1
$$
(see \cite{GeSh,Pet1}).
For example, as a consequence of
\eqref{Eq:GSFtransfChar} we have
\begin{equation}\label{Eq:vrhoEmb}
v_r (x)\equiv e^{r |x|}
\in
\Sigma _1'(\rr d)\setminus
\mascS '(\rr d),\quad x\in \rr d,
\quad \text{for every}\quad
r \in \mathbf R_+.
\end{equation}

\par

By \eqref{Eq:GSFtransfChar} it follows
that the following is true.

\par

\begin{prop}
Let $s,\sigma >0$ be such that $s+\sigma \ge 1$,
$x,\xi \in \rr d$, $A$ be a non-degenerate
real $d\times d$ matrix, and let $T_j$,
$j=1,2,3$,
be the mappings on $\mascS (\rr d)$,
given by
$$
T_1 : f\mapsto e^{i\scal \cdo \xi}f(\cdo -x),
\quad 
T_2 : f\mapsto f(A\cdo )
\quad \text{and}\quad
T_3 : f\mapsto \widehat f.
$$
Then the following is true:
\begin{enumerate}
\item $T_1$ and $T_2$ restrict
to homeomorphisms on $\maclS _s^\sigma (\rr d)$,
and $T_3$ restricts to a homeomorphism
from $\maclS _s^\sigma (\rr d)$ to
$\maclS ^s_\sigma (\rr d)$;

\vrum

\item $T_1$ and $T_2$ extend
uniquely
to homeomorphisms on
$(\maclS _s^\sigma )'(\rr d)$,
and $T_3$ extends uniquely
to a homeomorphism
from $(\maclS _s^\sigma )'(\rr d)$ to
$(\maclS ^s_\sigma )'(\rr d)$.
\end{enumerate}
If, more restrictive, $s+\sigma >1$,
then the same holds true with
$\Sigma _s^\sigma$ and
$\Sigma ^s_\sigma$ in place of
$\maclS _s^\sigma$ and
$\maclS ^s_\sigma$,
respectively, at each occurrence.
\end{prop}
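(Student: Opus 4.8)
The plan is to reduce every assertion to the Fourier-side characterization \eqref{Eq:GSFtransfChar}, which describes membership in $\maclS _s^\sigma (\rr d)$ (respectively $\Sigma _s^\sigma (\rr d)$) through simultaneous exponential decay of $f$ and of $\widehat f$. I would first settle the test-function statements in (1) and then pass to the duals in (2) by transposition. The case $T_3=\mascF$ is immediate from \eqref{Eq:GSFtransfChar}: if $|f(x)|\lesssim e^{-r|x|^{1/s}}$ and $|\widehat f(\xi )|\lesssim e^{-r|\xi |^{1/\sigma}}$, then $g=\widehat f$ satisfies $|g(\xi )|\lesssim e^{-r|\xi |^{1/\sigma}}$ and $|\widehat g(x)|=|f(-x)|\lesssim e^{-r|x|^{1/s}}$, which is exactly the characterization of $\maclS ^s_\sigma (\rr d)$ with the roles of $s$ and $\sigma$ interchanged. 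Since $\mascF$ is a bijection whose inverse is of the same type, this yields the asserted homeomorphism $\maclS _s^\sigma (\rr d)\to \maclS ^s_\sigma (\rr d)$.

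For $T_1$ and $T_2$ I would check that each of the two estimates in \eqref{Eq:GSFtransfChar} is separately preserved. Translation $f\mapsto f(\cdo -x)$ leaves $|\widehat f|$ unchanged, since by \eqref{Eq:FourTransfConv} it only multiplies $\widehat f$ by a unimodular factor, while on the spatial side one invokes the elementary inequality $|y|^{1/s}\le C_s(|y-x|^{1/s}+|x|^{1/s})$ to convert $e^{-r|y-x|^{1/s}}$ into a constant (depending on $x$) times $e^{-r'|y|^{1/s}}$. Modulation is handled symmetrically, leaving $|f|$ unchanged and translating $\widehat f$; hence the full time-frequency shift $T_1$ preserves the characterization. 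For $T_2$, non-degeneracy of $A$ gives $|Ay|\ge \nmm{A^{-1}}^{-1}|y|$, so $f(A\cdo )$ inherits the spatial decay, and via \eqref{Eq:FourTransfConv} the Fourier transform of $f(A\cdo )$ is a dilation of $\widehat f$ and inherits the frequency decay. In each case the inverse operator is again a time-frequency shift, a dilation, or a Fourier transform, so the maps are bijective, and continuity in the inductive (projective) limit topologies follows by recording how the constant $r$ in \eqref{Eq:GSFtransfChar} is transformed, which exhibits bounded maps $\maclS _{s,r}^\sigma (\rr d)\to \maclS _{s,r'}^\sigma (\rr d)$ on the Banach building blocks.

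The dual statements in (2) then follow by transposition with respect to the sesquilinear extension of the $L^2$ form discussed around \eqref{Eq:GSEmb}: the transpose of each $T_j$ is again an operator of the same type (a time-frequency shift with reflected parameters, a dilation by $A^{-1}$ up to a determinant factor, or a Fourier transform), so the extension to the duals is a homeomorphism between the asserted spaces. The Beurling case is obtained by replacing ``for some $r$'' with ``for every $r$'' throughout, the restriction $s+\sigma >1$ being used only to guarantee nontriviality of the spaces and of the duality, as recorded after \eqref{Eq:GSEmb}. The main obstacle is bookkeeping rather than conceptual: one must verify that each transformation changes the decay rate $r$ by at most a bounded factor depending on $x$, $\xi$, or $A$, so that both the ``some $r$'' property defining the Roumieu class and the ``every $r$'' property defining the Beurling class are preserved, and so that the induced maps on $\maclS _{s,r}^\sigma (\rr d)$ are uniformly bounded enough to yield continuity in the limit topologies.
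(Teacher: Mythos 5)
Your proposal is correct and follows essentially the same route as the paper, which proves the proposition simply by invoking the Fourier-side characterization \eqref{Eq:GSFtransfChar} of $\maclS _s^\sigma$ and $\Sigma _s^\sigma$; your write-up just makes explicit the bookkeeping (unimodular factors, the inequality $|y|^{1/s}\le C_s(|y-x|^{1/s}+|x|^{1/s})$, dilation of $\widehat f$ under $T_2$, and transposition for the duals) that the paper leaves to the reader.
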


\par

\subsection{Short-time Fourier transform}
In several situations it is convenient 
to use a localized version
of the Fourier transform.
More precisely,
the short-time Fourier transform of
$f\in \maclS _s^\sigma (\rr d)$
($f\in \Sigma _s^\sigma (\rr d)$)
with respect to the fixed \emph{window function}
$\phi \in \maclS _s^\sigma  (\rr d)$
($\phi \in \Sigma _s^\sigma (\rr d)$) is defined by
\begin{equation} \label{Eq:STFTDef}
(V_\phi f)(x,\xi ) \equiv (2\pi )^{-\frac d2}
(f,\phi (\cdo -x)e^{i\scal \cdo \xi})_{L^2}, 
\quad 
x, \xi \in \rr d.
\end{equation}
We observe that using certain properties for 
tensor products of distributions,
\begin{equation}
(V_\phi f)(x,\xi ) = \mascF (f\cdot \overline {\phi (\cdo -x)})(\xi ).
\tag*{(\ref{Eq:STFTDef})\text{$'$}},
\end{equation}
cf. \cite{Ho1,Toft22}. If, in addition
$f\in L^p(\rr d)$ for some $p\in [1,\infty ]$, then
\begin{equation}
(V_\phi f)(x,\xi ) = (2\pi )^{-\frac d2}\int _{\rr d}f(y)
\overline {\phi (y-x)}e^{-i\scal y\xi}\, dy
\quad 
x, \xi \in \rr d.
\tag*{(\ref{Eq:STFTDef})\text{$''$} }
\end{equation}

\par

From now on we usually restrict ourselves
to the case when $s,\sigma \ge 1$.
The following proposition deals with
some basic continuity properties
for the short-time Fourier transform.
We omit the proof since the result is
a special case of results presented
in \cite{Fol,GroZim2001,Teofanov2,Toft10,Toft17}.

\par

\begin{prop}
\label{Prop:ExtSTFTSchwartz}
Let $s\ge 1$. Then the map
\begin{alignat}{4}
(f,\phi ) &\mapsto V_\phi f &  &:\,  &
&\Sigma _{1} (\rr d) \times \Sigma _{1} 
(\rr d) & &\to \Sigma _{1} (\rr {2d})
\label{Eq:STFTSchwartz}
\intertext{is continuous, and is
uniquely extendable to a continuous map}
(f,\phi ) &\mapsto V_\phi f & &:\,  &
&\maclS _s'(\rr d)
\times \maclS _s'(\rr d) & &\to 
\maclS _s'(\rr {2d}),
\label{Eq:STFTTempDist}
\intertext{which in turn restricts to a 
continuous map}
(f,\phi ) &\mapsto V_\phi f & &:\,  &
&\maclS _s '(\rr d)
\times \maclS _s (\rr d) & &
\to \maclS _s '(\rr {2d})
\cap
C^\infty (\rr {2d}),
\label{Eq:STFTTempDistSmooth}
\intertext{and to a continuous map}
(f,\phi ) &\mapsto V_\phi f & &:\,  &
&L^2(\rr d) \times L^2(\rr d) & &\to 
L^2(\rr {2d}),
\label{Eq:STFTL2}
\end{alignat}
with $\nm {V_\phi f}{L^2}
=\nm f{L^2}\nm \phi{L^2}$.

\par

The same holds true with $\Sigma _s$
in place of $\maclS _s$ at each occurrence,
or with $\mascS$
at each occurrence.
\end{prop}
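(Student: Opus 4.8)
The plan is to build the statement up in four layers, corresponding to the four maps \eqref{Eq:STFTSchwartz}--\eqref{Eq:STFTL2}, each obtained from the previous by either extension or restriction. First I would establish continuity of the base map \eqref{Eq:STFTSchwartz} on $\Sigma_1(\rr d)\times\Sigma_1(\rr d)$. Using the formula \eqref{Eq:STFTDef}$'$, namely $(V_\phi f)(x,\xi)=\mascF(f\cdot\overline{\phi(\cdo-x)})(\xi)$, together with the characterization \eqref{Eq:GSFtransfChar}, the key estimate is that both $f$ and $\phi$ decay like $e^{-r|\cdo|}$ and have Fourier transforms decaying the same way. One then checks that the product $f(y)\overline{\phi(y-x)}$ inherits Gaussian-type (here exponential, since $s=\sigma=1$) decay in both $y$ and $x$ jointly, and that applying $\mascF$ in the second slot preserves this decay in $\xi$. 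Quantitatively I would bound $|x^\alpha\partial^\beta(V_\phi f)|$ by tracking how the seminorms \eqref{Eq:GSNorm} of $f$ and $\phi$ combine under multiplication and partial Fourier transform; submultiplicativity of the factorials $\alpha!,\beta!$ makes this routine but notation-heavy.

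Next I would handle the extension \eqref{Eq:STFTTempDist} to the dual spaces. The natural route is duality: for $f,\phi\in\maclS_s'(\rr d)$ one \emph{defines} $V_\phi f$ as an element of $\maclS_s'(\rr{2d})$ by testing against $\Sigma_1(\rr{2d})$ (or the appropriate Roumieu test space), using the sesqui-linear pairing discussed after \eqref{Eq:GSEmb}. Continuity of the bilinear map $(f,\phi)\mapsto V_\phi f$ then follows from continuity of the adjoint map on test functions, which reduces back to the first layer. The restriction \eqref{Eq:STFTTempDistSmooth} to $\maclS_s'(\rr d)\times\maclS_s(\rr d)\to\maclS_s'(\rr{2d})\cap C^\infty(\rr{2d})$ is obtained by noting that when $\phi$ is a genuine test function, the pairing in \eqref{Eq:STFTDef} can be differentiated under the bracket in $x$ and $\xi$ (the translates and modulations of $\phi$ depend smoothly on the parameters), so $V_\phi f$ is smooth; continuity is inherited from \eqref{Eq:STFTTempDist}.

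Finally, the $L^2$ statement \eqref{Eq:STFTL2} with the isometry-type identity $\nm{V_\phi f}{L^2}=\nm f{L^2}\nm\phi{L^2}$ is the classical Moyal/orthogonality relation for the short-time Fourier transform. I would first verify it on the dense subspace $\Sigma_1(\rr d)\times\Sigma_1(\rr d)$ by a direct Plancherel computation on \eqref{Eq:STFTDef}$''$, then extend by density using \eqref{Eq:STFTSchwartz} and the density of $\Sigma_1$ in $L^2$ from \eqref{Eq:GSEmb}. The remark that everything holds with $\Sigma_s$ in place of $\maclS_s$, or with $\mascS$ throughout, follows because the only structural inputs are the decay characterization \eqref{Eq:GSFtransfChar}, the mapping properties of $\mascF$, translation and modulation (Proposition preceding Subsection~\ref{subsec1.2}), and the density/duality facts in \eqref{Eq:GSEmb}, all of which have the same form for the Beurling classes and for $\mascS$. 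The main obstacle I anticipate is the bookkeeping in the first layer: verifying that the seminorm bounds survive the simultaneous translation-plus-Fourier-transform in \eqref{Eq:STFTDef}$'$ with constants that depend continuously (and bilinearly) on the seminorms of $f$ and $\phi$, since this is what drives every subsequent layer via density and duality.
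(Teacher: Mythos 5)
The paper itself offers no proof of this proposition: it is stated with the remark that the proof is omitted because the result is a special case of results in \cite{Fol,GroZim2001,Teofanov2,Toft10,Toft17}. So your plan can only be measured against the standard arguments in that literature, and three of your four layers do reproduce them faithfully: the seminorm bookkeeping based on \eqref{Eq:GSNorm} and \eqref{Eq:GSFtransfChar} for \eqref{Eq:STFTSchwartz}, the differentiation-under-the-pairing argument giving smoothness of $V_\phi f$ when $\phi$ is a test function and $f$ a distribution, and the Plancherel--Fubini computation plus density for \eqref{Eq:STFTL2} with its norm identity are exactly the classical route, and would go through as you describe.

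The weak point is your second layer, and as stated it would fail. The sesquilinear pairing introduced after \eqref{Eq:GSEmb} pairs a \emph{distribution} with a \emph{test function}; it cannot pair two distributions, so when both $f$ and $\phi$ lie in $\maclS _s'(\rr d)$, ``testing $V_\phi f$ against $\Sigma _1(\rr {2d})$ and reducing to the first layer'' does not define anything: the quantity $(f,\phi (\cdo -x)e^{i\scal \cdo \xi})$ in \eqref{Eq:STFTDef} is meaningless pointwise, and unwinding the duality one finds, for test functions, $(V_\phi f,\Psi )_{L^2(\rr {2d})}=\langle f\otimes \bar \phi ,\Theta _\Psi \rangle$, where $\Theta _\Psi$ is obtained from $\Psi$ by a partial inverse Fourier transform followed by the linear change of variables $(x,y)\mapsto (y,y-x)$ --- this is exactly the content of \eqref{Eq:STFTDef}$'$. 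Hence the extension to $\maclS _s'(\rr d)\times \maclS _s'(\rr d)$ requires constructing the tensor product $f\otimes \bar \phi$ of two Gelfand--Shilov distributions, equivalently a kernel-theorem or partial-pairing argument showing that $z\mapsto \langle f,\Theta _\Psi (\cdo ,z)\rangle$ is again a test function, continuously in $f$ and $\Psi$. This is a genuine ingredient, not bookkeeping --- it is precisely why the paper points to tensor-product results \cite{Ho1,Toft22} at \eqref{Eq:STFTDef}$'$ --- and it does not ``reduce back to the first layer.'' Once that ingredient is in place, the rest of your outline (uniqueness of the extension via weak-$*$ density of $\Sigma _1(\rr d)$ in $\maclS _s'(\rr d)$, the restriction statements, and the transfer of the whole argument to $\Sigma _s$ and to $\mascS$) is correct.
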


\par

If $s=1$ in
Proposition \ref{Prop:ExtSTFTSchwartz},
then it follows that \eqref{Eq:STFTTempDistSmooth}
can be refined as continuous mappings
\begin{alignat}{4}
\tag*{(\ref{Eq:STFTTempDistSmooth})$'$}
(f,\phi ) &\mapsto V_\phi f & &:\,  &
&\maclS _1 '(\rr d)
\times \maclS _1 (\rr d) & &
\to \maclS _1 '(\rr {2d})
\cap
A_R (\rr {2d})
\intertext{and}
\tag*{(\ref{Eq:STFTTempDistSmooth})$''$}
(f,\phi ) &\mapsto V_\phi f & &:\,  &
&\Sigma _1 '(\rr d)
\times \Sigma _1 (\rr d) & &
\to \Sigma _1 '(\rr {2d})
\cap
A(\rr {2d}).
\end{alignat}
Here $A_R(\rr d)$ is the set of all
real analytic functions on $\rr d$,
and $A(\rr d)$ is the set of all
functions in $A_R(\rr d)$
which are extendable to entire functions
on $\cc d$.

\par

We will often use the formula
\begin{align}
|V_\phi f(X)|
&=
|V_f \phi (-X)|,
\notag
\\[1ex]
X &=
(x,\xi )\in \rr {2d},\ 
f\in \Sigma _1'(\rr d),\ 
\phi \in \Sigma _1(\rr d),
\label{Eq:SwapWindows1}
\intertext{which is a special case
of}
V_\phi f(X)
&=
e^{-i\scal x\xi}
\overline{V_f \phi (-X)},
\notag
\\[1ex]
X&=(x,\xi )\in \rr {2d},\ 
f,\phi \in \Sigma _1'(\rr d),
\label{Eq:SwapWindows2}
\end{align}
which follows by straight-forward
computations.

\par

For the short-time Fourier transform, 
the Parseval identity
is replaced by
\begin{equation}\label{Eq:Moyal}
(V_{\phi _1}f,V_{\phi _2}g)
_{L^2(\rr {2d})}
= (\phi _2 ,\phi _1 )_{L^2(\rr d)}
(f,g)_{L^2(\rr d)},
\end{equation}
$f,g,\phi _1,\phi _2 \in L^2(\rr d)$,
and is often called Moyal's identity.

\par

By \eqref{Eq:Moyal} and the
fact that the embeddings in 
\eqref{Eq:GSEmb} are dense
we get the following.

\par

\begin{prop}\label{Prop:Moyal}
Suppose $j,k\in \{ 1,2\}$ satisfy
$j\neq k$. Then the following is true:
\begin{enumerate}
\item the identity \eqref{Eq:Moyal}
holds true when
$f,g,\phi _1,\phi _2 \in \Sigma _{1}
(\rr d)$;

\vrum

\item the identity in {\rm{(1)}}
extends continuously and uniquely
to any $f\in \Sigma _1'(\rr d)$,
$g\in \Sigma _{1}(\rr d)$ and
$\phi _1,\phi _2 \in L^2(\rr d)$;

\vrum

\item the identity in {\rm{(1)}}
extends continuously and uniquely
to any $f,\phi _j\in \Sigma _1'(\rr d)$
and $g,\phi _k \in \Sigma _{1}(\rr d)$.
\end{enumerate}
\end{prop}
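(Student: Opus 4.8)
The plan is to reduce all three assertions to the $L^2$-identity \eqref{Eq:Moyal}, using the density of the embeddings in \eqref{Eq:GSEmb} together with the continuity of the short-time Fourier transform from Proposition \ref{Prop:ExtSTFTSchwartz}. Assertion (1) is immediate: since $\Sigma _1(\rr d)\subseteq L^2(\rr d)$, the identity \eqref{Eq:Moyal} applies verbatim when $f,g,\phi _1,\phi _2\in \Sigma _1(\rr d)$. For (2) and (3) the strategy is the standard ``unique continuous extension from a dense subspace'' argument. First I would observe that in each case the right-hand side $(\phi _2,\phi _1)_{L^2}(f,g)$ is a well-defined, separately continuous sesquilinear form on the product of the relevant spaces: the pairing $(f,g)$ is continuous on $\Sigma _1'(\rr d)\times \Sigma _1(\rr d)$ by the extended sesquilinear form discussed after \eqref{Eq:GSEmb}, while $(\phi _2,\phi _1)_{L^2}$ is continuous on $L^2(\rr d)\times L^2(\rr d)$ in case (2), and in case (3) the hypothesis $j\neq k$ is precisely what places one window in $\Sigma _1(\rr d)$ and the other in $\Sigma _1'(\rr d)$, so that $(\phi _2,\phi _1)$ is again an admissible distribution--test pairing.

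Next I would approximate by density. Fixing $g$ and the windows, choose a net $f_n\in \Sigma _1(\rr d)$ with $f_n\to f$ in $\Sigma _1'(\rr d)$, which is possible since the embeddings in \eqref{Eq:GSEmb} are dense. For each $n$ all four entries lie in $L^2(\rr d)$ (case (2)) or in $\Sigma _1(\rr d)$ (case (3)), so \eqref{Eq:Moyal} gives $(V_{\phi _1}f_n,V_{\phi _2}g)_{L^2}=(\phi _2,\phi _1)_{L^2}(f_n,g)$. Passing to the limit, the right-hand side converges to $(\phi _2,\phi _1)_{L^2}(f,g)$ by continuity of the pairing. For the left-hand side I would invoke the continuity statements \eqref{Eq:STFTTempDist} and \eqref{Eq:STFTTempDistSmooth}, which guarantee $V_{\phi _1}f_n\to V_{\phi _1}f$ in $\Sigma _1'(\rr {2d})$; combined with continuity of the $\Sigma _1'(\rr {2d})$--$\Sigma _1(\rr {2d})$ duality this yields convergence of the left-hand side and identifies the common limit as the asserted value. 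In case (3) one repeats the same approximation also in the window allowed to be a distribution, using \eqref{Eq:STFTSchwartz} and \eqref{Eq:STFTTempDist}; when needed, the window swap \eqref{Eq:SwapWindows2} reduces the less favorable sub-case to the one in which the two factors of the inner product lie in $\Sigma _1'(\rr {2d})$ and $\Sigma _1(\rr {2d})$ respectively.

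The main obstacle is to give the left-hand side $(V_{\phi _1}f,V_{\phi _2}g)_{L^2(\rr {2d})}$ a rigorous meaning in the extended ranges, since for $f\in \Sigma _1'(\rr d)$ the transform $V_{\phi _1}f$ is only an element of $\Sigma _1'(\rr {2d})$, so the inner product on $\rr {2d}$ is no longer a literal integral. I would resolve this exactly as the density argument dictates: interpret the left-hand side as the unique continuous extension of the sesquilinear form $(f,g)\mapsto (V_{\phi _1}f,V_{\phi _2}g)_{L^2}$ from the diagonal subspace where all entries lie in $\Sigma _1(\rr d)$, where both sides are defined and agree by \eqref{Eq:Moyal}. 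Because the right-hand side is continuous on the larger space and the smaller space is dense, this extension exists, is unique, and equals $(\phi _2,\phi _1)_{L^2}(f,g)$; the continuity properties of Proposition \ref{Prop:ExtSTFTSchwartz} and the $L^2$-isometry \eqref{Eq:STFTL2} ensure that this extension is compatible with the distributional interpretation of $V_{\phi _1}f$ and $V_{\phi _2}g$, the three cases differing only in which entries are approximated.
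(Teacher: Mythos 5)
Your proposal is correct and takes essentially the same route as the paper: the paper's entire justification is that \eqref{Eq:Moyal} holds on $L^2$ together with the density of the embeddings in \eqref{Eq:GSEmb}, i.e.\ exactly the extension-by-density argument you carry out (with Proposition \ref{Prop:ExtSTFTSchwartz} supplying the needed continuity). Your additional care in interpreting the left-hand side $(V_{\phi _1}f,V_{\phi _2}g)_{L^2(\rr {2d})}$ as the unique continuous extension from the dense subspace only makes explicit what the paper leaves implicit.
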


\par

Obviously, the adjoint
$V_\phi ^*$ of $V_\phi$ fulfills
\begin{equation}\label{Eq:STFTAdjoint}
(V_\phi ^*F,g) _{L^2(\rr d)} = (F,V_\phi g) _{L^2(\rr {2d})}
\end{equation}
when $F\in \Sigma _{1} (\rr {2d})$ and $g\in \Sigma _{1}(\rr d)$.
%
%
By straight-forward computations
it follows that
\begin{equation}\label{Eq:STFTAdjointFormula}
(V_\phi ^*F)(x) = (2\pi )^{-\frac d2}
\iint _{\rr {2d}} F(y,\eta )\phi (x-y)e^{i\scal x\eta}\, dy \, d\eta ,
\end{equation}
when $F\in \Sigma _{1} (\rr {2d})$
and $\phi \in \Sigma _1(\rr d)$.
We may now use mapping properties
like \eqref{Eq:STFTTempDist}--\eqref{Eq:STFTL2} to extend
the definition of $V_\phi ^*F$ when $F$ and $\phi$ belong to
various classes of function and distribution spaces. For example,
by \eqref{Eq:STFTTempDist},
\eqref{Eq:STFTSchwartz} and \eqref{Eq:STFTL2},
it follows that the map
$$
(F,g)\mapsto (F,V_\phi g) _{L^2(\rr {2d})}
$$
defines a sesqui-linear form on
$\Sigma _{1} (\rr {2d})\times\Sigma _{1} '(\rr d)$,
$\Sigma _{1} '(\rr {2d})\times \Sigma _{1} (\rr d)$ and on
$L^2(\rr {2d})\times L^2(\rr d)$. This implies that
if $\phi \in \Sigma _{1} (\rr d)$, then
the mappings
\begin{equation}\label{Eq:STFTAdjCont}
\begin{alignedat}{3}
V_\phi ^* :  \Sigma _{1} (\rr {2d}) \to \Sigma _{1} (\rr d),
\qquad
&V_\phi ^* & &: &\Sigma _{1} '(\rr {2d}) &\to \Sigma _{1} '(\rr d)
\\[1ex]
\text{and}\qquad
&V_\phi ^* & &: & L^2(\rr {2d}) &\to L^2(\rr d)
\end{alignedat}
\end{equation}
are continuous.

\par

By Propositions 
\ref{Prop:ExtSTFTSchwartz} and 
\ref{Prop:Moyal},
\eqref{Eq:STFTAdjCont}
and duality it follows that if
$\phi \in \Sigma _{1} (\rr d)\setminus 0$, then the identity operator
on $\Sigma _{1} '(\rr d)$ is given by
\begin{equation}\label{Eq:IdentSTFTAdj}
\operatorname{Id}
=
\left (\nm \phi{L^2}^{-2}\right )
\cdot V_\phi ^*\circ V_\phi .
\end{equation}

\par

We will often use the estimates
\begin{align}
| (V _{\phi _1} \circ V_{\phi _2} ^*)F| &\leq
|F| * | V _{\phi _1}  \phi _2|
\label{Eq:adjoint-convolution}
\intertext{and}
| V _{\phi _1}  f |
&\leq 
\| \phi_2 \|_{L^2} ^{-2} 
(|V _{\phi _2} f| * | V _{\phi _1}
\phi _2|).
\label{Eq:change-of-window}
\end{align}
Clearly, \eqref{Eq:change-of-window} follows from \eqref{Eq:IdentSTFTAdj} 
and \eqref{Eq:adjoint-convolution}. Here above $F$ is a suitable distribution
on $\rr {2d}$, 
$f$ is a suitable distribution on $\rr d$, and 
$\phi_1, \phi_2$ are  suitable functions 
on $\rr d$. We refer to
e.g Proposition 11.3.2 in \cite{Gro2} for the proofs of 
\eqref{Eq:IdentSTFTAdj} 
and \eqref{Eq:adjoint-convolution}.

\par

We also need the following estimate of 
the short-time Fourier transform with a 
Gaussian window. For the proof we refer 
to \cite[Lemma 2]{Toft2022}. Here $B_r(x)$
denotes the open ball in $\rr d$ with center at
$x\in \rr d$ and radius $r>0$.

\par

\begin{lemma}\label{Lemma:EstSTFT}
Let $p \in (0,\infty]$, $R > 0$,
$X_0\in \rr {2d}$ be fixed, 
and let $\phi _0(x)
=
\pi ^{-\frac d4}e^{-\frac 12|x|^2}$, $x\in \rr d$. Then
$$
\left | V_{\phi_0} f (X_0) \right | 
\leq
C \nm{ V_{\phi_0} f }{L^p (B_R (X_0) )}, 
\qquad
f \in \Sigma_1' (\rr {d} ),
$$
where the constant $C$ is independent
of $X_0$ and $f$.
\end{lemma}

\par

\subsection{Classical modulation 
spaces}\label{subsec1.4}

\par

For the introduction of classical
modulation spaces, we need to discuss some
issues on mixed norm spaces of Lebesgue types.
Let $\Omega \subseteq \rr d$ be a
(Lebesgue) measurable set in $\rr d$,
and let $\maclM (\Omega )$ be the set of all 
complex-valued (Lebesgue)
measurable functions on $\Omega$.

\par

For any $p,q\in (0,\infty ]$, the mixed norm
spaces
$L^{p,q}_{(\omega )}(\rr {d_1+d_2})$
and $L^{p,q}_{*,(\omega )}(\rr {d_1+d_2})$
of Lebesgue type, are given by
\begin{alignat}{3}
L^{p,q}_{(\omega )}(\rr {d_1+d_2})
&=
\sets{F\in \maclM (\rr {d})}
{\nm F{L^{p,q}_{(\omega )}}<\infty }
\label{Eq:MixLebSpace1}
\intertext{and}
L^{p,q}_{*,(\omega )}(\rr {d_1+d_2})
&=
\sets{F\in \maclM (\rr {d})}
{\nm F{L^{p,q}_{*,(\omega )}}<\infty },
\label{Eq:MixLebSpace2}
\end{alignat}
for some $p,q\in (0,\infty ]$,
$\omega \in \mascP _E(\rr {d})$,
and $d_1,d_2\ge 0$ are integers which
satisfy
$d=d_1+d_2>0$. Here
\begin{alignat*}{3}
\nm f{L^{p,q}_{(\omega )}}
&\equiv
\nm {G_{f,\omega ,p}}{L^q(\rr {d_2})}, &
\quad
G_{f,\omega ,p}(\xi )
&=
\nm {f(\cdo ,\xi )\omega (\cdo ,\xi )}
{L^p(\rr {d_1})}&,
\quad
f&\in \maclM (\rr {d})
\intertext{and}
\nm f{L^{p,q}_{*,(\omega )}}
&\equiv
\nm {H_{f,\omega ,q}}{L^p(\rr {d_1})}, &
\quad
H_{f,\omega ,q}(x)
&=
\nm {f(x,\cdo )\omega (x,\cdo )}{L^q(\rr {d_2})}&,
\quad
f&\in \maclM (\rr {d}).
\end{alignat*}
We put
$$
L^{p,q}= L^{p,q}_{(\omega )}
\quad \text{and}\quad
L^{p,q}_*= L^{p,q}_{*,(\omega )}
\quad \text{when}\quad
\omega \equiv 1.
$$
We also put $L^p_{(\omega )}=L^{p,p}_{(\omega )}$, and observe that
$L^p=L^{p,p}$.

The (classical) modulation spaces, essentially 
introduced in \cite{Fei1} by Feichtinger are 
given in the following.
(See e.{\,}g. \cite{Fei6} for definition of more general modulation spaces.)

\par

\begin{defn}
\label{Def:ClassModSp}
Let $p,q\in (0,\infty ]$,
$\omega \in \mascP _E(\rr {2d})$ and
$\phi \in \Sigma _1 (\rr d)\setminus 0$.
\begin{enumerate}
\item The (classical) \emph{modulation space} 
$M^{p,q}_{(\omega )}(\rr d)$
consists of all $f\in \Sigma _1 '(\rr d)$
such that
\begin{equation}
\label{Eq:ModNorm2}
\nm f{M^{p,q}_{(\omega )}}\equiv \nm {V_\phi f}
{L^{p,q}_{(\omega )}}
\end{equation}
is finite. The topology of $M^{p,q}_{(\omega )}
(\rr d)$ is defined by
the (quasi-)norm $\nm \cdo{M^{p,q}_{(\omega )}}$;

\vrum

\item The \emph{modulation space (of Wiener amalgam type)}
$W^{p,q}_{(\omega )}(\rr d)$ consists of all $f\in \Sigma _1 '(\rr d)$ such that
$$
\nm f{W^{p,q}_{(\omega )}}\equiv \nm {V_\phi f}{L^{p,q}_{*,(\omega )}}
$$
is finite. The topology of $W^{p,q}_{(\omega )}(\rr d)$ is defined by
the (quasi-)norm $\nm \cdo{W^{p,q}_{(\omega )}}$.
\end{enumerate}
\end{defn}

\par

In the following propositions we
list well-known properties for modulation
spaces. The first proposition deals with
invariant and topological properties
for modulation spaces.

\par

\begin{prop}
\label{Prop:InvModAndBanach}
Let $p,q\in (0,\infty ]$,
$\omega \in \mascP _E(\rr {2d})$ and
$\phi \in \Sigma _1(\rr d)\setminus 0$.
Then the following is true:
\begin{enumerate}
\item the definitions of
$M_{(\omega )}^{p,q}(\rr d)$ and
$W_{(\omega )}^{p,q}(\rr d)$
are independent of the choices of
$\phi \in \Sigma _1 (\rr d)\setminus 0$,
and different choices give rise to
equivalent quasi-norms;

\vrum

\item the spaces $M^{p,q}_{(\omega )}(\rr d)$ and
$W^{p,q}_{(\omega )}(\rr d)$ are quasi-Banach 
spaces
which increase with $p$ and $q$, and
decrease with $\omega$. If in addition
$p,q\ge 1$, then they are Banach spaces.
\end{enumerate}
\end{prop}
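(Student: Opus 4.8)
The goal is to establish Proposition \ref{Prop:InvModAndBanach}, which asserts window-independence and the quasi-Banach (resp.\ Banach) structure of $M^{p,q}_{(\omega)}(\rr d)$ and $W^{p,q}_{(\omega)}(\rr d)$, together with the monotonicity in $p$, $q$ and $\omega$. The plan is to treat part (1) first, since window-independence is the cornerstone on which the topological statements in part (2) rest.

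For part (1), the key tool is the convolution estimate \eqref{Eq:change-of-window}, namely
\begin{equation*}
|V_{\phi_1} f| \le \|\phi_2\|_{L^2}^{-2}\, (|V_{\phi_2} f| * |V_{\phi_1}\phi_2|),
\end{equation*}
valid for $\phi_1,\phi_2 \in \Sigma_1(\rr d)\setminus 0$. First I would fix two admissible windows $\phi_1,\phi_2$ and multiply through by the weight $\omega$; using that $\omega \in \mascP_E(\rr{2d})$ is moderate, I can split $\omega(X) \lesssim \omega(X-Y)\, v_0(Y)$ for a suitable submultiplicative (indeed subexponential, by \eqref{Eq:weight0}) weight $v_0$, so that
\begin{equation*}
|V_{\phi_1} f(X)|\,\omega(X) \lesssim \big((|V_{\phi_2} f|\cdot \omega) * (|V_{\phi_1}\phi_2|\cdot v_0)\big)(X).
\end{equation*}
The crucial point is that $|V_{\phi_1}\phi_2|\cdot v_0 \in L^1(\rr{2d})$: indeed $\phi_2 \in \Sigma_1$ and $\phi_1\in\Sigma_1$ force $V_{\phi_1}\phi_2 \in \Sigma_1(\rr{2d})$ by \eqref{Eq:STFTSchwartz}, hence it decays like $e^{-r|X|}$ by \eqref{Eq:GSFtransfChar}, which dominates the subexponential growth of $v_0$. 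Then a (quasi-)norm version of Young's inequality for the mixed-norm space $L^{p,q}$, convolving an $L^{p,q}$ function against an $L^1$ kernel, yields $\nm{V_{\phi_1}f\cdot\omega}{L^{p,q}} \lesssim \nm{V_{\phi_2}f\cdot\omega}{L^{p,q}}$. Swapping the roles of $\phi_1$ and $\phi_2$ gives the reverse inequality and hence equivalence of the two quasi-norms. The identical argument, with the mixed norm $L^{p,q}_*$ in place of $L^{p,q}$, handles the $W^{p,q}_{(\omega)}$ case.

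For part (2), the monotonicity statements are the most routine: increase with $p,q$ follows from the elementary inclusions $L^p \subseteq L^{p'}$ on sets of finite measure combined with a standard $\ell^p$-nesting argument applied on the lattice of unit cubes (or directly from Young-type interpolation), while the decrease with $\omega$ is immediate from $\omega_1 \le C\omega_2$ forcing $\nm{V_\phi f\cdot\omega_1}{L^{p,q}} \le C\nm{V_\phi f\cdot\omega_2}{L^{p,q}}$. Completeness is the substantive part. I would argue that $V_\phi$ is an isometric embedding of $M^{p,q}_{(\omega)}$ onto a \emph{closed} subspace of $L^{p,q}_{(\omega)}(\rr{2d})$: completeness of the latter mixed-norm space is standard, so it suffices to show the image is closed. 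Given a sequence $f_n$ that is Cauchy in $M^{p,q}_{(\omega)}$, the functions $V_\phi f_n$ converge in $L^{p,q}_{(\omega)}$ to some $F$; using the local estimate of Lemma \ref{Lemma:EstSTFT} (after switching to a Gaussian window via part (1)), convergence in the quasi-norm forces pointwise convergence of $V_\phi f_n(X)$, and the inversion formula \eqref{Eq:IdentSTFTAdj}, $\operatorname{Id} = \|\phi\|_{L^2}^{-2} V_\phi^*\circ V_\phi$, together with the continuity of $V_\phi^*$ on $\Sigma_1'$ from \eqref{Eq:STFTAdjCont}, produces a limit $f\in\Sigma_1'$ with $V_\phi f = F$, so $f\in M^{p,q}_{(\omega)}$ and $f_n\to f$. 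Finally, when $p,q\ge 1$ the mixed norm $L^{p,q}_{(\omega)}$ is a genuine norm satisfying the triangle inequality, so $M^{p,q}_{(\omega)}$ inherits the Banach (rather than merely quasi-Banach) structure.

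I expect the main obstacle to be the quasi-Banach bookkeeping in the convolution step of part (1). When $\min(p,q)<1$, Young's inequality is not available in its classical form, and one must instead use the quasi-Banach variant in which convolution with an $L^{r_0}$ kernel (here $r_0 = \min(p,q,1)$) is controlled; this forces the kernel $|V_{\phi_1}\phi_2|\cdot v_0$ to lie in $L^{r_0}$ rather than merely $L^1$, which is still guaranteed by the Gelfand--Shilov decay but requires care in tracking the constants and the power-weighted convolution inequality. The analogous subtlety recurs in proving closedness of the image for $p,q<1$, where the failure of local convexity means the argument must be phrased entirely in terms of the quasi-norm and the pointwise domination supplied by Lemma \ref{Lemma:EstSTFT}, rather than via any duality or convexity shortcut.
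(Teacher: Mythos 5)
Your treatment of the Banach case $p,q\ge 1$ is the classical argument (change-of-window estimate \eqref{Eq:change-of-window}, moderateness of $\omega$, Young's inequality), and this matches the route the paper gestures at: the paper omits the proof of this proposition, citing \cite{Fei1,FeiGro1,GaSa,Gro2}, and remarks that for $p,q\ge 1$ straight-forward Fourier analysis suffices. The genuine gap is in the case $\min (p,q)<1$, which you defer to a ``quasi-Banach variant of Young's inequality'' with an $L^{r_0}$ kernel. No such inequality exists at the level of the mixed-norm spaces $L^{p,q}$, not even for your specific fixed kernel $\Phi =|V_{\phi _1}\phi _2|\cdot v_0$. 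Indeed, for $0<p<1$ and any fixed nonnegative $\Phi \neq 0$ (Gaussian, say), take the spike $F_\epsilon =\epsilon ^{-1/p}\chi _{[0,\epsilon ]}$ in dimension one: then $\nm {F_\epsilon}{L^p}=1$ while $(F_\epsilon *\Phi )(x)\gtrsim \epsilon ^{1-1/p}\Phi (x)$ for $|x|\ge 1$, so $\nm {F_\epsilon *\Phi}{L^p}\gtrsim \epsilon ^{1-1/p}\to \infty$ as $\epsilon \to 0$. Thus convolution against a fixed smooth rapidly decaying kernel is \emph{not} bounded on $L^p$ for $p<1$, and the step $\nm {V_{\phi _1}f\cdot \omega}{L^{p,q}}\lesssim \nm {V_{\phi _2}f\cdot \omega}{L^{p,q}}$ cannot be obtained by convolving at the Lebesgue level. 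This is not bookkeeping; it is precisely the obstruction that makes the quasi-Banach theory hard.

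What rescues the argument is that $V_{\phi _2}f$ is not an arbitrary element of $L^{p,q}_{(\omega )}$: by Lemma \ref{Lemma:EstSTFT} (a sub-mean-value property coming from the reproducing identity) one first proves the norm equivalence $\nm {V_{\phi}f\cdot \omega}{L^{p,q}}\asymp \nm {V_{\phi}f}{\sfW ^\infty (\omega ,L^{p,q})}$, i.{\,}e. membership in the Wiener amalgam class with locally uniform bounds --- this is exactly Proposition \ref{Prop:EquivNorms2} and Theorem \ref{Thm:EquivNorms2} of the paper (classically, the Gabor-frame step of \cite{GaSa}) --- and only then performs the convolution on the discrete level, where $\ell ^{r_0}$-convolution estimates such as \eqref{Eq:DiscConvEst} do hold for $r_0\le \min (1,p,q)$; spikes like $F_\epsilon$ are excluded because their amalgam norms blow up. The same missing ingredient resurfaces in your completeness proof: a quasi-norm limit $F\in L^{p,q}_{(\omega )}(\rr {2d})$ with $p,q<1$ need not even define an element of $\Sigma _1'(\rr {2d})$ (compare Corollary \ref{cor:relation-to-Sigma}, where $\sfW ^r(\omega ,\mascB )\nsubseteq \Sigma _1'$ for $r<1$), so $V_\phi ^*F$ is meaningless until you upgrade $F$, via the locally uniform convergence supplied by Lemma \ref{Lemma:EstSTFT}, to the class $\sfW ^\infty (\omega ,L^{p,q})$; this is why the paper's own completeness proof in Appendix \ref{App:A} is carried out entirely inside $\sfW ^\infty (\omega ,\mascB )$ rather than inside $\mascB$ itself.
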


\par

For convenience we set
$M^p_{(\omega )}=M^{p,p}_{(\omega )}$.
If in addition $\omega \equiv 1$, then we
set $M^{p,q}=M^{p,q}_{(\omega )}$
and $M^p=M^p_{(\omega )}$.

\par

The next proposition deals with questions on
duality. We need the conjugate exponents of $p$
respectively $q$, denoted by $p'$ respectively $q'$.
They are defined by 
$\frac{1}{p}+\frac{1}{p'} =1$ and
$\frac{1}{q}+\frac{1}{q'} =1$.

\par

\begin{prop}
\label{Prop:ModDuality}
Let $p,q\in [1,\infty ]$,
$\omega \in \mascP _E(\rr {2d})$ and
$\phi \in \Sigma _1(\rr d)\setminus 0$.
Then the following is true:
\begin{enumerate}
\item the $L^2(\rr d)$ scalar product,
$(\cdo ,\cdo )_{L^2(\rr d)}$, on $\Sigma _1 (\rr d)\times \Sigma _1 (\rr d)$ is
uniquely extendable to dualities between 
$M^{p,q}_{(\omega )}(\rr d)$
and $M^{p',q'}_{(1/\omega )}(\rr d)$, and between
$W^{p,q}_{(\omega )}(\rr d)$
and $W^{p',q'}_{(1/\omega )}(\rr d)$;

\vrum

\item if in addition $p,q<\infty$, then
the dual spaces of $M^{p,q}_{(\omega )}(\rr d)$ 
and $W^{p,q}_{(\omega )}(\rr d)$
can be identified with
$M^{p',q'}_{(1/\omega )}(\rr d)$ and
$W^{p',q'}_{(1/\omega )}(\rr d)$, respectively, 
through the form $(\cdo ,\cdo )_{L^2(\rr d)}$.
\end{enumerate}
\end{prop}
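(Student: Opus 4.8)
The plan is to transport the whole question to the mixed (quasi-)norm Lebesgue spaces via the short-time Fourier transform, exploiting that $V_\phi$ realizes $M^{p,q}_{(\omega )}(\rr d)$ as a retract of $L^{p,q}_{(\omega )}(\rr {2d})$, and then to invoke the classical duality $(L^{p,q}_{(\omega )})'=L^{p',q'}_{(1/\omega )}$ (Benedek--Panzone) valid for $1\le p,q<\infty$. Throughout I normalize $\nm \phi {L^2}=1$, which is harmless. With this normalization Moyal's identity \eqref{Eq:Moyal} reads $(f,g)_{L^2}=(V_\phi f,V_\phi g)_{L^2(\rr {2d})}$, and the inversion formula \eqref{Eq:IdentSTFTAdj} becomes $\operatorname{Id}=V_\phi ^*\circ V_\phi$.

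For the boundedness half of (1), fix $f\in M^{p,q}_{(\omega )}(\rr d)$ and $g\in M^{p',q'}_{(1/\omega )}(\rr d)$. By \eqref{Eq:Moyal}, in the extended form of Proposition \ref{Prop:Moyal}, one has $(f,g)_{L^2}=(V_\phi f,V_\phi g)_{L^2(\rr {2d})}$; writing $V_\phi f\cdot \overline{V_\phi g}=(V_\phi f\cdot \omega )(\overline{V_\phi g}\cdot \omega ^{-1})$ and applying H\"older's inequality in the mixed norm gives $|(f,g)_{L^2}|\le \nm f{M^{p,q}_{(\omega )}}\nm g{M^{p',q'}_{(1/\omega )}}$. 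Since this sesqui-linear form agrees with the $L^2$ scalar product on $\Sigma _1(\rr d)\times \Sigma _1(\rr d)$ and, by \eqref{Eq:Moyal}, is independent of $\phi$, it furnishes the asserted extension. Uniqueness follows from norm density of $\Sigma _1(\rr d)$ when the exponents are finite, and from weak-$*$ density in the predual otherwise. The Wiener amalgam case is identical, with $L^{p,q}_{*,(\omega )}$ replacing $L^{p,q}_{(\omega )}$.

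For (2) assume $p,q<\infty$. Then $V_\phi :M^{p,q}_{(\omega )}\to L^{p,q}_{(\omega )}$ is isometric by definition, and $V_\phi ^*:L^{p,q}_{(\omega )}\to M^{p,q}_{(\omega )}$ is bounded (the one point needing work, treated below), so $V_\phi ^*\circ V_\phi =\operatorname{Id}$ exhibits $M^{p,q}_{(\omega )}$ as a retract of $L^{p,q}_{(\omega )}$. Given $\ell \in (M^{p,q}_{(\omega )})'$, the functional $\tilde \ell :=\ell \circ V_\phi ^*$ lies in $(L^{p,q}_{(\omega )})'$, so by mixed-norm Lebesgue duality there is $G\in L^{p',q'}_{(1/\omega )}$ with $\tilde \ell (\cdo )=(\cdo ,G)_{L^2(\rr {2d})}$. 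Using $\ell (f)=\ell (V_\phi ^*V_\phi f)=\tilde \ell (V_\phi f)=(V_\phi f,G)_{L^2(\rr {2d})}$ together with the adjoint relation \eqref{Eq:STFTAdjoint} yields $\ell (f)=(f,V_\phi ^*G)_{L^2}$. Setting $g:=V_\phi ^*G$ and invoking the mapping property $V_\phi ^*:L^{p',q'}_{(1/\omega )}\to M^{p',q'}_{(1/\omega )}$ gives $g\in M^{p',q'}_{(1/\omega )}$ representing $\ell$; combined with (1) this is the claimed identification of duals. Once more the Wiener amalgam case is verbatim.

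The main obstacle is exactly the continuity of $V_\phi ^*$ from $L^{p',q'}_{(1/\omega )}(\rr {2d})$ into $M^{p',q'}_{(1/\omega )}(\rr d)$ (and symmetrically $L^{p,q}_{(\omega )}\to M^{p,q}_{(\omega )}$), which is where the choice of a $\Sigma _1$ window is essential. By \eqref{Eq:adjoint-convolution} one estimates $|V_\phi (V_\phi ^*F)|\le |F|*|V_\phi \phi |$, and since $\phi \in \Sigma _1(\rr d)$ the ambiguity function $V_\phi \phi$ belongs to $\Sigma _1(\rr {2d})$ by Proposition \ref{Prop:ExtSTFTSchwartz}, hence decays faster than $e^{-r|X|}$ for every $r>0$ by \eqref{Eq:GSFtransfChar}. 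Using the moderateness estimate \eqref{Eq:weight0} to absorb the weight $\omega$ into this rapidly decaying convolution kernel, a weighted Young-type inequality on the mixed-norm spaces gives the desired bound; the same estimate re-derives that $V_\phi ^*\circ V_\phi$ lands in $M^{p,q}_{(\omega )}$, consistent with the window-independence recorded in Proposition \ref{Prop:InvModAndBanach}.
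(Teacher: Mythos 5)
Your proposal is correct, and it follows the classical route: realizing $M^{p,q}_{(\omega )}$ as a retract of $L^{p,q}_{(\omega )}$ via $V_\phi$ and $V_\phi ^*$, controlling $V_\phi ^*$ through the convolution bound \eqref{Eq:adjoint-convolution} together with a weighted Young inequality (using that $V_\phi \phi \in \Sigma _1(\rr {2d})$ absorbs the moderate weight), and then transporting Benedek--Panzone duality for mixed-norm spaces back to the modulation spaces. This is essentially the same argument the paper intends: it omits the proof of this proposition and refers precisely to the literature (notably Chapter 11 of \cite{Gro2}), where this retract-plus-mixed-norm-duality proof is carried out.
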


\par

The next result concerns embedding
properties.

\par

\begin{prop}
\label{Prop:EmbModSp}
Let $p,p_j,q,q_j\in (0,\infty ]$, $j=1,2$,
be such that $p_1\le p_2$ and $q_1\le q_2$,
$\omega ,\omega _j\in \mascP _E(\rr {2d})$,
$j=0,1,2$, be such that
$\omega _0(x,\xi )=\omega (-\xi ,x)$ and
$\phi \in \Sigma _1(\rr d)\setminus 0$.
Then the following is true:
\begin{enumerate}
\item $\mascF$ on $\Sigma _1 '(\rr d)$
restricts to a homeomorphism from 
$M^{p,q}_{(\omega )}(\rr d)$ to
$W^{q,p}_{(\omega _0)}(\rr d)$;

\vrum

\item the inclusions
\begin{align}
\Sigma _1 (\rr d) &\hookrightarrow
M^{p,q}_{(\omega )}(\rr d),
W^{p,q}_{(\omega )}(\rr d)
\hookrightarrow \Sigma _1 '(\rr d)
\label{Eq:ModGSEmbeddings1}
\intertext{and}
M^{p_1,q_1}_{(\omega _1)}(\rr d)
&\hookrightarrow
M^{p_2,q_2}_{(\omega _2)}(\rr d),
\quad \text{when}\quad
\omega _2\lesssim \omega _1,
\end{align}
are continuous.
If in addition $p,p_2,q,q_2<\infty$,
then these inclusions are dense;

\vrum

\item if
$$
v_{r}(x,\xi )
=
e^{r(|x|+|\xi |)}
\quad \text{and}\quad
v_{0,r}=(1+|x|+|\xi |)^r,
$$
then
\begin{equation}\label{Eq:UnionIntersectMod}
\begin{alignedat}{2}
\Sigma _1(\rr d)
&=
\bigcap _{r>0}M^{p,q}_{(v_{r})}(\rr d),&
\Sigma _1'(\rr d)
&=
\bigcup _{r>0}M^{p,q}_{(1/v_{r})}(\rr d),
\\[1ex]
\mascS (\rr d)
&=
\bigcap _{r>0}M^{p,q}_{(v_{0,r})}(\rr d)&
\quad \text{and}\quad
\mascS '(\rr d)
&=
\bigcup _{r>0}M^{p,q}_{(1/v_{0,r})}(\rr d).
\end{alignedat}
\end{equation}
\end{enumerate}
\end{prop}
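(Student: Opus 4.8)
The plan is to reduce every assertion to the basic analytic properties of the short-time Fourier transform collected in Proposition \ref{Prop:ExtSTFTSchwartz}, the reconstruction identity \eqref{Eq:IdentSTFTAdj}, Moyal's formula \eqref{Eq:Moyal}, the change-of-window estimate \eqref{Eq:change-of-window}, and---most importantly---the local sup-bound in Lemma \ref{Lemma:EstSTFT}. Throughout I use that the definitions are window independent (Proposition \ref{Prop:InvModAndBanach}) and that, by \eqref{Eq:weight1}, every $\omega\in\mascP_E$ is squeezed between two exponentials.

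For (1) I would start from the covariance of the STFT under the Fourier transform. A direct computation from (\ref{Eq:STFTDef})$'$ gives $|V_{\widehat\phi}\widehat f(\xi,-x)|=|V_\phi f(x,\xi)|$. Since $\mascF$ is a homeomorphism on $\Sigma_1'(\rr d)$ and carries $\Sigma_1(\rr d)\setminus 0$ into itself, I may evaluate $\nm{\widehat f}{W^{q,p}_{(\omega_0)}}$ using the window $\widehat\phi$. Inserting the definition $\omega_0(x,\xi)=\omega(-\xi,x)$ and performing the change of variables $(\xi,-x)\mapsto(x,\xi)$ in the defining mixed integral converts the $L^{q,p}_{*,(\omega_0)}$-norm of $V_{\widehat\phi}\widehat f$ exactly into the $L^{p,q}_{(\omega)}$-norm of $V_\phi f$; hence $\nm{\widehat f}{W^{q,p}_{(\omega_0)}}=\nm f{M^{p,q}_{(\omega)}}$, and the claimed homeomorphism follows.

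For (2) the sandwich $\Sigma_1\hookrightarrow M^{p,q}_{(\omega)},W^{p,q}_{(\omega)}\hookrightarrow\Sigma_1'$ is obtained as follows. By \eqref{Eq:STFTSchwartz} we have $V_\phi f\in\Sigma_1(\rr{2d})$ whenever $f,\phi\in\Sigma_1$, so $V_\phi f$ decays faster than any exponential and dominates the at-most-exponential weight from \eqref{Eq:weight1}; the weighted mixed norm is therefore finite and bounded by a $\Sigma_1$-seminorm, giving the left inclusion continuously. For the right inclusion, reconstruction \eqref{Eq:IdentSTFTAdj} together with \eqref{Eq:Moyal} yields $(f,g)=\nm\phi{L^2}^{-2}(V_\phi f,V_\phi g)_{L^2(\rr{2d})}$ for $g\in\Sigma_1$; bounding this by H\"older in the mixed norm, against the rapidly decaying $V_\phi g$, shows $f$ acts continuously on $\Sigma_1$, that is $f\in\Sigma_1'$ with continuous inclusion. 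The monotone embedding $M^{p_1,q_1}_{(\omega_1)}\hookrightarrow M^{p_2,q_2}_{(\omega_2)}$ is the technical heart. Using \eqref{Eq:change-of-window} I first reduce to the Gaussian window $\phi_0$ of Lemma \ref{Lemma:EstSTFT}. The obstruction is that for $p_1\le p_2$ the Lebesgue inclusion on $\rr d$ points the wrong way; this is overcome by the local estimate, which gives $\nm{V_{\phi_0}f}{L^\infty}\lesssim\nm{V_{\phi_0}f}{L^{p_1}(\text{local})}$ and hence, for finite exponents via the elementary inequality $\nm F{L^{p_2}}^{p_2}\le\nm F{L^\infty}^{p_2-p_1}\nm F{L^{p_1}}^{p_1}$, the reversed bound $\nm{V_{\phi_0}f}{L^{p_2}}\lesssim\nm{V_{\phi_0}f}{L^{p_1}}$ (the case $p_2=\infty$ being the local estimate itself). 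Applying this slicewise in each of the two $\rr d$-variables, the moderate weight being essentially constant on each ball so that $\omega_2\lesssim\omega_1$ absorbs the weight factor, upgrades $L^{p_1,q_1}_{(\omega_1)}$ to $L^{p_2,q_2}_{(\omega_2)}$. Finally, when $p,p_2,q,q_2<\infty$, density of $\Sigma_1$ follows by the standard truncation argument: compactly supported functions are dense in $L^{p,q}_{(\omega)}$, and pulling these back through \eqref{Eq:IdentSTFTAdj} and regularizing produces approximants in $\Sigma_1$.

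For (3) I would translate each membership condition into a pointwise decay condition on $V_\phi f$. By \eqref{Eq:GSFtransfChar} and the associated STFT characterizations, $f\in\Sigma_1$ is equivalent to $|V_\phi f(X)|\lesssim e^{-r|X|}$ for every $r>0$, which by Lemma \ref{Lemma:EstSTFT} is in turn equivalent to $V_\phi f\cdot v_r\in L^{p,q}$ for every $r$, that is $f\in\bigcap_{r>0}M^{p,q}_{(v_r)}$; the local estimate is precisely what converts integrability of $V_\phi f\cdot v_r$ back into the pointwise bound. The same scheme, with ``for some $r$'' in place of ``for every $r$'', gives the identity for $\Sigma_1'$, and replacing the exponential weights $v_r$ by the polynomial weights $v_{0,r}$ yields the two identities for $\mascS$ and $\mascS'$. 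The main obstacle in the whole argument remains the reversed Lebesgue embedding in (2), where Lemma \ref{Lemma:EstSTFT} is indispensable.
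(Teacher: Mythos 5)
Your overall strategy --- reducing everything to the local sup-estimate of Lemma \ref{Lemma:EstSTFT} together with the reproducing identity \eqref{Eq:IdentSTFTAdj} --- is the right one; it is the approach behind the sources the paper cites for this proposition (the paper gives no proof of its own, referring to \cite{GaSa,Gro2,Toft10}), and it is what the paper's own machinery packages in Theorem \ref{Thm:EquivNorms2} when specialized to $\mascB =L^{p,q}$. Parts (1) and (3) are essentially correct as you sketch them: the covariance $|V_{\widehat \phi}\widehat f(\xi ,-x)|=|V_\phi f(x,\xi )|$ plus window independence gives (1), and in (3) Lemma \ref{Lemma:EstSTFT} correctly converts membership in $M^{p,q}_{(v_r)}$ into the pointwise bound $|V_\phi f|\lesssim e^{-r|\cdot |}$ and back, after which the STFT characterizations of $\Sigma _1$, $\Sigma _1'$, $\mascS$ and $\mascS '$ finish the argument.

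The genuine gap is in the monotone embedding of part (2) --- which is also the step your $\Sigma _1'$-inclusion for $p,q<1$ (where H{\"o}lder against $L^{p',q'}$ is unavailable) and your density claim lean on. The inequality $\nm F{L^{p_2}}^{p_2}\le \nm F{L^\infty}^{p_2-p_1}\nm F{L^{p_1}}^{p_1}$ combined with $\nm F{L^\infty}\lesssim \nm F{L^{p_1}}$ from Lemma \ref{Lemma:EstSTFT} does prove the diagonal case $p_j=q_j$, but ``applying this slicewise in each of the two $\rr d$-variables'' is not a proof for genuinely mixed norms. Concretely: when $p_1=p_2$ and $q_1<q_2$ the inner interpolation is vacuous, and what is then needed is a bound $\sup _\xi g(\xi )\lesssim \nm g{L^{q_1}}$ for the slice function $g(\xi )=\nm {V_{\phi _0}f(\cdo ,\xi )\, \omega _1(\cdo ,\xi )}{L^{p_1}}$; this does not follow from Lemma \ref{Lemma:EstSTFT}, whose balls are $2d$-dimensional and mix the two variables, but requires a separate reproducing-type inequality for $g$ itself --- and for exponents below $1$ the Young/Minkowski steps such an argument would naively use fail for general functions. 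The correct completion is the discretized Wiener amalgam argument: with $G=V_{\phi _0}f\cdot \omega _1$, $s=\min (1,p_1,q_1)$ and $Q=[0,1]^{2d}$, Lemma \ref{Lemma:EstSTFT} gives $\nm G{L^\infty (j+Q)}\lesssim \nm G{L^{s}(j+Q^*)}$ for a fixed enlargement $Q^*$ of $Q$; then one proves $\Vert \{ \nm G{L^{s}(j+Q^*)}\} _{j\in \zz {2d}}\Vert _{\ell ^{p_1,q_1}}\lesssim \nm G{L^{p_1,q_1}}$ by dominating $\sum _j \nm G{L^{s}(j+Q^*)}\chi _{j+Q}$ pointwise by $\bigl (\int _{Q^{**}}|G(\cdo +Y)|^{s}\, dY\bigr )^{1/s}$ and applying Minkowski's integral inequality in $L^{p_1/s,q_1/s}$ (this is precisely Proposition \ref{Prop:BasicEmbWSp}\,(2) and the key step of Proposition \ref{Prop:EquivNorms2}); finally the elementary inclusions $\ell ^{p_1,q_1}\hookrightarrow \ell ^{p_2,q_2}$ and $\nm G{L^{p_2,q_2}}\le \Vert \{ \nm G{L^\infty (j+Q)}\} _j\Vert _{\ell ^{p_2,q_2}}$, together with moderateness of the weights and $\omega _2\lesssim \omega _1$, close the chain. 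Without this discretization, part (2) of your argument is incomplete; with it, your outline coincides with the paper's own amalgam route rather than the Gabor-frame route of \cite{GaSa}.
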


\par

The next proposition deals with
complex interpolation properties
for modulation spaces.

\par

\begin{prop}
\label{Prop:ComplIntepolMod}
Let $p_j,q_j\in [1,\infty ]$, $j=0,1,2$,
be such that
$$
\frac 1{p_0}
=
\frac {1-\theta}{p_1}+\frac \theta{p_2}
\quad \text{and}\quad
\frac 1{q_0}
=
\frac {1-\theta}{q_1}+\frac \theta{q_2},
$$
for some $\theta \in [0,1]$,
and let
$\omega \in \mascP _E(\rr {2d})$.
Then
$$
(M^{p_1,q_1}_{(\omega )}(\rr d),
M^{p_2,q_2}_{(\omega )}(\rr d))_{[\theta]}
=
M^{p_0,q_0}_{(\omega )}(\rr d),
$$
with equivalent norms.
\end{prop}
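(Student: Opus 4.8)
The plan is to realize each classical modulation space as a \emph{retract} of the corresponding mixed-norm Lebesgue space and then transfer the classical complex interpolation identity for the latter. The two operators implementing the retract are the analysis map $V_\phi$ and the normalized synthesis map $R=\nm \phi{L^2}^{-2}V_\phi ^*$, where $\phi \in \Sigma _1(\rr d)\setminus 0$ is fixed. By the very definition \eqref{Eq:ModNorm2}, $V_\phi$ is an isometry from $M^{p,q}_{(\omega )}(\rr d)$ into $L^{p,q}_{(\omega )}(\rr {2d})$ for every $p,q\in [1,\infty ]$, while the inversion formula \eqref{Eq:IdentSTFTAdj} gives $R\circ V_\phi =\operatorname{Id}$ on $\Sigma _1'(\rr d)$. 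Since all spaces embed continuously into the common Hausdorff space $\Sigma _1'(\rr d)$ (cf.\ \eqref{Eq:ModGSEmbeddings1}), the pairs $(M^{p_1,q_1}_{(\omega )},M^{p_2,q_2}_{(\omega )})$ and $(L^{p_1,q_1}_{(\omega )},L^{p_2,q_2}_{(\omega )})$ are genuine interpolation couples and Calder\'on's complex method applies.

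The next step is to check that $R$ is bounded on the couple. For $F$ on $\rr {2d}$, estimate \eqref{Eq:adjoint-convolution} gives $|V_\phi (V_\phi ^*F)|\le |F|*|V_\phi \phi|$. Since $\phi \in \Sigma _1(\rr d)$, the bound $|V_\phi \phi (X)|\lesssim e^{-r|X|}$ valid for every $r>0$ (by \eqref{Eq:STFTSchwartz}) beats the at most exponential growth of any weight $v$ moderating $\omega$ (by \eqref{Eq:weight1}), so $V_\phi \phi \in L^1_{(v)}(\rr {2d})$. A weighted Young inequality for mixed-norm spaces then yields
\[
\nm {V_\phi ^*F}{M^{p,q}_{(\omega )}}
=
\nm {V_\phi (V_\phi ^*F)}{L^{p,q}_{(\omega )}}
\lesssim
\nm {|F|*|V_\phi \phi |}{L^{p,q}_{(\omega )}}
\lesssim
\nm {V_\phi \phi}{L^1_{(v)}}\,\nm F{L^{p,q}_{(\omega )}},
\]
so $R$ maps $L^{p_j,q_j}_{(\omega )}(\rr {2d})$ boundedly into $M^{p_j,q_j}_{(\omega )}(\rr d)$ for $j=1,2$, with bound independent of $(p,q)$.

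Finally, I would interpolate the target couple. The isometric isomorphism $F\mapsto F\omega$ reduces $L^{p,q}_{(\omega )}$ to the unweighted $L^{p,q}$, and the classical Benedek--Panzone interpolation of mixed-norm Lebesgue spaces gives $(L^{p_1,q_1}_{(\omega )},L^{p_2,q_2}_{(\omega )})_{[\theta ]}=L^{p_0,q_0}_{(\omega )}$. Invoking the retract theorem for the complex method (the boundedness of $V_\phi$ and $R$ on both endpoints, together with $R\circ V_\phi =\operatorname{Id}$), one obtains
\[
\nm f{(M^{p_1,q_1}_{(\omega )},M^{p_2,q_2}_{(\omega )})_{[\theta ]}}
\asymp
\nm {V_\phi f}{(L^{p_1,q_1}_{(\omega )},L^{p_2,q_2}_{(\omega )})_{[\theta ]}}
=
\nm {V_\phi f}{L^{p_0,q_0}_{(\omega )}}
=
\nm f{M^{p_0,q_0}_{(\omega )}},
\]
while equality of the spaces follows from the two inclusions: $f\in (M,M)_{[\theta ]}$ forces $V_\phi f\in L^{p_0,q_0}_{(\omega )}$ hence $f\in M^{p_0,q_0}_{(\omega )}$, and conversely $f=R(V_\phi f)\in (M,M)_{[\theta ]}$ for $f\in M^{p_0,q_0}_{(\omega )}$ since $R$ is a bounded retraction. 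I expect the only genuinely delicate point to be the endpoint cases $p_0=\infty$ or $q_0=\infty$, where one must ensure that the complex interpolation of the mixed-norm spaces recovers the \emph{full} space $L^{p_0,q_0}$ rather than a proper closed subspace; this is precisely the content of the classical mixed-norm interpolation theorem, and the remaining compatibility of the retract diagram across the couple is routine once the convolution estimate and Young's inequality are in place.
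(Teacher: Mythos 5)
Your overall strategy --- realizing $M^{p,q}_{(\omega )}$ as a retract of $L^{p,q}_{(\omega )}$ via the analysis map $V_\phi$ and the synthesis map $R=\nm \phi{L^2}^{-2}V_\phi ^*$, and pulling back interpolation from the Lebesgue couple --- is exactly how this result is proved in the sources the paper cites for it (the paper itself gives no proof): Feichtinger's original argument and the Feichtinger--Gr{\"o}chenig coorbit machinery are retract arguments of precisely this form. Your steps 1--3 are correct: $V_\phi$ is isometric by definition of the norm, $R\circ V_\phi =\operatorname{Id}$ is \eqref{Eq:IdentSTFTAdj}, and the bound for $R$ follows from \eqref{Eq:adjoint-convolution}, the super-exponential decay of $V_\phi \phi$ against \eqref{Eq:weight1}, moderateness of $\omega$, and Young's inequality in mixed norms, with constants uniform in $(p,q)$.

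The gap is the final step, and it is not merely a matter of finding the right citation. There is no ``classical mixed-norm interpolation theorem'' covering all $p_j,q_j\in [1,\infty ]$: Calder{\'o}n's vector-valued theorem (Bergh--L{\"o}fstr{\"o}m, Theorem 5.1.2) identifies $(L^{q_1}(A_0),L^{q_2}(A_1))_{[\theta ]}$ with $L^{q_0}((A_0,A_1)_{[\theta ]})$ only when \emph{both outer exponents are finite} (and, a separate but fixable point, it concerns Bochner spaces, which differ from the Benedek--Panzone spaces $L^{p,q}$ when the inner exponent is $\infty$). Worse, in the case $q_1=q_2=\infty$, $p_1\neq p_2$ the identity you invoke is false. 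Indeed, the first complex method always has $A_0\cap A_1$ dense in $(A_0,A_1)_{[\theta ]}$, while $L^{p_1,\infty}\cap L^{p_2,\infty}$ is not dense in $L^{p_0,\infty}$ (take $\omega \equiv 1$, $p_1<p_0<p_2$): with $\psi$ a fixed bump normalized in $L^{p_0}(\rr d)$, $\lambda (\xi )=(1+|\xi |)^{-1}$ and $F(x,\xi )=\lambda (\xi )^{d/p_0}\psi (\lambda (\xi )x)$, one has $F\in L^{p_0,\infty}(\rr {2d})$, whereas for any $G$ with $\sup _\xi \nm {G(\cdo ,\xi )}{L^{p_1}}=M<\infty$, H{\"o}lder's inequality on $B_\xi =B(0,\lambda (\xi )^{-1})$ gives
\begin{equation*}
\nm {F(\cdo ,\xi )-G(\cdo ,\xi )}{L^{p_0}}
\ge
|B_\xi |^{-1/p_0'}\left (
\int _{B_\xi}|F(x,\xi )|\, dx-\int _{B_\xi}|G(x,\xi )|\, dx
\right )
\ge
c_\psi - M\, |B_\xi |^{1/p_1'-1/p_0'},
\end{equation*}
and the last term tends to $0$ as $|\xi |\to \infty$ since $p_1'>p_0'$; hence $F$ stays at distance $\ge c_\psi >0$ from the intersection, so $(L^{p_1,\infty},L^{p_2,\infty})_{[\theta ]}\subsetneq L^{p_0,\infty}$ and the retract diagram cannot produce the claimed equality. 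This defect cannot be repaired by a cleverer argument either: the same spread-bump profile is realized by short-time Fourier transforms (e.g. $f=\sum _k N_k^{-d/p_0}e^{i\scal \cdo {n_k}}\psi (\cdo /N_k)$ with $N_k\uparrow \infty$ and widely separated $n_k$ lies in $M^{p_0,\infty}(\rr d)$ but at positive $M^{p_0,\infty}$-distance from the unit ball of $M^{p_1,\infty}(\rr d)$), so by the same density property $(M^{p_1,\infty},M^{p_2,\infty})_{[\theta ]}\subsetneq M^{p_0,\infty}$. In other words: your proof is complete when $q_1,q_2<\infty$ (modulo quoting a mixed-norm rather than Bochner version of Calder{\'o}n's theorem when some $p_j=\infty$); the case where exactly one $q_j$ is infinite has no density obstruction but also no off-the-shelf theorem, so it requires a separate argument; and the case $q_1=q_2=\infty$, $p_1\neq p_2$ must be excluded --- as it is in the careful statements of this interpolation result in the literature, which carry finiteness restrictions that the proposition as printed omits.
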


\par

Propositions
\ref{Prop:InvModAndBanach},
\ref{Prop:ModDuality} and
\ref{Prop:ComplIntepolMod},
as well as essential parts of Proposition
\ref{Prop:EmbModSp} follow from
\cite{Fei1,Fei6,FeiGro1,FeiGro2,GaSa,Gro2}
and the analyses therein.
The identities in
\eqref{Eq:UnionIntersectMod}
are essentially special cases of
\cite[Theorem 3.9]{Toft10}, see also
\cite{GroZim, Teofanov2}.
For these reasons we omit the 
proofs of Propositions
\ref{Prop:InvModAndBanach}--\ref{Prop:ComplIntepolMod}.
Here we remark that the topologies of the spaces
on the left-hand sides of
\eqref{Eq:UnionIntersectMod}
are obtained by replacing each intersection
and union,
by projective limit topology and
inductive limit topology, respectively,
with respect to $r>0$.

\par

We remark that for the Banach space
case, $p,q\ge 1$, Propositions
\ref{Prop:InvModAndBanach}--\ref{Prop:EmbModSp},
can be proved
by straight-forward applications
of Fourier analysis in combination of
classical estimates in harmonic analysis
(see e.{\,}g. \cite[Chapter 11]{Gro2}).
In order to reach the general result
(allowing $p$ and $q$ to be
smaller than one), one usually combine
these techniques with Gabor analysis
(see e.{\,}g. \cite{GaSa}). Here we remark
that Proposition \ref{Prop:InvModAndBanach}
and Proposition \ref{Prop:EmbModSp}
will be extended in several ways in the
next two sections, without any use of Gabor
analysis.

\par

\subsection{Invariant quasi-Banach 
function spaces}

\par


\par

The functional
$\nm \cdo{\mascB}$, defined on the
vector space $\mascB$ is called a
\emph{quasi-norm} of order
$r _0 \in (0,1]$, if the conditions
\begin{alignat}{2}
\nm {f+g}{\mascB}
&\le
2^{\frac 1 {r _0} -1}
(\nm {f}{\mascB} + \nm {g}{\mascB}),
& \quad f,g &\in \mascB ,
\label{Eq:WeakTriangle1}
\\[1ex]
\nm {\alpha \cdot f}{\mascB}
&=
|\alpha| \cdot \nm f{\mascB}\ge 0,
& \quad \alpha &\in \mathbf{C},
\quad  f \in \mascB
\notag
\intertext{and}
\nm f{\mascB} &= 0
\quad  \Leftrightarrow \quad
f=0, & &
\notag
\end{alignat}
hold true. A vector space which contains 
a quasi-norm (of order $r_0$)
is called a quasi-norm space (of order 
$r_0$). A complete quasi-norm space
(of order $r_0$) is called a
quasi-Banach space (of order $r_0$).
If $\mascB$ is a quasi-Banach space 
with quasi-norm satisfying 
\eqref{Eq:WeakTriangle1}
then by \cite{Aik,Rol} there is an 
equivalent quasi-norm to $\nm \cdo 
{\mascB}$ of order $r_0$, which 
additionally satisfies
\begin{align}\label{Eq:WeakTriangle2}
\nm {f+g}{\mascB} ^{r _0} \le \nm {f}
{\mascB} ^{r _0} + \nm {g}{\mascB} 
^{r _0}, 
\quad f,g \in \mascB .
\end{align}
From now on we always assume that 
the  quasi-norm of the quasi-Banach 
space  $\mascB$ is chosen in such a 
way that both 
\eqref{Eq:WeakTriangle1} and 
\eqref{Eq:WeakTriangle2} hold.

\par

The quasi-Banach
space $\mascB$ is called a
\emph{quasi-Banach function
space} (or \emph{QBF space}) on
$\rr d$, whenever $\mascB$ is
contained in
$\maclM (\rr d)$.
It is here
understood that the zero element
in $\maclM (\rr d)$ (which is
an equivalent class), is the zero
element in $\mascB$.

\par

We always assume that the QBF spaces
are \emph{solid}, which is described
in the following definition.
(See e.{\,}g. (A.3) in \cite{Fe1992}.)

\par

\begin{defn}
\label{Def:SolidBF}
Let $\mascB$ be a
\emph{QBF space} on
$\rr d$.
Then $\mascB$ is
called \emph{solid}, if the
following conditions hold:
\begin{enumerate}
\item
for any
$f,g\in \maclM (\rr d)$
which satisfy
$g\in \mascB$ and $|f|
\le |g|$, then $f\in \mascB$ and
\begin{equation}\label{Eq:Solid}
\nm f{\mascB}\le C\nm g{\mascB},
\end{equation}
for some constant $C>0$ which is
independent of $f$ and $g$ above;

\vrum

\item
for any $x\in \rr d$,
there is a constant $c>0$ and a function
$f\in \mascB$ such that $|f|>c$ on a convex
set in $\rr d$ which is not a zero set,
and which contains $x$.
\end{enumerate}
\end{defn}

\par

In the following definition
we put needed restrictions
on QBF spaces.

\par

\begin{defn}\label{Def:BFSpaces}
Let $\mascB$ be a solid QBF space of
order $r_0\in (0,1]$ on $\rr d$,
$C>0$ be the same as in 
\eqref{Eq:Solid},
and let $v_0\in \mascP _E(\rr d)$
be submultiplicative.
\begin{enumerate}
\item The space $\mascB$
is called a
\emph{solid translation invariant
quasi-Banach function space on
$\rr d$} 
(with respect to $r_0$ and $v_0$), or 
\emph{invariant QBF space on $\rr d$},
if for any $x\in \rr d$ and
$f\in \mascB$, then $f(\cdo -x)\in
\mascB$, and 
\begin{equation}\label{translmultprop1}
\nm {f(\cdo -x)}{\mascB}
\le
Cv_0(x)\nm {f}{\mascB}\text ;
\end{equation}

\vrum

\item The space $\mascB$
is called a \emph{solid translation 
invariant Banach function space on
$\rr d$} 
(with respect to $v_0$), or 
\emph{invariant BF space on $\rr d$},
if $\mascB$ is a Banach space and
an invariant QBF space on $\rr d$
with respect to $v_0$.

\vrum

\item The space $\mascB$
is called a \emph{solid translation 
invariant normal quasi-Banach function 
space on $\rr d$} (with respect to $r_0$ and $v_0$), or 
\emph{a normal QBF space on $\rr d$},
if there is an invariant BF space
$\mascB _0$ on $\rr d$
with respect to $v_0^{r_0}$ such that
\begin{equation}
\label{Eq:NormQBFNorm}
\nm f{\mascB}
\equiv
\nm {\, |f|^{r_0}\, }{\mascB _0}^{1/r_0},
\quad
f\in \mascB .
\end{equation}
\end{enumerate}
\end{defn}

\par

Evidently, $\mascB$ in
Definition \ref{Def:BFSpaces} (3),
is an invariant QBF space with respect
to $r_0$ and $v_0$.

\par

We observe that it suffices to verify
condition (2) in Definition \ref{Def:SolidBF} at
one point $x$, when $\mascB$ is an invariant
QBF space in $\rr d$. We also observe that if $\mascB$
is an invariant QBF space, then any characteristic
function of a bounded measurable in $\rr d$ belongs
to $\mascB$.

\par

Suppose that
$\omega \in \mascP _E(\rr d)$, and 
$\mascB$ is a quasi-Banach function
space on $\rr d$ with respect
to $r_0\in (0,1]$ and
$v_0\in \mascP _E(\rr d)$. Then we
let
\begin{equation}\label{Eq:BomegaDef}
\mascB _{(\omega )}=\mascB (\omega)
\equiv
\sets {f\in \maclM (\rr d)}
{f\cdot \omega \in \mascB},
\end{equation}
and equip the space in
\eqref{Eq:BomegaDef} with the
quasi-norm
$$
\nm f{\mascB _{(\omega )}}
\equiv
\nm {f\cdot \omega}{\mascB},
\qquad
f\in \mascB _{(\omega )}.
$$
For any $f\in \maclM (\rr d)
\setminus \mascB _{(\omega )}$, we put
$\nm f{\mascB}=\infty$.
%
%
%

\par

\begin{rem}\label{Rem:WeightedQBF}
Suppose that
$\omega ,v\in \mascP _E(\rr d)$
are chosen such that $\omega$
is $v$-moderate.
By straight-forward computations
it follows that the following
is true:
\begin{enumerate}
\item if $\mascB$ is an invariant
QBF space with respect to
$r_0\in (0,1]$ and
$v_0\in \mascP _E(\rr d)$, then
$\mascB _{(\omega )}$ is an invariant
QBF space with respect to
$r_0$ and $v_0v$;

\vrum

\item if $\mascB$ is an invariant
BF space with respect to
$v_0\in \mascP _E(\rr d)$, then
$\mascB _{(\omega )}$ is an invariant
BF space with respect to $v_0v$;

\vrum

\item if $\mascB$ is a normal invariant
QBF space with respect to
$r_0\in (0,1]$ and
$v_0\in \mascP _E(\rr d)$, then
$\mascB _{(\omega )}$ is a normal
invariant QBF space with respect to
$r_0$ and $v_0v$.
\end{enumerate}
\end{rem}

\par

In view of Remark \ref{Rem:WeightedQBF}, 
it follows that the
families of invariant QBF spaces
in Definition \ref{Def:BFSpaces}
are not enlarged by including
weighted versions as in
\eqref{Eq:BomegaDef}. On the other hand,
in some situations, it might be
convenient to include weights in the
definition of such spaces, see e.{\,}g. 
\cite{PfTo19}.

\par


\par

Every invariant BF space $\mascB$ is 
continuously embedded in
$\Sigma _1'(\rr d)$
and the map $(f,\fy )\mapsto f*\fy$
is well-defined and continuous from 
$\mascB \times L^1_{(v_0)}(\rr d)$
to $\mascB$. 
For the invariant BF space
$\mascB \subseteq L^1_{loc}(\rr d)$ with respect to $v_0$ we have
Minkowski's inequality, i.{\,}e.,
\begin{equation}\label{Eq:MinkIneq}
\nm {f*\fy}{\mascB}\le C \nm {f}{\mascB}\nm \fy{L^1_{(v_0)}},
\qquad f\in \mascB ,\ \fy \in L^1_{(v_0)} (\rr d),
\end{equation}
for some $C>0$ which is independent of
$f\in \mascB$ and $\fy \in L^1_{(v_0)} (\rr d)$, 
see e.g. \cite{PfTo19}.

\par

For any $\mascB$ in Definition
\ref{Def:BFSpaces}, the corresponding
sequence space,
$\ell _{\mascB} = \ell _{\mascB}(\zz d)$
is the set of all sequences
$a = \{ a(j) \} _{j\in \zz d}\subseteq
\mathbf C$ such that
\begin{equation} \label{eq:seq-space-norm}
\nm a{\ell _{\mascB}}
\equiv
\NM
{\sum _{j\in \zz d}a(j)\chi _{j+Q}}
{\mascB},
\qquad Q=[0,1]^d.    
\end{equation}
Here $\chi _E$ is the characteristic
function of the set $E\subseteq \rr d$.

\par

\begin{example}
\label{Example:LebCase}
Let $d=d_1+d_2$. A common choice of $\mascB$
in Definition \ref{Def:BFSpaces}
is given by $L ^{p,q}_{(\omega)} (\rr {d})$
or $L ^{p,q}_{*,(\omega)} (\rr {d})$ in
\eqref{Eq:MixLebSpace1} and
\eqref{Eq:MixLebSpace2}.

\par

Let $v\in \mascP _E(\rr {d})$ be
chosen such that $\omega$ is 
$v$-moderate,
$$
r _0= \min(1,p,q),
\quad
p_0=\frac{p}{r_0}
\quad \text{and}\quad
q_0=\frac{q}{r_0}.
$$
Then
$L ^{p,q}_{(\omega)} (\rr {d})$
and 
$L^{p,q}_{*,(\omega )} (\rr {d})$
are
normal QBF spaces on $\rr {d}$ with 
respect to 
$v$, $r _0$ and 
$\mascB _0
=
L^{p_0, q_0}_{(\omega _{0})}
(\rr {d})$ 
respectively
$\mascB _0
=
L^{p_0,q_0}_{\ast ,(\omega _{0})}
(\rr {d})$, $\omega _{0}=\omega ^{r_0}$.

\par

Let $\Omega \subseteq \rr d$ be Lebesgue measurable.
As usual we let $L^{p,q}_{(\omega )}(\Omega )$
($L^{p,q}_{*,(\omega )}(\Omega )$)
be the set of all $f\in \maclM (\Omega )$ such that
$$
f_\Omega (x)
=
\begin{cases}
f(x), & x\in \Omega ,
\\[1ex]
0, & x\in \rr d \setminus \Omega ,
\end{cases}
$$
belongs to $L^{p,q}_{(\omega )}(\rr d)$
($L^{p,q}_{*,(\omega )}(\rr d)$). The corresponding
norm is given by
$$
\nm f{L^{p,q}_{(\omega )}(\Omega )}
\equiv 
\nm {f_\Omega}{L^{p,q}_{(\omega )}(\rr d )}
\qquad
\big (
\nm f{L^{p,q}_{*,(\omega )}(\Omega )}
\equiv 
\nm {f_\Omega}{L^{p,q}_{*,(\omega )}(\rr d )}
\big ).
$$

\par

In similar ways we let
$\ell ^{p,q}_{(\omega )}(\zz {d})$
and
$\ell ^{p,q}_{*,(\omega )}(\zz {d})$, 
$d=d_1 + d_2$ with $d_1, d_2 \in \mathbb N$,
be the set of all
$c\in \ell _0'(\zz {d})$
such that
$$
\nm {c}{\ell _{(\omega )}^{p,q}}<\infty 
\quad \text{and}\quad
\nm {c}{\ell _{*,(\omega )}^{p,q}}
<\infty,
$$
respectively. Here
\begin{alignat*}{3}
\nm {c}
{\ell _{(\omega )}^{p,q}(\zz {d})}
&\equiv
\nm {G_{c,\omega ,p}}{\ell ^q(\zz {d_2})}, &
\quad 
G_{c,\omega, p}(\iota)
&=
\nm {c(\cdo ,\iota )\omega (\cdo ,\iota)}
{\ell ^p(\zz {d_1})}, & \quad
\iota &\in \zz {d_2}, 
\intertext{and}
\nm {c}
{\ell ^{p,q}_{*,(\omega )}(\zz {d})}
&\equiv
\nm {H_{c,\omega ,q}}{\ell ^p(\zz {d_1})}, &
\quad 
H_{c,\omega, q}(j)
&=
\nm {c(j,\cdo )\omega (j,\cdo )}
{\ell ^q(\zz {d_2})}, & \quad
j&\in \zz {d_1},
\end{alignat*}
and $\ell _0'(\zz d)$
denotes the set of all 
formal (complex-valued) sequences 
$c=\{ c(j)\} _{j\in \zz d}$ on $\zz d$.
In similar ways as above, we put 
$$
\ell ^{p,q}= \ell ^{p,q}_{(\omega )}
\quad \text{and}\quad
\ell ^{p,q}_*= \ell ^{p,q}_{*,(\omega )}
\quad \text{when}\quad
\omega \equiv 1,
$$
and also put $\ell ^p_{(\omega )}=\ell ^{p,p}_{(\omega )}$, which lead to
$\ell ^p=\ell ^{p,p}$.

\par

By letting $\mascB$ be equal to
$L^{p}(\rr {d})$, $L^{p,q} (\rr {d})$, 
or $L ^{p,q} _\ast (\rr {d})$
in \eqref{eq:seq-space-norm}, we obtain
$$
\ell _ {L ^p_{(\omega)} (\rr {d})}
=
\ell ^p_{(\omega)}(\zz d),
\quad
\ell _ {L ^{p,q} _{(\omega)} (\rr {d})} 
=
\ell ^{p,q} _{(\omega)}(\zz {d})
\quad
\text{and}
\quad
\ell _ {L ^{p,q} _{\ast, (\omega)}
(\rr {d})}
=
\ell ^{p,q} _{\ast, (\omega)}(\zz {d}),
$$
with equality in norms.
%
%
\end{example}

\par

Let $\mascB$ be an invariant
QBF space on $\rr d$ with
respect to $r_0\in (0,1]$
and $v_0\in \mascP _E(\rr d)$.
Then the (discrete) 
convolution $*$, formally given by
$$ 
a*b = \sum_{j \in \zz d}
a(\cdot -j) b(j), 
\qquad 
a,b \in \ell _0' (\zz d),
$$
is well-defined on
$\ell _{(v_0)}^{r_0}(\zz d)
\times
\ell _{\mascB}(\zz d)
$
and satisfies
\begin{equation}\label{Eq:DiscConvEst}
\nm {a*b}{\ell _{\mascB}}
\le
C
\nm a{\ell _{(v_0)}^{r_0}}
\nm b{\ell _{\mascB}},
\qquad
a\in \ell _{(v_0)}^{r_0}(\zz d),
\,
b\in \ell _{\mascB}(\zz d),
\end{equation}
where the constant $C$ in
\eqref{Eq:DiscConvEst} only depends
on the constant $C$ in
\eqref{translmultprop1} and the
dimension $d$.

\par

\subsection{Wiener amalgam spaces and a broad class of modulation spaces}
We first define Wiener amalgam spaces.

\par

\begin{defn}
\label{Def:WienAm}
Let $\mascB$ be an invariant
QBF space on $\rr d$ with
respect to $r_0\in (0,1]$
and $v_0\in \mascP _E(\rr d)$.
Also let $\omega \in \mascP _E(\rr d)$,
$d,d_1,d_2\in \mathbf N$
be such that $d=d_1+d_2>0$, and let $r_1,r_2\in (0,\infty ]$.
Then the norm
$\nm f{\sfW ^{r_1,r_2}(\omega ,\mascB)}$
of $f\in \maclM (\rr d)$ is given
by
\begin{equation}
\label{Eq:WienerQuasiNormsSimpleExt}
\begin{aligned}
\nm f{\sfW ^{r_1,r_2}_{d_1,d_2}(\omega ,\mascB)}
&=
\nm f{\sfW ^{r_1,r_2}_{d_1,d_2}(\omega ,\ell _{\mascB})}
\equiv
\nm {a}{\ell _{\mascB}},\quad
\text{where}
\\[1ex]
a(j)
&\equiv
\nm {f \, \omega }{L^{r_1,r_2} (j+Q)},
\\
Q &=[0,1] ^d,\ 
j=(j_1,j_2),\ j_k\in \zz {d_k},\ k=1,2.
\end{aligned}
\end{equation}
The Wiener amalgam space
\begin{equation}
\label{Wiener-notation-1}
\sfW ^{r_1,r_2}_{d_1,d_2}(\omega ,\mascB )
=
\sfW ^{r_1,r_2}_{d_1,d_2}(\omega ,\ell _{\mascB}
(\zz {d}))
\end{equation}
consists of all $f\in \maclM (\rr d)$
such that
$\nm f{\sfW ^{r_1,r_2}_{d_1,d_2}(\omega ,\mascB)}$ is 
finite.

\par

The norm $\nm \cdo{\sfW ^{r_1,r_2}_{d_1,d_2,*}
(\omega ,\mascB)}$ and space
$$
\sfW ^{r_1,r_2}_{d_1,d_2,*}(\omega ,\mascB )
=
\sfW ^{r_1,r_2}_{d_1,d_2,*}(\omega ,\ell _{\mascB}
(\zz {d}))
$$
are obtained by replacing $a(j)$ in
\eqref{Eq:WienerQuasiNormsSimpleExt} as
$$
a(j)
\equiv
\nm {f \, \omega }{L^{r_1,r_2}_* (j+Q)}.
$$
\end{defn}

\par

To shorten
the notation we set
\begin{align}
\sfW ^{r_1,r_2}_{d_1,d_2}(\mascB )
&=
\sfW ^{r_1,r_2}_{d_1,d_2}(\omega ,\ell _{\mascB})
\quad \text{when}\quad
\omega \equiv 1,
\label{Eq:WienerNot2}
\\[1ex]
\sfW ^{r_1,r_2}_{d_1,d_1}&=\sfW ^{r_1,r_2},
\quad \text{and}\quad
\sfW ^{r_1,r_1}=\sfW ^{r_1},
\notag
\end{align}
in Definition \ref{Def:WienAm}.

\par

We observe that
$\sfW ^r(\omega ,\ell ^{p})$ is often 
denoted by
$W(L^r,L^p_{(\omega )})$ or 
$W(L^r, \ell ^p_{(\omega ) })$ in
the literature (see e.{\,}g. 
\cite{Fe1992,FeiGro1,GaSa,Rau1}).
The space $\sfW ^r(\omega ,\ell _\mascB )$,
when $\mascB$ is equal to
$L^{p} (\rr {d})$, 
$L^{p,q} (\rr {2d})$ or
$L^{p,q} _\ast (\rr {2d})$
are special cases of so-called
coorbit spaces,
cf. \cite{FeiGro1,FeiGro2,Rau2}.

\par

\begin{rem}\label{Rem:WienerEmb}
Obviously,
$\sfW ^{r_1,r_2}(\omega ,\ell _\mascB )$ 
decreases with $r_1$ and $r_2$,
because 
for any compact set $K \subseteq \rr d$,
$$
\nm f{L^{r_1,r_2}(K)}
\lesssim
\nm f{L^{r_3,r_4}(K)},
\quad r_1\le r_3
\quad \text{and}\quad r_2\le r_4.
$$
In particular,
\begin{equation}
\label{Eq:WienerEmbTwoInd}
\sfW ^{\max (r_1,r_2)}(\omega ,\ell _\mascB )
\hookrightarrow
\sfW ^{r_1,r_2}(\omega ,\ell _\mascB )
\hookrightarrow
\sfW ^{\min (r_1,r_2)}(\omega ,\ell _\mascB ).
\end{equation}
Furthermore,
$\sfW ^{r_1,r_2}(\omega ,\ell ^{p,q})$ 
and 
$\sfW ^{r_1,r_2}(\omega ,\ell ^{p,q} _\ast)$
increase with $p,q$. We have
\begin{alignat}{1}
\sfW ^\infty (\omega ,\ell ^{p,q})
&\hookrightarrow
L^{p,q}_{(\omega )}(\rr {2d}) \cap \Sigma _1'(\rr {2d})
\hookrightarrow
L^{p,q}_{(\omega )}(\rr {2d})
\hookrightarrow
\sfW ^r(\omega ,\ell ^{p,q})
\intertext{and}
\nm \cdo {\sfW ^r(\omega ,\ell ^{p,q})}
&\le
\nm \cdo {L^{p,q}_{(\omega )}}
\le
\nm \cdo {\sfW  ^\infty (\omega ,\ell ^{p,q})},
\qquad
r\le\min (1,p,q),
\end{alignat}
which follows by straight-forward
applications of H{\"o}lder's inequality.
\end{rem}

\par

By Lemma 2.5 and Theorem 2.6 in 
\cite{Rau1} we get the following.

\par

\begin{prop}\label{Prop:ComplWienSpaces}
Suppose that $r_1,r_2\in (0,\infty ]$, $d,d_1,d_2\in \mathbf N$
is such that $d=d_1+d_2>0$, $\mascB$
is an invariant QBF space on $\rr d$
of order $r_0\in (0,1]$,
and let $\omega \in \mascP _E(\rr d)$.
Then the following is true:
\begin{enumerate}
\item $\ell _{\mascB}(\zz d)$ is a quasi-Banach
space of order $r_0$;

\vrum

\item $\sfW ^{r_1,r_2}_{d_1,d_2}(\omega , \mascB )$
is a quasi-Banach space of order $\min (r_0,r_1,r_2)$.
\end{enumerate}
\end{prop}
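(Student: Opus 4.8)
The plan is to transport the whole structure from $\mascB$ through the isometric identification of a sequence with the step function it determines on the unit lattice. For (1), introduce the linear map $T\colon a\mapsto F_a=\sum_{j\in\zz d}a(j)\chi_{j+Q}$, $Q=[0,1]^d$. Since the cubes $j+Q$ are pairwise disjoint up to null sets, $T$ is injective and additive, and by the definition \eqref{eq:seq-space-norm} one has $\nm a{\ell_{\mascB}}=\nm{F_a}{\mascB}$; hence $T$ is an isometry of $\ell_{\mascB}$ onto the subspace $S$ of those elements of $\mascB$ that are a.e.\ constant on every $j+Q$. As $T$ is linear and isometric, the quasi-norm axioms transfer verbatim; in particular the order-$r_0$ inequality \eqref{Eq:WeakTriangle2} gives $\nm{a+b}{\ell_{\mascB}}^{r_0}=\nm{F_a+F_b}{\mascB}^{r_0}\le\nm{F_a}{\mascB}^{r_0}+\nm{F_b}{\mascB}^{r_0}$, so $\ell_{\mascB}$ is a quasi-norm space of order $r_0$.

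For completeness it suffices to show that $S$ is closed in the complete space $\mascB$, and I would use solidity in two ways. First, for a Cauchy sequence $\{a_n\}$ in $\ell_{\mascB}$ a single term is dominated in modulus by the full step function, so $|a_n(j)-a_m(j)|\,\nm{\chi_{j+Q}}{\mascB}\le C\nm{a_n-a_m}{\ell_{\mascB}}$; since $\nm{\chi_{j+Q}}{\mascB}>0$ for each fixed $j$ (solidity together with translation invariance), every coordinate converges, $a_n(j)\to a(j)$. Second, $F_{a_n}\to F$ in $\mascB$ by completeness of $\mascB$, while $F_{a_n}\to F_a$ pointwise a.e.; invoking that norm convergence in a solid QBF space containing the $\chi_{j+Q}$ forces local convergence in measure, and hence a.e.\ convergence along a subsequence, one identifies $F=F_a\in\mascB$. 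Thus $F_a\in S$ and $\nm{a_n-a}{\ell_{\mascB}}=\nm{F_{a_n}-F_a}{\mascB}\to0$, proving (1).

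For (2), write $a_f(j)=\nm{f\,\omega}{L^{r_1,r_2}(j+Q)}$, so that the defining quasi-norm equals $\nm{a_f}{\ell_{\mascB}}$. The local mixed-Lebesgue quasi-norm obeys the order-$\rho$ inequality with $\rho=\min(r_1,r_2)$, and therefore also the order-$s$ inequality for $s=\min(r_0,r_1,r_2)\le\rho$; applied to $f\omega+g\omega$ this gives $a_{f+g}(j)^s\le a_f(j)^s+a_g(j)^s$ pointwise in $j$. Feeding this into the order-$s$ lattice structure of $\ell_{\mascB}$ inherited from part (1) (the point discussed below) yields $\nm{a_{f+g}}{\ell_{\mascB}}^s\le\nm{a_f}{\ell_{\mascB}}^s+\nm{a_g}{\ell_{\mascB}}^s$, so $\sfW^{r_1,r_2}_{d_1,d_2}(\omega,\mascB)$ carries a quasi-norm of order $s$. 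Completeness follows the template of (1): solidity in $\ell_{\mascB}$ extracts that $\{f_n\omega\}$ is Cauchy in each complete local space $L^{r_1,r_2}(j+Q)$, the local limits glue to a single $f$ with $f_n\to f$ locally, and the identification of this local limit with the $\sfW$-limit is carried out exactly as the closedness argument in (1).

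The genuine difficulty is concentrated in the completeness, namely the passage from coordinatewise (local) convergence to convergence of the global norm: one must know that convergence in $\mascB$ forces local convergence in measure, so that the pointwise limit can be identified with the norm limit and recognised as a step function. This measure-theoretic input, automatic for the solid function spaces containing the characteristic functions of bounded sets considered here (in particular for the mixed-Lebesgue models of Example \ref{Example:LebCase}), is precisely what Lemma~2.5 and Theorem~2.6 of \cite{Rau1} supply; a secondary delicate point is that the sharp merging of the exponents into $\min(r_0,r_1,r_2)$ uses the lattice convexity of $\mascB$ rather than the bare order-$r_0$ inequality. Once these are in hand, the remaining algebra is routine.
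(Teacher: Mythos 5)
Your proposal has two genuine gaps, and they sit exactly at the points where the paper's one-line proof (the citation of Lemma 2.5 and Theorem 2.6 in \cite{Rau1}) does its actual work.

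First, completeness. In (1) you reduce everything to closedness of the subspace of step functions in $\mascB$, but your identification of the $\mascB$-limit $F$ with the pointwise limit $F_a$ rests on the claim that norm convergence in a solid QBF space forces local convergence in measure. This is not among the paper's axioms (Definitions \ref{Def:SolidBF} and \ref{Def:BFSpaces}) and you do not prove it; it amounts to a uniform lower bound on $\nm {\chi _A}{\mascB}$ over all $A\subseteq E$ with $|A|\ge \delta$, which does not follow from solidity. Your fallback, that this input ``is precisely what Lemma 2.5 and Theorem 2.6 of \cite{Rau1} supply'', is circular: those two results essentially \emph{are} the proposition, and citing them is the paper's entire proof, not a lemma inside yours. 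The irony is that for (1) no measure theory is needed: on each cube one has $(F_{a_n}-F_a)\chi _{j+Q}=(a_n(j)-a(j))\chi _{j+Q}$, so from $F_{a_n}\to F$ in $\mascB$, solidity, and the coordinatewise convergence you already established, it follows that $\nm {(F-F_a)\chi _{j+Q}}{\mascB}=0$ for every $j$, hence $F=F_a$ a.e. Your completeness sketch for (2) is weaker still: elements of $\sfW ^{r_1,r_2}_{d_1,d_2}(\omega ,\mascB )$ are not step functions, so the argument of (1) does not transfer ``exactly''; to conclude $\nm {f_n-f}{\sfW ^{r_1,r_2}_{d_1,d_2}(\omega ,\mascB )}\to 0$ for the glued local limit $f$ you must pass from convergence of $a_{f_n-f_m}(j)$ at each $j$ to control of $\nm {a_{f_n-f}}{\ell _{\mascB}}$, i.e.\ a Fatou- or Riesz--Fischer-type property of $\ell _{\mascB}$, which again is not an axiom and is not addressed.

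Second, the order in (2). Part (1) gives the $r_0$-triangle inequality for the \emph{quasi-norm} of $\ell _{\mascB}$; what your argument needs, in order to push the pointwise bound $a_{f+g}(j)^s\le a_f(j)^s+a_g(j)^s$ with $s=\min (r_0,r_1,r_2)$ through $\ell _{\mascB}$, is lattice $s$-convexity, i.e.\ $\nm {(u^s+v^s)^{1/s}}{\ell _{\mascB}}^s\le \nm u{\ell _{\mascB}}^s+\nm v{\ell _{\mascB}}^s$ for $u,v\ge 0$. These are different properties: for quasi-Banach lattices the triangle inequality does not imply convexity (there exist quasi-Banach lattices that are not $s$-convex for any $s>0$). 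Without convexity the pointwise route only yields $\nm {a_{f+g}}{\ell _{\mascB}}\le C\, 2^{1/s-1}\nm {a_f+a_g}{\ell _{\mascB}}\le C\, 2^{1/s-1}2^{1/r_0-1}\big (\nm {a_f}{\ell _{\mascB}}+\nm {a_g}{\ell _{\mascB}}\big )$, whose constant corresponds to an order strictly smaller than $\min (r_0,r_1,r_2)$ as soon as $r_0<1$. You half-acknowledge this by appealing to ``the lattice convexity of $\mascB$'', but that hypothesis is not available: Proposition \ref{Prop:ComplWienSpaces} assumes only an invariant QBF space, not a normal one --- normality (Definition \ref{Def:BFSpaces} (3)) is exactly the paper's mechanism for securing $r_0$-convexity, and it is deliberately not assumed here. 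So as written, (2) either proves a weaker order or borrows an assumption the statement does not grant; this is precisely the content for which the paper defers to \cite{Rau1}.
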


\par

\begin{rem}
\label{Rem:EquivWienerNorms}
Let $r_1,r_2\in (0,\infty ]$,
$d_1,d_2\in \mathbf N$ be such that $d=d_1+d_2>0$,
$\omega \in \mascP _E(\rr d)$,
$\mascB$ be an invariant QBF space on
$\rr d$,
$\Lambda \subseteq \rr d$
be a lattice, and let $\Omega
\subseteq \rr d$ be a bounded
convex set such that
$$
\bigcup _{j\in \Lambda}(j+\Omega)
=\rr d.
$$
Also let $a_{0,\Omega}
=
\{ a_{0,\Omega}(j) \} _{j\in \Omega}$,
$$
a_{0,\Omega}(f,j)\equiv
\nm {f\, \omega }{L^{r_1,r_2}_{d_1,d_2}(j+\Omega )},
\quad
j\in \Lambda ,
$$
and consider the quasi-norm
\begin{equation}\label{Eq:NormAltern}
\nmm f = \NM {\sum _{j\in \Lambda}
a_{0,\Omega }(f,j)\chi _{j+\Omega }}
{\mascB},
\end{equation}
when $f \in \maclM (\rr d)$.
Then $\sfW ^{r_1,r_2}_{d_1,d_2}(\omega ,\mascB)$
consists of all measurable $f \in \maclM (\rr d)$
such that $\nmm f$
is finite, and the quasi-norm
in \eqref{Eq:NormAltern} is equivalent
to $\nm \cdo
{\sfW ^{r_1,r_2}_{d_1,d_2}(\omega ,\mascB)}$.
These properties follow by straight-forward
computations, using the fact
that $\omega $ is moderate
(see e.{\,}g. \cite{Fei1981,Fei1}).
\end{rem}

\par

Next we define a broad class of 
modulation spaces.

\par

\begin{defn}\label{Def:GenModSpace}
Let $\mascB$ be a normal QBF space on $\rr {2d}$, 
$\omega \in\mascP _E(\rr {2d})$,
and 
$\phi \in \Sigma _1(\rr d)\setminus 0$. 
Then the \emph{modulation space} $M(\omega ,\mascB )$ consists
of all $f\in \Sigma _1'(\rr d)$ such that
$$
\nm f{M(\omega ,\mascB )}
\equiv \nm {V_\phi f\cdo \omega }{\mascB} <\infty .
$$
\end{defn}

\par

\begin{example}
Let
$
\mascB =L^{p,q}(\rr {2d})
$ 
or
$
\mascB =L_{\ast}^{p,q}(\rr {2d}).
$ 
$p,q \in (0,\infty]$. If
$\omega \in \mascP _E(\rr {2d})$,
then 
$
M(\omega ,\mascB )
$
in Definition \ref{Def:GenModSpace}
becomes the classical modulation
space
$
M^{p,q}_{(\omega )}(\rr d),
$
and
$
W^{p,q}_{(\omega )}(\rr d),
$
respectively,
see Definition \ref{Def:ClassModSp}.
\end{example}

\par

Note that $M(\omega ,\mascB )$ possess
several convenient properties that are
true for the classical modulation spaces. 
For example, the definition of
$M(\omega ,\mascB )$ is independent
of the choice of
$\phi \in \Sigma _1(\rr d)\setminus 0$ 
when $\mascB$ is a Banach space (see 
e.{\,}g. \cite{FeiGro1}).
Additionally in
\cite[Proposition 2.10]{PfTo19}
it is proved that 
$M(\omega ,\mascB )$ is independent
of the choice of
$\phi \in M ^1_{(v_0 v)}(\rr d)\setminus 0$.

\par

In the  next section we show that
such properties in broader forms
are extendable to the case when
$\mascB$ is a normal QBF space.
We refer to 
\cite{Fei1, Fei6, FeiGro1, FeiGro2, FG4, GaSa, 
Gro2, GroZim, RSTT, Teofanov2,
Teofanov3,Toft10}
for more information about modulation 
spaces.

\par 

\section{Some properties of normal QBF spaces}
\label{sec2}

\par

In this section we show some
properties of $\mascB$,
$\ell _{\mascB}(\zz d)$ and
$\sfW ^r(\omega ,\mascB )$
when $\mascB$ is a normal
QBF space.

\par

We start with the following,
which explain embedding properties
between $\mascB _{(\omega )}$ and
$\ell _{\mascB}(\zz d)$ on one hand,
and classical Wiener spaces and
discrete Lebesgue spaces on the other
hand.

\par

\begin{prop}\label{Prop:BasicEmbWSp}
Let $\mascB$ be a normal QBF space
on $\rr d$ with respect to
$r_0\in (0,1]$ and
$v_0\in \mascP _E(\rr d)$, and let
$\omega \in \mascP _E(\rr d)$.
Then the following is true:
\begin{enumerate}
\item $\ell ^{r_0}_{(v_0)}(\zz d)
\hookrightarrow
\ell _{\mascB}(\zz d)
\hookrightarrow
\ell ^{\infty}_{(1/v_0)}(\zz d)$,
and
\begin{equation}\label{Eq:SeqIneq}
C^{-1}\nm a{\ell ^\infty _{(1/v_0)}}
\le
\nm a{\ell _{\mascB}}
\le
C\nm a{\ell ^{r_0}_{(v_0)}},
\qquad
a\in \ell _0'(\zz d),
\end{equation}
where the constant $C$ only depends
on $d$ and the constant
in \eqref{translmultprop1};

\vrum

\item $\sfW ^\infty
(\omega ,\mascB)
\hookrightarrow
\mascB _{(\omega )}
\hookrightarrow
\sfW ^{r_0}
(\omega ,\mascB )$,
and
\begin{equation}\label{Eq:WSpIneq}
C^{-1}\nm f{\sfW ^{r_0}
(\omega ,\mascB)}
\le
\nm f{\mascB _{(\omega )}}
\le
C\nm f{\sfW ^{\infty}
(\omega ,\mascB)},
\qquad
f\in \maclM (\rr d),
\end{equation}
where the constant $C$ only depends
on $\omega$, $d$, and the constant
in \eqref{translmultprop1}.
\end{enumerate}
\end{prop}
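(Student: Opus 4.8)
The plan is to reduce everything to the scalar (unweighted) case and then to the normal structure. First I would observe that by the definition of $\mascB_{(\omega)}$ and of $\sfW^r(\omega,\mascB)$, replacing $f$ by $f\cdot\omega$ moves all the weight into the function. Since $\omega$ is moderate (so $\omega(x)\asymp\omega(j)$ for $x\in j+Q$), the weight is comparable to a constant on each unit cube $j+Q$, and one checks directly that
\begin{equation*}
\nm f{\mascB_{(\omega)}}\asymp \nm{f\cdot\omega}{\mascB}
\quad\text{and}\quad
\nm f{\sfW^{r}(\omega,\mascB)}\asymp\nm{f\cdot\omega}{\sfW^r(1,\mascB)},
\end{equation*}
with constants depending only on $\omega$. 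Thus it suffices to prove \eqref{Eq:WSpIneq} with $\omega\equiv 1$, and likewise the sequence inequality \eqref{Eq:SeqIneq} is the model case to establish first.

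For part (1), I would use the normal structure. Write $\mascB$ in terms of the underlying Banach space $\mascB_0$ via \eqref{Eq:NormQBFNorm}, so that $\nm a{\ell_{\mascB}}=\nm{\{|a(j)|^{r_0}\}}{\ell_{\mascB_0}}^{1/r_0}$, where $\mascB_0$ is an invariant BF space with respect to $v_0^{r_0}$. For a Banach function space the two-sided embedding $\ell^1_{(v_0^{r_0})}\hookrightarrow\ell_{\mascB_0}\hookrightarrow\ell^\infty_{(1/v_0^{r_0})}$ is the standard solid/invariant estimate: the right-hand embedding follows from solidity applied to a single basis vector together with \eqref{translmultprop1}, and the left-hand one follows by writing a sequence as a sum of translates of $\chi_Q$ and applying the triangle inequality and \eqref{translmultprop1}. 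Taking $r_0$-th roots and using that $|a(j)|^{r_0}$ replaces $|a(j)|$ converts these into \eqref{Eq:SeqIneq} for $\ell^{r_0}_{(v_0)}$ and $\ell^\infty_{(1/v_0)}$. The constant tracking is routine once the scalar $\mascB_0$-estimate is in place.

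For part (2), the bridge between $\mascB_{(\omega)}$ and the Wiener spaces is the local quantity $a(j)=\nm{f}{L^{r}(j+Q)}$ appearing in \eqref{Eq:WienerQuasiNormsSimpleExt}, which links a genuinely local $L^r$-norm to the global $\mascB$-norm via $\ell_{\mascB}$. The right-hand embedding $\sfW^\infty(\mascB)\hookrightarrow\mascB$ comes from $\nm f{L^\infty(j+Q)}\ge$ the local $\mascB$-contribution: pointwise one has $|f|\le\sum_j\big(\sup_{j+Q}|f|\big)\chi_{j+Q}$, so solidity of $\mascB$ gives $\nm f{\mascB}\le C\,\nm{\{a_\infty(j)\}}{\ell_{\mascB}}$. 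The left-hand embedding $\mascB\hookrightarrow\sfW^{r_0}(\mascB)$ is the harder direction and is the step I expect to be the main obstacle: one needs to bound the local $L^{r_0}$-norms $\nm f{L^{r_0}(j+Q)}$ in terms of $\nm f{\mascB}$, i.e.\ to control a local averaging in the quasi-Banach regime $r_0\le 1$. I would handle it by passing to $\mascB_0$ through \eqref{Eq:NormQBFNorm}: the claim becomes an estimate for $|f|^{r_0}\in\mascB_0$ against its local $L^1$-averages, where the Banach-space embedding $\mascB_0\hookrightarrow\sfW^1(\mascB_0)$ is available by the solid-BF-space arguments underlying Example \ref{Example:LebCase} and the sequence embedding just proved in part (1). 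Combining the two directions and taking $r_0$-th roots yields \eqref{Eq:WSpIneq}, and then restoring the weight by the first paragraph completes the proof.
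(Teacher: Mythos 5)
Your overall architecture coincides with the paper's: reduce to $\omega \equiv 1$ (in fact no moderateness is needed there, since by the definitions $\nm f{\mascB _{(\omega )}}=\nm {f\omega}{\mascB}$ and $\nm f{\sfW ^{r}(\omega ,\mascB )}=\nm {f\omega}{\sfW ^{r}(\mascB )}$ exactly), prove (1), get the embedding $\sfW ^\infty (\mascB )\hookrightarrow \mascB$ from solidity, and handle the hard direction $\mascB \hookrightarrow \sfW ^{r_0}(\mascB )$ by passing through the normal structure \eqref{Eq:NormQBFNorm}, using that $\nm f{L^{r_0}(j+Q)}^{r_0}=\nm {\, |f|^{r_0}\, }{L^1(j+Q)}$, so that the claim becomes the Banach-case embedding $\mascB _0\hookrightarrow \sfW ^1(\mascB _0)$. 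Your part (1) is correct (you route it through $\mascB _0$ and the ordinary triangle inequality, where the paper uses the $r_0$-subadditivity of $\nm \cdo {\mascB}$ directly; the two are equivalent in content), and the reduction of (2) to the Banach case is exactly the paper's device.

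The gap is the last step. You assert that $\mascB _0\hookrightarrow \sfW ^1(\mascB _0)$ is ``available by the solid-BF-space arguments underlying Example \ref{Example:LebCase} and the sequence embedding just proved in part (1)'', but neither ingredient can deliver it, and this step is the analytic heart of the proposition (it occupies roughly half of the paper's proof). What is actually needed is a pointwise domination of $h=\sum _{j}\nm g{L^1(j+Q)}\chi _{j+Q}$ by, essentially, the convolution $|g|*\chi$ with $\chi$ the characteristic function of a fixed neighborhood of the origin (in the paper this is the change-of-variables computation with the cubes $Q_r=[-r,r]^d$), followed by Minkowski's integral inequality \eqref{Eq:MinkIneq}, $\nm {g*\fy}{\mascB _0}\lesssim \nm g{\mascB _0}\nm \fy {L^1_{(v_0)}}$, which rests quantitatively on the translation invariance \eqref{translmultprop1}. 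Solidity alone cannot suffice: for $\mascB _0=L^1_{(w)}$ with $w=1+N\chi _E$, $E\subseteq Q$ of positive measure, and $g=\chi _{E'}$ with $E'\subseteq Q\setminus E$, one has $\nm g{\mascB _0}=|E'|$ while $\nm g{\sfW ^1(\mascB _0)}\ge |E'|(1+N|E|)$, although the solidity constant of $L^1_{(w)}$ is $1$ for every $N$; so the embedding constant necessarily involves the translation-invariance data, which enters precisely through the convolution/Minkowski argument. As written, the hard direction of (2) is therefore unproved; inserting that argument (or an equivalent one) would complete your proof along the same lines as the paper's.
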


\par

\begin{proof}
Let
$$
Q=[0,1]^d
\quad \text{and}\quad
Q_r=[-r,r]^d,
\quad r\ge 0.
$$
The assertion (1) follows if we
prove \eqref{Eq:SeqIneq}. 
Therefore,
suppose that
$a\in \ell ^{\infty}_{(1/v_0)}(\zz d)$.
Then
\begin{align*}
\nm a{\ell _{\mascB}}^{r_0}
&=
\NM {\sum _{j\in \zz d}a(j)\chi _{j+Q}}
{\mascB}^{r_0}
\le
\sum _{j\in \zz d}
|a(j)|^{r_0}
\nm {\chi _{j+Q}}{\mascB}^{r_0}
\\[1ex]
&\le
C\sum _{j\in \zz d}
|a(j)v_0(j)|^{r_0}
\nm {\chi _{Q}}{\mascB}^{r_0}
\asymp
C\sum _{j\in \zz d}
|a(j)v_0(j)|^{r_0},
\end{align*}
and the second inequality in
\eqref{Eq:SeqIneq} follows.

\par

We also have
\begin{align*}
\nm a{\ell ^\infty _{(1/v_0)}}
&\asymp
\sup _{j\in \zz d}
\left (
|a(j)/v_0(j)|\nm {\chi _Q}{\mascB}
\right )
\le
C\sup _{j\in \zz d}
\left (
\nm {|a(j)|\chi _{j+Q}}{\mascB}
\right )
\\[1ex]
&\lesssim
C
\NM {\sum _{j\in \zz d}
|a(j)|\chi _{j+Q}}{\mascB}
=
C\nm a{\ell _{\mascB}},
\end{align*}
giving the first inequality in
\eqref{Eq:SeqIneq}, and (1)
follows.

\par

The assertion (2) follows if we
prove \eqref{Eq:WSpIneq}. 
We observe that the map
$f\mapsto f\, \omega$ is homeomorphic
from $\mascB _{(\omega )}$ to $\mascB$,
and from $\sfW ^r(\omega ,\mascB)$
to $\sfW ^r(\mascB)$, for every
$r\in (0,\infty ]$. This reduces
ourselves to the case when $\omega \equiv 1$.

\par

Since
$$
|f(x)| \le \sum _{j\in \zz d}
\nm {f}
{L^\infty (j+Q)}\chi _{j+Q}(x),
$$
we obtain
\begin{equation*}
\nm f{\mascB}
\lesssim
\NM {\sum _{j\in \zz d}
\nm {f}{L^\infty (j+Q)}
\chi _{j+Q}}
{\mascB}
=
\nm f{\sfW ^{\infty}
(\mascB)},
\end{equation*}
where the last inequality follows
from the fact that $\mascB$ is solid.
This gives the second inequality in
\eqref{Eq:WSpIneq}.

\par

It remains to prove the first
inequality in \eqref{Eq:WSpIneq}.
By solid properties of the involved
spaces, it is no restriction to
assume that $f\ge 0$.
First we consider the case when
$r_0=1$.
We have
$$
\nm f{\sfW ^{r_0}
(\mascB)}
=
\nm h{\mascB},
$$
where
\begin{align*}
h(x)
&=
\sum _{j\in \zz d}
\nm f{L^1(j+Q)}\chi _{j+Q}(x)
\\[1ex]
&=
\sum _{j\in \zz d}
\left (
\int _{Q} f(y+j)
\chi _{Q_2}(x-y-j)\, dy
\right )
\chi _{Q}(x-j).
\end{align*}
Here we have taken $y-j$ as new
variables of integrations, and
used the fact that
$\chi _{Q_2}(x-y-j)=1$ when
$y\in Q$ and $x-j\in Q$. By
taking $y+j-x$ as new variables
of integrations, and using the fact
that $\chi _{Q_2}$ is even,
we obtain
\begin{align*}
h(x)
&\le
\sum _{j\in \zz d}
\left (
\int _{j-x+Q} f(x+y )
\chi _{Q_2}(y)\, dy
\right )
\chi _{Q}(x-j)
\\[1ex]
&\le
\sum _{j\in \zz d}
\left (
\int _{Q_1} f(x+y )
\chi _{Q_2}(y)\, dy
\right )
\chi _{Q}(x -j),
\end{align*}
where the last inequality follows
from the fact that $y\in Q_1$ when
$y\in j-x+Q$ and $x-j\in Q$.
Since $\mascB$ is solid, we obtain
$$
\nm f{\sfW ^{r_0}(\mascB)}
=
\nm h{\mascB}
\lesssim
\NM {\sum _{j\in \zz d}
\left (
\int _{Q_3} f(\cdo +y )
\chi _{Q_2}(y)\, dy
\right )
\chi _{Q}(\cdo -j)}{\mascB}.
$$
An application of Minkowski's
inequality now gives
\begin{align*}
\nm f{\sfW ^{r_0}
(\mascB)}
&\le
\int _{Q_3}
\NM {\sum _{j\in \zz d}
f(\cdo +y )
\chi _{Q_2}(y)
\chi _{Q}(\cdo -j)}{\mascB}\, dy
\\[1ex]
&\asymp
\int _{Q_3}
\NM {\sum _{j\in \zz d}
f(\cdo +y)\cdot 
\chi _{Q}(\cdo -j)}{\mascB}
\chi _{Q_2}(y) \, dy
\\[1ex]
&\asymp
\int _{Q_3}
\nm {f}{\mascB}
\chi _{Q_2}(y) \, dy
\asymp \nm {f}{\mascB}.
\end{align*}
This gives the first
inequality in \eqref{Eq:WSpIneq},
and the result follows in the case
$r_0=1$.

\par

Next suppose that $r_0\in (0,1]$
is arbitrary, and let $\mascB _0$
be the same as in Definition
\ref{Def:BFSpaces}. Then
\begin{align*}
\nm f{\sfW ^{r_0}
(\mascB)}^{r_0}
&=
\NM {\sum _{j \in \zz d}\nm f{L^{r_0}(j+Q)}
\chi _{j+Q}}{\mascB}^{r_0}
=
\NM {\sum _{j \in \zz d}\nm f{L^{r_0}(j+Q)}^{r_0}
\chi _{j+Q}}{\mascB _0}
\\[1ex]
&=
\nm {|f|^{r_0}}{\sfW ^1
(\mascB _0)}
\lesssim
\nm {|f|^{r_0}}{\mascB _0}
=
\nm f{\mascB}^{r_0},
\end{align*}
where the inequality follows from
the previous part of the proof.
This gives the first
inequality in \eqref{Eq:WSpIneq},
and the result follows.
%
%
%
%
\end{proof}

\par

As a consequence of 
Proposition \ref{Prop:BasicEmbWSp} we get

\par

\begin{cor}\label{Cor:BasicEmbWSp}
Let $\mascB$ be a normal QBF space
on $\rr d$ with respect to
$r_0\in (0,1]$ and
$v_0\in \mascP _E(\rr d)$, and
let $\omega \in \mascP _E(\rr d)$.
Then
$$
\sfW ^\infty (\omega v_0,\ell ^{r_0}
(\zz d))
\hookrightarrow
\mascB _{(\omega )}
\hookrightarrow
\sfW ^{r_0}(\omega /v_0,\ell ^{\infty}
(\zz d)),
$$
and
\begin{equation}\label{Eq:WSpIneq2}
\begin{aligned}
C^{-1}\nm f
{\sfW ^{r_0}(\omega /v_0,\ell ^{\infty})}
&\le
\nm f{\mascB _{(\omega )}}
\le
C\nm f
{\sfW ^\infty (\omega v_0,\ell ^{r_0})},
\\[1ex]
f
&\in
\sfW ^{r_0}(\omega/v_0,\ell ^{\infty}
(\zz d)),
\end{aligned}
\end{equation}
where the constant $C$ only depends
on $d$, $\omega$ and the constant
in \eqref{translmultprop1}.
\end{cor}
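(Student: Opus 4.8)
The plan is to deduce Corollary \ref{Cor:BasicEmbWSp} directly from Proposition \ref{Prop:BasicEmbWSp} by applying it with the right choice of underlying space and then rewriting the resulting Wiener quasi-norms in the concrete discrete-Lebesgue form. The key observation is that the two embeddings to be proved,
$$
\sfW ^\infty (\omega v_0,\ell ^{r_0})
\hookrightarrow
\mascB _{(\omega )}
\hookrightarrow
\sfW ^{r_0}(\omega /v_0,\ell ^{\infty}),
$$
should be obtained by \emph{composing} the embeddings
$\sfW ^\infty (\omega ,\mascB )\hookrightarrow \mascB _{(\omega )}\hookrightarrow \sfW ^{r_0}(\omega ,\mascB )$
from Proposition \ref{Prop:BasicEmbWSp}(2) with the sequence-space sandwich from Proposition \ref{Prop:BasicEmbWSp}(1), namely
$\ell ^{r_0}_{(v_0)}\hookrightarrow \ell _{\mascB}\hookrightarrow \ell ^{\infty}_{(1/v_0)}$.

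First I would unwind the definition \eqref{Eq:WienerQuasiNormsSimpleExt}: for any target sequence space $\ell$ and weight, $\nm f{\sfW ^{r}(\sigma ,\ell )}=\nm {a}{\ell }$ with $a(j)=\nm {f\,\sigma}{L^{r}(j+Q)}$. Applying Proposition \ref{Prop:BasicEmbWSp}(1) to the sequence $a=\{\nm {f\,\omega}{L^{r_0}(j+Q)}\}_j$ gives on the upper side
$$
\nm f{\sfW ^{r_0}(\omega ,\mascB )}
=
\nm a{\ell _{\mascB}}
\le
C\nm a{\ell ^{r_0}_{(v_0)}}
=
C\Big(\sum _{j}\big(\nm {f\,\omega}{L^{r_0}(j+Q)}v_0(j)\big)^{r_0}\Big)^{1/r_0},
$$
and since $\omega$ is moderate one has $\omega (X)v_0(j)\asymp (\omega v_0)(X)$ uniformly for $X\in j+Q$, so the right-hand side is comparable to $\nm f{\sfW ^{r_0}(\omega v_0,\ell ^{r_0})}$. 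This is \emph{not} quite what is wanted on the upper end; the cleaner route is to combine the \emph{other} directions. Concretely, chaining
$\mascB _{(\omega )}\hookrightarrow \sfW ^{r_0}(\omega ,\mascB )$ with
$\ell _{\mascB}\hookrightarrow \ell ^{\infty}_{(1/v_0)}$ yields
$\mascB _{(\omega )}\hookrightarrow \sfW ^{r_0}(\omega /v_0,\ell ^{\infty})$,
and chaining $\sfW ^\infty (\omega ,\mascB )\hookleftarrow \ell ^{r_0}_{(v_0)}$-type control with $\ell ^{r_0}_{(v_0)}\hookrightarrow \ell _{\mascB}$ yields
$\sfW ^\infty (\omega v_0,\ell ^{r_0})\hookrightarrow \mascB _{(\omega )}$; the weight shifts $\omega \mapsto \omega v_0$ and $\omega \mapsto \omega /v_0$ are exactly the $v_0(j)$ factors produced by \eqref{Eq:SeqIneq}, reabsorbed into the local weight by moderateness as above. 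The constants $C$ are then products of the constants appearing in \eqref{Eq:SeqIneq} and \eqref{Eq:WSpIneq}, so they depend only on $d$, $\omega$, and the constant in \eqref{translmultprop1}, as claimed.

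The main technical point — and the step I would be most careful about — is the bookkeeping that lets one pass the weight $v_0(j)$ between the \emph{sequence} index $j$ and the \emph{local} weight $\omega$ inside the integral $L^{r_0}(j+Q)$. The justification is that $v_0\in \mascP _E$ is submultiplicative and hence $v_0(X)\asymp v_0(j)$ for $X\in j+Q$ (this is \eqref{moderateconseq} applied on the unit cube), so replacing $\ell ^{r}_{(v_0)}$-norms of the sequence $a(j)=\nm {f\,\omega}{L^{r}(j+Q)}$ by unweighted $\ell ^{r}$-norms of $a'(j)=\nm {f\,\omega v_0}{L^{r}(j+Q)}$ costs only a fixed multiplicative constant, and $a'(j)$ is precisely the sequence defining $\sfW ^{r}(\omega v_0,\ell ^{r})$ via \eqref{Eq:WienerQuasiNormsSimpleExt}; the $\ell ^\infty _{(1/v_0)}$ side is handled symmetrically. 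Everything else is a direct substitution into the definitions, so once the weight-transfer lemma is invoked the two inequalities in \eqref{Eq:WSpIneq2} follow immediately from \eqref{Eq:SeqIneq} and \eqref{Eq:WSpIneq}, and the corresponding embeddings are their qualitative restatements.
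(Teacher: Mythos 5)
Your proposal is correct and takes essentially the same route as the paper: both deduce the corollary by chaining Proposition \ref{Prop:BasicEmbWSp} (1) and (2), and both handle the passage from $\omega$ to $\omega v_0$ and $\omega /v_0$ by transferring the weight between the sequence index and the local weight via moderateness on unit cubes — the paper packages this as the identity $\sfW ^p(\omega _1\omega _2 ,\ell ^q(\zz d))=\sfW ^p(\omega _1 ,\ell ^q_{(\omega _2)}(\zz d))$, while you argue it inline through $v_0(X)\asymp v_0(j)$ for $X\in j+Q$, which is the same fact. The false start you flag and then discard does not affect the validity of the final argument.
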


\par

\begin{proof}
If $\omega _1,\omega _2
\in \mascP _E(\rr d)$ and
$p,q\in (0,\infty ]$, then the facts
that $\omega _j$, $ j = 1,2$, are moderate imply
\begin{equation}
\begin{aligned}
\sfW ^p(\omega _1\omega _2 ,
\ell ^q(\zz d))
&=
\sfW ^p(\omega _1 ,
\ell ^q_{(\omega _2)}(\zz d))
\\[1ex]
&=
\sfW ^p(\omega _2 ,
\ell ^q_{(\omega _1)}(\zz d))
=
\sfW ^p(1,
\ell ^q_{(\omega _1\omega _2)}(\zz d)),
\end{aligned}
\end{equation}
with equivalent quasi-norms.
By combining these identities with
Proposition \ref{Prop:BasicEmbWSp},
we obtain
\begin{align*}
\sfW ^\infty (\omega v_0,\ell ^{r_0}
(\zz d))
&=
\sfW ^\infty (\omega ,
\ell _{(v_0)}^{r_0}
(\zz d))
\hookrightarrow
\sfW ^\infty (\omega ,
\ell _{\mascB}(\zz d))
\hookrightarrow
\mascB _{(\omega )}
\\[1ex]
&\hookrightarrow
\sfW ^{r_0} (\omega ,
\ell _{\mascB}(\zz d))
\hookrightarrow
\sfW ^{r_0} (\omega ,
\ell _{(1/v_0)}^{\infty}(\zz d))
\\[1ex]
&=
\sfW ^{r_0} (\omega /v_0,
\ell ^{\infty}(\zz d)).\qedhere
\end{align*}
\end{proof}

\par

\begin{cor} \label{cor:relation-to-Sigma}
Let $\mascB$ be a normal QBF space
on $\rr d$, $r\in (0,\infty ]$
and $\omega \in \mascP _E(\rr {d})$.
Then
\begin{alignat}{3}
\Sigma _1(\rr d)
&\subseteq
\sfW ^r(\omega ,\mascB)
\nsubseteq
\Sigma _1'(\rr d),&
\quad &\text{when}& \quad
r&<1,
\label{Eq:EmbWSpaceToDistSp1}
\intertext{and}
\Sigma _1(\rr d)
&\subseteq
\sfW ^r(\omega ,\mascB)
\subseteq
\Sigma _1'(\rr d),&
\quad &\text{when}& \quad
r&\ge 1,
\label{Eq:EmbWSpaceToDistSp2}
\end{alignat}
with continuous inclusions.
\end{cor}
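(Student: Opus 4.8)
The plan is to read off both inclusions from the discrete weighted sandwiching in Proposition \ref{Prop:BasicEmbWSp}(1), translating its two ends into statements about $\Sigma _1(\rr d)$ and $\Sigma _1'(\rr d)$ by means of the exponential weight bounds \eqref{Eq:weight1} together with the Gelfand--Shilov characterization \eqref{Eq:GSFtransfChar}. First I would treat the left inclusion in \eqref{Eq:EmbWSpaceToDistSp1} and \eqref{Eq:EmbWSpaceToDistSp2} at once, for every $r\in (0,\infty ]$. Since $\sfW ^\infty (\omega ,\mascB )\hookrightarrow \sfW ^r(\omega ,\mascB )$ by the monotonicity recorded in Remark \ref{Rem:WienerEmb}, it suffices to prove $\Sigma _1(\rr d)\hookrightarrow \sfW ^\infty (\omega ,\mascB )$. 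For $f\in \Sigma _1(\rr d)$, \eqref{Eq:GSFtransfChar} gives $|f(x)|\lesssim e^{-R|x|}$ for every $R>0$, while \eqref{Eq:weight1} gives $\omega (x)\lesssim e^{c|x|}$ for a fixed $c$; hence the sequence $a(j)=\nm {f\,\omega }{L^\infty (j+Q)}$ satisfies $a(j)\lesssim e^{-R|j|}$ for every $R$. As $v_0$ is submultiplicative and therefore exponentially bounded, this forces $a\in \ell ^{r_0}_{(v_0)}(\zz d)$, and Proposition \ref{Prop:BasicEmbWSp}(1) then yields $a\in \ell _{\mascB}(\zz d)$, i.e. $f\in \sfW ^\infty (\omega ,\mascB )$. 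Estimating the decay rate $R$ against a Gelfand--Shilov seminorm of $f$ makes this bound continuous.

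Next I would establish the right inclusion in \eqref{Eq:EmbWSpaceToDistSp2}, using the other end of Proposition \ref{Prop:BasicEmbWSp}(1), namely $\ell _{\mascB}(\zz d)\hookrightarrow \ell ^\infty _{(1/v_0)}(\zz d)$. For $f\in \sfW ^r(\omega ,\mascB )$ this yields $\nm {f\,\omega }{L^r(j+Q)}\lesssim v_0(j)\,\nm f{\sfW ^r(\omega ,\mascB )}$, and after absorbing $1/\omega \lesssim e^{c|x|}$ and $v_0\lesssim e^{c|x|}$ we obtain $\nm f{L^r(j+Q)}\lesssim e^{C|j|}\,\nm f{\sfW ^r(\omega ,\mascB )}$. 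The hypothesis $r\ge 1$ enters decisively here: since each cube $j+Q$ has measure one, H\"older's inequality gives $\nm f{L^1(j+Q)}\le \nm f{L^r(j+Q)}$, so $f\in L^1_{loc}(\rr d)$ with at most exponential local growth. Pairing with $g\in \Sigma _1(\rr d)$, whose super-exponential decay $|g(x)|\lesssim e^{-R|x|}$ (all $R$) again follows from \eqref{Eq:GSFtransfChar}, I would estimate $|(f,g)|\le \sum _j \nm f{L^1(j+Q)}\sup _{j+Q}|g| \lesssim \nm f{\sfW ^r(\omega ,\mascB )}\sum _j e^{C|j|}\sup _{j+Q}|g|$, where the last sum converges and is controlled by a seminorm of $g$. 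This identifies $f$ with a continuous functional on $\Sigma _1(\rr d)$, giving $\sfW ^r(\omega ,\mascB )\hookrightarrow \Sigma _1'(\rr d)$ continuously.

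Finally, for $r<1$ I would prove the failure $\sfW ^r(\omega ,\mascB )\nsubseteq \Sigma _1'(\rr d)$ by a counterexample exploiting exactly the breakdown of the step $L^r(j+Q)\hookrightarrow L^1(j+Q)$. Take $f_0(x)=|x|^{-d}\chi _{B_1(0)}(x)$. Then $\int _{B_1(0)}|x|^{-dr}\,dx<\infty$ precisely because $r<1$, so $f_0\in L^r(\rr d)$; as $f_0$ is compactly supported, $\omega$ is locally bounded, and characteristic functions of bounded sets lie in $\mascB$, only finitely many local contributions are nonzero and $f_0\in \sfW ^r(\omega ,\mascB )$. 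On the other hand $f_0\ge 0$ and $f_0\notin L^1_{loc}(\rr d)$, so $f_0$ cannot be represented by a positive Radon measure and hence does not define an element of $\Sigma _1'(\rr d)$ through the natural pairing. I expect the main obstacle to be exactly this $r<1$ dichotomy: one must argue carefully that membership in $\sfW ^r$ for $r<1$ genuinely permits non-integrable singularities (so no identification with $\Sigma _1'$ survives), whereas for $r\ge 1$ the local $L^r$--$L^1$ comparison is precisely what restores local integrability and thereby the distributional interpretation.
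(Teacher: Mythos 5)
Your proposal is correct and follows essentially the same route as the paper: the left inclusions via Gelfand--Shilov decay, the exponential weight bounds \eqref{Eq:weight1} and Proposition \ref{Prop:BasicEmbWSp}{\,}(1); the inclusion $\sfW ^r(\omega ,\mascB )\subseteq \Sigma _1'(\rr d)$ for $r\ge 1$ via the $\ell ^\infty _{(1/v_0)}$ end of that same proposition together with pairing against super-exponentially decaying test functions, which the paper merely packages through the intermediate space $L^1_{(1/v_\rho )}$ and \eqref{Eq:vrhoEmb}; and the failure for $r<1$ via a locally $L^r$ but non-locally-integrable singularity. The paper's counterexample is $x_1^{-1}\chi _Q$ rather than your $|x|^{-d}\chi _{B_1(0)}$, an immaterial difference.
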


\par

\begin{proof}
By \eqref{Eq:weight1}, \eqref{Eq:GSFtransfChar} and 
the definition of the Wiener amalgam space
it follows that $\Sigma _1(\rr d)$
is continuously embedded in
$\sfW ^r(\omega ,\mascB)$, which gives
left inclusions in
\eqref{Eq:EmbWSpaceToDistSp1}
and
\eqref{Eq:EmbWSpaceToDistSp2}.

\par

If $r<1$, $Q=(0,1)^d$, and 
$f(x)=x_1^{-1}\chi _Q(x)$, $x=(x_1, \ldots, x_d)\in \rr d$,
then we also have
$$
f\in \sfW ^r(\omega ,\mascB )\setminus
\Sigma _1'(\rr d),
$$
which gives 
\eqref{Eq:EmbWSpaceToDistSp1}.

\par

It remains to prove the second
inclusion in 
\eqref{Eq:EmbWSpaceToDistSp2}.
Let $v_\rho (x) = e^{\rho |x|}$,
$x\in \rr d$ and $\rho >0$. Then
$L^1_{(1/v_\rho )}(\rr d)
\hookrightarrow
\Sigma _1'(\rr d)$ (see
\eqref{Eq:vrhoEmb}). 
Also let
$v\in \mascP _E$ be chosen such that
$\omega$ is $v$-moderate,
$v_0$ be as in Definition 
\ref{Def:BFSpaces},
and redefine $Q$ as $Q=[0,1]^d$. For any
$f\in L^1_{(v_\rho )}(\rr d)$,
we have
\begin{align*}
\nm f{L^1_{(1/v_\rho )}}
&=
\nm {f/v_\rho }{L^1}
\lesssim 
\nm {f\omega v/v_\rho }{L^1}
\\[1ex]
&\le
\sum _{j\in \zz d}
\nm {f\omega v/v_\rho }{L^1(j+Q)}
\\[1ex]
&=
\sum _{j\in \zz d}
\nm {(f\omega v\eabs \cdo ^{d+1}
/v_\rho )\eabs \cdo ^{-(d+1)}}{L^1(j+Q)}
\\[1ex]
&\le 
\nm {\eabs \cdo ^{-(d+1)}}{\ell ^1}
\sup _{j\in \zz d}
\left (
\nm {f\omega v\eabs \cdo ^{d+1}
/v_\rho }{L^1(j+Q)}
\right )
\\[1ex]
&\lesssim
\sup _{j\in \zz d}
\left (
\nm {f\omega /v_0}{L^1(j+Q)}
\right ),
\end{align*}
provided $\rho$ is chosen large
enough such that
$$
v_0v\eabs \cdo ^{d+1}
\lesssim
v_\rho .
$$
An application of Proposition
\ref{Prop:BasicEmbWSp} (1) now gives
\begin{align*}
\nm f{L^1_{(1/v_\rho )}}
&\lesssim
\NM { \{ \nm {f\omega }{L^1(j+Q)}\}
_{j\in \zz d} }{\ell ^\infty (1/v_0)}
\\[1ex]
&\lesssim
\NM { \{ \nm {f\omega }{L^1(j+Q)}\}
_{j\in \zz d} }{\ell _{\mascB}}
\asymp
\nm f{\sfW ^1(\omega ,\mascB )}.
\end{align*}
A combination of these estimates and
\eqref{Eq:vrhoEmb} gives
$$
\sfW ^1(\omega ,\mascB )
\hookrightarrow
L^1_{(1/v_\rho )}(\rr d)
\hookrightarrow
\Sigma _1'(\rr d),
$$
which gives the second inclusion
in \eqref{Eq:EmbWSpaceToDistSp2},
and thereby the result.
\end{proof}

\par

\section{Equivalence of norms in modulation
and Wiener amalgam spaces}\label{sec3}

\par

In this section we deduce norm
equivalences
for general classes of
modulation spaces.
Especially we extend
\cite[Proposition 2$'$]{Toft2022} by 
relaxing conditions on window functions 
and by allowing more general classes of modulation
and Wiener amalgam spaces  $ M(\omega ,\mascB )$
and $\sfW ^r (\omega , \mascB )$, respectively.
The arguments are different compared to 
\cite{Toft2022}.
For related results, when $ \mascB=L^{p,q} (\rr {2d})$
respectively $\mascB=L ^{p,q}_{\ast} 
(\rr {2d}$),  $r=\infty$, and $p,q\in [1,\infty ]$,
see \cite{Gro2},
and for the same choice of $\mascB$
and $r$, and  $p,q\in (0,\infty ]$, see
\cite{GaSa,Toft13}. Here observe that 
the analysis in \cite{GaSa} is restricted
to weights which are moderated
by polynomially bounded submultiplicative
functions. 
%
%

\par

The main result in the section is the following. Here and in what
follows we set $\sfW ^{r_1,r_2} (\omega , \mascB)
=
\sfW ^{r_1,r_2}_{d,d} (\omega , \mascB)$, when $r_1,r_2\in (0,\infty ]$,
$\omega \in \mascP _E(\rr {2d})$ and $\mascB$ is an invariant QBF space
on $\rr {2d}$.

\par

\begin{thm}\label{Thm:EquivNorms2}
Let $\omega ,v,v_0\in \mascP _E(\rr 
{2d})$ be such that
$\omega$ is $v$-moderate, 
$\mascB$ be a normal QBF space
on $\rr {2d}$ with respect to $r_0\in (0,1]$ and $v_0$, $r_1,r_2\in [r_0,\infty ]$,
and let
$\phi _1,\phi _2\in M^{r_0}_{(v_0v)}
(\rr d)\setminus 0$.
Then
\begin{equation}\label{Eq:EquivNorms21}
f\in M(\omega ,\mascB )
\quad \Leftrightarrow \quad
V_{\phi _1}f\cdo \omega \in \mascB
\quad \Leftrightarrow \quad
V_{\phi _2}f\in \sfW ^{r_1,r_2}
(\omega , \mascB ),
\end{equation}
and
\begin{equation}\label{Eq:EquivNorms22}
\nm f{M(\omega ,\mascB )}
\asymp
\nm {V_{\phi _1}f\cdo \omega }{\mascB}
\asymp
\nm {V_{\phi _2}f}
{\sfW ^{r_1,r_2} (\omega , \mascB)}.
\end{equation}
\end{thm}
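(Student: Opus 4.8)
The plan is to route every window through a fixed Gaussian $\phi_0(x)=\pi^{-\frac d4}e^{-\frac12|x|^2}$, for which the local regularity in Lemma~\ref{Lemma:EstSTFT} collapses all admissible Wiener indices. I would fix $\phi_0\in\Sigma_1\subseteq M^{r_0}_{(v_0v)}$ (the inclusion coming from \eqref{Eq:UnionIntersectMod}) and first dispose of the Gaussian case: by Lemma~\ref{Lemma:EstSTFT}, $\nm{V_{\phi_0}f}{L^\infty(j+Q)}\lesssim\nm{V_{\phi_0}f}{L^{r}(j+Q')}$ for every $r\in(0,\infty]$ and a slightly enlarged cube $Q'$, so for the Gaussian window the trivial embedding $L^{r}\hookrightarrow L^\infty$ on bounded sets reverses and all local quasi-norms of $V_{\phi_0}f$ become comparable. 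Inserting this into Definition~\ref{Def:WienAm} and combining with \eqref{Eq:WienerEmbTwoInd} and Proposition~\ref{Prop:BasicEmbWSp} should give, for all $r_1,r_2\in(0,\infty]$,
\[
\nm{V_{\phi_0}f}{\sfW^{r_1,r_2}(\omega,\mascB)}
\asymp
\nm{V_{\phi_0}f}{\sfW^\infty(\omega,\mascB)}
\asymp
\nm{V_{\phi_0}f\cdo\omega}{\mascB},
\]
i.e. the three quantities in \eqref{Eq:EquivNorms22} already coincide when $\phi_1=\phi_2=\phi_0$.

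The remaining task is window independence, which I would extract from the change-of-window estimate \eqref{Eq:change-of-window}. Writing $\phi$ for either $\phi_1$ or $\phi_2$, one has $|V_\phi f|\lesssim|V_{\phi_0}f|*|V_\phi\phi_0|$ (and symmetrically with the roles of $\phi_0$ and $\phi$ swapped, using $M^{r_0}_{(v_0v)}\subseteq M^2=L^2$, since $v_0v\ge1$ for submultiplicative weights, so $\phi_1,\phi_2\in L^2$). As $\omega$ is $v$-moderate,
\[
\big(|V_{\phi_0}f|*|V_\phi\phi_0|\big)(X)\,\omega(X)
\lesssim
\big((|V_{\phi_0}f|\,\omega)*(|V_\phi\phi_0|\,v)\big)(X),
\]
so the whole problem reduces to a single convolution estimate in the weighted Wiener amalgam space, of the form
\[
\nm{F*G}{\sfW^{r_1,r_2}(\omega,\mascB)}
\lesssim
\nm{F}{\sfW^{r_1,r_2}(\omega,\mascB)}\cdot\nm{G\,v}{\sfW^\infty(v_0,\ell^{r_0})},
\qquad r_1,r_2\in[r_0,\infty],
\]
to be applied with $F=|V_{\phi_0}f|$ and $G=|V_\phi\phi_0|$ (and its reverse). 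I would prove this estimate by decomposing $G=\sum_k G\chi_{k+Q}$, using Young's inequality on cubes together with the subadditivity of $t\mapsto t^{r_0}$ to dominate the Wiener sequence of $F*G$ by a discrete convolution of the Wiener sequences of $F$ and $G$, and then invoking the discrete convolution estimate \eqref{Eq:DiscConvEst}; here the hypothesis $r_1,r_2\ge r_0$ is exactly what makes the local pieces summable in $\ell^{r_0}$.

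It then remains to verify finiteness of the kernel factor, i.e. $|V_\phi\phi_0|\,v\in\sfW^\infty(v_0,\ell^{r_0})$ for $\phi\in M^{r_0}_{(v_0v)}$. Testing the membership $\phi\in M^{r_0}_{(v_0v)}$ against the same Gaussian $\phi_0$ gives $V_{\phi_0}\phi\cdo(v_0v)\in L^{r_0}$, and Lemma~\ref{Lemma:EstSTFT} again upgrades this to $V_{\phi_0}\phi\in\sfW^\infty(v_0v,\ell^{r_0})$; the identity $|V_\phi\phi_0(X)|=|V_{\phi_0}\phi(-X)|$ from \eqref{Eq:SwapWindows1}, together with $v_0(-k)v(-k)\asymp v_0(k)v(k)$, then transfers this to $|V_\phi\phi_0|$ (and note $\nm{Gv}{\sfW^\infty(v_0,\ell^{r_0})}=\nm{G}{\sfW^\infty(v_0v,\ell^{r_0})}$). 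Observe that testing against $\phi_0$ throughout avoids any circularity, since $M^{r_0}_{(v_0v)}$ may itself be read off with the window $\phi_0\in\Sigma_1$.

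Chaining the Gaussian equivalence with the two window changes should yield
\[
\nm{V_{\phi_1}f\cdo\omega}{\mascB}
\asymp
\nm{V_{\phi_0}f\cdo\omega}{\mascB}
\asymp
\nm{V_{\phi_0}f}{\sfW^{r_1,r_2}(\omega,\mascB)}
\asymp
\nm{V_{\phi_2}f}{\sfW^{r_1,r_2}(\omega,\mascB)},
\]
which is \eqref{Eq:EquivNorms22} (the quantity $\nm f{M(\omega,\mascB)}$ being the middle term, for the defining window $\phi\in\Sigma_1\subseteq M^{r_0}_{(v_0v)}$), while \eqref{Eq:EquivNorms21} follows because finiteness of any one of these quasi-norms forces finiteness of the others. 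I expect the main obstacle to be the quasi-Banach convolution estimate: when $r_0<1$ the continuous Young and Minkowski inequalities are unavailable on $\mascB$ directly, so the argument is forced to pass through the Gaussian intermediary (to linearize the local behaviour via Lemma~\ref{Lemma:EstSTFT}) and through the discrete convolution \eqref{Eq:DiscConvEst}, rather than convolving inside $\mascB$ itself.
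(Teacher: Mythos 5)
Your overall architecture is the same as the paper's: first the Gaussian case via Lemma \ref{Lemma:EstSTFT} (this is exactly Proposition \ref{Prop:EquivNorms2}, and your argument for it is correct), then change of windows through \eqref{Eq:change-of-window}, with the kernel membership $|V_\phi \phi _0|\, v\in \sfW ^\infty (v_0,\ell ^{r_0})$ and its equivalence with $\nm \phi {M^{r_0}_{(v_0v)}}$ verified as in the paper (cf. \eqref{Eq:TildeAlfaIdent} and \eqref{NormAlpha}). The gap is the convolution estimate to which you reduce the window change:
\begin{equation*}
\nm {F*G}{\sfW ^{r}(\omega ,\mascB )}
\lesssim
\nm {F}{\sfW ^{r}(\omega ,\mascB )}\,
\nm {G\, v}{\sfW ^\infty (v_0,\ell ^{r_0})}
\end{equation*}
is \emph{false} for general measurable $F$ as soon as $r<1$; there is no ``Young's inequality on cubes'' below exponent $1$. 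Concretely, take $\omega =v=v_0\equiv 1$, $\mascB =L^{r_0}(\rr {2d})$, $r=r_0<1$, $G=\chi _{[0,1]^{2d}}$ and $F=h\chi _{[0,\epsilon ]^{2d}}$. Then $\nm F{\sfW ^{r}(\omega ,\mascB )}\asymp h\epsilon ^{2d/r}$, while $F*G$ is a plateau of height $\asymp h\epsilon ^{2d}$ on a unit cube, so $\nm {F*G}{\sfW ^{r}(\omega ,\mascB )}\asymp h\epsilon ^{2d}$, and the claimed bound fails by the factor $\epsilon ^{-2d(1/r-1)}\to \infty$ as $\epsilon \to 0$: local $L^r$ quasi-norms with $r<1$ do not see mass, but convolution does. (The same spike explains why Minkowski's inequality \eqref{Eq:MinkIneq} is stated only for Banach $\mascB$.)

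This failure hits exactly one of your two window changes, namely the direction in which the Gaussian-window quantity must be dominated by the general-window quantity, i.e. $F=|V_{\phi }f|$, $G=|V_{\phi _0}\phi |$ in $|V_{\phi _0}f|\lesssim |V_\phi f|*|V_{\phi _0}\phi |$. In the opposite direction you are safe: there $F=|V_{\phi _0}f|$ is a Gaussian STFT, so Lemma \ref{Lemma:EstSTFT} lets you trade its local $L^r$ quasi-norms for local sup-norms and run the Young step at exponent $\infty$, where it is legitimate. But $V_\phi f$, for $\phi$ merely in $M^{r_0}_{(v_0v)}(\rr d)$, has no such a priori local regularity, and assuming $\nm {V_\phi f}{\sfW ^\infty (\omega ,\mascB )}\lesssim \nm {V_\phi f}{\sfW ^{r}(\omega ,\mascB )}$ is circular, since that equivalence is part of what is being proved. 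The paper closes precisely this hole with a step your proposal lacks: inside the local estimate it factorizes $F=F^r\cdot F^{1-r}$, bounding the local mass of $F$ by a product of powers of its local $L^r$ quasi-norm and local sup-norm; it then applies the weighted arithmetic--geometric inequality with a free parameter $\theta$, converts the resulting $\sfW ^\infty$-term of $F$ back into $\nm \phi {M^{r_0}_{(v_0v)}}\nm {V_{\phi _0}f}{\sfW ^r(\omega ,\mascB )}$ by the already established easy direction \eqref{eq:estimates-for-(2.10)}, and finally chooses $\theta$ so that this term appears with coefficient $\tfrac 12$ and can be absorbed into the left-hand side (see \eqref{Eq:F0FEst} and the lines following it). Without this absorption argument, or a substitute for it, your chain of estimates is one-sided and does not yield \eqref{Eq:EquivNorms22}.
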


\par

Note that \eqref{Eq:EquivNorms22}
gives
\begin{equation}\label{Eq:EquivNorms23}
\nm {V_{\phi _2}f}
{\sfW ^{r_1} (\omega , \mascB)}
\asymp
\nm {V_{\phi _2}f}
{\sfW ^{r_2} (\omega , \mascB)},
\quad
r_1,r_2\in (0,\infty ].
\end{equation}

\par

In order to prove
Theorem \ref{Thm:EquivNorms2}, we
first show that the result holds
true when $r_1$ and $r_2$ belong to
the larger interval $(0,\infty ]$
instead of $[r_0,\infty ]$,
when $\phi _1(x)$ and $\phi _2(x)$
are equal to the (standard) Gaussian
$\phi _0(x) = \pi ^{-\frac d4}
e^{-\frac 12\cdot |x|^2}$.

\par

\begin{prop}\label{Prop:EquivNorms2}
Let $\omega ,v_0\in \mascP _E(\rr 
{2d})$ be such that $v_0$
is submultiplicative,
$\mascB$ be a normal QBF space
on $\rr {2d}$ with respect to $r_0\in 
(0,1]$ and $v_0$, $r_1,r_2\in (0,\infty ]$,
and let
$\phi _0(x)=\pi ^{-\frac d4}
e^{-\frac 12\cdot |x|^2}$.
Then
\begin{equation}\label{Eq:EquivNorms21G}
f\in M(\omega ,\mascB )
\quad \Leftrightarrow \quad
V_{\phi _0}f\omega \in \mascB
\quad \Leftrightarrow \quad
V_{\phi _0}f\in \sfW ^{r_1,r_2}
(\omega , \mascB ),
\end{equation}
and
\begin{equation}\label{Eq:EquivNorms22G}
\nm f{M(\omega ,\mascB )}
\asymp
\nm {V_{\phi _0}f\cdot \omega }{\mascB}
\asymp
\nm {V_{\phi _0}f}
{\sfW ^{r_1,r_2} (\omega , \mascB)}.
\end{equation}
\end{prop}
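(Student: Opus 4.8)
The plan is to split the asserted equivalence \eqref{Eq:EquivNorms22G} into two parts and then read off the finiteness equivalences \eqref{Eq:EquivNorms21G} as a consequence. Writing $\mascB _{(\omega )}$ as in \eqref{Eq:BomegaDef}, so that $\nm {V_{\phi _0}f\cdot \omega}{\mascB}=\nm {V_{\phi _0}f}{\mascB _{(\omega )}}$, the two parts are: (A) the Wiener equivalence $\nm {V_{\phi _0}f}{\mascB _{(\omega )}}\asymp \nm {V_{\phi _0}f}{\sfW ^{r_1,r_2}(\omega ,\mascB )}$ for the Gaussian window; and (B) the window change $\nm {f}{M(\omega ,\mascB )}=\nm {V_\psi f\cdot \omega}{\mascB}\asymp \nm {V_{\phi _0}f\cdot \omega}{\mascB}$, where $\psi \in \Sigma _1(\rr d)\setminus 0$ is the window fixing $M(\omega ,\mascB )$ in Definition \ref{Def:GenModSpace}. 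Since $V_{\phi _0}f$ is a smooth element of $\Sigma _1'(\rr {2d})$ whenever $f\in \Sigma _1'(\rr d)$, all quantities are well-defined, and once \eqref{Eq:EquivNorms22G} is proved, the equivalences in \eqref{Eq:EquivNorms21G} follow by comparing finiteness of the three norms.

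For (A) I would start from the sandwich $\sfW ^\infty (\omega ,\mascB )\hookrightarrow \mascB _{(\omega )}\hookrightarrow \sfW ^{r_0}(\omega ,\mascB )$ of Proposition \ref{Prop:BasicEmbWSp}, together with the monotonicity $\nm \cdot {\sfW ^{r_1,r_2}(\omega ,\mascB )}\le C\nm \cdot {\sfW ^\infty (\omega ,\mascB )}$ from Remark \ref{Rem:WienerEmb}. The only nontrivial direction is the reverse estimate $\nm {V_{\phi _0}f}{\sfW ^\infty (\omega ,\mascB )}\lesssim \nm {V_{\phi _0}f}{\sfW ^{r_1,r_2}(\omega ,\mascB )}$, which fails for a generic function but holds for a Gaussian short-time Fourier transform because of its local regularity. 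Concretely, for $X_0$ in a unit cube $j+Q$, Lemma \ref{Lemma:EstSTFT} with $p=\min (r_1,r_2)$ bounds $|V_{\phi _0}f(X_0)|$ by $\nm {V_{\phi _0}f}{L^p(B_R(X_0))}$; the moderateness of $\omega$ lets me pull $\omega (X_0)$ inside this local norm up to the factor $\sup _{|Y|<R}v(Y)$, and a local H\"older comparison bounds the resulting $L^p$-quantity by the mixed norm $\nm {V_{\phi _0}f\cdot \omega}{L^{r_1,r_2}(j+\Omega )}$ over a fixed enlarged box $\Omega$. Taking the supremum over $j+Q$ and assembling in the solid sequence space $\ell _{\mascB}$ gives an estimate by the Wiener quasi-norm built on $\Omega$, which by Remark \ref{Rem:EquivWienerNorms} is equivalent to $\nm {V_{\phi _0}f}{\sfW ^{r_1,r_2}(\omega ,\mascB )}$. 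Choosing $r_1=r_2=r_0$ in particular collapses the sandwich, so $\nm {V_{\phi _0}f}{\mascB _{(\omega )}}\asymp \nm {V_{\phi _0}f}{\sfW ^\infty (\omega ,\mascB )}\asymp \nm {V_{\phi _0}f}{\sfW ^{r_1,r_2}(\omega ,\mascB )}$ for all $r_1,r_2\in (0,\infty ]$, which is (A).

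For (B) the key point is that, since $r_0<1$ rules out the usual continuous Young/Minkowski bound \eqref{Eq:MinkIneq}, the window change must be routed through the discrete convolution estimate \eqref{Eq:DiscConvEst}. Starting from the change of window \eqref{Eq:change-of-window}, $|V_{\phi _0}f|\lesssim |V_\psi f|*|V_{\phi _0}\psi |$, and using $v$-moderateness of $\omega$ to absorb the weight, I obtain $(V_{\phi _0}f)\omega \lesssim (|V_\psi f|\omega )*\Phi$ with $\Phi =|V_{\phi _0}\psi |\,v$, where $V_{\phi _0}\psi \in \Sigma _1(\rr {2d})$ and $v\in \mascP _E(\rr {2d})$, so $\Phi$ decays faster than any exponential. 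Dominating $\Phi$ by a rapidly decaying step function and estimating the local $L^\infty$ masses of the convolution over $j+Q$ bounds the sequence $\{\nm {(V_{\phi _0}f)\omega}{L^\infty (j+Q)}\}_j$ pointwise by a discrete convolution $a*b$, where $b_l=\nm {(V_\psi f)\omega}{L^\infty (l+Q)}$ and $a$ inherits the super-exponential decay of $\Phi$, hence lies in $\ell ^{r_0}_{(v_0)}(\zz {2d})$. Then \eqref{Eq:DiscConvEst} together with the translation invariance \eqref{translmultprop1} gives $\nm {V_{\phi _0}f}{\sfW ^\infty (\omega ,\mascB )}\lesssim \nm {V_\psi f}{\sfW ^\infty (\omega ,\mascB )}$, and swapping the roles of $\phi _0$ and $\psi$ gives the reverse; the same argument with $r_0$ in place of $\infty$ shows that the $\sfW ^{r_0}$-norms of $V_{\phi _0}f$ and $V_\psi f$ are equivalent as well. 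Combining this window-independence of the $\sfW ^\infty$- and $\sfW ^{r_0}$-norms with the Gaussian collapse from (A) and the sandwich of Proposition \ref{Prop:BasicEmbWSp} squeezes $\nm {V_\psi f}{\mascB _{(\omega )}}$ between two equivalent quantities, yielding (B).

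The main obstacle is the reverse Wiener inequality in (A): in general $\sfW ^{r_1,r_2}(\omega ,\mascB )$ is strictly smaller than $\sfW ^\infty (\omega ,\mascB )$, and the equivalence for $V_{\phi _0}f$ rests entirely on the Gaussian-specific local estimate of Lemma \ref{Lemma:EstSTFT}; inserting the weight and the mixed local norms correctly into the solid space $\ell _{\mascB}$ (via Remark \ref{Rem:EquivWienerNorms}) is the delicate bookkeeping. A secondary difficulty, already noted above, is that the quasi-Banach order $r_0<1$ forbids the continuous convolution bound \eqref{Eq:MinkIneq}, so both the window change and the reassembly must be discretized in order to invoke \eqref{Eq:DiscConvEst}.
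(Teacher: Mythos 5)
Your part (A) is correct, and it is essentially the paper's own proof: the nontrivial reverse bound $\nm {V_{\phi _0}f}{\sfW ^\infty (\omega ,\mascB )}\lesssim \nm {V_{\phi _0}f}{\sfW ^{r_1,r_2}(\omega ,\mascB )}$ comes from the Gaussian sub-mean-value estimate of Lemma \ref{Lemma:EstSTFT} together with Remark \ref{Rem:EquivWienerNorms}, and your use of the sandwich in Proposition \ref{Prop:BasicEmbWSp} to collapse everything onto $\nm {V_{\phi _0}f\cdot \omega}{\mascB}$ is a legitimate shortcut (the paper instead reproves the inequality $\nm {V_{\phi _0}f}{\sfW ^\infty (\omega ,\mascB )}\lesssim \nm {V_{\phi _0}f\cdot \omega }{\mascB}$ directly via Minkowski's inequality in $\mascB _0$, but the content is the same). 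Note, however, that the paper's proof of this proposition ends there: at this stage the defining window of $M(\omega ,\mascB )$ is taken to be $\phi _0$ itself, so the first equivalence in \eqref{Eq:EquivNorms22G} is definitional, and window independence is exactly what Theorem \ref{Thm:EquivNorms2} adds later. Your part (B) is therefore an addition, and it is where your proof breaks.

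The gap is the sentence claiming that ``the same argument with $r_0$ in place of $\infty$'' gives $\nm {V_{\phi _0}f}{\sfW ^{r_0}(\omega ,\mascB )}\lesssim \nm {V_{\psi }f}{\sfW ^{r_0}(\omega ,\mascB )}$. Discretizing the bound $|V_{\phi _0}f|\,\omega \lesssim G*\Phi$, $G=|V_\psi f|\,\omega$, controls local quantities of $G*\Phi$ only through the local $L^1$ masses of $G$: with $Q=[0,1]^{2d}$ and $Q_2=[-1,1]^{2d}$, for $X\in j+Q$ one has $\int _{l+Q}G(X-Y)\Phi (Y)\, dY\le \big (\sup _{l+Q}\Phi \big )\nm G{L^1(j-l+Q_2)}$, and for $r_0<1$ there is no substitute bounding $\nm G{L^1}$ by $\nm G{L^{r_0}}$ on a fixed cube: a spike of height $\ep ^{-1/r_0}$ on a set of measure $\ep$ has local $L^{r_0}$-norm $1$ but local $L^1$-norm $\ep ^{1-1/r_0}\to \infty$. (This is the same reason the paper's convolution results, Proposition \ref{Prop:ConvWienerSp} and Corollary \ref{Cor:ConvWienerSp1}, keep the local exponents in Young's relation with $p_j\ge 1$; only the outer sequence space carries the quasi-Banach order.) So in your chain for the direction $\nm {V_{\phi _0}f\cdot \omega}{\mascB}\lesssim \nm {V_\psi f\cdot \omega}{\mascB}$ — namely (A), then the $r_0$-level window change, then the sandwich — the middle inequality is unproven, while the $\sfW ^\infty$-level window change and the opposite direction of (B) are fine. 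Closing this gap is precisely the hard part of the paper's proof of Theorem \ref{Thm:EquivNorms2}, estimate \eqref{Eq:EquivNorms2B}: there one writes $F=F^r\, F^{1-r}$, bounds the local $L^1$ masses by products of powers of local $L^\infty$ and local $L^r$ quantities, applies the arithmetic--geometric inequality with a free parameter $\theta$, and then absorbs the resulting $\sfW ^r$-norm of the Gaussian transform into the left-hand side. If you read the proposition as the paper does, your (A) alone is a complete proof and (B) can be dropped; if you insist on (B), you must carry out this absorption argument rather than a discrete convolution estimate.
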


\par


\par

\par

\begin{proof}
Due to \eqref{Eq:WienerEmbTwoInd}
we reduce ourself to the case $r_1=r_2=r$.

\par

First we prove
\eqref{Eq:EquivNorms23} when $\phi _2 = \phi _0$.
Since
$$
\nm {V_{\phi _0}f}
{\sfW ^{r} (\omega , \mascB)}
\le \nm {V_{\phi _0}f}
{\sfW ^{\infty}
(\omega , \mascB)},
$$
it suffices to prove that
\begin{equation}\label{Eq:RevIneq1}
\nm {V_{\phi _0}f}
{\sfW ^{\infty}
(\omega , \mascB)}
\lesssim
\nm {V_{\phi _0}f}
{\sfW ^{r} (\omega , \mascB)},
\quad
r\in (0,\infty ].
\end{equation}
We shall essentially follow the
first part of the proof of
\cite[Propostion 2$'$]{Toft2022} to
show \eqref{Eq:RevIneq1}.

\par

Let 
$F_0=|V_{\phi _0} f\cdot \omega |$,
$Q=[0,1]^{2d}$, and $Q_r=[-r,r]^{2d}$, $r>0$.
By Lemma \ref{Lemma:EstSTFT}
and the fact that $\omega$ is
moderate it follows that there is an
$X_j=(x_j,\xi _j)\in j+Q$,
$j\in \zz {2d}$,
such that
\begin{align*}
\nm {F_0}{L^\infty (j+Q)}
&\asymp
\nm {V_{\phi _0}f}{L^\infty (j+Q)}
\omega (j)
\\[1ex]
&=
|{V_{\phi _0}f}(X_j)|\omega (j)
\lesssim
\nm {V_{\phi _0}f}{L^r (B_1(X_j))}
\omega (j)
\\[1ex]
&\le
\nm {V_{\phi _0}f}{L^r (j+Q_2)}
\omega (j)
\asymp
\nm {F_0}{L^r (j+Q_2)}.
\end{align*}
From this estimate and Remark
\ref{Rem:EquivWienerNorms} we get
\begin{align*}
\nm {V_{\phi _0}f}
{\sfW ^\infty (\omega ,\mascB)}
&=
\nm { \{ \nm {F_0}
{L^\infty (j+Q)}\}
_{j\in \zz {2d}}}{\ell _{\mascB}}
\\[1ex]
&\lesssim
\nm { \{ \nm {F_0}
{L^r (j+Q_2)}\}
_{j\in \zz {2d}}}{\ell _{\mascB}}
\\[1ex]
&\asymp
\nm { \{ \nm {F_0}
{L^r (j+Q)}\}
_{j\in \zz {2d} }}{\ell _{\mascB}}
=
\nm {V _{\phi _0} f}
{\sfW ^r(\omega ,\mascB)},
\end{align*}
and \eqref{Eq:RevIneq1} follows.

\par

Since 
$\nm {F_0}{\mascB}
\le
\nm {V_{\phi _0}f}
{\sfW ^\infty
(\omega , \mascB)}$ (see Proposition \ref{Prop:BasicEmbWSp}),
the result follows 
if we prove
that
\begin{equation}\label{Eq:RevIneq2}
\nm {V_{\phi _0}f}
{\sfW ^\infty
(\omega , \mascB)}
\lesssim
\nm {F_0}{\mascB}.
\end{equation}

\par

By Lemma \ref{Lemma:EstSTFT} we have for $X \in \rr {2d}$
\begin{align}
\nm {F_0}{L^\infty (j+Q)}^{r_0}
\chi _{j+Q}(X)
\notag
&\asymp
\nm {V_{\phi _0}f}{L^\infty (j+Q)}^{r_0}
\omega (j) ^{r_0}\chi _{j+Q}(X)
\notag
\\[1ex]
&\lesssim
\left (
\int _{Q_1}F_0(X_j+Y)^{r_0}\, dY
\right )
\chi _{j+Q}(X)
\notag
\\[1ex]
&=
\left (
\int _{X_j+Q_1}F_0(Y)^{r_0}\, dY
\right )
\chi _{j+Q}(X)
\notag
\\[1ex]
&\le
\left (
\int _{X+Q_3}F_0(Y)^{r_0}\, dY
\right )
\chi _{j+Q}(X)
\notag
\\[1ex]
&\le
\left (
\int _{Q_3}F_0(Y+X)^{r_0}\, dY
\right )
\chi _{j+Q}(X).
\label{Eq:F0LNormEst}
\end{align}
In the second inequality from
the end, we have used the facts that
$F_0\ge 0$, and that
$$
X_j+Q_1\subseteq X+Q_3
\quad \text{when}\quad
X\in j+Q.
$$

\par

Let $\mascB _0$ be an invariant
BF space, related to $\mascB$ as
in Definition \ref{Def:BFSpaces} (3).
By \eqref{Eq:F0LNormEst} we get
\begin{align*}
\nm {V_{\phi _0}f}
{\sfW ^\infty
(\omega , \mascB)}^{r_0}
&=
\NM { \sum _{j\in \zz {2d}}
\nm {F_0}{L^\infty (j+Q)}\chi _{j+Q}}
{\mascB}^{r_0}
\\[1ex]
&=
\NM { \sum _{j\in \zz {2d}}
\nm {F_0}{L^\infty (j+Q)}^{r_0}
\chi _{j+Q}}
{\mascB _0}
\\[1ex]
&\lesssim
\NM { \sum _{j\in \zz {2d}}
\left (
\int _{Q_3}F_0(Y+\cdo )^{r_0}\, dY
\right )
\chi _{j+Q}}
{\mascB _0}
\\[1ex]
&\le
\int _{Q_3}
\NM { \sum _{j\in \zz {2d}}
\left (
F_0(Y+\cdo )^{r_0}
\right )
\chi _{j+Q}}
{\mascB _0}\, dY
\\[1ex]
&\lesssim
\int _{Q_3}
\NM {F_0(Y+\cdo )^{r_0}}
{\mascB _0}\, dY
\\[1ex]
&\lesssim
\int _{Q_3}
\NM {F_0^{r_0}}
{\mascB _0}\, dY
\asymp
\NM {F_0}
{\mascB}^{r_0},
\end{align*}
and \eqref{Eq:RevIneq2} follows.
\end{proof}

\par

\begin{proof}[Proof of Theorem
\ref{Thm:EquivNorms2}]
By \eqref{Eq:WienerEmbTwoInd}, we may assume
that $r_1=r_2=r$, $r \in [r_0, \infty]$.
Let $\phi _0(x)=\pi ^{-\frac d4}e^{-\frac 12|x|^2}$,
$x\in \rr d$, be the standard Gaussian,
$\phi \in M^{r_0}_{(v _0 v)}(\rr d)\setminus 0$ be arbitrary,
and let $f\in \Sigma _1'(\rr d)$ be fixed. Using \eqref{Eq:WienerEmbTwoInd}
again, the result follows if we prove
\begin{equation}
\label{Eq:ProofDiagram}
\begin{matrix}
V_{\phi _0}f \in \sfW ^{\infty}(\omega ,\mascB ) &
\!\! \overset{(1)}{\phantom i\Leftrightarrow \phantom i}\!\! &
V_{\phi _0}f \in \mascB _{(\omega )} &
\!\! \overset{(2)}{\phantom i\Leftrightarrow \phantom i}\!\! &
V_{\phi _0}f \in \sfW ^{r_0}(\omega ,\mascB )
\\[1ex]
{\text{\scriptsize{$(3)$}}}
\Updownarrow & & & &
\Updownarrow
{\text{\scriptsize{$(4)$}}}
\\[1ex]
V_{\phi}f \in \sfW ^{\infty}(\omega ,\mascB ) &
\!\! \underset{(5)}{\phantom i\Rightarrow \phantom i}\!\! &
V_{\phi}f \in \mascB _{(\omega )} &
\!\! \underset{(6)}{\phantom i\Rightarrow \phantom i}\!\! &
V_{\phi}f \in \sfW ^{r_0}(\omega ,\mascB ).
\end{matrix}
\end{equation}

\par

By Propositions \ref{Prop:BasicEmbWSp} and \ref{Prop:EquivNorms2}
it follows that (1), (2), (5) and (6) in \eqref{Eq:ProofDiagram} hold.
It remain to prove (3) and (4) in \eqref{Eq:ProofDiagram}.

\par

We set
\begin{equation}\label{Eq:F0FDef}
\hspace{-1mm}
F=|V_\phi f\cdo \omega |,
\quad
F_0=|V_{\phi _0} f\cdo \omega |,
\quad
\Phi _1 = |V_{\phi _0}\phi \cdo v|,
\quad
\Phi _2 = |V_\phi \phi _0 \cdo v| ,
\end{equation}
\begin{alignat*}{2}
Q&=[0,1]^{2d},&\quad Q (j)&=j+Q,
\quad
\\[1ex]
Q_1&=
[-1,1]^{2d},&\quad
Q_1 (j )&=j+Q_1,
\\[1ex]
\alpha _{n,s}(j)& = \nm {\Phi _n}{L^s(Q 
(j))}, & &
\quad n = 1,2,
\\[1ex]
\beta _{0,s}(j )
&=
\nm {F_0}{L^s(Q (j ))}, &
\quad
\beta _{s}(j) &= \nm F{L^s(Q (j))},
\\[1ex]
\tilde \beta _{0,s}(j ) &=
\nm {F_0}{L^s(Q_1 (j))},&
\quad \text{and}\quad
\tilde \beta _{s}(j)
&=
\nm F{L^s(Q_1 (j ))},
\quad
j \in \zz {2d}, \ 
s \in (0,\infty].
\end{alignat*}
Since $|V_\phi \phi _0 (X)|
=
|V_{\phi _0} \phi (-X)|$, it follows that
\begin{equation}\label{Eq:TildeAlfaIdent}
\alpha _{1,s}(j)
\asymp
\alpha _{2,s}(-j).
\end{equation}

\par

We intend to prove that
\begin{gather}
\nm {V_{\phi }f}
{\sfW ^r (\omega , \mascB )}
\le
C\nm \phi{M^{r_0}_{(v_0v)}}
\nm {V_{\phi _0}f}
{\sfW ^r (\omega , \mascB )}
\label{Eq:EquivNorms2A}
\intertext{and}
\nm {V_{\phi _0}f}
{\sfW ^r (\omega , \mascB )}
\le
C\big (\nm \phi{M^{r_0}_{(v_0v)}}
\big )^{-1}
\left (
\frac {\nm {\phi}
{M^{r_0}_{(v_0v)}}}
{\nm \phi{L^2}}
\right )^{\! \! \vartheta (r)}
\!\!
\nm {V_{\phi }f}
{\sfW ^r (\omega , \mascB )},
\label{Eq:EquivNorms2B}
\end{gather}
for
$\vartheta(r) = \max(\frac 2r, 2)$ and
some constant $C\ge 1$ which is
independent of $f\in M(\omega ,\mascB)$
and $\phi \in M^{r_0}_{(v_0 v)}(\rr d)$.
This gives (3) and (4) in \eqref{Eq:ProofDiagram},
as special case, and thereby the result.

%

\par

By \eqref{Eq:TildeAlfaIdent}, Remark 
\ref{Rem:EquivWienerNorms} and
Proposition \ref{Prop:EquivNorms2} we obtain
\begin{align}
\nm {\alpha _{n,s}}
{\ell ^{r_0}_{( v _0)}}
&=
\nm {\Phi _n}
{\sfW ^s (\ell ^{r_0} _{( v _0)} ) }
\asymp
\nm \phi{M^{r_0}_{(v_0  v)}}, \quad n=1,2,
\label{NormAlpha} \\[1ex]
\nm {\tilde{\beta} _{0,s}}{\ell _\mascB}
&\asymp
\nm {\beta _{0,s}}{\ell _\mascB}
=
\nm {F_0}{\sfW ^s (\mascB )},
\notag
\intertext{and}
\nm {\tilde{\beta} _{s}}{\ell _\mascB}
&\asymp
\nm {\beta _{s}}{\ell_\mascB}
=
\nm F{\sfW ^s (\mascB )},
\qquad
s \in (0,\infty] .
\notag
\end{align}
It follows from
\eqref{Eq:change-of-window} that
$F \le C _0 F _0 \ast \Phi_2$, where
$C_0=\nm{\phi_0}{L ^2} ^{-2}$.
A combination of these
relations, \eqref{Eq:DiscConvEst} and 
\eqref{NormAlpha} gives
\begin{align*}
\nm {F} {\sfW ^{\infty} (\mascB )}^{r_0}
&= 
\nm {\beta _{\infty}}
{\ell _{\mascB}}^{r_0}
\\[1ex]
&\le
C C _0 ^{r_0} \nm { \alpha_{2,\infty}*\tilde{\beta}_{0,\infty} }
{\ell _{\mascB}}^{r_0} 
\le
C C_0 ^{r_0}
\nm { \alpha_{2,\infty} }
{\ell _{(v_0)}^{r_0}}^{r_0}
\nm { \tilde{\beta}_{0,\infty}  }
{\ell _{\mascB}}^{r_0}
\\[1ex]
&=
C C_0 ^{r_0}
\nm {\Phi_2}
{\sfW ^{\infty}
(\ell _{(v_0)}^{r_0})}^{r_0} 
\nm {F_0} {\sfW ^{\infty}(\mascB )}^{r_0}
\\[1ex]
&\le
C C_0 ^{r_0}
\nm { \phi } {M ^{r_0} _{(v_0v)}}^{r_0}
\nm { F _0} {\sfW ^{\infty}
(\mascB )}^{r_0} 
\end{align*}
for some constant $C\ge 1$ which is
independent of $f\in M(\omega ,\mascB)$
and $\phi \in M^{r_0}_{(v_0 v)}(\rr d)$.

\par 

A combination of the latter estimates
and Proposition \ref{Prop:EquivNorms2}
gives
\begin{equation} \label{eq:estimates-for-(2.10)}
\nm {F} {\sfW ^{r} (\mascB )}
\le
\nm {F} {\sfW ^{\infty} (\mascB )}
\lesssim
\nm { \phi } {M ^{r_0} _{(v_0v)}}
\nm { F _0} {\sfW ^{\infty}
(\mascB )}
\asymp
\nm { \phi } {M ^{r_0} _{(v_0v)}}
\nm { F _0} {\sfW ^{r} (\mascB )},
\end{equation}
and \eqref{Eq:EquivNorms2A} follows.

\par

Next we prove \eqref{Eq:EquivNorms2B}.
First we consider the case when
$r\in [r_0,1]$.
We redefine $C_0$ as $C_0=\nm \phi{L^2}^{-2r_0}$.
Then the inequality $F_0 \lesssim C_0^{1/r_0 } F 
\ast \Phi_1$ (cf. \eqref{Eq:change-of-window}) gives
\begin{align*}
\beta_{0,r} (j)^r
&\lesssim
C_0^{\frac r{r_0}}\int_{j+Q}
\left ( 
\int _{\rr {2d}} F(X-Y) \, 
\Phi _1 (Y) \, dY
\right )^r
\, dX
\nonumber
\\[1ex]
&=
C_0^{\frac r{r_0}}\int _{j+Q}
\left ( 
\sum_{k\in \zz {2d}}
\int_{k+Q} F(X-Y) \, 
\Phi_1 (Y) \, dY
\right ) ^r
\, dX
\nonumber
\\[1ex]
&\lesssim
C_0^{\frac r{r_0}}\sum_{k\in \zz {2d}} 
\alpha_{1,\infty} (k)^r
\gamma _r(j-k)^r ,
\end{align*}
where
$$
(\gamma _r(j))^{r}
=
\left (
\int _{j+Q} \left( 
\int _{Q} F(X-Y)  
\, dY
\right )^r \, dX
\right ).
$$
Since $\frac {r_0}r\le 1$, we obtain
$$
\beta_{0,r} (j)^{r_0}
\lesssim
C_0\left (
(\alpha _{1,\infty}^r*\gamma _r^r)(j)
\right )^{\frac {r_0}r}
\le
C_0(\alpha _{1,\infty}^{r_0}
*\gamma _r^{r_0})(j).
$$

\par

Let $\mascB _0$ be the invariant BF space of 
Definition \ref{Def:BFSpaces} related to $\mascB$.
By applying the $\ell _{\mascB}$ norm
and using
\eqref{Eq:DiscConvEst}, we obtain
\begin{equation}\label{Eq:beta0rEst}
\begin{aligned}
\nm {\beta _{0,r}}{\ell _{\mascB}}^{r_0}
&=
\nm {\beta _{0,r}^{r_0}}
{\ell _{\mascB _0}}
\le
C_0
\nm {\alpha _{1,\infty}^{r_0}
*\gamma _r^{r_0}}
{\ell _{\mascB _0}}
\\[1ex]
&\lesssim C_0
\nm {\alpha _{1,\infty}^{r_0}}
{\ell ^1_{(v_0)}}\nm {\gamma _r^{r_0}}
{\ell _{\mascB _0}}
=
C_0\nm {\alpha _{1,\infty}}
{\ell ^{r_0}_{(v_0)}}^{r_0}
\nm {\gamma _r ^{r_0} }{\ell _{\mascB _0}}.
\end{aligned}
\end{equation}
We need to estimate $\nm {\gamma _r ^{r_0}}
{\ell _{\mascB _0 }}$.
Let $\theta >0$ be arbitrary.
By 
$
F=F^r \, F^{1-r}
$
we have
\begin{align*}
\gamma _r (j)^{r_0}
&=
\left (
\int_{j+Q}
\left( 
\int_{Q} F(X-Y)  \, dY
\right )^r
\, dX
\right )^{\frac {r_0}r}
\\[1ex]
&\le 
\tilde{\beta}_{\infty}(j) ^{r_0(1-r)}
\left (
\int_{j+Q}
\left( 
\int_{Q} F(X-Y)^r
\, dY 
\right)^r
\, dX 
\right )^{\frac {r_0}r}
\\[1ex]
&\qquad 
\leq
\tilde{\beta}_{r}(j) ^{r_0r}
\tilde{\beta}_{\infty}(j) ^{r_0(1-r)}
\\[1ex]
&\qquad 
\leq
r\theta ^{\frac 1r}\tilde{\beta} _{r}(j)^{r_0}
+
(1-r)\theta ^{-\frac 1{1-r}}
\tilde{\beta}_{\infty}(j)^{r_0},
\end{align*}
where the last inequality follows
from the arithmetic-geometric
inequality.

\par

By using these estimate in
\eqref{Eq:beta0rEst}, we obtain
\begin{align*}
\nm {F_0}{\sfW ^r (\mascB )}^{r_0}
&=
\nm {\beta _{0,r}}{\ell _{\mascB}}^{r_0}
\\[1ex]
&\lesssim
C_0
\nm {\alpha_{1,\infty}}
{\ell ^{r_0}_{(v_0)}}^{r_0}
\left ( 
r\theta ^{\frac 1r}
\nm {\tilde{\beta} _{r}^{r_0}}
{\ell _{\mascB _0}}
+
(1-r)\theta ^{-\frac 1{1-r}}
\nm {\tilde{\beta}  _{\infty}^{r_0}}
{\ell _{\mascB _0}}
\right )
\\[1ex]
&=
C_0
\nm {\alpha_{1,\infty}}
{\ell ^{r_0}_{(v_0)}}^{r_0}
\left ( 
r\theta ^{\frac 1r}
\nm {\tilde{\beta} _{r}}
{\ell _{\mascB}}^{r_0}
+
(1-r)\theta ^{-\frac 1{1-r}}
\nm {\tilde{\beta}  _{\infty}}
{\ell _{\mascB}}^{r_0}
\right ).
\end{align*}
A combination of the latter
inequality with
\eqref{NormAlpha} gives
\begin{equation}\label{Eq:F0FEst}
\nm {F_0}{\sfW ^r (\mascB )}^{r_0}
\lesssim
C_0
\nm {\phi}
{M^{r_0}_{(v_0v)}}^{r_0}
\!\!
\left ( 
r\theta ^{\frac 1r}
\nm {F}{\sfW ^r (\mascB )}^{r_0}
+
\frac {1-r}{\theta ^{1/(1-r)}}
\nm {F}{\sfW ^\infty (\mascB )}^{r_0}
\right ).
\end{equation}

\par

If $r=1$ and $\theta =1$, then 
\eqref{Eq:F0FEst} gives 
\eqref{Eq:EquivNorms2B}, and the
result follows in this case.

\par

If instead $r\in [r_0,1)$,
then by \eqref{eq:estimates-for-(2.10)} it follows that
\begin{equation}\label{eq:EstF_0}
\nm {F}{\sfW ^\infty (\mascB )}
\le
C_1\nm {\phi}{M^{r_0}_{(v_0v)}}
\nm {F_0}{\sfW ^\infty (\mascB )}
\le
C_2\nm {\phi}{M^{r_0}_{(v_0v)}}
\nm {F_0}{\sfW ^r (\mascB )},
\end{equation}
for some constants $C_1$ and
$C_2$, which only
depend on $r$ and the dimension $d$.
A combination of \eqref{Eq:F0FEst}
and \eqref{eq:EstF_0} gives
$$
\nm {F_0}{\sfW ^r (\mascB )}^{r_0}
\le
C_0C
\nm {\phi}
{M^{r_0}_{(v_0v)}}^{r_0}
\left ( 
r\theta ^{\frac 1r}
\nm {F}{\sfW ^r (\mascB )}^{r_0}
+
\left(
\frac {1-r} {\theta ^{\frac 1{1-r}} } 
\right)
\nm {\phi}{M^{r_0}_{(v_0v)}}^{r_0}
\nm {F_0}{\sfW ^r (\mascB )}^{r_0}
\right ).
$$

\par

By choosing $\theta$ as
$$
\theta ^{\frac 1{1-r}}
=
2C_0C(1-r)
\nm \phi{M^{r_0}_{(v_0v)}}^{2r_0},
$$
we obtain
$$
\nm {F_0}{\sfW ^r (\mascB )}^{r_0}
\le
C_0C
r\theta ^{\frac 1r}
\nm {\phi}
{M^{r_0}_{(v_0v)}}^{r_0}
\nm {F}{\sfW ^r (\mascB )}^{r_0}
+
\frac 12
\nm {F_0}{\sfW ^r (\mascB )}^{r_0},
$$
which is the same as
$$
\nm {F_0}{\sfW ^r (\mascB )}^{r_0}
\le
2C_0C
r\theta ^{\frac 1r}
\nm {\phi}
{M^{r_0}_{(v_0v)}}^{r_0}
\nm {F}{\sfW ^r (\mascB )}^{r_0}.
$$
We also use
$$
\theta ^{\frac 1r} =
(2C_0C(1-r))^{(1-r)/r}
\nm \phi{M^{r_0}_{(v_0v)}}^{2r_0(1-r)/r}
\quad \text{and}\quad
C_0=\nm \phi{L^2}^{-2r_0}
$$
in the last inequality to get \eqref{Eq:EquivNorms2B} in the case 
$r\in [r_0,1)$.

\par

For $r\in (1,\infty]$,
Proposition \ref{Prop:EquivNorms2}, 
\eqref{Eq:EquivNorms2B}
and the fact that the Wiener amalgam spaces 
$\sfW ^r (\mascB )$ decrease with $r$ 
gives
$$
\nm {F_0}{\sfW ^r (\mascB )}
\asymp
\nm {F_0}{\sfW ^1 (\mascB )}
\lesssim
\nm F{\sfW ^1 (\mascB )}
\le
\nm {F}{\sfW ^r (\mascB )},
$$
giving the result.
\end{proof}

\par

\begin{rem}
Let $\mascB$ and $\omega$
be the same as in Theorem
\ref{Thm:EquivNorms2},
$\vartheta$ be as in
\eqref{Eq:EquivNorms2B},
$\phi _1,\phi _2\in M^{r_0}_{(v_0v)}
(\rr d)\setminus 0$, and let
$r_1,r_2\in [r_0,\infty ]$. Then
a combination of 
\eqref{Eq:EquivNorms2A}
and
\eqref{Eq:EquivNorms2B}
shows that
\begin{equation*}
\nm {V_{\phi _1}f}
{\sfW ^{r_1} (\omega ,\mascB )}
\le
C\left (
\frac {\nm {\phi _1}
{M^{r_0}_{(v_0v)}}}
{\nm {\phi _2}
{M^{r_0}_{(v_0v)}}}
\right )
\left (
\frac {\nm {\phi _2}
{M^{r_0}_{(v_0v)}}}{\nm {\phi _2}{L^2}}
\right )^{\!\! \vartheta (r_2)}
\!\!
\nm {V_{\phi _2} f}
{\sfW ^{r_2} (\omega ,\mascB )},
\end{equation*}
for every $f\in M(\omega ,\mascB )$,
where the constant $C$ only depends on
$\mascB$, $r_1$, $r_2$ and $d$.

\par

Here we observe that the factor
$$
\left (
\frac {\nm {\phi}
{M^{r_0}_{(v_0v)}}}
{\nm \phi{L^2}}
\right )^{\vartheta (r)}
$$
in \eqref{Eq:EquivNorms2B} is bounded
from below by a positive constant,
because $M^{r_0}_{(v_0v)}(\rr d)$
is continuously embedded in
$M^2(\rr d)=L^2(\rr d)$.
A question which appear is whether
\eqref{Eq:EquivNorms2B} is true
when this factor is omitted.
\end{rem}

\par

Theorem \ref{Thm:EquivNorms2} can also be
formulated in the following way.


\par

\begin{prop}
Suppose that
$\omega ,\omega _0,v_0
\in
\mascP _E(\rr {2d})$,
$\mascB$ is a normal QBF space
on $\rr {2d}$ with respect to
$r_0\in (0,1]$,
$r\in [r_0,\infty ]$,
and that
$f\in \Sigma _1'(\rr d)\setminus 0$.
Then the following conditions are equivalent:
\begin{enumerate}
\item $f\in M(\omega ,\mascB )$;

\vrum

\item for some $\phi \in M^{r_0}_{(v_0v)}(\rr d)\setminus 0$,
it holds $V_\phi f \cdot \omega \in \mascB$;

\vrum

\item for some $\phi \in M^{r_0}_{(v_0v)}(\rr d)\setminus 0$,
it holds $V_\phi f \in \sfW ^r(\omega ,\mascB )$;

\vrum

\item for every $\phi \in M^{r_0}_{(v_0v)}(\rr d) \setminus 0$,
it holds $V_\phi f \cdot \omega \in \mascB$;

\vrum

\item for every $\phi \in M^{r_0}_{(v_0v)}(\rr d) \setminus 0$,
it holds $V_\phi f \in \sfW ^r(\omega ,\mascB )$.
\end{enumerate}
\end{prop}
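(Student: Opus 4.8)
The plan is to deduce the whole proposition directly from Theorem \ref{Thm:EquivNorms2}, which already carries all the analytic content; what remains is purely the quantifier bookkeeping. Throughout, fix a submultiplicative weight $v\in \mascP _E(\rr {2d})$ for which $\omega$ is $v$-moderate; such a $v$ exists precisely because $\omega \in \mascP _E(\rr {2d})$, and this is the same $v$ that enters the window class $M^{r_0}_{(v_0v)}(\rr d)$ appearing in conditions (2)--(5). The first thing I would record is that this window class is nonempty: since $v_0v\in \mascP _E(\rr {2d})$ (products of moderate weights are moderate), Proposition \ref{Prop:EmbModSp} yields $\Sigma _1(\rr d)\hookrightarrow M^{r_0}_{(v_0v)}(\rr d)$, whence $\emptyset \neq \Sigma _1(\rr d)\setminus 0 \subseteq M^{r_0}_{(v_0v)}(\rr d)\setminus 0$. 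In particular, the window $\phi \in \Sigma _1(\rr d)\setminus 0$ used to define $M(\omega ,\mascB )$ in Definition \ref{Def:GenModSpace} is itself admissible in (2)--(5).

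The core step is to apply Theorem \ref{Thm:EquivNorms2}. For any single admissible window $\phi \in M^{r_0}_{(v_0v)}(\rr d)\setminus 0$ and any $r\in [r_0,\infty ]$, the equivalences in \eqref{Eq:EquivNorms21} assert that $f\in M(\omega ,\mascB )$ holds if and only if $V_\phi f\cdot \omega \in \mascB$, which in turn holds if and only if $V_\phi f\in \sfW ^r(\omega ,\mascB )$. The decisive feature is that the theorem places no restriction on $\phi$ beyond membership in $M^{r_0}_{(v_0v)}(\rr d)\setminus 0$; hence the same equivalence is available \emph{simultaneously for every} admissible window, not merely for one.

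I would then close the logical loop as follows. The implications $(4)\Rightarrow (2)$ and $(5)\Rightarrow (3)$ are immediate, since the admissible window class is nonempty. For $(2)\Rightarrow (1)$ and $(3)\Rightarrow (1)$ I apply the relevant equivalence of Theorem \ref{Thm:EquivNorms2} to the single window furnished by the hypothesis. For $(1)\Rightarrow (4)$ and $(1)\Rightarrow (5)$ I apply the same equivalences, invoking the window-freedom noted above so that the conclusion is obtained for \emph{every} admissible $\phi$. Chaining these gives $(1)\Leftrightarrow (2)\Leftrightarrow (4)$ and $(1)\Leftrightarrow (3)\Leftrightarrow (5)$, so all five conditions are mutually equivalent.

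I do not expect any genuine obstacle: the heavy lifting---window independence and the passage between $\mascB$-membership and $\sfW ^r(\omega ,\mascB )$-membership across the full range $r\in [r_0,\infty ]$---is entirely contained in Theorem \ref{Thm:EquivNorms2} (itself resting on Propositions \ref{Prop:BasicEmbWSp} and \ref{Prop:EquivNorms2}). The only points requiring minor care are the correct handling of the ``for some''/``for every'' quantifiers and the nonemptiness of the window class; the assumption $f\neq 0$ plays no essential role and merely excludes the trivial case in which all five statements hold vacuously.
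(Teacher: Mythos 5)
Your proof is correct and follows exactly the route the paper intends: the paper states this proposition as a direct reformulation of Theorem \ref{Thm:EquivNorms2} (offering no separate proof), and your argument simply makes explicit the quantifier bookkeeping—window-independence of the theorem plus nonemptiness of $M^{r_0}_{(v_0v)}(\rr d)\setminus 0$—that justifies that reformulation.
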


\par

\begin{rem}
Evidently, due to Example
\ref{Example:LebCase}, it follows that
all results in this section, as well as
in the remaining sections, are applicable
to classical modulation spaces 
$M^{p,q}_{(\omega)} (\rr d)$ and
$W^{p,q}_{(\omega)} (\rr d)$,
when
$p,q\in [r_0,\infty ]$, $r_0\in (0,1]$,
$\omega \in \mascP _E(\rr {2d})$.
\end{rem}

\par

\begin{rem}
\label{Rem:ProgressNormSpaces}
As remarked earlier, there are several 
contributions in the literature,
which are related to Theorem
\ref{Thm:EquivNorms2}. Several of these 
contributions deal with more general 
situations, e.{\,}g. in \cite{FeiGro1}
by Feichtinger and Gr{\"o}chenig, where the 
(basic) coorbit space theory (for Banach 
spaces) is established. In \cite{Rau2}, 
Rauhut extends the theory to allow suitable 
quasi-Banach spaces which are not Banach
spaces. Roughly 
speaking, the coorbit spaces are defined in
similar ways as for modulation spaces
in Definition \ref{Def:GenModSpace},
after that $L^2(\rr d)$ is replaced by
more general Hilbert spaces and the
phase-shifts $f\mapsto
f(\cdo -x)e^{i\scal \cdo \xi}$ are replaced
by more general group representations.
In this context, modulation spaces can
be identified with coorbit spaces by choosing
these representations as suitable
representations of the Heisenberg group.

\par

Table \ref{Table:ProgressModNorms} in the 
introduction give some overview on how far 
earlier results
on norm equivalences
for modulation spaces, are reaching compared 
to Theorem \ref{Thm:EquivNorms2}. Especially we focus
on extensions and generalizations with respect to
the parameters $r$, weight function $\omega$,
the window function $\phi$,
and the space $\mascB$ in Theorem \ref{Thm:EquivNorms2}.
\begin{itemize}
\item \emph{The parameter $r$}
in Table \ref{Table:ProgressModNorms}.
It is only allowed to
attain $\infty$ in the earlier
results (i.{\,}e. in 
\cite{FeiGro1,GaSa,Rau2}),
while in Theorem
\ref{Thm:EquivNorms2} it is allowed
to be any value in $[r_0,\infty]$.

\vrum

\item \emph{The weight function $\omega$.}
For the modulation spaces $M^{p,q}_{(\omega )}(\rr d)$
in \cite{GaSa} it is assumed that all weight functions
belong to $\mascP (\rr {2d})$, which is significantly
smaller than $\mascP _E(\rr {2d})$, the set of all moderate
weights on $\rr {2d}$. On the other hand, the analysis
in \cite{GaSa} should hold also after $\mascP$ is replaced
by the larger class $\mascP _E^0$. On the other hand
it seems to be difficult to include weights in
$\mascP _E\setminus \mascP _E^0$
in the pure analysis in \cite{GaSa}. In fact,
an essential part in \cite{GaSa} concerns
continuity of the inverse of
Gabor frame operators on  $M^{r_0}_{(v)}(\rr d)$,
and such properties can be guaranteed only for
$v\in \mascP _E^0(\rr d)$, while there are examples
that such properties do not hold when
$v\in \mascP _E(\rr d)\setminus \mascP _E^0(\rr d)$.
\\[1ex]
\emph{In particular it follows that weights which are exponential
functions are excluded in} \cite{GaSa} \emph{and its analysis, since
all such weights are in $\mascP _E\setminus \mascP _E^0$.}
\\[1ex]
Similar or even stronger restrictions seem to be present
in \cite{FeiGro1,Rau2}. For example, in Section 8 in \cite{Rau2},
it is assumed that all weights are symmetric, i.{\,}e., $\omega$
above should satisfy $\omega (-x,-\xi )=\omega (x,\xi )$.

\vrum

\item \emph{The window function $\phi$}
in the short-time Fourier transforms.
In the Banach space case, i.{\,}e.
in \cite{Fei1989,Fei1,FeiGro1} or when
$r_0=1$ in Theorem \ref{Thm:EquivNorms2},
$\phi$ is allowed to be any element in
$M^1_{(v)}\setminus 0$, for suitable $v$.
In the quasi-Banach cases, i.{\,}e.
in \cite{GaSa}, $\phi$ is allowed to be 
any element in $M^{r_0}_{(v)}$, provided 
there is a function $\psi \in M^{r_0}_{(v)}$
which is a dual Gabor atom to $\phi$.
In \cite{Rau2}, the problem with
existence of suitable dual Gabor atom
seems to be removed, provided the weights
(explained above)
possess the desired properties.

\vrum

\item \emph{The generality of $\mascB$.}
Evidently, \cite{GaSa} only deals with classical modulation spaces
$M^{p,q}_{(\omega )}(\rr d)$, which is a special case of
$M(\omega ,\mascB )$ spaces. If $\mascB$ is a Banach space,
then it seems that the assumptions in \cite{FeiGro1} and
in Theorem \ref{Thm:EquivNorms2} essentially agrees.
If $\mascB$ is a quasi-Banach space, then it seems that
stronger conditions on $\mascB$ are imposed in
Theorem \ref{Thm:EquivNorms2} compared to what
is the case in \cite{Rau2}. In fact, in Theorem
\ref{Thm:EquivNorms2} it is essential that $\mascB$
is \emph{normal}, while such a condition is
absent in \cite{Rau2}.
\end{itemize}
\end{rem}

\par

\section{Continuity, compactness,
unions and intersections for
modulation spaces}
\label{sec4}

\par

In this section we combine  results from
Section \ref{sec3} with some well-known
results for classical modulation spaces
to deduce continuity, compactness,
union and intersection properties for
modulation spaces of the form
$M(\omega ,\mascB )$.


\par

\subsection{Unions and intersections of
modulation spaces}

\par

First we have the following result, giving
general bounds of the modulation
$M(\omega ,\mascB )$ in terms of
classical modulation spaces.

\par

\begin{prop}\label{Prop:BasicEmbModSp1}
Let $\mascB$ be a normal QBF space
on $\rr {2d}$ with respect to
$r_0\in (0,1]$ and
$v_0\in \mascP _E(\rr {2d})$,
and let $\omega \in
\mascP _E(\rr {2d})$.
Then
$$
M^{r_0}_{(\omega v_0)}(\rr d)
\hookrightarrow
M(\omega ,\mascB)
\hookrightarrow
M^{\infty }_{(\omega /v_0)}(\rr d).
$$
\end{prop}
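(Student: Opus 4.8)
The plan is to reduce everything to the already-established embeddings for the corresponding \emph{sequence} and \emph{Wiener amalgam} spaces, since Theorem~\ref{Thm:EquivNorms2} identifies the modulation norm with a Wiener amalgam norm of $V_\phi f$, and Proposition~\ref{Prop:BasicEmbWSp} pins down the two-sided embeddings of $\mascB_{(\omega)}$ between weighted $\ell^{r_0}$- and $\ell^\infty$-type Wiener spaces. Concretely, I would fix a Gaussian window $\phi_0(x)=\pi^{-d/4}e^{-|x|^2/2}$ (which lies in $M^{r_0}_{(v_0 v)}(\rr d)\setminus 0$ for any submultiplicative $v$), and use the classical identifications $M^{r_0}_{(\omega v_0)}=M(\omega v_0, L^{r_0})$ and $M^{\infty}_{(\omega/v_0)}=M(\omega/v_0, L^\infty)$ from the Example following Definition~\ref{Def:GenModSpace}. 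The desired chain then becomes a statement purely about the containments $L^{r_0}_{(\omega v_0)}\hookrightarrow \mascB_{(\omega)}\hookrightarrow L^\infty_{(\omega/v_0)}$ applied to $V_{\phi_0}f$, together with the norm equivalences $\nm{f}{M(\omega,\mascB)}\asymp\nm{V_{\phi_0}f\cdot\omega}{\mascB}$.

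The key steps, in order, are as follows. First I would invoke Corollary~\ref{Cor:BasicEmbWSp}, which already records exactly
\[
\sfW^\infty(\omega v_0,\ell^{r_0}(\zz d))
\hookrightarrow
\mascB_{(\omega)}
\hookrightarrow
\sfW^{r_0}(\omega/v_0,\ell^\infty(\zz d)),
\]
so the heart of the matter is to translate these Wiener-space embeddings into modulation-space embeddings. For the right-hand inclusion, given $f\in M(\omega,\mascB)$ I would write $\nm{f}{M(\omega,\mascB)}\asymp\nm{V_{\phi_0}f\cdot\omega}{\mascB}=\nm{V_{\phi_0}f}{\mascB_{(\omega)}}$ by Theorem~\ref{Thm:EquivNorms2}, apply the right embedding of Corollary~\ref{Cor:BasicEmbWSp} to bound this from below by $c\,\nm{V_{\phi_0}f}{\sfW^{r_0}(\omega/v_0,\ell^\infty)}$, and then use Theorem~\ref{Thm:EquivNorms2} a \emph{second} time in the classical case $\mascB=L^\infty$ (with $r=r_0$) to recognize the latter as $\asymp\nm{f}{M^\infty_{(\omega/v_0)}}$. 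For the left-hand inclusion I would run the same argument in reverse: start from $f\in M^{r_0}_{(\omega v_0)}$, use the classical norm equivalence to write $\nm{f}{M^{r_0}_{(\omega v_0)}}\asymp\nm{V_{\phi_0}f}{\sfW^\infty(\omega v_0,\ell^{r_0})}$, apply the left embedding of Corollary~\ref{Cor:BasicEmbWSp} to dominate $\nm{V_{\phi_0}f}{\mascB_{(\omega)}}$, and conclude by Theorem~\ref{Thm:EquivNorms2} that this equals $\asymp\nm{f}{M(\omega,\mascB)}$.

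The one genuine subtlety — and the step I expect to require the most care — is the repeated back-and-forth between the \emph{Wiener amalgam} formulation (with a fixed integer-lattice sampling of $V_{\phi_0}f$) and the \emph{modulation} formulation. Theorem~\ref{Thm:EquivNorms2} delivers $\nm{f}{M(\omega,\mascB)}\asymp\nm{V_{\phi_0}f}{\sfW^{r}(\omega,\mascB)}$ for \emph{every} $r\in[r_0,\infty]$, so one must be sure to apply it at the correct value of $r$ on each side (namely $r=\infty$ on the left and $r=r_0$ on the right) and to the correct function space ($\mascB=L^{r_0}$ or $L^\infty$), and that the weights $\omega v_0$ and $\omega/v_0$ are themselves in $\mascP_E(\rr{2d})$ so the classical equivalences apply. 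A cleaner route, avoiding any explicit window-change estimates, is simply to combine Theorem~\ref{Thm:EquivNorms2} (identifying $M(\omega,\mascB)$ with a Wiener amalgam space) directly with Corollary~\ref{Cor:BasicEmbWSp} and the special cases $M^{r_0}_{(\omega v_0)}=\sfW^\infty(\omega v_0,\ell^{r_0})$ and $M^\infty_{(\omega/v_0)}=\sfW^{r_0}(\omega/v_0,\ell^\infty)$ (again via Theorem~\ref{Thm:EquivNorms2} in the Lebesgue case); then the whole proposition collapses to transitivity of the three continuous inclusions in Corollary~\ref{Cor:BasicEmbWSp}. I would present it this second way, as it makes the logic transparent and isolates all the analytic work in results already proved.
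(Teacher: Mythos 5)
Your proposal is correct and follows essentially the same route as the paper, whose proof is exactly "combine Theorem \ref{Thm:EquivNorms2} with Corollary \ref{Cor:BasicEmbWSp}" with details left to the reader; your second, "cleaner" formulation (identifying $M^{r_0}_{(\omega v_0)}$ and $M^{\infty}_{(\omega /v_0)}$ with the Wiener amalgam spaces $\sfW ^\infty (\omega v_0,\ell ^{r_0})$ and $\sfW ^{r_0}(\omega /v_0,\ell ^{\infty})$ via the Lebesgue case of Theorem \ref{Thm:EquivNorms2}, then invoking transitivity of the inclusions in Corollary \ref{Cor:BasicEmbWSp}) is precisely the intended argument, and your attention to the admissibility of the weights $\omega v_0,\ \omega /v_0\in \mascP _E(\rr {2d})$ and to the choice of $r$ on each side fills in the omitted details correctly.
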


\par

\begin{proof}
The result follows by combining
Theorem \ref{Thm:EquivNorms2}
with Corollary
\ref{Cor:BasicEmbWSp}. The details
are left for the reader.
\end{proof}

\par

\begin{rem}
\label{Rem:BFtoQBF}
In Definition \ref{Def:BFSpaces} (3),
the main focus is
on a specific invariant QBF space $\mascB$
with respect to $r_0\in (0,1]$,
and one concludes that $\mascB$ is normal
if there exists an invariant BF space $\mascB _0$
such that the norm of $\mascB$ is given by
\eqref{Eq:NormQBFNorm}.

\par

On the other hand, by focusing on the invariant
BF space $\mascB _0$ instead, it follows that
for every such $\mascB _0$ and $r_0\in (0,1]$,
there is
a unique normal QBF space $\mascB _{[r_0]}$
of order $r_0$ with quasi-norm ($r_0$-norm)
given given by \eqref{Eq:NormQBFNorm},
when $\mascB =\mascB _{[r_0]}$.
Here we conclude that $\mascB _{[r_0]}$ essentially
increases with $r_0$.

\par

More precisely, let $r_1,r_2\in (0,1]$,
$\omega _1,\omega _2\in \mascP _E(\rr {2d})$,
$\mascB _0$
be an invariant solid BF space on $\rr {2d}$
with respect to $v_0\in \mascP _E(\rr {2d})$,
and suppose that
\begin{align}
r_1&\le r_2
\quad \text{and}\quad
\vartheta _2
\lesssim
\vartheta _1,
\qquad
\vartheta _j
=
\omega _jv_0^{-\frac 1{r_j}},
\quad
j=1,2.
\label{Eq:IncrParWeights}
\intertext{Then}
M(\omega _1,\mascB _{[r_1]})
&\hookrightarrow
M(\omega _2,\mascB _{[r_2]}),
\label{Eq:IncrParWeights1}
\intertext{and}
\nm f{M(\omega _2,\mascB _{[r_2]})}
&\lesssim
\nm f{M(\omega _1,\mascB _{[r_1]})},
\quad
f\in M(\omega _1,\mascB _{[r_1]}).
\label{Eq:IncrParWeights2}
\end{align}

\par

In fact, let $f\in M(\omega _1,\mascB _{[r_1]})$
and $F=V_\phi f$ for some
$\phi \in \Sigma _1(\rr d)\setminus 0$. Then,
\begin{align}
\nm f{M(\omega _2,\mascB _{[r_2]})}
&\asymp
\left (\nm { |F\cdot \omega _2 |^{r_2}}{\mascB _0}
\right )^{\frac 1{r_2}}
\notag
\\[1ex]
&\le
\left (\NM { \left |F\cdot \omega _2\cdot
v_0^{\frac 1{r_1}-\frac 1{r_2}} \right |^{r_1}
\cdot \left |
F\cdot \omega _2\cdot v_0^{-\frac 1{r_2}}
\right |^{r_2-r_1}}{\mascB _0}
\right )^{\frac 1{r_2}}
\notag
\\[1ex]
&\lesssim
\left (\NM { \left |F\cdot \omega _2\cdot
v_0^{\frac 1{r_1}-\frac 1{r_2}} \right |^{r_1}}{\mascB _0}
\NM {\left |F\cdot \omega _2\cdot v_0^{-\frac 1{r_2}}
\right |^{r_2-r_1}}{L^\infty}
\right )^{\frac 1{r_2}}
\notag
\\[1ex]
&\lesssim
\left (\nm f{M(\omega _1,\mascB _{[r_1]})}
\right )^{\frac {r_1}{r_2}}
\left (
\NM f{M^\infty _{(\vartheta _2)}}
\right )^{1-\frac {r_1}{r_2}}.
\label{Eq:SolidNormEst}
\end{align}
Here the last inequality follows from
\eqref{Eq:IncrParWeights} and the fact that
$\mascB _0$ is solid.

\par

By combining Proposition \ref{Prop:BasicEmbModSp1}
with \eqref{Eq:IncrParWeights}, and the fact that
$\mascB _{[r_1]}$
is a normal QBF space of order $r_1$
and $v_0^{1/{r_1}}$,
for the last factor in \eqref{Eq:SolidNormEst}
we obtain
\begin{align*}
\NM f{M^\infty _{(\vartheta _2)}}
\lesssim
\NM f{M(\vartheta _2v_0^{1/r_1},
\mascB _{[r_1]})}
\lesssim
\nm f{M(\omega _1,\mascB _{[r_1]})}.
\end{align*}
A combination of the latter inequality and
\eqref{Eq:SolidNormEst} now gives
\eqref{Eq:IncrParWeights1}
and \eqref{Eq:IncrParWeights2},
and the assertion follows.
\end{rem}

\par

\begin{rem}
By choosing $\mascB =
L^{p,q}_{(1/v_0)}(\rr {2d})$,
$p,q\in [1,\infty]$,
it follows that the assumptions on
the weights in \eqref{Eq:IncrParWeights}
cannot be improved, in order for
\eqref{Eq:IncrParWeights2} to hold.

\par

In fact, suppose that
\eqref{Eq:IncrParWeights2} holds true,
and
let $\phi \in \Sigma _1(\rr d)\setminus 0$,
$p_j=pr_j$ and $q_j=qr_j$,
$j=1,2$. Then $p_1\le p_2$,
$q_1\le q_2$, and
\begin{align*}
\nm f{M(\omega _j,\mascB _{[r_j]})}
&\asymp
\left (
\nm {\, |V_\phi f\cdot \omega _j|^{r_j} \, }
{L^{p,q}_{(1/v_0)}}
\right )^{\frac 1{r_j}}
\\[1ex]
&=
\nm {V_\phi f\cdot \vartheta _j}
{L^{p_j,q_j}}
\asymp
\nm {f}
{M^{p_j,q_j}_{(\vartheta _j)}}.
\end{align*}

\par

A combination with the latter
relations and \eqref{Eq:IncrParWeights2}
shows that
\begin{equation}
\label{Eq:IneqSTFTWeights}
\nm {V_\phi f\cdot \vartheta _2}
{L^{p_2,q_2}}
\lesssim
\nm {V_\phi f\cdot \vartheta _1}
{L^{p_1,q_1}}
\end{equation}
By choosing
$\phi (x)=\pi ^{-\frac d4}e^{-\frac 12\cdot |x|^2}$
and letting
$$
f(x)=f_{Y}(x)=\pi ^{-\frac d4}
e^{-\frac 12\cdot |x-y|^2}e^{i\scal x\eta},
\qquad Y=(y,\eta )\in \rr {2d},
$$
it follows that
$$
|V_\phi f_Y(X)| =(2\pi )^{-\frac d2}e^{-\frac 14\cdot |X-Y|^2},
\qquad X,Y\in \rr {2d}.
$$
This gives
\begin{equation}
\label{Eq:EqSTFTWeights}
\nm {V_\phi f_Y\cdot \vartheta _j}
{L^{p_j,q_j}}
\asymp
\nm {e^{-\frac 14\cdot |\cdo |^2}
\vartheta _j(\cdo +Y)}{L^{p_j,q_j}}
\asymp
\vartheta _j(Y),
\end{equation}
because
\begin{align*}
\nm {e^{-\frac 14\cdot |\cdo |^2}
\vartheta _j(\cdo +Y)}{L^{p_j,q_j}}
\lesssim
\nm {e^{-\frac 14\cdot |\cdo |^2}
\cdot v}{L^{p_j,q_j}}\vartheta _j(Y)
\asymp
\vartheta _j(Y)
\intertext{and}
\nm {e^{-\frac 14\cdot |\cdo |^2}
\vartheta _j(\cdo +Y)}{L^{p_j,q_j}}
\gtrsim
\nm {e^{-\frac 14\cdot |\cdo |^2}
/v}{L^{p_j,q_j}}\vartheta _j(Y)
\asymp
\vartheta _j(Y),
\end{align*}
if $v\in \mascP _E(\rr {2d})$
is chosen such that $\vartheta _j$
is $v$-moderate.

\par

A combination of
\eqref{Eq:IneqSTFTWeights}
and
\eqref{Eq:EqSTFTWeights}
gives
$$
\vartheta _2\lesssim \vartheta _1
\quad \Leftrightarrow \quad
\omega _2v_0^{-1/{r_2}}
\lesssim
\omega _1v_0^{-1/{r_1}},
$$
which is the same as
\eqref{Eq:IncrParWeights}.
\end{rem}

\par

In the next three propositions,
we deduce
general bounds of the modulation space
$M(\omega ,\mascB )$ in terms of
the Schwartz space, Gelfand-Shilov
spaces, and their distribution spaces.
Here recall Subsection \ref{subsec1.1}
for the definition of various types
of classes of weight functions.

\par

\begin{prop}
\label{Prop:BasicEmbModSp2Schw}
Let $\mascB$ be a normal
QBF space
on $\rr {2d}$ with respect to
$r_0\in (0,1]$ and
$v_0\in \mascP (\rr {2d})$,
and let $\omega \in
\mascP (\rr {2d})$.
Then
\begin{equation}\label{Eq:ModEmbSchw}
\begin{gathered}
\mascS (\rr d)
\hookrightarrow
M(\omega ,\mascB)
\hookrightarrow
\mascS '(\rr d),
\\[1ex]
\bigcap
M(\omega ,\mascB ) = \mascS (\rr d)
\quad \text{and}\quad
\bigcup
M(\omega ,\mascB ) = \mascS '(\rr d),
\end{gathered}
\end{equation}
where the intersection and union
are taken over all
$\omega \in \mascP (\rr {2d})$.
%
%
\end{prop}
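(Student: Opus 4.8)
The plan is to reduce the statement to known facts about classical modulation spaces by means of the sandwich in Proposition \ref{Prop:BasicEmbModSp1}, which gives
$$
M^{r_0}_{(\omega v_0)}(\rr d) \hookrightarrow M(\omega ,\mascB) \hookrightarrow M^{\infty}_{(\omega /v_0)}(\rr d).
$$
Here $\omega ,v_0\in \mascP (\rr {2d})$, and since $\mascP (\rr {2d})$ is a group under multiplication (Remark \ref{Rem:WeightClGrpProp}), both $\omega v_0$ and $\omega /v_0$ again lie in $\mascP (\rr {2d})$; in particular each is polynomially bounded from above and below.

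I would first prove the two-sided embedding in \eqref{Eq:ModEmbSchw}. Choosing $N$ so large that $\omega v_0\lesssim v_{0,N}$, Proposition \ref{Prop:EmbModSp}(2) yields $M^{r_0}_{(v_{0,N})}\hookrightarrow M^{r_0}_{(\omega v_0)}$, and combining this with $\mascS (\rr d)\hookrightarrow M^{r_0}_{(v_{0,N})}$ from Proposition \ref{Prop:EmbModSp}(3) and the left inclusion of the sandwich gives $\mascS (\rr d)\hookrightarrow M(\omega ,\mascB)$. Symmetrically, choosing $N$ with $1/v_{0,N}\lesssim \omega /v_0$ gives $M^{\infty}_{(\omega /v_0)}\hookrightarrow M^{\infty}_{(1/v_{0,N})}\hookrightarrow \mascS '(\rr d)$, and the right inclusion of the sandwich then yields $M(\omega ,\mascB)\hookrightarrow \mascS '(\rr d)$.

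For the intersection, $\mascS (\rr d)\subseteq \bigcap _\omega M(\omega ,\mascB)$ is immediate from the embedding just obtained. For the reverse inclusion I would, for each $r>0$, take the admissible weight $\omega =v_{0,r}v_0\in \mascP (\rr {2d})$; then the right inclusion of the sandwich reads $M(\omega ,\mascB)\hookrightarrow M^{\infty}_{(v_{0,r})}$, so any $f$ lying in every $M(\omega ,\mascB)$ lies in $M^{\infty}_{(v_{0,r})}$ for all $r$, whence $f\in \bigcap _{r>0}M^{\infty}_{(v_{0,r})}=\mascS (\rr d)$ by Proposition \ref{Prop:EmbModSp}(3). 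For the union, $\bigcup _\omega M(\omega ,\mascB)\subseteq \mascS '(\rr d)$ follows from the embedding, while conversely, writing $f\in \mascS '(\rr d)=\bigcup _{r>0}M^{r_0}_{(1/v_{0,r})}$ and fixing $r$ with $f\in M^{r_0}_{(1/v_{0,r})}$, I would set $\omega =1/(v_{0,r}v_0)\in \mascP (\rr {2d})$, so that $\omega v_0=1/v_{0,r}$ and the left inclusion of the sandwich gives $f\in M^{r_0}_{(\omega v_0)}\hookrightarrow M(\omega ,\mascB)$.

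The computation is essentially bookkeeping with weights; the only thing requiring care is to choose the auxiliary weights $v_{0,r}v_0$ and $1/(v_{0,r}v_0)$ so that, after the multiplication or division by $v_0$ built into the sandwich, one lands exactly on the polynomial weights $v_{0,r}$ and $1/v_{0,r}$ that appear in the Schwartz and tempered-distribution characterizations of Proposition \ref{Prop:EmbModSp}(3). The main (mild) obstacle is keeping the direction of the weight inequalities consistent with the fact that modulation spaces shrink as the weight grows; once the group structure of $\mascP (\rr {2d})$ is used to remain inside $\mascP (\rr {2d})$, no genuine difficulty is left.
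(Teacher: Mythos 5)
Your proof is correct and takes essentially the same route as the paper's (which proves the Beurling analogue and notes this case is identical): the sandwich of Proposition \ref{Prop:BasicEmbModSp1}, the group structure of $\mascP (\rr {2d})$ under multiplication, and known intersection/union characterizations of $\mascS$ and $\mascS '$ via classical modulation spaces. The only cosmetic difference is that you invoke Proposition \ref{Prop:EmbModSp} (2)--(3) with the one-parameter polynomial weights $v_{0,r}$ together with weight monotonicity, whereas the paper cites the corresponding identities taken over all of $\mascP$ (cf. \eqref{Eq:InterSecUnionModSp}); the two are interchangeable here.
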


\par

\begin{prop}
\label{Prop:BasicEmbModSp2Roum}
Let $s,\sigma \ge 1$, $\mascB$ be a normal
QBF space
on $\rr {2d}$ with respect to
$r_0\in (0,1]$ and
$v_0\in \mascP _{s,\sigma}^0(\rr {2d})$,
and let $\omega \in
\mascP _{s,\sigma}^0(\rr {2d})$.
Then
\begin{equation}\label{Eq:ModEmbRoum}
\begin{gathered}
\maclS _s^\sigma (\rr d)
\hookrightarrow
M(\omega ,\mascB)
\hookrightarrow
(\maclS _s^\sigma )'(\rr d),
\\[1ex]
\bigcap
M(\omega ,\mascB ) = \maclS _s^\sigma (\rr d)
\quad \text{and}\quad
\bigcup
M(\omega ,\mascB ) = (\maclS _s^\sigma )'(\rr d),
\end{gathered}
\end{equation}
where the intersection and union
are taken over all
$\omega \in \mascP _{s,\sigma}^0(\rr {2d})$.
%
%
\end{prop}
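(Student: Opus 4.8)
The plan is to deduce everything from the two-sided embedding of Proposition \ref{Prop:BasicEmbModSp1}, which sandwiches $M(\omega ,\mascB)$ between the classical modulation spaces $M^{r_0}_{(\omega v_0)}(\rr d)$ and $M^\infty _{(\omega /v_0)}(\rr d)$, together with the description of $\maclS _s^\sigma$ and $(\maclS _s^\sigma)'$ as intersections and unions of classical modulation spaces. Concretely, I would invoke the Gelfand--Shilov analogue of \eqref{Eq:UnionIntersectMod} (a special case of \cite[Theorem 3.9]{Toft10}, see also \cite{GroZim,Teofanov2}): for every $p,q\in (0,\infty ]$,
\begin{gather*}
\maclS _s^\sigma (\rr d)
=
\bigcap _{\theta \in \mascP _{s,\sigma}^0(\rr {2d})}M^{p,q}_{(\theta )}(\rr d),
\\
(\maclS _s^\sigma )'(\rr d)
=
\bigcup _{\theta \in \mascP _{s,\sigma}^0(\rr {2d})}M^{p,q}_{(\theta )}(\rr d),
\end{gather*}
with the associated projective and inductive limit topologies. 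In particular $\maclS _s^\sigma \hookrightarrow M^{p,q}_{(\theta )}$ and $M^{p,q}_{(\theta )}\hookrightarrow (\maclS _s^\sigma )'$ continuously for each single $\theta \in \mascP _{s,\sigma}^0$.

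For the embeddings in \eqref{Eq:ModEmbRoum} I would first note that, since $\mascP _{s,\sigma}^0(\rr {2d})$ is a group under multiplication (Remark \ref{Rem:WeightClGrpProp}) and $\omega ,v_0\in \mascP _{s,\sigma}^0$, both $\omega v_0$ and $\omega /v_0$ lie in $\mascP _{s,\sigma}^0$. Hence the displayed characterization gives $\maclS _s^\sigma \hookrightarrow M^{r_0}_{(\omega v_0)}$ and $M^\infty _{(\omega /v_0)}\hookrightarrow (\maclS _s^\sigma )'$. Composing these with the chain $M^{r_0}_{(\omega v_0)}\hookrightarrow M(\omega ,\mascB )\hookrightarrow M^\infty _{(\omega /v_0)}$ from Proposition \ref{Prop:BasicEmbModSp1} yields $\maclS _s^\sigma \hookrightarrow M(\omega ,\mascB )\hookrightarrow (\maclS _s^\sigma )'$, which is the first line of \eqref{Eq:ModEmbRoum}.

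For the intersection and union identities I would again use the sandwich together with the fact that, as $\omega$ runs through $\mascP _{s,\sigma}^0$, so do $\theta =\omega v_0$ and $\theta =\omega /v_0$. Intersecting $M(\omega ,\mascB )\hookrightarrow M^\infty _{(\omega /v_0)}$ over $\omega \in \mascP _{s,\sigma}^0$ gives $\bigcap _\omega M(\omega ,\mascB )\subseteq \bigcap _{\theta }M^\infty _{(\theta )}=\maclS _s^\sigma$, while the embedding $\maclS _s^\sigma \hookrightarrow M(\omega ,\mascB )$ (valid for every $\omega$) furnishes the reverse inclusion; hence $\bigcap _\omega M(\omega ,\mascB )=\maclS _s^\sigma$. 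Dually, taking unions in $M^{r_0}_{(\omega v_0)}\hookrightarrow M(\omega ,\mascB )$ gives $(\maclS _s^\sigma )'=\bigcup _{\theta }M^{r_0}_{(\theta )}\subseteq \bigcup _\omega M(\omega ,\mascB )$, while the upper bound $M(\omega ,\mascB )\hookrightarrow (\maclS _s^\sigma )'$ gives $\bigcup _\omega M(\omega ,\mascB )\subseteq (\maclS _s^\sigma )'$, so the two coincide. The topological (limit) statements then follow exactly as in Proposition \ref{Prop:BasicEmbModSp2Schw}.

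The bookkeeping above is harmless; the genuine content sits entirely in the invoked characterization of $\maclS _s^\sigma$ and $(\maclS _s^\sigma )'$ through $\mascP _{s,\sigma}^0$. The delicate point is the pairing of the \emph{Roumieu} space with the \emph{Beurling-type} weight class $\mascP _{s,\sigma}^0$ (rather than with the larger class $\mascP _{s,\sigma}$): the nontrivial inclusions $\bigcap _\theta M^{p,q}_{(\theta )}\subseteq \maclS _s^\sigma$ and $(\maclS _s^\sigma )'\subseteq \bigcup _\theta M^{p,q}_{(\theta )}$ require producing, from a function or ultradistribution whose short-time Fourier transform is controlled by every $\theta ^{-1}$ with $\theta \in \mascP _{s,\sigma}^0$, a \emph{single} weight in $\mascP _{s,\sigma}^0$ witnessing the exact Roumieu rate $e^{-r(|x|^{1/s}+|\xi |^{1/\sigma})}$. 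This is a weight-construction (diagonalization) argument relying on the richness of $\mascP _{s,\sigma}^0$, and it is precisely where one uses $s,\sigma \ge 1$ (so that $|\cdot |^{1/s}$ and $|\cdot |^{1/\sigma}$ are subadditive) together with \eqref{Eq:GSFtransfChar}. Since this ingredient is already recorded in \cite{Toft10} (and \cite{GroZim,Teofanov2}), I would cite it rather than reprove it, leaving the proof of the proposition itself as the short reduction described above.
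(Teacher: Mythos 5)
Your proposal is correct and follows essentially the same route as the paper: the paper proves the Beurling analogue (Proposition \ref{Prop:BasicEmbModSp2Beurl}) by combining the sandwich $M^{r_0}_{(\omega v_0)}\hookrightarrow M(\omega ,\mascB )\hookrightarrow M^{\infty}_{(\omega /v_0)}$ from Proposition \ref{Prop:BasicEmbModSp1} with the group property of the weight class and the known classical-modulation-space characterizations of the Gelfand--Shilov space and its dual (cited from \cite{Toft10,Toft17,Teofanov3}), and declares the Roumieu case to follow by the same argument. Your write-up is exactly that argument carried out for the Roumieu pairing with $\mascP _{s,\sigma}^0$, with the citation burden placed on the same external results, so there is no substantive difference.
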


\begin{prop}
\label{Prop:BasicEmbModSp2Beurl}
Let $s,\sigma \ge 1$, $\mascB$ be a normal
QBF space
on $\rr {2d}$ with respect to
$r_0\in (0,1]$ and
$v_0\in \mascP _{s,\sigma}(\rr {2d})$,
and let $\omega \in
\mascP _{s,\sigma}(\rr {2d})$.
Then
\begin{equation}\label{Eq:ModEmbBeur}
\begin{gathered}
\Sigma _s^\sigma (\rr d)
\hookrightarrow
M(\omega ,\mascB)
\hookrightarrow
(\Sigma _s^\sigma )'(\rr d),
\\[1ex]
\bigcap
M(\omega ,\mascB ) = \Sigma _s^\sigma (\rr d)
\quad \text{and}\quad
\bigcup
M(\omega ,\mascB ) = (\Sigma _s^\sigma )'(\rr d),
\end{gathered}
\end{equation}
where the intersection and union
are taken over all
$\omega \in \mascP _{s,\sigma}(\rr {2d})$.
%
%
\end{prop}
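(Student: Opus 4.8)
The plan is to sandwich $M(\omega ,\mascB )$ between two classical modulation spaces and then transfer the known description of $\Sigma _s^\sigma (\rr d)$ and $(\Sigma _s^\sigma )'(\rr d)$ as intersections and unions of such spaces. By Proposition \ref{Prop:BasicEmbModSp1} we have the continuous inclusions
\[
M^{r_0}_{(\omega v_0)}(\rr d)
\hookrightarrow
M(\omega ,\mascB )
\hookrightarrow
M^{\infty}_{(\omega /v_0)}(\rr d),
\]
valid for each fixed $\omega \in \mascP _{s,\sigma}(\rr {2d})$. Since $\omega ,v_0\in \mascP _{s,\sigma}(\rr {2d})$ and this class is a group under multiplication (Remark \ref{Rem:WeightClGrpProp}), both $\omega v_0$ and $\omega /v_0$ again belong to $\mascP _{s,\sigma}(\rr {2d})$. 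Thus the two outer spaces are ordinary modulation spaces with weights in $\mascP _{s,\sigma}$, and everything reduces to facts about $M^{r_0}_{(\eta )}$ and $M^{\infty}_{(\eta )}$ with $\eta \in \mascP _{s,\sigma}$.

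The classical input I would use is the short-time Fourier transform characterization of Gelfand-Shilov spaces of Beurling type: for $\phi \in \Sigma _s^\sigma (\rr d)\setminus 0$, one has $f\in \Sigma _s^\sigma (\rr d)$ precisely when $|V_\phi f(X)|\lesssim e^{-r(|x|^{1/s}+|\xi |^{1/\sigma})}$ for every $r>0$, and $f\in (\Sigma _s^\sigma )'(\rr d)$ precisely when this holds with $\lesssim e^{r(|x|^{1/s}+|\xi |^{1/\sigma})}$ for some $r>0$ (this follows from \eqref{Eq:GSFtransfChar}; see \cite{GroZim,Teofanov2,Toft10}). Two consequences are immediate. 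First, every $\eta \in \mascP _{s,\sigma}$ satisfies $\eta (X)\lesssim e^{r_1(|x|^{1/s}+|\xi |^{1/\sigma})}$ for some $r_1$, so for $f\in \Sigma _s^\sigma$ the product $V_\phi f\cdot \eta$ decays faster than any exponential and hence lies in every mixed space; this gives $\Sigma _s^\sigma \hookrightarrow M^{r_0}_{(\eta )}$, and since $1/\eta \in \mascP _{s,\sigma}$ as well, the dual estimate gives $M^{\infty}_{(\eta )}\hookrightarrow (\Sigma _s^\sigma )'$. Combined with the displayed sandwich this yields the first line of \eqref{Eq:ModEmbBeur}. Second, the exponential weights $\eta _r(X)=e^{r(|x|^{1/s}+|\xi |^{1/\sigma})}$ all belong to $\mascP _{s,\sigma}$, so
\[
\bigcap _{\eta \in \mascP _{s,\sigma}}M^{\infty}_{(\eta )}(\rr d)
=\Sigma _s^\sigma (\rr d)
\quad \text{and}\quad
\bigcup _{\eta \in \mascP _{s,\sigma}}M^{r_0}_{(\eta )}(\rr d)
=(\Sigma _s^\sigma )'(\rr d),
\]
which are exactly the $(s,\sigma )$-analogues of \eqref{Eq:UnionIntersectMod} available from \cite[Theorem 3.9]{Toft10}.

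Finally I would assemble the intersection and union statements. For the intersection, the upper sandwich inclusion gives $\bigcap _\omega M(\omega ,\mascB )\subseteq \bigcap _\omega M^{\infty}_{(\omega /v_0)}$; as $\omega$ runs through $\mascP _{s,\sigma}$ so does $\eta =\omega /v_0$, whence this equals $\bigcap _{\eta}M^{\infty}_{(\eta )}=\Sigma _s^\sigma$, while the reverse inclusion is the first line of \eqref{Eq:ModEmbBeur}. For the union, the lower sandwich inclusion gives $\bigcup _\omega M(\omega ,\mascB )\supseteq \bigcup _\omega M^{r_0}_{(\omega v_0)}=\bigcup _{\eta}M^{r_0}_{(\eta )}=(\Sigma _s^\sigma )'$, and the upper inclusion $M(\omega ,\mascB )\hookrightarrow M^{\infty}_{(\omega /v_0)}\subseteq (\Sigma _s^\sigma )'$ gives the opposite containment. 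The same scheme proves the Roumieu and Schwartz variants after replacing $\mascP _{s,\sigma}$ by $\mascP _{s,\sigma}^0$ and $\mascP$, respectively. The one point requiring genuine care --- and the main obstacle --- is matching the Beurling space $\Sigma _s^\sigma$ with the \emph{``some $r$''} weight class $\mascP _{s,\sigma}$ (rather than the ``every $r$'' class $\mascP _{s,\sigma}^0$, which belongs to the Roumieu case), and checking that the equalities above are topological, i.e. that the projective and inductive limit topologies transport correctly through the continuous sandwich inclusions.
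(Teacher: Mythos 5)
Your proposal is correct and follows essentially the same route as the paper's proof: the sandwich $M^{r_0}_{(\omega v_0)}\hookrightarrow M(\omega ,\mascB )\hookrightarrow M^{\infty}_{(\omega /v_0)}$ from Proposition \ref{Prop:BasicEmbModSp1}, the group property of $\mascP _{s,\sigma}$ under multiplication to reparameterize the weights, the classical embeddings $\Sigma _s^\sigma \hookrightarrow M^{r_0}_{(\eta )}\hookrightarrow M^{\infty}_{(\eta )}\hookrightarrow (\Sigma _s^\sigma )'$, and the known identities $\bigcap _\eta M^{\infty}_{(\eta )}=\Sigma _s^\sigma$, $\bigcup _\eta M^{r_0}_{(\eta )}=(\Sigma _s^\sigma )'$ over $\eta \in \mascP _{s,\sigma}$. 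The only cosmetic difference is that you sketch the classical embeddings via the short-time Fourier transform decay characterization, whereas the paper simply cites them (from \cite{Toft10}) together with the intersection/union identities (from \cite{Toft17,Teofanov3}).
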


\par

We only prove Proposition
\ref{Prop:BasicEmbModSp2Beurl}.
Propositions \ref{Prop:BasicEmbModSp2Schw}
and \ref{Prop:BasicEmbModSp2Roum} follow
by similar arguments and are left for the reader.

\par

\begin{proof}[Proof of Proposition
\ref{Prop:BasicEmbModSp2Beurl}]
The embeddings in \eqref{Eq:ModEmbBeur}
follows from Proposition
\ref{Prop:BasicEmbModSp1} and
the embeddings
\begin{equation}
\label{eq:EmbeddingsModSpAndGelfShilSp}
\Sigma _s^\sigma (\rr d)
\hookrightarrow
M^{r_0}_{(\omega )}(\rr d)
\hookrightarrow
M^{\infty}_{(\omega )}(\rr d)
\hookrightarrow
(\Sigma _s^\sigma )'(\rr d),
\end{equation}
for every
$\omega \in \mascP _{s,\sigma}(\rr {2d})$.
(See e.{\,}g. \cite[Theorem 3.9]{Toft10}.
For weights moderated by polynomials,
see also \cite{GaSa}.)

\par

Finally, the equalities in
\eqref{Eq:ModEmbBeur} follow from 
Corollary 
\ref{Prop:BasicEmbModSp1}, the fact
that $\mascP _{s,\sigma}(\rr {2d})$ is a group
under multiplication,
and the identities
\begin{equation} \label{eq:mod-spaces-sigma_1}
 \bigcap
M_{(\omega )}^\infty (\rr d)
= \Sigma _s^\sigma (\rr d)
\quad \text{and}\quad
\bigcup
M_{(\omega )}^{r_0} (\rr d)
= (\Sigma _s^\sigma )'(\rr d),
 \end{equation}
where the intersection and union
are taken over all
$\omega \in \mascP _{s,\sigma}(\rr {2d})$.
We refer to
Proposition 6.5 in \cite{Toft17} or
Theorem 4.1 in \cite{Teofanov3}
for the proof of
\eqref{eq:mod-spaces-sigma_1}.
%
\end{proof}

\par

Finally we observe that
$M(\omega ,\mascB)$ is a quasi-Banach
space (of the same order as $\mascB$),
which is explained in the following proposition.

\par

\begin{prop}\label{Prop:ComplModSp}
Let $\mascB$ be a normal invariant
QBF space on $\rr {2d}$ of order
$r_0\in (0,1]$, and let
$\omega \in \mascP _E(\rr {2d})$.
Then $M(\omega ,\mascB)$ is a
quasi-Banach space of order $r_0$.
\end{prop}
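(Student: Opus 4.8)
The plan is to verify first that $\nm\cdo{M(\omega ,\mascB )}$ is a genuine quasi-norm of order $r_0$, and then to establish completeness by reconciling the two natural limits of a Cauchy sequence. Fix $\phi \in \Sigma _1(\rr d)\setminus 0$. Since $V_\phi$ is linear and $\mascB$ is a quasi-Banach space of order $r_0$, we may invoke \eqref{Eq:WeakTriangle2} to obtain
\begin{equation*}
\nm {f+g}{M(\omega ,\mascB )}^{r_0}
=
\nm {(V_\phi f+V_\phi g)\cdo \omega}{\mascB}^{r_0}
\le
\nm {V_\phi f\cdo \omega}{\mascB}^{r_0}
+
\nm {V_\phi g\cdo \omega}{\mascB}^{r_0}
=
\nm f{M(\omega ,\mascB )}^{r_0}+\nm g{M(\omega ,\mascB )}^{r_0}.
\end{equation*}
Homogeneity is immediate, and definiteness follows since $\omega >0$ forces $V_\phi f=0$ whenever $\nm f{M(\omega ,\mascB )}=0$, whence $f=0$ by the inversion formula \eqref{Eq:IdentSTFTAdj}. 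Thus $M(\omega ,\mascB )$ is a quasi-normed space of order $r_0$, and it remains to prove completeness.

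For this, let $\{ f_n\}$ be a Cauchy sequence in $M(\omega ,\mascB )$. By Proposition \ref{Prop:BasicEmbModSp1} the embedding $M(\omega ,\mascB )\hookrightarrow M^\infty _{(\omega /v_0)}(\rr d)$ is continuous, so $\{ f_n\}$ is Cauchy in $M^\infty _{(\omega /v_0)}(\rr d)$. The latter is complete by Proposition \ref{Prop:InvModAndBanach} and continuously embedded in $\Sigma _1'(\rr d)$ by Proposition \ref{Prop:EmbModSp}; hence $f_n\to f$ for some $f\in M^\infty _{(\omega /v_0)}(\rr d)\subseteq \Sigma _1'(\rr d)$, with convergence in particular in $\Sigma _1'(\rr d)$. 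Testing against $\phi (\cdo -x)e^{i\scal \cdo \xi}\in \Sigma _1(\rr d)$ then yields the pointwise convergence $V_\phi f_n(X)\to V_\phi f(X)$ for every $X\in \rr {2d}$. On the other hand, by the very definition of the quasi-norm the sequence $\{ V_\phi f_n\cdo \omega \}$ is Cauchy in $\mascB$, hence converges to some $G\in \mascB$.

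The crux is to identify these two limits, namely to show $G=V_\phi f\cdo \omega$ almost everywhere. By Proposition \ref{Prop:BasicEmbWSp} one has $\mascB _{(\omega )}\hookrightarrow \sfW ^{r_0}(\omega ,\mascB )$ together with $\ell _\mascB \hookrightarrow \ell ^\infty _{(1/v_0)}$, which combine to give, for each fixed $j\in \zz {2d}$ and $Q=[0,1]^{2d}$, the local estimate $\nm {h\cdo \omega}{L^{r_0}(j+Q)}\lesssim v_0(j)\nm {h\cdo \omega}{\mascB}$. Applying this to $h=V_\phi (f_n-f_m)$ shows that $\{ V_\phi f_n\cdo \omega\}$ is Cauchy in $L^{r_0}(j+Q)$ for every $j$, so a diagonal argument produces a subsequence converging pointwise almost everywhere to $G$. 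Comparison with the pointwise limit $V_\phi f\cdo \omega$ forces $G=V_\phi f\cdo \omega$ a.e. Consequently $V_\phi f\cdo \omega =G\in \mascB$, so $f\in M(\omega ,\mascB )$, and
\begin{equation*}
\nm {f_n-f}{M(\omega ,\mascB )}
=
\nm {V_\phi f_n\cdo \omega -G}{\mascB}\longrightarrow 0 ,
\end{equation*}
establishing completeness and hence the claim. I expect this reconciliation step to be the main obstacle: the two candidate limits live a priori in different spaces (one in $\mascB$, one in $\Sigma _1'(\rr d)$), and it is precisely the local $L^{r_0}$ control furnished by Proposition \ref{Prop:BasicEmbWSp} that lets us pass to a common pointwise-convergent subsequence and glue them together.
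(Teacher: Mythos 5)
Your proof is correct, and it takes a genuinely different route from the paper's. The paper transfers the Cauchy property to $\sfW ^\infty (\omega ,\mascB )$ via Theorem \ref{Thm:EquivNorms2}, invokes completeness of that Wiener amalgam space (Proposition \ref{Prop:ComplWienSpaces}, imported from Rauhut), and then constructs the limit explicitly as $f=V_\phi ^*F$, using continuity of the projection $P_{\phi ,\phi}=\nm \phi{L^2}^{-2}\,V_\phi \circ V_\phi ^*$ (Proposition \ref{Prop:TFProjCont}, itself resting on the twisted-convolution estimate of Corollary \ref{Cor:ConvWienerSp1}) both to place $f$ in $M(\omega ,\mascB )$ and to obtain $f_j\to f$. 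You avoid the adjoint $V_\phi ^*$ and the reproducing identity entirely: you produce a distributional limit $f$ through the embedding $M(\omega ,\mascB )\hookrightarrow M^\infty _{(\omega /v_0)}(\rr d)$ (Proposition \ref{Prop:BasicEmbModSp1}) together with the classical completeness of $M^\infty _{(\omega /v_0)}(\rr d)$, produce a second limit $G$ in $\mascB$, and reconcile the two via the local estimate $\nm H{L^{r_0}(j+Q)}\lesssim v_0(j)\nm H{\mascB}$, which you correctly extract from the two parts of Proposition \ref{Prop:BasicEmbWSp}; this upgrades $\mascB$-convergence to local $L^{r_0}$-convergence and hence, along a diagonal subsequence, to a.e. convergence, pinning down $G=V_\phi f\cdot \omega$. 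What the paper's approach buys is an explicit formula for the limit and reuse of the projection machinery that Appendix \ref{App:A} builds anyway (a coorbit-style argument); what yours buys is economy of means: only embedding facts, completeness of $\mascB$, and a standard measure-theoretic two-limits argument. Two remarks. First, to identify the a.e. limit with $G$ it is cleanest to apply your local estimate directly to $V_\phi f_n\cdot \omega -G$ (rather than only to differences $V_\phi (f_n-f_m)$): the same inequality shows that convergence in $\mascB$ forces convergence in each $L^{r_0}(j+Q)$, so the local limits must equal $G$. Second, neither proof is more self-contained on the completeness issue: you cite the classical completeness of $M^\infty _{(\omega /v_0)}(\rr d)$, stated without proof in Proposition \ref{Prop:InvModAndBanach}, while the paper cites Rauhut's completeness of $\sfW ^\infty (\omega ,\mascB )$.
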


\par

Proposition \ref{Prop:ComplModSp}
follows by similar arguments as for
analogous results in 
\cite{FeiGro1,Gro2,Rau1}. In order to
be self-contained we give a proof in
Appendix \ref{App:A}.

\par

\subsection{Continuity and compactness
for modulation spaces}

\par

In various contexts it is shown
that if $\mascB$ and
$\omega _1,\omega _2\in \mascP _E(\rr {2d})$
are suitable, then the inclusion map
\begin{align}\label{Embedding}
i: M(\omega _1, \mascB) 
\rightarrow
M(\omega _2, \mascB)
\end{align}
is continuous when
${\omega _2}\slash {\omega _1}$ is bounded,
and compact when ${\omega _2}\slash {\omega _1}$
tends to zero at infinity. 
In \cite{PfTo19} these properties were shown for 
invariant BF spaces $\mascB$ fulfilling the additional condition 
\begin{equation}\label{Eq:CondForQBFSpaces}
\Sigma _1(\rr {2d})\hookrightarrow \mascB ,
\end{equation}
and for QBF spaces of Lebesgue-type.
In Propositions 1.20 and  1.21 in 
\cite{BimTof},
these properties were generalized
to all invariant QBF spaces $\mascB$
fulfilling \eqref{Eq:CondForQBFSpaces}. 
Due to 
Proposition \ref{Prop:BasicEmbWSp} (2) 
and Corollary \ref{cor:relation-to-Sigma}, 
condition \eqref{Eq:CondForQBFSpaces} holds 
for all normal QBF spaces $\mascB$. 
Hence by Propositions 1.20 and 1.21
in \cite{BimTof} we immediately get the following 
continuity respectively compactness result.

\par

\begin{thm}\label{thm:CompAndContProp}
    Let $\mascB$ be a normal QBF space 
    with respect to $r_0 \in (0,1]$ 
    and $v_0 \in \mascP _E (\rr d)$. 
    Also let 
    $\omega _1, \omega_2 \in \mascP _E (\rr {2d})$. 
    Then the following is true:
    \begin{enumerate}
        \item 
        if $\omega _2 \lesssim \omega _1$, 
        then $M(\omega_1, \mascB) 
        \subseteq M(\omega_2, \mascB)$
        and the map \eqref{Embedding}
        is continuous;
        
        \vrum

        \item 
        if $\lim _{|X|\to \infty}
\frac {\omega _2(X)}{\omega _1(X)} =0$,
 then $M(\omega_1, \mascB) 
        \subseteq M(\omega_2, \mascB)$
        and the map \eqref{Embedding}
        is compact;
        
        \vrum
        
        \item if in addition $v_0$ is bounded, then 
        $\omega _2 \lesssim \omega _1$ if and only
        if the map \eqref{Embedding}
        is continuous;
        \item 
        if in addition $v_0$ is bounded, then
$\lim _{|X|\to \infty}
\frac {\omega _2(X)}{\omega _1(X)} =0$,
if and only if the map \eqref{Embedding}
        is compact.
    \end{enumerate}
\end{thm}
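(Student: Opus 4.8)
The plan is to deduce all four assertions from the continuity and compactness criteria for inclusions between spaces $M(\omega ,\mascB )$ that were established in \cite{BimTof} under the single structural hypothesis $\Sigma _1(\rr {2d})\hookrightarrow \mascB$, i.e. condition \eqref{Eq:CondForQBFSpaces}. Thus the entire task reduces to verifying \eqref{Eq:CondForQBFSpaces} for an arbitrary normal QBF space, after which Propositions 1.20 and 1.21 of \cite{BimTof} apply verbatim.

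First I would check \eqref{Eq:CondForQBFSpaces}. Applying Corollary \ref{cor:relation-to-Sigma} in dimension $2d$ with $\omega \equiv 1$ and $r=\infty$ gives the continuous inclusion $\Sigma _1(\rr {2d})\subseteq \sfW ^\infty (1,\mascB )$, and Proposition \ref{Prop:BasicEmbWSp}~(2) with $\omega \equiv 1$ gives $\sfW ^\infty (1,\mascB )\hookrightarrow \mascB _{(1)}=\mascB$. Composing these two continuous inclusions yields $\Sigma _1(\rr {2d})\hookrightarrow \mascB$, which is exactly \eqref{Eq:CondForQBFSpaces}. Since $\mascB$ was an arbitrary normal QBF space, this holds throughout the class under consideration, so the cited criteria apply and deliver (1)--(4).

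For completeness I would also indicate the mechanism behind each assertion. The forward directions of (1) and (2) rest on Theorem \ref{Thm:EquivNorms2}: writing $F=V_\phi f$ for a fixed $\phi \in \Sigma _1(\rr d)\setminus 0$, one has $\nm f{M(\omega _j,\mascB )}\asymp \nm {F\cdot \omega _j}{\mascB }$, so $\omega _2\lesssim \omega _1$ combined with solidity of $\mascB$ forces $\nm f{M(\omega _2,\mascB )}\lesssim \nm f{M(\omega _1,\mascB )}$, which is (1). The converses (3) and (4) are tested on the time-frequency shifts $f_Y$ of the Gaussian $\phi _0$, for which the Remark following Proposition \ref{Prop:BasicEmbModSp1} records $|V_{\phi _0}f_Y(X)|=(2\pi )^{-d/2}e^{-\frac 14|X-Y|^2}$; moderateness of $\omega _j$ together with the translation bound \eqref{translmultprop1} yields $\omega _j(Y)/v_0(Y)\lesssim \nm {f_Y}{M(\omega _j,\mascB )}\lesssim \omega _j(Y)\,v_0(Y)$. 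This is precisely where boundedness of $v_0$ enters: it collapses the estimate to $\nm {f_Y}{M(\omega _j,\mascB )}\asymp \omega _j(Y)$, so continuity forces $\omega _2\lesssim \omega _1$ and compactness forces $\omega _2/\omega _1\to 0$.

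The main obstacle is the compactness half of (2), which needs genuinely more than the norm equivalence. The cleanest route is to approximate the inclusion $i$ in operator norm by the truncated operators $i_R f=\nmm{\phi }^{-2}V_\phi ^*(\chi _{\{|X|\le R\}}\,V_\phi f)$. Using $|V_\phi (V_\phi ^*G)|\le |G|*|V_\phi \phi |$ from \eqref{Eq:adjoint-convolution}, the Gaussian decay of $|V_\phi \phi |$, and moderateness of $\omega _2$, the error satisfies $\nm {(i-i_R)f}{M(\omega _2,\mascB )}\lesssim \big(\sup _{|X|>R}\omega _2(X)/\omega _1(X)\big)\nm f{M(\omega _1,\mascB )}$, so $\nm {i-i_R}{}\to 0$. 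Each $i_R$ is compact because $\chi _{\{|X|\le R\}}V_\phi f$ lives on a fixed compact window where bounded families of short-time Fourier transforms are equicontinuous by Lemma \ref{Lemma:EstSTFT}, hence relatively compact, and this relative compactness transfers through $V_\phi ^*$ to $M(\omega _2,\mascB )$. Establishing this relative compactness carefully is the only technical point; everything else follows directly from Theorem \ref{Thm:EquivNorms2}, solidity, and the embedding \eqref{Eq:CondForQBFSpaces} verified above.
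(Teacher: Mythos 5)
Your reduction of (1) and (3) is exactly the paper's: verify \eqref{Eq:CondForQBFSpaces} via Corollary \ref{cor:relation-to-Sigma} and Proposition \ref{Prop:BasicEmbWSp}~(2), then invoke Propositions 1.20 and 1.21 of \cite{BimTof}. For (2) you take a genuinely different route: the paper argues directly that a bounded sequence in $M(\omega _1,\mascB )$ has a Cauchy subsequence in $M(\omega _2,\mascB )$, splitting the norm into a part on a large ball $B_k$ (handled by Lemma \ref{Lemma:UniformSeq}, an Arzel\`a--Ascoli statement for STFTs of $M^\infty _{(\omega )}$-bounded sets, plus solidity and $\chi _k\in \mascB _{(\omega _2)}$) and a tail controlled by $\sup _{|X|>R}\omega _2/\omega _1$; you instead approximate $i$ in operator norm by truncations $i_R=\nmm \phi ^{-2}V_\phi ^*(\chi _{B_R}V_\phi \, \cdot \, )$. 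Your scheme can be made to work, but note two points where your justification is weaker than what the quasi-Banach setting demands. First, for $r_0<1$ the estimate $\nm {G*|V_\phi \phi |}{\mascB}\lesssim \nm G{\mascB}$ does \emph{not} follow from \eqref{Eq:adjoint-convolution} plus Minkowski/Young: convolution by $L^1_{(v)}$-functions is unbounded on quasi-Banach $\mascB$. Both your error bound and the compactness of each $i_R$ need the Wiener-amalgam convolution machinery (Proposition \ref{Prop:ConvWienerSp} or Corollary \ref{Cor:ConvWienerSp1}) together with Theorem \ref{Thm:EquivNorms2}; the ingredients are in the paper, but your citation of \eqref{Eq:adjoint-convolution} and ``moderateness'' alone does not carry the step. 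Second, Lemma \ref{Lemma:EstSTFT} is a pointwise-by-local-$L^p$ bound, not an equicontinuity statement; the relative compactness on the window $\{ |X|\le R\}$ is exactly what Lemma \ref{Lemma:UniformSeq} provides, and that is the lemma you should be using. The paper's direct argument is in fact more elementary here, since it bypasses the convolution estimates entirely.

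The genuine gap is in (4). You claim that the norm asymptotics $\nm {f_Y}{M(\omega _j,\mascB )}\asymp \omega _j(Y)$ (which you derive correctly, and which is where boundedness of $v_0$ enters) imply that compactness forces $\omega _2/\omega _1\to 0$. That implication does not follow from norm estimates alone: compactness of $i$ plus $\nm {g_Y}{M(\omega _2,\mascB )}\asymp \omega _2(Y)/\omega _1(Y)$ for the normalized shifts $g_Y=f_Y/\omega _1(Y)$ is perfectly consistent, a priori, with the quotient staying bounded away from $0$ along some sequence $|Y_k|\to \infty$, \emph{unless} one also shows that the only possible limit point of $\{ i\, g_{Y_k}\}$ in $M(\omega _2,\mascB )$ is zero. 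This is the heart of the paper's contradiction argument: one checks that $g_{Y_k}\to 0$ in $\Sigma _1'(\rr d)$ (using $V_\phi \phi \in \Sigma _1(\rr {2d})$, \eqref{Eq:GSFtransfChar} and the lower bound \eqref{Eq:weight1} on $\omega _1$), so any $M(\omega _2,\mascB )$-convergent subsequence must converge to $0$, and then evaluating $\omega _2(X_{k_j})|V_\phi g_{k_j}(X_{k_j})|$ at the points $X_{k_j}$ themselves (via the embedding $M(\omega _2,\mascB )\hookrightarrow M^\infty _{(\omega _2/v_0)}$ and boundedness of $v_0$) yields $\omega _2(X_{k_j})/\omega _1(X_{k_j})\to 0$, contradicting the assumed lower bound. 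Without this identification of the limit as zero, your one-line deduction for (4) is incomplete.
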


\par

The corresponding result to (2)
in Theorem \ref{thm:CompAndContProp}
is not explicitly
given in Proposition 1.21 of 
\cite{BimTof}. However, it is employed in
its proof.
In order to be self-contained, we present
a proof of (2) and (4) in Appendix 
\ref{App:A}. 


\par

\section{Convolutions and multiplications in
Wiener amalgam and modulation spaces}\label{sec5}

\par

In this section we deduce convolution and multiplication properties
for Wiener amalgam spaces, which extends analogous results in
\cite{Toft04,Toft26} to more general modulation spaces. In the first
part we discuss an extended class of non-commutative convolutions,
which appears when applying short-time Fourier transforms on
convolutions and multiplications. Thereafter we deduce estimates
of various types of non-commutative convolutions and multiplications
in background of Wiener amalgam spaces and modulation spaces.

\par



\subsection{Wiener amalgam estimates for
non-commutative convolutions}

We observe that the twisted
convolution (as well as the ordinary
convolution) is a mapping of the form
\begin{equation}\label{Eq:QConv}
(f*_\vartheta g)(x,y)
=
\int _{\rr {d_1}} f(x-z,y)g(z,y)
e^{i\vartheta (x,y,z)}\, dz,
\quad
(x,y)\in \rr {d_1+d_2},
\end{equation}
where $\vartheta$ is a
real-valued measurable function on
$$
\rr {d_1}\times \rr {d_2}\times \rr {d_1}
\asymp \rr {2d_1+d_2}.
$$
It is obvious that
the map $(f,g)\mapsto f*_\vartheta g$ is continuous
from $\Sigma _1(\rr d)\times
\Sigma _1(\rr d)$ to $L^1(\rr d)$,
with $d=d_1+d_2$.

\par

Evidently, the corresponding discrete version of \eqref{Eq:QConv}
is given by
\begin{equation}\label{Eq:QConvDisc}
(f*_\vartheta g)(j,k)
=
\sum _{n\in \zz {d_1}} f(j-n,k)g(n,k)
e^{i\vartheta (j,k,n)},
\quad
(j,k)\in \zz {d_1+d_2},
\end{equation}
which, for example, is well-defined and 
continuous
from 
$\ell ^{p_1,q}_{(\omega _1 )}(\zz d)
\times
\ell ^{p_2,\infty}_{(\omega _2)}(\zz d)$
to $\ell ^{p_0,q}_{(\omega _0)}(\zz d)$
when
\begin{equation}\label{Eq:YoungCond}
\frac 1{p_1}+\frac 1{p_2} = 1+\frac 1{p_0},
\end{equation}
and
\begin{equation}\label{Eq:ConvWeightsEst}
\omega _0(x_1+x_2,y)\lesssim \omega _1(x_1,y)\omega _2(x_2,y),
\quad
x_1,x_2\in \rr {d_1},\ y\in \rr {d_2},
\end{equation}
holds.

\par


The following result deals with Wiener amalgam
estimates of mappings in \eqref{Eq:QConv}.

\par

\begin{prop}\label{Prop:ConvWienerSp}
Suppose $d=d_1+d_2>0$, $d_1,d_2\in \mathbf N$, $\mascB$ is an invariant
QBF space on $\rr d$ of order
$r_0\in (0,1]$ and
$v_0\in \mascP _E(\rr {d})$,
$\omega _j,v\in \mascP _E(\rr {d})$, $j=0,1,2$,
be such that $v(x,y)=v_0(x,0)$ and \eqref{Eq:ConvWeightsEst} holds,
and let
$$
p_0,p_1,p_2\in [1,\infty ]
$$
be such that
\eqref{Eq:YoungCond} holds.
Also let $\vartheta$ be a real-valued
and measurable function on $\rr d$, and let $*_\vartheta$ from
$\Sigma _1(\rr d)\times \Sigma _1(\rr d)$
to $L^1(\rr d)$ be the non-commutative
convolution, given in \eqref{Eq:QConv}.
Then $(f,g)\mapsto f*_\vartheta g$ extends to a continuous map
from
$\sfW ^{p_1,r_0}_{d_1,d_2}(\omega _1,\mascB )
\times \sfW ^{p_2,\infty}_{d_1,d_2,*}
(\omega _2v,\ell ^{r_0,\infty}_*(\zz d))$
to
$\sfW ^{p_0,r_0}_{d_1,d_2}(\omega _0,\mascB )$, and
\begin{equation}\label{Eq:ConvEstWiener}
\begin{aligned}
\nm {f*_\vartheta g}{\sfW ^{p_0,r_0}_{d_1,d_2}(\omega _0,\mascB )}
&\lesssim
\nm f{\sfW ^{p_1,r_0}_{d_1,d_2}(\omega _1,\mascB )}
\nm g{\sfW ^{p_2,\infty}_{d_1,d_2,*}(\omega _2v,\ell ^{r_0,\infty}_*)},
\\[1ex]
f&\in \sfW ^{p_1,r_0}_{d_1,d_2}(\omega _1,\mascB ),
\ g\in \sfW ^{p_2,\infty}_{d_1,d_2,*}
(\omega _2v,\ell ^{r_0,\infty}_*(\zz d)).
\end{aligned}
\end{equation}
\end{prop}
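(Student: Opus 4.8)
The plan is to reduce the continuous (Wiener amalgam) estimate to the discrete convolution estimate on the corresponding sequence spaces, via the local-to-global passage furnished by the structure of $\sfW^{p_1,r_0}_{d_1,d_2}(\omega_1,\mascB)$. First I would dyadically decompose the integral in \eqref{Eq:QConv}. For each lattice point $(j,k)\in\zz{d_1+d_2}$ consider the local piece $a(j,k)=\nm{(f*_\vartheta g)\,\omega_0}{L^{p_0,r_0}(Q(j,k))}$ with $Q(j,k)=(j,k)+Q$, $Q=[0,1]^d$. The definition \eqref{Eq:WienerQuasiNormsSimpleExt} says $\nm{f*_\vartheta g}{\sfW^{p_0,r_0}_{d_1,d_2}(\omega_0,\mascB)}=\nm{a}{\ell_\mascB}$, so the whole game is to bound this $\ell_\mascB$-quasi-norm. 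Since the $\vartheta$-phase has modulus one, it disappears under the pointwise absolute value bound $|f*_\vartheta g|\le |f|*_{\abs\cdot}|g|$ where the inner convolution is over the first $d_1$ variables only, so I may harmlessly replace $f,g$ by $|f|,|g|$ and drop $\vartheta$ from here on.

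Next I would estimate $a(j,k)$ by a \emph{discrete} convolution of the sequences
$$
b(m,k)=\nm{f\,\omega_1}{L^{p_1,r_0}(Q(m,k))}
\quad\text{and}\quad
c(n,k)=\nm{g\,\omega_2 v}{L^{p_2,\infty}_*(Q(n,k))},
$$
indexed over $m,n\in\zz{d_1}$ for each fixed $k\in\zz{d_2}$. The key pointwise computation is local Young's inequality: restricting $(x,y)$ to $Q(j,k)$, the integration variable $z$ ranges over finitely many translates $m+[-1,1]^{d_1}$, and on each such cell the continuous Young inequality in the $x$-variable (using \eqref{Eq:YoungCond}) together with the moderateness bound \eqref{Eq:ConvWeightsEst} for the weights gives
$$
a(j,k)\lesssim \sum_{m\in\zz{d_1}} b(j-m,k)\,c(m,k),
$$
up to a fixed finite overlap constant coming from the $[-1,1]^{d_1}$ fattening (this is exactly the mechanism of Remark \ref{Rem:EquivWienerNorms}, which lets me fatten the cubes at the cost of an equivalent quasi-norm). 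Here the hypothesis $v(x,y)=v_0(x,0)$ ensures the weight $v$ absorbs precisely the submultiplicative factor that the $v_0$-translation property \eqref{translmultprop1} of $\mascB$ will produce.

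Applying $\ell_\mascB$ and invoking solidity of $\mascB$, I would then convert the $q$-direction bookkeeping: $a$ is controlled by the discrete convolution of $b$ (which sits in $\ell_\mascB$ with $\nm{b}{\ell_\mascB}\asymp\nm{f}{\sfW^{p_1,r_0}_{d_1,d_2}(\omega_1,\mascB)}$) against $c$ (whose $\ell^{r_0,\infty}_*$-norm in the $m$-variable is $\nm{g}{\sfW^{p_2,\infty}_{d_1,d_2,*}(\omega_2 v,\ell^{r_0,\infty}_*)}$). The discrete convolution estimate \eqref{Eq:DiscConvEst} for the invariant QBF space $\mascB$ then yields
$$
\nm{a}{\ell_\mascB}\lesssim \nm{c}{\ell^{r_0}_{(v_0)}}\,\nm{b}{\ell_\mascB},
$$
which after translating back through \eqref{Eq:WienerNot2} and the definitions is exactly \eqref{Eq:ConvEstWiener}. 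I expect the main obstacle to be the careful alignment of the mixed exponents: the $p$-direction is handled by honest Young's inequality on $\rr{d_1}$, but the second (the $\zz{d_2}$ or $r_0,\infty$) direction must be threaded through the anisotropic definitions of both the Wiener amalgam spaces and $\ell_\mascB$ so that \eqref{Eq:DiscConvEst} applies with the convolution taken only in the first block of variables while the second block is merely a fixed parameter $k$. Verifying that the weight inequality \eqref{Eq:ConvWeightsEst} combines correctly with the $v_0$-moderateness hidden in \eqref{Eq:DiscConvEst} across both blocks is the delicate point; once this is set up, the density of $\Sigma_1(\rr d)$ from Corollary \ref{cor:relation-to-Sigma} extends the estimate from Schwartz-type inputs to the full spaces and finishes the proof.
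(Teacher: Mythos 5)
Your core estimate is essentially the paper's own argument: the paper likewise localizes on unit cubes, applies Minkowski's and Young's inequalities in the first block of variables together with \eqref{Eq:ConvWeightsEst}, pairs $L^{r_0}$ against $L^\infty$ in the second block (this is precisely why $g$ is measured in the $*$-spaces $L^{p_2,\infty}_*$ and $\ell^{r_0,\infty}_*$), and concludes with the discrete convolution estimate in $\ell_{\mascB}$, the hypothesis $v(x,y)=v_0(x,0)$ converting the translation constant in \eqref{translmultprop1} into the weight $v$ on the $g$-side. Your reduction of the partial (first-block) convolution to \eqref{Eq:DiscConvEst} does go through: after taking the supremum over the second index of $c$, the convolving sequence may be viewed as supported on $\zz {d_1}\times \{0\}$, so \eqref{Eq:DiscConvEst} applies verbatim. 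Just avoid double-counting the weight: either $c$ carries $\omega _2 v$ and the discrete norm is the unweighted $\ell ^{r_0,\infty}_*$ (as in \eqref{Eq:ConvEstWiener}), or $c$ carries only $\omega _2$ and the discrete norm carries $v_0$; your displayed bound $\nm a{\ell _{\mascB}}\lesssim \nm c{\ell ^{r_0}_{(v_0)}}\nm b{\ell _{\mascB}}$ mixes the two.

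The step that would fail is the last one: there is no density of $\Sigma _1(\rr d)$ in these Wiener amalgam spaces to invoke. Corollary \ref{cor:relation-to-Sigma} gives continuous embeddings, not density, and density genuinely fails here, since the spaces involve local $L^\infty$-norms in the second block and $\ell ^\infty$-type outer norms, and $p_2=\infty$ is allowed (consistently, Corollary \ref{Cor:ConvWienerSp1} claims uniqueness of the extension only when $p_2<\infty$). Fortunately the density step is also unnecessary given what you already have: your estimate applied to $|f|$ and $|g|$ shows that $|f|*|g|$ lies in $\sfW ^{p_0,r_0}_{d_1,d_2}(\omega _0,\mascB )$, hence is finite almost everywhere, so the integral \eqref{Eq:QConv} converges absolutely for a.e. $(x,y)$; therefore $f*_\vartheta g$ is defined directly and pointwise on the full product space and satisfies \eqref{Eq:ConvEstWiener} by domination. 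This is exactly how the paper constructs the extension: it decomposes $f$, $g$ and $e^{i\vartheta}$ into non-negative parts and defines $f*_\vartheta g$ as the resulting finite sum of terms, each controlled by $|f|*|g|$ --- the same mechanism in different clothing.
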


\par

Proposition \ref{Prop:ConvWienerSp} follows by
related results,
available in e.{\,}g. \cite{Rau1}. In order
to be self-contained we here present a proof.

\par

\begin{proof}
We first prove \eqref{Eq:ConvEstWiener}
when $f,g\ge 0$ are measurable and $\vartheta (x,y,z)=0$,
leading to that $*_\vartheta $ is the ordinary convolution
in the $x$-variable.
Let $Q_{n}=[0,1]^{n}$, $Q_{n,r}=[-r,r]^{n}$, $r\ge 0$,
$n \in \mathbf Z_+$, 
\begin{alignat*}{2}
f_\omega
&=
f\cdot \omega ,&
\quad f_{\omega ,j}
&=
f_{\omega} \cdot \chi _{j+Q_{d,1}},
\\[1ex]
g_{\omega}
&=
g\cdot {\omega} ,&
\quad g_{\omega ,j}
&=
g_{\omega} \cdot \chi _{j+Q_d},
\\[1ex]
\beta _0(j)
&=
\nm {(f*_0g)\, \omega _0}{L^{p_0,r_0}(j+Q_d)}&
\quad \text{and}\quad
\beta (j)
&=
\nm {f_{\omega _1 ,j}}{L^{p_1,r_0}},
\quad
j\in \zz d,
\end{alignat*}
when $\omega \in \mascP _E(\rr d)$.
Also let
$$
h_j(y)
=
\nm {g_{\omega _2,j}(y,\cdo )}{L^\infty (\rr {d_2})},
\quad \text{and}\quad
\gamma (j)
=
\nm {h_j}{L^{p_2}(\rr {d_1})},\quad
j\in \zz d,\ y\in \rr {d_1}.
$$
By \eqref{Eq:ConvWeightsEst}, Minkowski's inequality and
Young's inequality, we obtain
\begin{align}
\nm {(f*_0g)(\cdo ,y)\, & \omega _0(\cdo ,y)}{L^{p_0}(j_1+Q_{d_1})}
\\[1ex]
&\le
\NM {\sum _{k\in \zz {d}}\int _{\rr {d_1}}f_{\omega _1}(\cdo -z,y) h_{k}(z)\, dz}
{L^{p_0}(j_1+Q_{d_1})}
\notag
\\[1ex]
&\le
\sum _{k\in \zz {d}}
\NM {\int _{\rr {d_1}}f_{\omega _1}(\cdo -z,y) h_{k}(z)\, dz}
{L^{p_0}(j_1+Q_{d_1})}
\notag
\\[1ex]
&\le
\sum _{k\in \zz {d}}
\NM {\int _{\rr {d_1}}f_{\omega _1,(j_1-k_1,k_2)}(\cdo -z,y) h_{k}(z)\, dz}
{L^{p_0}(\rr {d_1})}
\notag
\\[1ex]
&\le
\sum _{k\in \zz {d}}
\nm {f_{\omega _1,(j_1-k_1,k_2)}(\cdo ,y)}{L^{p_1}(\rr {d_1})}
\nm  {h_{k}}{L^{p_2}(\rr {d_1})}
\notag
\\[1ex]
&\le
\sum _{k\in \zz {d}}
h_{(j_1-k_1,k_2)}(y)
\gamma (k),
\label{Eq:MixedDiscrConv}
\intertext{where}
h_{(k_1,k_2)}(y)
&=
\nm {f_{\omega _1,(k_1,k_2)}(\cdo ,y)}{L^{p_1}(\rr {d_1})},
\quad (k_1,k_2)\in \zz {d_1}\times \zz {d_2}.
\end{align}

\par

Let $\Omega =Q_{d_2,2}\bigcap \zz {d_2}$.
By applying the $L^{r_0}(j_2+Q_{d_2})$ norm in \eqref{Eq:MixedDiscrConv},
we obtain
\begin{align*}
\beta _0(j)^{r_0}
&\le
\NM {\sum _{k\in \zz {d}}
h_{(j_1-k_1,k_2)}
\gamma (k)}{L^{r _0} (j_2+Q_{d_2})}^{r_0}
\\[1ex]
&\le
\sum _{k\in \zz {d}}
\nm {f_{\omega _1,(j_1-k_1,k_2)}}{L^{p_1,r_0}(\rr {d_1}\times (j_2+Q_{d_2}))}^{r_0}
\gamma (k)^{r_0}
\\[1ex]
&\le
\sum _{k_2\in j_2+\Omega}
\left (
\sum _{k_1\in \zz {d_1}}
\nm {f_{\omega _1,(j_1-k_1,k_2)}}{L^{p_1,r_0}(\rr d)}^{r_0}
\gamma (k)^{r_0}
\right )
\\[1ex]
&\le
\sum _{k_2\in j_2+\Omega}
\left (\nm {f_{\omega _1,(\cdo ,k_2)}}{L^{p_1,r_0}(\rr d)}^{r_0}
*\gamma (\cdo ,k_2)^{r_0}\right )(j_1)
\\[1ex]
&=
\sum _{k_2\in \Omega}
\left (\beta (\cdo ,j_2+k_2)^{r_0}
*\gamma (\cdo ,j_2+k_2)^{r_0}\right )(j_1)
\\[1ex]
&\le
\sum _{k_2\in \Omega}
\left (\beta (\cdo ,j_2+k_2)^{r_0}
*\gamma _0^{r_0}\right )(j_1),
\end{align*}
where
$$
\gamma _0(k_1) = \nm {\gamma (k_1,\cdo )}{\ell
^{\infty}(\zz {d_2})}.
$$

\par

Since $\Omega$ is a finite set, we obtain
$$
\NM
{\sum _{k_2\in \Omega}
a(\cdo +(0,k_2))}{\ell _{\mascB}} \lesssim \nm a{\ell _{\mascB}},
\quad a\in \ell _0'(\zz d).
$$
This gives
\begin{align*}
\nm {\beta _0}{\ell _{\mascB}}^{r_0}
&\asymp
\nm {\beta _0^{r_0}}{\ell _{\mascB _0}}
\\[1ex]
&\lesssim
\NM {\left (
\sum _{k_1\in \zz {d_1}}\beta (\cdo -(k_1,0))^{r_0}
\gamma _0(k_1)^{r_0}
\right )}{\ell _{\mascB _0}}
\\[1ex]
&\le
\sum _{k_1\in \zz {d_1}}
\nm {\beta (\cdo -(k_1,0))^{r_0}}{\ell _{\mascB _0}}
\gamma _0(k_1)^{r_0}
\\[1ex]
&\lesssim
\nm {\beta ^{r_0}}{\ell _{\mascB _0}}
\left (
\sum _{k_1\in \zz {d_1}} \gamma _0(k_1)^{r_0}v(k_1,0)^{r_0}\right )
\\[1ex]
&\asymp
\nm f{\sfW ^{p_1,r_0} _{d_1,d_2}(\omega _1,\mascB )}^{r_0}
\nm g{\sfW ^{p_2,\infty} _{d_1,d_2,*}(\omega _2v,\ell ^{r_0,\infty}_*)}^{r_0}.
\end{align*}
This gives \eqref{Eq:ConvEstWiener}
and thereby the result,
when $\vartheta =0$ and $f,g\ge 0$.

\par

Suppose that $f\in \sfW ^{p_1,r_0}_{d_1,d_2}(\omega _1,\mascB )$
and $g\in \sfW ^{p_2,\infty} _{d_1,d_2,*}(v_0v,\ell ^{r_0,\infty}_*(\zz d))$
are arbitrary, and decompose $f$, $g$ 
and $e^{i\vartheta}$
into their positive and negative real and imaginary 
parts. That is,
$$
f = \sum _{n=1}^4 i^{n-1}f_n,
\quad
g = \sum _{n=1}^4 i^{n-1}g_n,
\quad \text{and}\quad
e^{i\vartheta}
=
\sum _{n=1}^4 i^{n-1}K_n,
$$
where $f_n,g_n,K_n\ge 0$, $n=1,\dots ,4$,
are chosen as small as possible. Then
\begin{equation}\label{Eq:ConvDecomp}
f*_\vartheta g =\sum _{n_1=1}^4\sum _{n_2=1}^4
\sum _{n_3=1}^4 i^{n_1+n_2+n_3-3}h_{n_1,n_2,n_3},
\end{equation}
where
$$
h_{n_1,n_2,n_3}(x)
=
\int _{\rr d}f_{n_1}(x-z,y)g_{n_2}(y,z)K_{n_3}(x,y,z)\,dy
$$
has only non-negative factors in the integral. We 
observe that
$$
0\le
h_{n_1,n_2,n_3}
\le
|f|*|g|.
$$

\par

The first part of the proof
now shows that
$h_{n_1,n_2,n_3}$ is a well-defined 
element in
$\sfW ^{p_0,r_0} _{d_1, d_2}(\omega _0,\mascB )$, and that
$$
\nm {h_{n_1,n_2,n_3}}
{\sfW ^{p_0,r_0} _{d_1, d_2}(\omega _0,\mascB )}
\lesssim
\nm f{\sfW ^{p_1,r_0} _{d_1, d_2}(\omega _1,\mascB )}
\nm g{\sfW ^{p_2,\infty} _{d_1, d_2,*}(\omega _2v,\ell ^{r_0,\infty}_*)},
$$
for every $n_1$, $n_2$ and $n_3$. Hence,
by defining $f*_\vartheta g$ by \eqref{Eq:ConvDecomp},
it follows that $f*_\vartheta g$ is a well-defined  element in $\sfW ^{p_0,r_0}_{d_1, d_2}(\omega _0,\mascB )$,
and that \eqref{Eq:ConvEstWiener} holds.
This shows that the requested extension
exists.
\end{proof}

\par

By letting $d_2=0$ or $d_1=0$ in Proposition \ref{Prop:ConvWienerSp}, we
obtain the following results. The details are left for the reader.

\par

\begin{cor}\label{Cor:ConvWienerSp1}
Suppose $\mascB$ is an invariant
QBF space on $\rr d$ of order
$r_0\in (0,1]$ and
$v_0\in \mascP _E(\rr {d})$,
$\omega _j\in \mascP _E(\rr {d})$, $j=0,1,2$,
be such that 
\begin{equation}
\label{Eq:PureConvWeightsEst}
\omega _0(x_1+x_2)\lesssim \omega _1(x_1)\omega _2(x_2),
\quad
x_1,x_2\in \rr {d},
\end{equation}
holds, and let
$$
p_0,p_1,p_2\in [1,\infty ]
$$
be such that
\eqref{Eq:YoungCond} holds.
Also let $\vartheta$ be a real-valued
and measurable function on $\rr d$, and let $*_\vartheta$ from
$\Sigma _1(\rr d)\times \Sigma _1(\rr d)$
to $L^1(\rr d)$ be
$$
(f*_\vartheta g)(x)
=
\int _{\rr {d_1}} f(x-z)g(z)
e^{i\vartheta (x,z)}\, dz,
\quad
x\in \rr d.
$$
Then $(f,g)\mapsto f*_\vartheta g$ extends to a continuous map
from
$\sfW ^{p_1}(\omega _1,\mascB )\times \sfW ^{p_2}
(\omega _2v_0,\ell ^{r_0}(\zz d))$
to
$\sfW ^{p_0}(\omega _0,\mascB )$. If in addition $p_2<\infty$,
then the extension is unique.
\end{cor}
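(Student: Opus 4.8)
The plan is to obtain Corollary \ref{Cor:ConvWienerSp1} as the degenerate case $d_2=0$ (hence $d_1=d$) of Proposition \ref{Prop:ConvWienerSp}, so that essentially no new analysis is required beyond careful bookkeeping. First I would check that the data match. With $d_2=0$ the second (inner) variable disappears, so the non-commutative convolution \eqref{Eq:QConv} collapses to the map $\int_{\rr d}f(x-z)g(z)e^{i\vartheta(x,z)}\,dz$ of the statement, the weight inequality \eqref{Eq:ConvWeightsEst} reduces to \eqref{Eq:PureConvWeightsEst}, and the auxiliary weight $v(x,y)=v_0(x,0)$ becomes $v=v_0$; the exponent relation \eqref{Eq:YoungCond} is unchanged. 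Since this convolution maps $\Sigma _1(\rr d)\times\Sigma _1(\rr d)$ continuously into $L^1(\rr d)$, the hypotheses of Proposition \ref{Prop:ConvWienerSp} are met verbatim.

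The key bookkeeping step is to identify the Wiener amalgam spaces after setting $d_2=0$. Here I would use that a mixed Lebesgue norm whose outer component is taken over $\rr 0$ (a single point, with $\nm c{L^q(\rr 0)}=|c|$) coincides with the plain $L^p$-norm, so that both $L^{p,q}(j+Q)$ and $L^{p,q}_*(j+Q)$ reduce to $L^p(j+Q)$, and likewise the mixed sequence space $\ell^{r_0,\infty}_*(\zz d)$ reduces to $\ell^{r_0}(\zz d)$. Consequently the three spaces in Proposition \ref{Prop:ConvWienerSp} become
\begin{gather*}
\sfW^{p_1,r_0}_{d,0}(\omega_1,\mascB)=\sfW^{p_1}(\omega_1,\mascB),\qquad
\sfW^{p_0,r_0}_{d,0}(\omega_0,\mascB)=\sfW^{p_0}(\omega_0,\mascB),\\
\sfW^{p_2,\infty}_{d,0,*}(\omega_2 v,\ell^{r_0,\infty}_*(\zz d))
=\sfW^{p_2}(\omega_2 v_0,\ell^{r_0}(\zz d)),
\end{gather*}
and the estimate \eqref{Eq:ConvEstWiener} turns into
$$
\nm{f*_\vartheta g}{\sfW^{p_0}(\omega_0,\mascB)}
\lesssim
\nm f{\sfW^{p_1}(\omega_1,\mascB)}\,
\nm g{\sfW^{p_2}(\omega_2 v_0,\ell^{r_0}(\zz d))},
$$
which is exactly the asserted continuity. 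Existence of the continuous extension is inherited directly from the explicit construction in the proof of Proposition \ref{Prop:ConvWienerSp}, namely the decomposition of $f$, $g$ and $e^{i\vartheta}$ into non-negative parts together with the pointwise bound $0\le h_{n_1,n_2,n_3}\le |f|*|g|$, applied with $d_2=0$.

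Finally, for the uniqueness claim when $p_2<\infty$ I would argue by density. For $p_2<\infty$ the Lebesgue-amalgam space $\sfW^{p_2}(\omega_2 v_0,\ell^{r_0}(\zz d))$ contains $\Sigma _1(\rr d)$ as a dense subspace, which fails when $p_2=\infty$; this lets one recover the values of the map from its restriction to a product with $\Sigma _1(\rr d)$ in the second slot via continuity in that variable, yielding uniqueness of the continuous extension. I expect this to be the only genuinely delicate point, since the first factor $\sfW^{p_1}(\omega_1,\mascB)$ need not contain $\Sigma _1(\rr d)$ densely for a general invariant QBF space $\mascB$, so the approximation cannot be carried out symmetrically and must exploit precisely the finiteness of $p_2$ in the second factor. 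Everything else reduces to the routine space identifications above and, as indicated, may be left to the reader.
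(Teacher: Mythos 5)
Your proposal is correct and is exactly the paper's own proof: the paper obtains Corollary \ref{Cor:ConvWienerSp1} by setting $d_2=0$ in Proposition \ref{Prop:ConvWienerSp} and leaves the bookkeeping to the reader, and your identifications $\sfW ^{p_1,r_0}_{d,0}(\omega _1,\mascB )=\sfW ^{p_1}(\omega _1,\mascB )$, $\sfW ^{p_2,\infty}_{d,0,*}(\omega _2v,\ell ^{r_0,\infty}_*)=\sfW ^{p_2}(\omega _2v_0,\ell ^{r_0}(\zz d))$ and $v=v_0$ are precisely that bookkeeping. Your density argument for the uniqueness claim when $p_2<\infty$ (both the local exponent $p_2$ and the global exponent $r_0$ are then finite, so $\Sigma _1(\rr d)$ is dense in the second factor) supplies the one point the paper's one-line proof does not even comment on, and is the intended mechanism.
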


\par

\begin{cor}\label{Cor:ConvWienerSp2}
Suppose $\mascB$ is an invariant QBF space on $\rr d$ of order
$r_0\in (0,1]$ and
$v_0\in \mascP _E(\rr {d})$,
$\omega _j\in \mascP _E(\rr {d})$, $j=0,1,2$,
and
\begin{equation}
\label{Eq:PureMultWeightsEst}
\omega _0(y)\lesssim \omega _1(y)\omega _2(y),
\quad
y\in \rr {d},
\end{equation}
holds.
Also let $\vartheta$ be a real-valued
and measurable function on $\rr d$.
Then $(f,g)\mapsto f\cdot g$ extends to a continuous map
from
$\sfW ^{r_0}(\omega _1,\mascB )\times \sfW ^{\infty}
(\omega _2\, v_0,\ell ^{\infty}_*(\zz d))$
to
$\sfW ^{r_0}(\omega _0,\mascB )$.
\end{cor}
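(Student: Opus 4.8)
The plan is to obtain the statement as the degenerate case $d_1=0$ (so that $d_2=d$) of Proposition \ref{Prop:ConvWienerSp}. When $d_1=0$ the inner integration in \eqref{Eq:QConv} disappears, and the non-commutative convolution $*_\vartheta$ reduces to pointwise multiplication twisted by the unimodular factor $e^{i\vartheta}$; since $|e^{i\vartheta}|=1$, every Wiener amalgam estimate for $f*_\vartheta g$ is simultaneously an estimate for $f\cdot g$. The weight hypothesis \eqref{Eq:ConvWeightsEst} then collapses to \eqref{Eq:PureMultWeightsEst}, and Young's relation \eqref{Eq:YoungCond} is met by the admissible triple $p_1=1$, $p_2=\infty$, $p_0=\infty$, for which $\tfrac1{p_1}+\tfrac1{p_2}=1=1+\tfrac1{p_0}$.

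Next I would record how the mixed norms and sequence spaces degenerate. Since $\rr{d_1}=\{0\}$, every norm $L^{p}(\rr{d_1})$ is just the absolute value, so the local norm $L^{p_1,r_0}$ becomes $L^{r_0}$, the local norm $L^{p_2,\infty}_*$ becomes $L^{\infty}$, and $\ell^{r_0,\infty}_*(\zz d)$ becomes $\ell^\infty(\zz d)$. Consequently $\sfW^{p_1,r_0}_{0,d}(\omega_1,\mascB)=\sfW^{r_0}(\omega_1,\mascB)$ and $\sfW^{p_0,r_0}_{0,d}(\omega_0,\mascB)=\sfW^{r_0}(\omega_0,\mascB)$, while the source space for $g$ reduces to $\sfW^{\infty}(\omega_2,\ell^\infty_*)$. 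Feeding these identifications into \eqref{Eq:ConvEstWiener} yields the asserted continuity.

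The only point needing care, and the place I expect the bookkeeping to be most delicate, is the weight $v$ carried by $g$ in Proposition \ref{Prop:ConvWienerSp}, defined by $v(x,y)=v_0(x,0)$: for $d_1=0$ it degenerates to the constant $v_0(0)$, which is harmless. To recover the stated form of the corollary I would use that every submultiplicative $v_0$ satisfies $v_0\gtrsim 1$, whence $\omega_2\lesssim \omega_2 v_0$ and $\nm g{\sfW^\infty(\omega_2,\ell^\infty_*)}\le \nm g{\sfW^\infty(\omega_2 v_0,\ell^\infty_*)}$; thus strengthening the hypothesis by inserting $v_0$ leaves the conclusion intact.

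Finally, I would remark that the estimate also admits a short self-contained proof, which I would include as the cleanest route. On each cube $j+Q$ the pointwise bound $|fg\,\omega_0|\lesssim |f\,\omega_1|\,|g\,\omega_2|$ from \eqref{Eq:PureMultWeightsEst} together with the quasi-Hölder inequality pairing $L^{r_0}$ with $L^\infty$ gives $\nm{fg\,\omega_0}{L^{r_0}(j+Q)}\lesssim \nm{f\,\omega_1}{L^{r_0}(j+Q)}\,\nm{g\,\omega_2 v_0}{L^\infty(j+Q)}$, where $v_0\gtrsim 1$ was used in the last factor. Setting $a(j)=\nm{f\,\omega_1}{L^{r_0}(j+Q)}$ and bounding the second factor by $\nm g{\sfW^\infty(\omega_2 v_0,\ell^\infty_*)}$ uniformly in $j$, solidity of $\ell_\mascB$ then yields $\nm{fg}{\sfW^{r_0}(\omega_0,\mascB)}\lesssim \nm g{\sfW^\infty(\omega_2 v_0,\ell^\infty_*)}\,\nm f{\sfW^{r_0}(\omega_1,\mascB)}$. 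Once the degeneration of the mixed norms and of the auxiliary weight $v$ is handled, both routes are routine.
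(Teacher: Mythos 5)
Your proposal is correct and follows exactly the paper's route: the paper obtains Corollary \ref{Cor:ConvWienerSp2} precisely by setting $d_1=0$ in Proposition \ref{Prop:ConvWienerSp} (with the details left to the reader), and you supply those details accurately, including the degeneration of the mixed norms $L^{p_1,r_0}$, $L^{p_2,\infty}_*$ and of $\ell^{r_0,\infty}_*(\zz d)$, the collapse of $v(x,y)=v_0(x,0)$ to a constant, and the harmless strengthening of the hypothesis from $\omega_2$ to $\omega_2 v_0$ justified by $v_0\ge 1$ for submultiplicative weights. Your additional self-contained cube-by-cube argument (pointwise bound from \eqref{Eq:PureMultWeightsEst}, H{\"o}lder pairing $L^{r_0}\times L^\infty\to L^{r_0}$, then solidity of $\ell_{\mascB}$) is also correct, but it is a bonus rather than a necessity.
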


\par

\begin{rem}\label{Rem:IndepChapters}
For future references we observe that
no results from Sections \ref{sec2} and
\ref{sec3} were used in the deductions of
Proposition \ref{Prop:ConvWienerSp},
Corollary \ref{Cor:ConvWienerSp1}
and Corollary \ref{Cor:ConvWienerSp2}.
\end{rem}

\par

\subsection{Modulation space estimates for convolutions
and multiplications}

\par

Next we shall use the previous results to deduce convolution
and multiplication properties for the modulation spaces under
considerations. For the norm estimates we shall use
Proposition \ref{Prop:ConvWienerSp}. For the existence and
uniqueness we shall use the first of the following two lemmas
for convolution, and the second lemma for multiplications.
Here the involved weight functions should satisfy
\begin{alignat}{2}
\omega _0(x_1+x_2,\xi )
&\lesssim
\omega _1(x_1,\xi )\omega _2(x_2,\xi ), &
\quad
x_1,x_2,\xi &\in \rr d,
\label{Eq:ModConvWeightsEst}
\intertext{or}
\omega _0(x,\xi _1+\xi _2)
&\lesssim
\omega _1(x,\xi _1)\omega _2(x,\xi _2), &
\quad
x,\xi _1,\xi _2 &\in \rr d.
\label{Eq:ModMultWeightsEst}
\end{alignat}
We observe that \eqref{Eq:ModConvWeightsEst}
agree with \eqref{Eq:ConvWeightsEst}
when $d_1=d_2$.

\par

\begin{lemma}\label{Lemma:AssistConv}
Let $r_0\in (0,1]$, $v_0\in \mascP _E(\rr {2d})$ be submultiplicative,
$v(x,\xi )=v_0(x,0)$, and suppose that
$\omega _j\in \mascP _E(\rr {2d})$ satisfy \eqref{Eq:ModConvWeightsEst}.
Then the following is true:
\begin{enumerate}
\item the map
$(f,g)\mapsto f*g$ from $\Sigma _1(\rr d)\times \Sigma _1(\rr d)$
to $\Sigma _1(\rr d)$ is uniquely extendable to a continuous map
from $M^\infty _{(\omega _1/v_0)}(\rr d)\times W^{r_0,\infty}_{(\omega _2v)}(\rr d)$
to $M^\infty _{(\omega _0/v_0)}(\rr d)$;

\vrum

\item if $f\in M^\infty _{(\omega _1/v_0)}(\rr d)$,
$g\in W^{r_0,\infty}_{(\omega _2v)}(\rr d)$, and $\phi ,\psi \in \Sigma _1(\rr d)$,
then
$$
(x,\xi )\mapsto \big (|V_\phi f (\cdo ,\xi )|*|V_\psi g (\cdo ,\xi )|\big )(x)
$$
belongs to $\sfW ^{r_0}(\omega _0/v_0,\ell ^\infty (\zz {2d}))$, and
$$
V_{\phi *\psi} (f*g)(x,\xi )
=
(2\pi )^{\frac d2} \big (V_\phi f (\cdo ,\xi )*V_\psi g (\cdo ,\xi )\big )(x).
$$
\end{enumerate}
\end{lemma}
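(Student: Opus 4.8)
The plan is to prove the short-time Fourier transform identity in (2) first for $f,g,\phi ,\psi \in \Sigma _1(\rr d)$, to turn it into a pointwise domination, and then to deduce both the membership in (2) and the continuity in (1) from an elementary weighted estimate. Writing the two transforms in $(V_\phi f(\cdo ,\xi )*V_\psi g(\cdo ,\xi ))(x)$ as absolutely convergent integrals (cf. \eqref{Eq:STFTDef}) and carrying out the inner convolution integration first, the substitution collapsing the two window factors produces $\overline{(\phi *\psi )(y+w-x)}$, and after passing to the variable $t=y+w$ the remaining integral becomes $(2\pi )^{-\frac d2}\int (f*g)(t)\,\overline{(\phi *\psi )(t-x)}\,e^{-i\scal t\xi}\,dt$. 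This gives the identity
\begin{equation*}
V_{\phi *\psi}(f*g)(x,\xi )=(2\pi )^{\frac d2}\big (V_\phi f(\cdo ,\xi )*V_\psi g(\cdo ,\xi )\big )(x),
\end{equation*}
and, by the triangle inequality inside the integral, the pointwise bound $|V_{\phi *\psi}(f*g)(x,\xi )|\le (2\pi )^{\frac d2}H(x,\xi )$, where $H(x,\xi )=\big (|V_\phi f(\cdo ,\xi )|*|V_\psi g(\cdo ,\xi )|\big )(x)$.

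Next I would estimate $H$. Since $V_\phi f$ is continuous, $f\in M^{\infty}_{(\omega_0\text{-type})}$, more precisely $f\in M^{\infty}_{(\omega_1/v_0)}$, gives $|V_\phi f(X)|\le \nm f{M^{\infty}_{(\omega_1/v_0)}}\,v_0(X)/\omega_1(X)$. Combining \eqref{Eq:ModConvWeightsEst} (with $x_1=x-z$, $x_2=z$) and the submultiplicativity \eqref{Eq:Submultiplicative} of $v_0$ in the form $v_0(x-z,\xi )\le v_0(x,\xi )v_0(z,0)$, one obtains for each $z$
\begin{equation*}
\frac{\omega_0(x,\xi )}{v_0(x,\xi )}\lesssim \frac{\omega_1(x-z,\xi )}{v_0(x-z,\xi )}\,\omega_2(z,\xi )\,v_0(z,0),
\end{equation*}
where $v_0(z,0)=v(z,\xi )$ is exactly the auxiliary factor in the weight of $g$. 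Multiplying by the integrand and integrating in $z$ yields
\begin{equation*}
H(x,\xi )\frac{\omega_0(x,\xi )}{v_0(x,\xi )}\lesssim \nm f{M^{\infty}_{(\omega_1/v_0)}}\int _{\rr d}|V_\psi g(z,\xi )|\,\omega_2(z,\xi )v(z,\xi )\,dz.
\end{equation*}
The last integral is the $L^1$-norm in $z$ of $V_\psi g(\cdo ,\xi )\omega_2 v$; splitting $\rr d$ into unit cubes, using $\nm F{L^1(Q)}\le \nm F{L^{r_0}(Q)}$ on a unit cube $Q$ (Jensen, as $r_0\le 1$) together with $\ell ^{r_0}\hookrightarrow \ell ^1$, and $|V_\psi g(z,\xi )|\le \sup_{\xi '}|V_\psi g(z,\xi ')|$, it is dominated by $\nm g{W^{r_0,\infty}_{(\omega_2 v)}}$ uniformly in $\xi $. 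Hence $\nm {H\cdot \omega_0/v_0}{L^\infty}\lesssim \nm f{M^{\infty}_{(\omega_1/v_0)}}\nm g{W^{r_0,\infty}_{(\omega_2 v)}}$, and since the local $L^{r_0}$ norm over a unit cube is controlled by the local $L^\infty$ norm, also $H\in \sfW ^{r_0}(\omega_0/v_0,\ell ^\infty (\zz {2d}))$ with the same bound. Together with the pointwise domination and Theorem \ref{Thm:EquivNorms2}, this proves the membership in (2) and the estimate $\nm {f*g}{M^{\infty}_{(\omega_0/v_0)}}\lesssim \nm f{M^{\infty}_{(\omega_1/v_0)}}\nm g{W^{r_0,\infty}_{(\omega_2 v)}}$ on $\Sigma _1\times \Sigma _1$.

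For (1) it remains to define $f*g$ for general $f\in M^{\infty}_{(\omega_1/v_0)}$ and $g\in W^{r_0,\infty}_{(\omega_2 v)}$ and to identify it as the unique continuous extension of the convolution on $\Sigma _1(\rr d)\times \Sigma _1(\rr d)$. I would use the identity of (2) as a definition: the function $(2\pi )^{\frac d2}(V_\phi f(\cdo ,\xi )*V_\psi g(\cdo ,\xi ))$ belongs to $L^\infty _{(\omega_0/v_0)}$ by the previous paragraph, hence to $\Sigma _1'(\rr {2d})$ (its growth is at most exponential, cf. \eqref{Eq:vrhoEmb}), and applying the adjoint transform via the inversion formula \eqref{Eq:IdentSTFTAdj} with the window $\phi *\psi $ produces $f*g\in \Sigma _1'(\rr d)$; its independence of the admissible windows follows from Moyal's identity as in Proposition \ref{Prop:Moyal}, and on $\Sigma _1\times \Sigma _1$ it agrees with the classical convolution. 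The step I expect to be the main obstacle is precisely this extension: because $\Sigma _1(\rr d)$ is not norm-dense in the $L^\infty$-type spaces $M^{\infty}_{(\omega_1/v_0)}$ and $W^{r_0,\infty}_{(\omega_2 v)}$, extension by norm-continuity is unavailable, and one must verify that the window-free definition above is well posed and unique; I would settle this through the weak-$*$ density of $\Sigma _1$ in these dual spaces combined with the continuity of the short-time Fourier transform in Proposition \ref{Prop:ExtSTFTSchwartz}, the interchange of the various local Lebesgue exponents used along the way being harmless by the quasi-norm equivalences of Theorem \ref{Thm:EquivNorms2}.
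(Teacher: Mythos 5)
Your proposal takes a genuinely different route from the paper's proof. The paper's argument for (1) consists exactly of your weight computation (submultiplicativity of $v_0$ turns \eqref{Eq:ModConvWeightsEst} into $\vartheta _0(x_1+x_2,\xi )\lesssim \vartheta _1(x_1,\xi )\vartheta _2(x_2,\xi )$ with $\vartheta _0=\omega _0/v_0$, $\vartheta _1=\omega _1/v_0$, $\vartheta _2=\omega _2v$), after which both the extension in (1) and the identity and membership in (2) are outsourced to \cite[Theorems 3.4 and 3.8]{Toft26}; you instead attempt a self-contained proof. Your derivation of $V_{\phi *\psi}(f*g)(x,\xi )=(2\pi )^{\frac d2}\big (V_\phi f(\cdo ,\xi )*V_\psi g(\cdo ,\xi )\big )(x)$ and the pointwise weight inequality are correct.

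The genuine gap is in your estimate of $H$. You claim $\nm F{L^1(Q)}\le \nm F{L^{r_0}(Q)}$ on a unit cube ``by Jensen, as $r_0\le 1$'': Jensen gives exactly the opposite, $\nm F{L^{r_0}(Q)}\le \nm F{L^1(Q)}$ when $|Q|=1$ and $r_0\le 1$, and no local measure-theoretic inequality can reverse the exponents. The global bound you actually need, namely $\int _{\rr d}G(z)\, dz\lesssim \nm G{L^{r_0}(\rr d)}=\nm g{W^{r_0,\infty}_{(\omega _2v)}}$ with $G(z)=\sup _\xi |V_\psi g(z,\xi )|\omega _2(z,\xi )v(z,\xi )$, is false for general measurable $G$: placing increasingly tall and thin spikes on far-out unit cubes produces $G\in L^{r_0}\setminus L^1$. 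The bound holds here only because $G$ comes from a short-time Fourier transform, and that is precisely where the paper's machinery must enter explicitly. The correct chain is $\int _{\rr d}G\, dz\le \sum _{j\in \zz d}\nm G{L^\infty (j+Q)}\le \bigl (\sum _{j\in \zz d}\nm G{L^\infty (j+Q)}^{r_0}\bigr )^{1/r_0}$, $Q=[0,1]^d$, by $\ell ^{r_0}\hookrightarrow \ell ^1$, and the right-hand side equals $\nm {V_\psi g}{\sfW ^{\infty}(\omega _2v,\mascB )}$ with $\mascB =L^{r_0,\infty}_{*}(\rr {2d})$, which is $\asymp \nm g{W^{r_0,\infty}_{(\omega _2v)}}$ by Theorem \ref{Thm:EquivNorms2} (applicable since $L^{r_0,\infty}_{*}$ is a normal QBF space of order $r_0$, cf. Example \ref{Example:LebCase}); alternatively one can argue through Lemma \ref{Lemma:EstSTFT}. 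Your closing remark that exponent interchanges are ``harmless by Theorem \ref{Thm:EquivNorms2}'' points at the right tool, but as written the key step is a false inequality, and both (1) and (2) rest on it.

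A second, unresolved point is the word ``uniquely'' in (1). Once the estimate is repaired, your window-free definition of $f*g$ via the STFT identity and \eqref{Eq:IdentSTFTAdj} does yield a continuous bilinear map agreeing with convolution on $\Sigma _1\times \Sigma _1$, but, as you yourself note, $\Sigma _1$ is not norm-dense in $M^\infty _{(\omega _1/v_0)}$ or $W^{r_0,\infty}_{(\omega _2v)}$, so uniqueness cannot come from density; the weak-$*$ approximation argument you allude to is not optional but is the actual content of the uniqueness claim, and in the paper it is exactly what is inherited from \cite[Theorem 3.8]{Toft26}. As it stands, this part of your proposal is a program rather than a proof.
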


\par

\par

\begin{lemma}\label{Lemma:AssistMult}
Let $r_0\in (0,1]$, $v_0\in \mascP _E(\rr {2d})$ be submultiplicative,
$v(x,\xi )=v_0(0,\xi )$, and suppose that
$\omega _j\in \mascP _E(\rr {2d})$ satisfy \eqref{Eq:ModMultWeightsEst}.
Then the following is true:
\begin{enumerate}
\item the map
$(f,g)\mapsto f\cdot g$ from $\Sigma _1(\rr d)\times \Sigma _1(\rr d)$
to $\Sigma _1(\rr d)$ is uniquely extendable to a continuous map
from $M^\infty _{(\omega _1/v_0)}(\rr d)\times M^{\infty ,r_0}_{(\omega _2v)}(\rr d)$
to $M^\infty _{(\omega _0/v_0)}(\rr d)$;

\vrum

\item if $f\in M^\infty _{(\omega _1/v_0)}(\rr d)$,
$g\in M^{\infty ,r_0}_{(\omega _2v)}(\rr d)$, and $\phi ,\psi \in \Sigma _1(\rr d)$,
then
$$
(x,\xi )\mapsto \big (|V_\phi f (x,\cdo )|*|V_\psi g (x,\cdo )|\big )(\xi )
$$
belongs to $\sfW ^{r_0}(\omega _0/v_0,\ell ^\infty (\zz {2d}))$, and
$$
V_{\phi \cdot \psi} (f\cdot g)(x,\xi )
=
(2\pi )^{-\frac d2} \big (V_\phi f (x,\cdo )*V_\psi g (x,\cdo )\big )(\xi ).
$$
\end{enumerate}
\end{lemma}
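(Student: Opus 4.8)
The plan is to exploit the multiplicative counterpart of the identity behind Lemma \ref{Lemma:AssistConv}: whereas a convolution of two functions turns into a convolution of their short-time Fourier transforms in the \emph{space} variable, a product turns into a convolution of the transforms in the \emph{frequency} variable. Writing $V_\phi f(x,\cdot )=\mascF (f\cdo \overline{\phi (\cdo -x)})$ as in \eqref{Eq:STFTDef}$'$ and applying the Fourier convolution identity \eqref{Eq:FourTransfConv}, one gets for $f,g,\phi ,\psi \in \Sigma _1(\rr d)$ the relation
$$
V_{\phi \psi}(fg)(x,\xi )=(2\pi )^{-\frac d2}\big (V_\phi f(x,\cdo )*V_\psi g(x,\cdo )\big )(\xi ),
$$
which is exactly the formula asserted in (2). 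This is the engine of the whole argument, and it is the frequency-variable mirror image of the space-variable identity used for Lemma \ref{Lemma:AssistConv}; the entire proof therefore parallels that lemma after interchanging the roles of $x$ and $\xi$.

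First I would establish (2). Reorder the phase space as $(\xi ,x)$ so that the convolution occurs in the first ($d_1=d$) block. The hypotheses then translate, through Theorem \ref{Thm:EquivNorms2} and Corollary \ref{Cor:BasicEmbWSp}, into Wiener amalgam control of the transforms: $f\in M^\infty _{(\omega _1/v_0)}(\rr d)$ bounds $V_\phi f$ by $\omega _1/v_0$, while $g\in M^{\infty ,r_0}_{(\omega _2v)}(\rr d)$ controls $V_\psi g$ with local $L^{r_0}$-integrability in $\xi$. After the reordering, the weight hypothesis \eqref{Eq:ModMultWeightsEst} is precisely the convolution weight condition \eqref{Eq:ConvWeightsEst} demanded by Proposition \ref{Prop:ConvWienerSp}, so that proposition applies and places the nonnegative function $(x,\xi )\mapsto (|V_\phi f(x,\cdo )|*|V_\psi g(x,\cdo )|)(\xi )$ in a Wiener amalgam space with global component $\ell ^\infty$; the monotonicity of $\sfW ^r$ in $r$ (Remark \ref{Rem:WienerEmb}) then deposits it in $\sfW ^{r_0}(\omega _0/v_0,\ell ^\infty (\zz {2d}))$, as claimed.

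To obtain (1) I would feed (2) back into the product identity. For $f,g\in \Sigma _1(\rr d)$ the identity together with the estimate from (2) yields
$$
\nm {fg}{M^\infty _{(\omega _0/v_0)}}\lesssim \nm f{M^\infty _{(\omega _1/v_0)}}\nm g{M^{\infty ,r_0}_{(\omega _2v)}}.
$$
The extension to the full spaces is then defined directly through the short-time Fourier transform: one declares $V_{\phi \psi}(fg)$ to be the (convergent) convolution on the right of the identity in (2), and recovers $fg$ from the inversion formula \eqref{Eq:IdentSTFTAdj} via $V_{\phi \psi}^*$. Part (2) guarantees that this object is a well-defined element of $M^\infty _{(\omega _0/v_0)}(\rr d)$ obeying the stated bound, and it coincides with the ordinary product on $\Sigma _1(\rr d)$.

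The hard part is the bookkeeping of the mixed local and global integrability indices when invoking Proposition \ref{Prop:ConvWienerSp} at the subunital order $r_0\le 1$: the proposition is phrased for convolution in the first block with Young exponents in $[1,\infty ]$, whereas the relevant local integrability in $\xi$ here is $r_0$, so one must route the estimate through an index $\ge 1$ and descend to $\sfW ^{r_0}$ using the embedding in Remark \ref{Rem:WienerEmb} and the norm equivalence \eqref{Eq:EquivNorms23}. A second, more technical difficulty is the \emph{uniqueness} of the extension in (1): since $\Sigma _1(\rr d)$ is not norm dense in $M^\infty$-type spaces, uniqueness cannot be inferred from density, and must instead be pinned down by the explicit short-time Fourier transform definition together with a weak-$*$ continuity argument.
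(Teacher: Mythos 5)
Your proposal is essentially correct, but it takes a genuinely different route from the paper. The paper disposes of Lemma \ref{Lemma:AssistMult} in one sentence: it performs the same weight reduction as in the proof of Lemma \ref{Lemma:AssistConv} (absorbing $v_0$ into the weights by submultiplicativity, with the roles of $x$ and $\xi$ interchanged) and then cites \cite[Theorem 3.3]{Toft26} and its proof for both the unique extension in (1) and the formula and membership claim in (2). You instead rebuild the lemma from the paper's internal machinery: the product--STFT identity via \eqref{Eq:STFTDef}$'$ and \eqref{Eq:FourTransfConv}, Theorem \ref{Thm:EquivNorms2} to convert the hypotheses on $f$ and $g$ into Wiener amalgam information (taking local exponents $\infty$ and descending by H{\"o}lder on cubes), Proposition \ref{Prop:ConvWienerSp} in the reordered variables $(\xi ,x)$ -- legitimate, since Remark \ref{Rem:IndepChapters} rules out circularity -- and the embeddings of Remark \ref{Rem:WienerEmb} to land in $\sfW ^{r_0}(\omega _0/v_0,\ell ^\infty (\zz {2d}))$. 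What your route buys is self-containedness; what the external citation buys the paper is exactly the two points your argument leaves open.

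Two gaps remain. First, your assertion that \eqref{Eq:ModMultWeightsEst} ``is precisely'' the condition \eqref{Eq:ConvWeightsEst} required by Proposition \ref{Prop:ConvWienerSp} is not accurate as stated: the weights entering the convolution are $\omega _1/v_0$, $\omega _2v$ and $\omega _0/v_0$, so the inequality that must be verified is
$$
\frac {\omega _0(x,\xi _1+\xi _2)}{v_0(x,\xi _1+\xi _2)}
\lesssim
\frac {\omega _1(x,\xi _1)}{v_0(x,\xi _1)}\,
\omega _2(x,\xi _2)\, v_0(0,\xi _2),
$$
and this follows from \eqref{Eq:ModMultWeightsEst} only after using submultiplicativity of $v_0$, namely $v_0(x,\xi _1)\le v_0(x,\xi _1+\xi _2)v_0(0,\xi _2)$; this is the computation the paper carries out for $\vartheta _0,\vartheta _1,\vartheta _2$ in the proof of Lemma \ref{Lemma:AssistConv}, transposed to the frequency variable, and it should appear explicitly. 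Second, and more seriously, part (1) claims a \emph{unique} extension. You rightly note that $\Sigma _1(\rr d)$ is not norm dense in these $M^\infty$-type spaces, but your remedy (``a weak-$*$ continuity argument'') is left unexecuted; in addition, defining $fg$ by applying $V_{\phi \psi}^*$ to the declared convolution requires verifying that this convolution actually lies in the range of $V_{\phi \psi}$ (a reproducing-identity check of the type \eqref{Eq:IdentSTFTAdj}), and that the outcome is independent of the choice of $\phi$ and $\psi$. These are precisely the points that \cite[Theorem 3.3]{Toft26} and its proof settle. As it stands, your argument establishes the existence of a continuous bilinear map with the asserted bound that agrees with pointwise multiplication on $\Sigma _1(\rr d)$, but not that it is the \emph{only} such extension.
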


%
%
%

\par

\begin{proof}[Proof of Lemma \ref{Lemma:AssistConv}]
The assertion (1) essentially follows
from \cite[Theorem 3.8]{Toft26}. Let
\begin{align*}
\vartheta _0(x,\xi )
&=
\frac {\omega _0(x,\xi )}{v_0(x,\xi )},
\quad
\vartheta _1(x,\xi )
=
\frac {\omega _1(x,\xi )}{v_0(x,\xi )}
\intertext{and}
\vartheta _2(x,\xi )
&=
\omega _2(x,\xi )v(x,\xi )
=
\omega _2(x,\xi )v_0(x,0),
\quad x,\xi \in \rr d.
\end{align*}
Since $v_0$ is submultiplicative, \eqref{Eq:ModConvWeightsEst}
gives
\begin{align*}
\vartheta _0(x_1+x_2,\xi )
&=\frac {\omega _0(x_1+x_2,\xi )}{v_0(x_1+x_2,\xi )}
\lesssim
\frac {\omega _1(x_1,\xi )\omega _2(x_2,\xi )}{v_0(x_1+x_2,\xi )}
\\[1ex]
&\lesssim
\frac {\omega _1(x_1,\xi )\omega _2(x_2,\xi )v_0(x_2,0)}{v_0(x_1,\xi )}
=
\vartheta _1(x_1,\xi )\vartheta _2(x_2,\xi ).
\end{align*}

\par

By \cite[Theorem 3.8]{Toft26} it follows that the map
$(f,g)\mapsto f*g$ from $\Sigma _1(\rr d)\times \Sigma _1(\rr d)$
to $\Sigma _1(\rr d)$ extends uniquely to a continuous map
from $M^\infty _{(\vartheta _1)}(\rr d)
\times
M^{\infty ,r_0}_{(\vartheta _2)}(\rr d)$ to
$M^\infty _{(\vartheta _0)}(\rr d)$.
This is the same as (1).

\par

The assertion (2) follows from
\cite[Theorem 3.4]{Toft26},
and its proof. The details are
left for the reader.
\end{proof}

\par

Lemma \ref{Lemma:AssistMult} follows by
similar arguments as for
Lemma \ref{Lemma:AssistConv}, by using
\cite[Theorem 3.3]{Toft26} and its proof instead of
\cite[Theorem 3.4]{Toft26} and its proof. The
details are left for the reader.

\par

We have now the following convolution and multiplication
results for modulation spaces.

\par

\begin{thm}\label{Thm:MainConvMod}
Suppose $\mascB$ is an invariant QBF space on $\rr {2d}$ of order
$r_0\in (0,1]$ and $v_0\in \mascP _E(\rr {2d})$, and 
$\omega _j,v\in \mascP _E(\rr {2d})$, $j=0,1,2$, be such that
$v(x,\xi )=v_0(x,0)$ and  \eqref{Eq:ModConvWeightsEst} holds.
Then the map $(f,g)\mapsto f*g$ from $\Sigma _1(\rr d)\times \Sigma _1(\rr d)$
to $\Sigma _1(\rr d)$ is uniquely extendable to a continuous map
from $M(\omega _1,\mascB )\times W^{r_0,\infty}_{(\omega _2v)}(\rr d)$
to $M(\omega _0,\mascB )$, and
\begin{equation}\label{Eq:MainConvMod}
\begin{aligned}
\nm {f*g}{M(\omega _0,\mascB )}
&\le
C \nm f{M(\omega _1,\mascB )}
\nm g{W^{r_0,\infty}_{(\omega _2v)}},
\\[1ex]
f &\in M(\omega _1,\mascB ),\ g\in W^{r_0,\infty}_{(\omega _2v)}(\rr d),
\end{aligned}
\end{equation}
where the constant $C>0$ only depends
on the involved weight functions, $r_0$ and $\mascB$.
\end{thm}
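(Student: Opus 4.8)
The plan is to transfer the whole estimate to the short-time Fourier transform side, where by Lemma~\ref{Lemma:AssistConv}~(2) the convolution $f*g$ becomes a \emph{partial} convolution of the transforms $V_\phi f$ and $V_\psi g$ in the position variable, and then to feed this into the Wiener amalgam convolution estimate of Proposition~\ref{Prop:ConvWienerSp}, translating back and forth with the norm equivalences of Theorem~\ref{Thm:EquivNorms2}. The existence and uniqueness of the extension I would obtain for free: by Proposition~\ref{Prop:BasicEmbModSp1} one has $M(\omega_1,\mascB)\hookrightarrow M^\infty_{(\omega_1/v_0)}(\rr d)$, so Lemma~\ref{Lemma:AssistConv}~(1) already defines $(f,g)\mapsto f*g$ uniquely and continuously on $M^\infty_{(\omega_1/v_0)}(\rr d)\times W^{r_0,\infty}_{(\omega_2 v)}(\rr d)\to M^\infty_{(\omega_0/v_0)}(\rr d)$. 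Restricting the first factor to $M(\omega_1,\mascB)$ pins down the desired map, and since $M(\omega_0,\mascB)\hookrightarrow M^\infty_{(\omega_0/v_0)}(\rr d)$ is injective, the quantitative bound \eqref{Eq:MainConvMod}, once established, yields both that $f*g\in M(\omega_0,\mascB)$ and uniqueness of the extension into $M(\omega_0,\mascB)$.

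For the estimate, fix $\phi,\psi\in\Sigma_1(\rr d)\setminus 0$ with $\eta=\phi*\psi\neq 0$ (two Gaussians will do), so $\eta\in\Sigma_1(\rr d)\setminus 0$. By Lemma~\ref{Lemma:AssistConv}~(2),
$$
V_\eta(f*g)(x,\xi)=(2\pi)^{d/2}\big(V_\phi f(\cdo,\xi)*V_\psi g(\cdo,\xi)\big)(x),
$$
whence $|V_\eta(f*g)|\le (2\pi)^{d/2}\big(|V_\phi f|*_0|V_\psi g|\big)$, where $*_0$ is the partial convolution \eqref{Eq:QConv} in the first $\rr d$-variable (so $d_1=d_2=d$, $\vartheta\equiv 0$). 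Since $\eta,\phi,\psi\in\Sigma_1(\rr d)$ are admissible windows and the Wiener amalgam quasi-norms are solid, Theorem~\ref{Thm:EquivNorms2} reduces the claim to a Wiener amalgam convolution bound. Here I would apply Proposition~\ref{Prop:ConvWienerSp} with $d_1=d_2=d$ and Young exponents $(p_0,p_1,p_2)=(\infty,1,\infty)$, which satisfy \eqref{Eq:YoungCond} with each $p_j\ge 1$; note that the required weight inequality \eqref{Eq:ConvWeightsEst} is precisely the hypothesis \eqref{Eq:ModConvWeightsEst} once $d_1=d_2=d$. This gives
$$
\nm{|V_\phi f|*_0|V_\psi g|}{\sfW^{\infty,r_0}_{d,d}(\omega_0,\mascB)}
\lesssim
\nm{V_\phi f}{\sfW^{1,r_0}_{d,d}(\omega_1,\mascB)}\,
\nm{V_\psi g}{\sfW^{\infty,\infty}_{d,d,*}(\omega_2 v,\ell^{r_0,\infty}_*)}.
$$

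It remains to match the three factors to the intended norms, and this is where the flexibility of Theorem~\ref{Thm:EquivNorms2} in the Wiener indices (any values in $[r_0,\infty]$) is decisive. The left-hand side, with indices $(\infty,r_0)$, is $\asymp\nm{f*g}{M(\omega_0,\mascB)}$; the first factor, with indices $(1,r_0)$ (both $\ge r_0$ since $r_0\le 1$), is $\asymp\nm f{M(\omega_1,\mascB)}$; and for $g$ I would use that with both local indices equal to $\infty$ one has $\sfW^{\infty,\infty}_{d,d,*}=\sfW^\infty$, and that $W^{r_0,\infty}_{(\omega_2 v)}=M(\omega_2 v,L^{r_0,\infty}_*)$ with $L^{r_0,\infty}_*$ a normal QBF space of order $r_0$ (Example~\ref{Example:LebCase}), so the factor is $\asymp\nm g{W^{r_0,\infty}_{(\omega_2 v)}}$. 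Assembling these three equivalences yields \eqref{Eq:MainConvMod}. I expect the main obstacle to be precisely this index bookkeeping, namely choosing the Young exponents so that \eqref{Eq:YoungCond} and the admissibility ranges of Theorem~\ref{Thm:EquivNorms2} hold simultaneously, and correctly reconciling the starred and unstarred Wiener amalgam conventions for the $g$-factor; the analytic content (the conversion of $f*g$ into a partial convolution of transforms) is supplied cleanly by Lemma~\ref{Lemma:AssistConv}.
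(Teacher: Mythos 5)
Your proposal is correct and follows essentially the same route as the paper's own proof: uniqueness via Proposition \ref{Prop:BasicEmbModSp1} together with Lemma \ref{Lemma:AssistConv}~(1), and the estimate by combining Lemma \ref{Lemma:AssistConv}~(2), Proposition \ref{Prop:ConvWienerSp} with $d_1=d_2=d$ and $\vartheta=0$, and the norm equivalences of Theorem \ref{Thm:EquivNorms2}. The only (harmless) difference is that you fix the Young exponents as $(p_0,p_1,p_2)=(\infty,1,\infty)$ — which neatly sidesteps the starred/unstarred bookkeeping for the $g$-factor — whereas the paper leaves them as an arbitrary admissible triple.
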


\par

\par

\begin{thm}\label{Thm:MainMultMod}
Suppose $\mascB$ is an invariant QBF space on $\rr {2d}$ of order
$r_0\in (0,1]$ and $v_0\in \mascP _E(\rr {2d})$, and 
$\omega _j,v\in \mascP _E(\rr {d})$, $j=0,1,2$, be such that
$v(x,\xi )=v_0(0,\xi )$ and  \eqref{Eq:ModMultWeightsEst} holds.
Then the map $(f,g)\mapsto f\cdot g$ from $\Sigma _1(\rr d)\times \Sigma _1(\rr d)$
to $\Sigma _1(\rr d)$ is uniquely extendable to a continuous map
from $M(\omega _1,\mascB )\times M^{\infty ,r_0}_{(\omega _2v)}(\rr d)$
to $M(\omega _0,\mascB )$, and
\begin{equation}\label{Eq:MainMultMod}
\begin{aligned}
\nm {f\cdot g}{M(\omega _0,\mascB )}
&\le
C \nm f{M(\omega _1,\mascB )}
\nm g{W^{r_0,\infty}_{(\omega _2v)}},
\\[1ex]
f &\in M(\omega _1,\mascB ),\ g\in M^{\infty ,r_0}_{(\omega _2v)}(\rr d),
\end{aligned}
\end{equation}
where the constant $C>0$ only depends
on the involved weight functions, $r_0$ and $\mascB$.
\end{thm}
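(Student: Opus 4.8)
The plan is to mirror the proof of Theorem \ref{Thm:MainConvMod}, interchanging the roles of the space and frequency variables and replacing Lemma \ref{Lemma:AssistConv} by Lemma \ref{Lemma:AssistMult}. First I would settle existence and uniqueness: by Proposition \ref{Prop:BasicEmbModSp1} one has $M(\omega _1,\mascB )\hookrightarrow M^\infty _{(\omega _1/v_0)}(\rr d)$, so Lemma \ref{Lemma:AssistMult}~(1) already provides a unique continuous extension of $(f,g)\mapsto f\cdot g$ from $M^\infty _{(\omega _1/v_0)}(\rr d)\times M^{\infty ,r_0}_{(\omega _2v)}(\rr d)$ to $M^\infty _{(\omega _0/v_0)}(\rr d)\subseteq \Sigma _1'(\rr d)$, which agrees with the classical product on $\Sigma _1(\rr d)\times \Sigma _1(\rr d)$. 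Hence the product $f\cdot g$ is a well-defined element of $\Sigma _1'(\rr d)$ whenever $f\in M(\omega _1,\mascB )$ and $g\in M^{\infty ,r_0}_{(\omega _2v)}(\rr d)$, and it only remains to prove the norm estimate \eqref{Eq:MainMultMod} (where, to match the stated mapping property, the right-hand factor should read $\nm g{M^{\infty ,r_0}_{(\omega _2v)}}$).

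Next I would fix windows $\phi ,\psi \in \Sigma _1(\rr d)\setminus 0$ with $\phi \cdot \psi \neq 0$ (e.{\,}g. Gaussians), which is legitimate since the modulation norm is, up to equivalence, independent of the window. By Lemma \ref{Lemma:AssistMult}~(2) the short-time Fourier transform of the product is a convolution in the \emph{frequency} variable with the space variable as a parameter, whence pointwise $|V_{\phi \cdot \psi}(f\cdot g)(x,\xi )|\le (2\pi )^{-d/2}\bigl (|V_\phi f(x,\cdot )|*|V_\psi g(x,\cdot )|\bigr )(\xi )$. This is precisely the frequency-side analogue of the space-side convolution that drives the proof of Theorem \ref{Thm:MainConvMod}.

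Then I would pass to Wiener amalgam norms via Theorem \ref{Thm:EquivNorms2}. Choosing the local Lebesgue exponent equal to $1$ in the frequency block and to $r_0$ in the space block, one has $\nm f{M(\omega _1,\mascB )}\asymp \nm {V_\phi f}{\sfW ^{r_0,1}(\omega _1,\mascB )}$ and $\nm {f\cdot g}{M(\omega _0,\mascB )}\asymp \nm {V_{\phi \cdot \psi}(f\cdot g)}{\sfW ^{r_0,\infty }(\omega _0,\mascB )}$, while, by Example \ref{Example:LebCase} together with Theorem \ref{Thm:EquivNorms2} applied to $\mascB =L^{\infty ,r_0}(\rr {2d})$, $\nm g{M^{\infty ,r_0}_{(\omega _2v)}}\asymp \nm {V_\psi g}{\sfW ^\infty (\omega _2v,L^{\infty ,r_0}(\rr {2d}))}$. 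The estimate \eqref{Eq:MainMultMod} then follows from the frequency-variable version of Proposition \ref{Prop:ConvWienerSp}, applied with the Young exponents $p_1=1$ and $p_2=p_0=\infty $ (so that \eqref{Eq:YoungCond} holds), with \eqref{Eq:ModMultWeightsEst} playing the role of the weight condition \eqref{Eq:ConvWeightsEst} and with $v(x,\xi )=v_0(0,\xi )$ depending only on the convolution (frequency) variable. This frequency-variable version is obtained from Proposition \ref{Prop:ConvWienerSp} by the coordinate flip $(x,\xi )\mapsto (\xi ,x)$, under which $\mascB $ is turned into another invariant QBF space, since solidity, translation invariance, and the discrete convolution estimate \eqref{Eq:DiscConvEst} are all preserved under permutation of coordinates.

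The main obstacle I anticipate is the bookkeeping attached to this space--frequency interchange: one must track how the mixed local and global norm orderings (the $*$ versus non-$*$ conventions) that define $\mascB $ and the target space $M(\omega _0,\mascB )$ transform under the flip, and verify that the resulting Young exponents reproduce exactly the $M(\omega _0,\mascB )$-norm through Theorem \ref{Thm:EquivNorms2}. Once the correct frequency-side analogue of Proposition \ref{Prop:ConvWienerSp} is in place, the remaining steps are routine, the constant $C$ in \eqref{Eq:MainMultMod} depending only on the involved weights, on $r_0$ and on $\mascB $.
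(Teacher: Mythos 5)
Your proposal is correct and is essentially the paper's own proof: the paper disposes of Theorem \ref{Thm:MainMultMod} by saying it "follows by similar arguments as in the proof of Theorem \ref{Thm:MainConvMod}, where the roles of $x$ and $y$ in \eqref{Eq:QConv} are interchanged, and Lemma \ref{Lemma:AssistMult} is used instead of Lemma \ref{Lemma:AssistConv}", and your coordinate-flip formulation of the frequency-side version of Proposition \ref{Prop:ConvWienerSp} is just an explicit way of carrying out that interchange. You also correctly observe that the factor $\nm g{W^{r_0,\infty}_{(\omega _2v)}}$ in \eqref{Eq:MainMultMod} is a leftover from the convolution theorem and should read $\nm g{M^{\infty ,r_0}_{(\omega _2v)}}$.
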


\par

\begin{proof}[Proof of Theorem \ref{Thm:MainConvMod}]
Suppose $f \in M(\omega _1,\mascB )$ and
$g\in W^{r_0, \infty}_{(\omega _2v)}(\rr d)$.
Since $M(\omega _1,\mascB )$ is continuously embedded in
$M^\infty _{(\omega _1/v_0)}(\rr d)$, in view of
Corollary \ref{Prop:BasicEmbModSp1}, Lemma
\ref{Lemma:AssistConv} shows that $f*g$ is
uniquely defined as an element in
$M^\infty _{(\omega _0/v_0)}(\rr d)$. Hence the result
follows if we prove \eqref{Eq:MainConvMod}.

\par

Let $\phi \in \Sigma _1(\rr d)\setminus 0$,
$\psi =\phi *\phi \in  \Sigma _1(\rr d)\setminus 0$,
and let $d_1=d_2=d$
and $\vartheta =0$ in \eqref{Eq:QConv}. Also let
$p_j$, $j=0,1,2$, be the same as in
Proposition \ref{Prop:ConvWienerSp}.
By Theorem \ref{Thm:EquivNorms2},
Proposition \ref{Prop:ConvWienerSp},
and Lemma \ref{Lemma:AssistConv} (2) we get
\begin{align*}
\nm {f*g}{M(\omega _0,\mascB )}
&\lesssim
\nm {|V_\phi f|*_0 |V_\phi g|}{\sfW ^{p_0,r_0}(\omega _0,\mascB)}
\\[1ex]
&\lesssim
\nm {|V_\phi f|}{\sfW ^{p_1,r_0}(\omega _1,\mascB)}
\nm {|V_\phi g|}{\sfW ^{p_2,\infty}(\omega _2v,\ell ^{r_0,\infty}_*(\zz {2d}))}
\\[1ex]
&\asymp
\nm f{M(\omega _1,\mascB )}\nm g{W^{r_0,\infty}_{(\omega _2v)}}.
\end{align*}
This gives \eqref{Eq:MainConvMod}, and thereby the result.
\end{proof}

\par

Theorem \ref{Thm:MainMultMod} follows by similar arguments
as in the proof of Theorem \ref{Thm:MainConvMod}, where
the roles of $x$ and $y$ in \eqref{Eq:QConv} are 
interchanged,
and Lemma \ref{Lemma:AssistMult} is used instead of
Lemma \ref{Lemma:AssistConv}. The details are left for the 
reader.

\par

\begin{rem}
\label{Rem:ConvClassicMod}
Theorem \ref{Thm:MainConvMod} is not sharp
when we restrict ourselves to classical modulation
spaces. For example, (1) in Lemma \ref{Lemma:AssistConv}
can be improved into that the map $(f_1,f_2)
\mapsto f_1*f_2$
is continuous from
$M^\infty _{(\omega _1)}(\rr d)\times
M^{1,\infty}_{(\omega _2)}(\rr d)$ to
$M^\infty _{(\omega _0)}(\rr d)$, provided
$\omega _j\in \mascP _E(\rr {2d})$,
$j=0,1,2$, satisfy \eqref{Eq:ModConvWeightsEst}
(see e.{\,}g. \cite{Toft04} and the analysis
in \cite{Fei1}).
These continuity properties are special
cases of \cite[Theorem 3.4]{Toft2022}. 

\par

In fact,
suppose that $\omega _j\in \mascP _E(\rr {2d})$ and
$p_j,q_j\in (0,\infty ]$,
$j=0,1,2$, satisfy \eqref{Eq:ModConvWeightsEst}
and
\begin{equation}
\label{Eq:HolderYoungCond}
\frac 1{p_0} = \frac 1{p_1}+\frac 1{p_2}
-
\max \left (
1,\frac 1{p_1},\frac 1{p_2}
\right ),
\quad
\frac 1{q_0} = \frac 1{q_1} + \frac 1{q_2}.
\end{equation}
Then it follows from
\cite[Theorem 3.4]{Toft2022}
that the map $(f_1,f_2)\mapsto f_1*f_2$
is continuous from
$M^{p_1,q_1} _{(\omega _1)}(\rr d)\times
M^{p_2,q_2}_{(\omega _2)}(\rr d)$ to
$M^{p_0,q_0} _{(\omega _0)}(\rr d)$, and
\begin{equation}
\label{Eq:ConvEstClassicModSp}
\nm {f_1*f_2}{M^{p_0,q_0}_{(\omega _0)}}
\lesssim
\nm {f_1}{M^{p_1,q_1}_{(\omega _1)}}
\nm {f_2}{M^{p_2,q_2}_{(\omega _2)}},
\quad
f_j\in M^{p_j,q_j} _{(\omega _j)}(\rr d),
\ j=1,2.
\end{equation}
We also refer to \cite{FeiGro1,Rau1}
for overlapping convolution results
established for coorbit space.
\end{rem}

\par

\section{Continuity properties for pseudo-differential
operators}
\label{sec6}

\par

In this section we make a review of some facts about
pseudo-differential operators. Thereafter we deduce
continuity for pseudo-differential operators with symbols
in suitable modulation spaces, when acting between
other modulation spaces.

\par


\par

Let $\GL (d,\mathbf R)$ be the set of $d\times d$-matrices with
entries in $\mathbf R$, $\fka \in \Sigma _1 
(\rr {2d})$, and let $A\in \GL (d,\mathbf R)$ be fixed. Then the
pseudo-differential operator $\op _A(\fka )$
is the linear and continuous operator on $\Sigma _1 (\rr d)$, given by
\begin{equation}\label{Eq:PseudoDef}
(\op _A(\fka )f)(x)
=
(2\pi  ) ^{-d}\iint \fka (x-A(x-y),\xi )f(y)e^{i\scal {x-y}\xi }\,
dyd\xi , \quad  x\in \rr d
\end{equation}
(see e.{\,}g. Chapter XVIII in \cite{Ho1}).
For general $\fka \in \Sigma _1'(\rr {2d})$, the
pseudo-differential operator $\op _A(\fka )$ is defined as the continuous
operator from $\Sigma _1(\rr d)$ to $\Sigma _1'(\rr d)$ with
distribution kernel
\begin{equation}\label{Eq:KernelPsDO}
K_{\fka ,A}(x,y)=(2\pi )^{-d/2}(\mascF _2^{-1}\fka )(x-A(x-y),x-y), 
\quad x,y \in \rr d.
\end{equation}
Here $\mascF _2F$ is the partial Fourier transform of $F(x,y)\in
\Sigma _1'(\rr {2d})$ with respect to the $y$ variable. This
definition makes sense since the mappings
\begin{equation}\label{Eq:HomeoF2tmap}
\mascF _2\quad \text{and}\quad F(x,y)\mapsto F(x-A(x-y),x-y)
\end{equation}
are homeomorphisms on
$\Sigma _s(\rr {2d})$, $\maclS _s(\rr {2d})$,
$\mascS (\rr {2d})$, and their duals, $s\ge 1$.
In particular we have the following.

\par

\begin{lemma}
\label{Lemma:SymbKernelHom}
Let $A\in \GL (d,\mathbf R )$,
and $K_{\fka ,A}$ be as in \eqref{Eq:KernelPsDO}, 
when $\fka \in \Sigma _1'(\rr {2d})$.
Then the map $\fka \mapsto K_{\fka ,A}$ is a homeomorphism on
$\Sigma _1'(\rr {2d})$, which restricts to homeomorphisms on
$$
\Sigma _s(\rr {2d}),
\quad
\maclS _s(\rr {2d}),
\quad
\mascS (\rr {2d}),
\quad
\mascS '(\rr {2d}),
\quad
\maclS _s'(\rr {2d})
\quad \text{and}\quad
\Sigma _s'(\rr {2d}).
$$
\end{lemma}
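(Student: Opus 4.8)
The plan is to recognise the assignment $\fka \mapsto K_{\fka ,A}$ as a composition of maps that are already known to be homeomorphisms, and then to conclude from the elementary fact that compositions and inverses of homeomorphisms are again homeomorphisms. Reading off \eqref{Eq:KernelPsDO}, I would introduce the linear substitution operator $\kappa _A$, given by $(\kappa _A G)(x,y)=G(x-A(x-y),x-y)$, and note that
\begin{equation*}
K_{\fka ,A}=(2\pi )^{-d/2}\,\kappa _A\bigl (\mascF _2^{-1}\fka \bigr ),
\end{equation*}
so that the map under study factors as $\fka \mapsto \mascF _2^{-1}\fka \mapsto \kappa _A(\mascF _2^{-1}\fka )$, followed by multiplication by the non-zero constant $(2\pi )^{-d/2}$.

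First I would verify that the substitution underlying $\kappa _A$ is a genuine change of variables on $\rr {2d}$. Setting $u=x-A(x-y)$ and $v=x-y$ and solving yields $x=u+Av$ and $y=u+(A-I)v$, so the map $(x,y)\mapsto (x-A(x-y),x-y)$ is an invertible linear transformation of $\rr {2d}$ whose inverse is again linear; thus $\kappa _A$ is an invertible substitution operator of the type covered by \eqref{Eq:HomeoF2tmap}. Next I would simply invoke \eqref{Eq:HomeoF2tmap}, which already states that both the partial Fourier transform $\mascF _2$ and the substitution $\kappa _A$ are homeomorphisms on each of $\Sigma _s(\rr {2d})$, $\maclS _s(\rr {2d})$, $\mascS (\rr {2d})$ and on their duals, for $s\ge 1$. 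Since $\mascF _2^{-1}$ is the inverse of the homeomorphism $\mascF _2$, it too is a homeomorphism on all these spaces, as is scalar multiplication by $(2\pi )^{-d/2}$; composing them along the displayed factorization gives that $\fka \mapsto K_{\fka ,A}$ is a homeomorphism on $\Sigma _1'(\rr {2d})$ restricting to homeomorphisms on $\Sigma _s(\rr {2d})$, $\maclS _s(\rr {2d})$, $\mascS (\rr {2d})$, $\mascS '(\rr {2d})$, $\maclS _s'(\rr {2d})$ and $\Sigma _s'(\rr {2d})$.

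Since \eqref{Eq:HomeoF2tmap} carries essentially all the weight, the only genuine content beyond this bookkeeping is the justification of \eqref{Eq:HomeoF2tmap} itself, which is the step I would expect to be the real obstacle were it not already at hand. Its two ingredients are: that the invertible linear substitution $\kappa _A$ preserves each of the Gelfand-Shilov spaces, which follows from their invariance under non-degenerate linear changes of variables; and that the partial Fourier transform $\mascF _2$ preserves the isotropic spaces $\maclS _s(\rr {2d})=\maclS _s^s(\rr {2d})$ and $\Sigma _s(\rr {2d})$, which rests on the two-sided decay characterization of type \eqref{Eq:GSFtransfChar} together with the coincidence $s=\sigma$ of the position and frequency indices, so that transforming only the last $d$ variables keeps the estimates in the same class. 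The passage to the dual spaces would then be handled by transposition, using that all maps involved are continuous with continuous inverses on the corresponding test-function spaces.
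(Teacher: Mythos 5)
Your proposal is correct and follows exactly the route the paper takes: the paper states Lemma \ref{Lemma:SymbKernelHom} as an immediate consequence ("In particular\dots") of \eqref{Eq:HomeoF2tmap}, i.{\,}e. of the fact that $\mascF _2$ and the (invertible, as you verify) linear substitution $F(x,y)\mapsto F(x-A(x-y),x-y)$ are homeomorphisms on all the listed spaces and their duals, which is precisely your factorization $K_{\fka ,A}=(2\pi )^{-d/2}\,\kappa _A(\mascF _2^{-1}\fka )$. Your additional sketch of why \eqref{Eq:HomeoF2tmap} itself holds (linear change of variables plus the isotropy $s=\sigma$ for the partial Fourier transform, then transposition for the duals) is consistent with the paper's supporting material and adds detail the paper leaves implicit.
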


\par

The standard (Kohn-Nirenberg) representation, $\fka (x,D)=\op (\fka )$, and
the Weyl quantization $\op ^w(\fka )$ of $\fka$ are obtained by choosing
$A=0$ and $A=\frac 12 I$, respectively, in \eqref{Eq:PseudoDef}
and \eqref{Eq:KernelPsDO}, where $I$ is the identity matrix.

\par

\begin{rem}\label{Rem:BijKernelsOps}
Let $s\ge 1$. By Fourier's inversion formula, \eqref{Eq:KernelPsDO} and the kernel theorem
\cite[Theorem 2.2]{LozPerTask} for operators from
Gelfand-Shilov spaces to their duals,
it follows that the map $\fka \mapsto \op _A(\fka )$ 
is bijective from $\Sigma _s'(\rr {2d})$
to the set of all linear and continuous operators 
from $\Sigma _s(\rr d)$
to $\Sigma _s'(\rr {2d})$. The same holds true
with $\maclS _s$ or $\mascS$ in place of $\Sigma _s$
at each occurrence.
\end{rem}

\par

By Remark \ref{Rem:BijKernelsOps}, it follows that for every $\fka _1\in \Sigma _1'(\rr {2d})$
and $A_1,A_2\in \GL (d,\mathbf R)$, there is a unique $\fka _2\in \Sigma _1'(\rr {2d})$ such that
$\op _{A_1}(\fka _1) = \op _{A_2} (\fka _2)$.
Indeed, the relation between $\fka _1$
and $\fka _2$ is given by
\begin{equation}
\label{Eq:CalculiTransform}
\op _{A_1}(\fka _1) = \op _{A_2}(\fka _2)
\quad \Leftrightarrow \quad
e^{i\scal {A_2D_\xi}{D_x}}\fka _2(x,\xi )=e^{i\scal {A_1D_\xi}{D_x}}\fka _1(x,\xi )
\end{equation}
(see e.{\,}g. \cite[Section 18.5]{Ho1}
or \cite[Proposition 1.1]{Toft20}).
Here we note that the operator
$e^{i\scal {AD_\xi}{D_x}}$ is homeomorphic
on $\Sigma _s(\rr {2d})$, $\maclS _s(\rr {2d})$,
$\mascS (\rr {2d})$, and their duals
(cf. \cite{CaTo,Tr}).

\par

We also recall that $\op _A(\fka )$ is a rank-one operator, i.{\,}e.
\begin{equation}\label{Eq:RankOneSymb}
\op _A(\fka )f=(2\pi )^{-\frac d2}
(f,f_2)f_1, \qquad f\in \Sigma _1(\rr d),
\end{equation}
for some $f_1,f_2\in \Sigma _1'(\rr d)$,
if and only if $\fka$ is equal to the \emph{$A$-Wigner distribution}
\begin{equation}\label{Eq:WignerAdef}
W_{f_1,f_2}^{A}(x,\xi ) \equiv \mascF (f_1(x+A\cdo
)\overline{f_2(x-(I-A)\cdo )} )(\xi ),
\quad x,\xi \in \rr d, 
\end{equation}
of $f_1$ and $f_2$. If in addition $f_1,f_2\in L^2(\rr d)$, then $W_{f_1,f_2}^{A}$
takes the form
\begin{equation}\label{wignertdef2}
W_{f_1,f_2}^{A}(x,\xi ) =
(2\pi )^{-\frac d2}
\int _{\rr d} f_1(x+Ay)
\overline{f_2(x-(I-A)y)}e^{-i\scal y\xi} \, dy,
\quad x,\xi \in \rr d.
\end{equation}
(Cf. e.{\,}g. \cite{Toft20}.) Since
the Weyl case is peculiar interesting,
we also set $W_{f_1,f_2}=W_{f_1,f_2}^{A}$
when $A=\frac 12 I$. A straight-forward combination of
\eqref{Eq:CalculiTransform} and the fact that $\fka$ in \eqref{Eq:RankOneSymb}
equals $W_{f_1,f_2}^{A}$ gives
$$
e^{i\scal {A_2D_\xi}{D_x}}W_{f_1,f_2}^{A_2}
=
e^{i\scal {A_1D_\xi}{D_x}}W_{f_1,f_2}^{A_1}.
$$

\par


\par

We aim at extending the following result which is a special case of Theorem 3.1 in
\cite{Toft18}. Here the involved weight functions should be related as
\begin{equation}
\label{Eq:WeightPseudoRel}
\frac {\omega _2(x,\xi )}{\omega _1(y,\eta)}
\lesssim
\omega (x+A(y-x),\eta +A^*(\xi -\eta ),
\xi -\eta ,y-x),
\quad x,y,\xi ,\eta \in \rr d.
\end{equation}

\par

\begin{thm}
\label{Thm:PseudoCont1}
Suppose $A\in \GL (d,\mathbf R)$, $r_0\in (0,1]$, $p,q\in [r_0,\infty ]$,
$\omega \in \mascP _E(\rr {4d})$,
and $\omega _1,\omega _2\in \mascP _E(\rr {2d})$, be such that
\eqref{Eq:WeightPseudoRel} holds. If
$\fka \in M^{\infty ,r_0}_{(\omega _0)}(\rr {2d})$, then $\op _A(\fka )$
from $\Sigma _1(\rr d)$ to $\Sigma _1'(\rr d)$ is uniquely
extendable to a continuous map from
$M^{p,q}_{(\omega _1)}(\rr d)$ to $M^{p,q}_{(\omega _2)}(\rr d)$,
and
\begin{equation}
\label{Eq:PseudoCont1}
\begin{aligned}
\nm {\op _A(\fka )f}{M^{p,q}_{(\omega _2)}}
&\le
C\nm {\fka }{M^{\infty ,r_0}_{(\omega _0)}}\nm f{M^{p,q}_{(\omega _1)}},
\\[1ex]
\fka &\in M^{\infty ,r_0}_{(\omega _0)}(\rr {2d}),\ 
f\in M^{p,q}_{(\omega _1)}(\rr d),
\end{aligned}
\end{equation}
for some constant $C>0$, which only depends on
$r_0$ and $\omega _j$, $j=0,1,2$.
\end{thm}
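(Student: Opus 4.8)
The plan is to reduce \eqref{Eq:PseudoCont1} to a pointwise estimate for the short-time Fourier transform of $\op _A(\fka )f$ and then to close the argument with the Wiener amalgam convolution estimates of Section \ref{sec5} together with the norm equivalences of Theorem \ref{Thm:EquivNorms2}. (Throughout I read the weight in the symbol class and in \eqref{Eq:WeightPseudoRel} as the same, i.e. $\omega =\omega _0\in \mascP _E(\rr {4d})$.) Fix $\phi \in \Sigma _1(\rr d)\setminus 0$. Starting from the kernel \eqref{Eq:KernelPsDO} and the identification of rank-one symbols with $A$-Wigner distributions in \eqref{Eq:WignerAdef}, a direct computation produces a window $\Phi \in \Sigma _1(\rr {2d})\setminus 0$, depending only on $\phi$ and $A$ (essentially the $A$-Wigner distribution $W^A_{\phi ,\phi}$), such that after substituting $u=y-x$ and $v=\xi -\eta$ one has
\begin{equation*}
|V_\phi (\op _A(\fka )f)(x,\xi )|
\lesssim
\int _{\rr {2d}}
|V_\Phi \fka (x+Au,\, \xi -(I-A^*)v,\, v,\, u)|\,
|V_\phi f(x+u,\xi -v)|\, du\, dv .
\end{equation*}
Here Lemma \ref{Lemma:SymbKernelHom} and the homeomorphism properties \eqref{Eq:HomeoF2tmap} guarantee that $\Phi$ is an admissible window on $\rr {2d}$, and, since $\Phi$ is already an $A$-Wigner distribution, a general matrix $A$ is handled directly with no separate calculus reduction.

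Next I would absorb the weights. Multiplying the displayed inequality by $\omega _2(x,\xi )$ and applying \eqref{Eq:WeightPseudoRel} with $\omega =\omega _0$ — whose argument is precisely $(x+Au,\xi -(I-A^*)v,v,u)$ after the same substitution — yields
\begin{equation*}
(|V_\phi (\op _A(\fka )f)|\, \omega _2)(x,\xi )
\lesssim
\int _{\rr {2d}}
\big (|V_\Phi \fka |\, \omega _0\big )(x+Au,\xi -(I-A^*)v,v,u)\,
\big (|V_\phi f|\, \omega _1\big )(x+u,\xi -v)\, du\, dv .
\end{equation*}
Thus $(|V_\phi (\op _A(\fka )f)|\, \omega _2)$ is dominated by a phase-space convolution of the weighted symbol transform $|V_\Phi \fka |\, \omega _0$ against $|V_\phi f|\, \omega _1$, in which the former is measured in $L^\infty$ over its first $\rr {2d}$ variables and with $L^{r_0}$-mass over the remaining ones — exactly the mixed structure encoded by $\fka \in M^{\infty ,r_0}_{(\omega _0)}(\rr {2d})$.

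The final step is to estimate this convolution. Because $r_0\le 1$ and the indices $p,q\in [r_0,\infty ]$ may be smaller than $1$, classical Young inequalities are unavailable, so I would discretize and use the quasi-Banach Wiener amalgam convolution machinery of Section \ref{sec5} (in the spirit of Proposition \ref{Prop:ConvWienerSp}), with the symbol supplying the $\ell ^{r_0}$-type convolution factor and $|V_\phi f|\, \omega _1$ the $L^{p,q}$ factor. The crucial flexibility is that, by Theorem \ref{Thm:EquivNorms2} applied both to $f$ (with $\mascB =L^{p,q}$) and to $\op _A(\fka )f$, all relevant modulation norms may be evaluated through whichever Wiener amalgam index $r\in [r_0,\infty ]$ renders the local Young exponents admissible; this converts the pointwise bound into $\nm {\op _A(\fka )f}{M^{p,q}_{(\omega _2)}}\lesssim \nm {\fka }{M^{\infty ,r_0}_{(\omega _0)}}\, \nm f{M^{p,q}_{(\omega _1)}}$ first for $f\in \Sigma _1(\rr d)$. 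Continuity and the unique extension in \eqref{Eq:PseudoCont1} then follow by density of $\Sigma _1(\rr d)$ when $p,q<\infty$ and by a standard weak-$*$ duality argument otherwise. I expect the main obstacle to be precisely this concluding convolution step in the quasi-Banach regime: matching the mixed $(L^\infty ,L^{r_0})$ structure of the symbol to the $L^{p,q}$ structure of $f$ forces one to work in Wiener amalgam spaces rather than with a naive Young inequality, and it is exactly here that the index-independence furnished by Theorem \ref{Thm:EquivNorms2} becomes indispensable.
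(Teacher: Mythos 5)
The first thing to note is that the paper contains no proof of Theorem \ref{Thm:PseudoCont1} at all: it is quoted as a special case of Theorem 3.1 in \cite{Toft18}, and is then itself \emph{used} (for the unique-extendability step) in the proof of its generalization, Theorem \ref{Thm:PseudoCont2}. What you propose is, in substance, the paper's proof of Theorem \ref{Thm:PseudoCont2} specialized to $\mascB =L^{p,q}$ and $v_0\equiv 1$: your pointwise bound is exactly what \eqref{Eq:STFTKernelPsDO}--\eqref{Eq:STFTKernelAction} give after the substitution $u=y-x$, $v=\xi -\eta$; your weight absorption via \eqref{Eq:WeightPseudoRel} reproduces the paper's derivation of \eqref{Eq:FGHEst}; and your closing step (Theorem \ref{Thm:EquivNorms2} combined with Corollary \ref{Cor:ConvWienerSp1}, with Young exponents as in \eqref{Eq:YoungCond}, e.g. $p_0=p_1=\infty$, $p_2=1$) is the same. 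This route is non-circular, since Theorem \ref{Thm:EquivNorms2} and the Section \ref{sec5} convolution estimates are independent of Section \ref{sec6}, so your argument would in fact make the norm estimate \eqref{Eq:PseudoCont1} self-contained where the paper defers to \cite{Toft18}. One point you should make explicit: your displayed inequality is not yet a convolution, because the symbol factor still carries $(x,\xi )$ in its first two slots; one must first pass to $H(y,\eta )=\sup _{x,\xi \in \rr d}\big (|V_\Psi \fka |\,\omega _0\big )(x,\xi ,\eta ,-y)$, as the paper does, and only then is the right-hand side dominated by $H*\big (|V_\phi f|\,\omega _1\big )$. Your remark about measuring the symbol "in $L^\infty$ over its first $\rr {2d}$ variables" shows you intend this, but it is precisely this sup that turns the $M^{\infty ,r_0}_{(\omega _0)}$ quasi-norm of $\fka$ into the admissible $\sfW$-type convolution factor, so it should appear in the proof rather than in a gloss.

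The genuine gap is the unique-extension claim when $p=\infty$ or $q=\infty$. Density of $\Sigma _1(\rr d)$ fails there, and your fallback, "a standard weak-$*$ duality argument", is not available in the stated generality: $q$ is allowed to lie in $[r_0,1)$, and then $M^{p,q}_{(\omega _1)}(\rr d)$ is a genuine quasi-Banach space for which no predual or Hahn--Banach machinery is at hand, so weak-$*$ density of $\Sigma _1$ is not even a meaningful statement; and even at Banach endpoints such as $(p,q)=(\infty ,1)$, Proposition \ref{Prop:ModDuality} identifies duals only when $p,q<\infty$, so the required predual identification is not supplied by the paper. Note that the paper dodges exactly this issue in Theorem \ref{Thm:PseudoCont2} by invoking Theorem \ref{Thm:PseudoCont1} for the extension to $M^\infty$-type spaces --- an option closed to you on pain of circularity. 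The repair is to \emph{construct} the extension rather than abstractly extend: for $f\in M^{p,q}_{(\omega _1)}(\rr d)$ your estimate shows that the integral in \eqref{Eq:STFTKernelAction} converges absolutely and defines a function dominated in the correct weighted mixed norm, so composing with $V_\phi ^*$ via \eqref{Eq:IdentSTFTAdj} yields a continuous map that extends $\op _A(\fka )$; uniqueness must then be formulated and verified with respect to a weaker notion of convergence (pointwise/narrow convergence of short-time Fourier transforms of bounded sequences) in which $\Sigma _1$ \emph{is} dense in $M^{p,q}_{(\omega _1)}$. As written, the endpoint cases of the "uniquely extendable" assertion remain unproven in your proposal.
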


\par

Our extension of the previous result is the following.
Here the condition \eqref{Eq:WeightPseudoRel} is replaced by
\begin{multline}
\tag*{(\ref{Eq:WeightPseudoRel})$'$}
\frac {\omega _2(x,\xi )v_0(x-y,\xi -\eta )}
{\omega _1(y,\eta)}
\lesssim
\omega _0(x+A(y-x),\eta +A^*(\xi -\eta ),\xi -\eta ,y-x),
\\[1ex]
x,y,\xi ,\eta \in \rr d.
\end{multline}

\par


\begin{thm}
\label{Thm:PseudoCont2}
Suppose $A\in \GL (d,\mathbf R)$, $\mascB$ is a normal
QBF space on $\rr {2d}$ with respect to $r_0\in (0,1]$ and
$v_0\in \mascP _E(\rr {2d})$,
$\omega _0\in \mascP _E(\rr {4d})$,
and $\omega _1,\omega _2\in \mascP _E(\rr {2d})$, be such that
\eqref{Eq:WeightPseudoRel}$'$ holds. If
$\fka \in M^{\infty ,r_0}_{(\omega _0)}(\rr {2d})$,
then $\op _A(\fka )$
from $\Sigma _1(\rr d)$ to $\Sigma _1'(\rr d)$ is uniquely
extendable to a continuous map from
$M(\omega _1,\mascB )$ to $M(\omega _2,\mascB )$,
and
\begin{equation}
\label{Eq:PseudoCont2}
\begin{aligned}
\nm {\op _A(\fka )f}{M(\omega _2,\mascB )}
&\le
C\nm {\fka}{M^{\infty ,r_0}_{(\omega _0)}}\nm f{M(\omega _1,\mascB )},
\\[1ex]
\fka &\in M^{\infty ,r_0}_{(\omega _0)}(\rr {2d}),\ 
f\in M(\omega _1,\mascB ),
\end{aligned}
\end{equation}
for some constant $C>0$, which only depends on
$r_0$ and $\omega _j$, $j=0,1,2$.
\end{thm}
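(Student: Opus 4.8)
The plan is to adapt the scheme used for the classical case, Theorem \ref{Thm:PseudoCont1}, by replacing the mixed-norm Lebesgue convolution estimates with the general Wiener amalgam estimate of Corollary \ref{Cor:ConvWienerSp1}, and by using the norm equivalences of Theorem \ref{Thm:EquivNorms2} to pass freely between modulation-space norms and Wiener amalgam norms. Since $\op _A(\fka )$ is already defined on all of $\Sigma _1'(\rr d)$ through the kernel \eqref{Eq:KernelPsDO}, and $M(\omega _1,\mascB )\subseteq \Sigma _1'(\rr d)$, the asserted unique extension is just the restriction of this globally defined operator; thus it suffices to fix $f\in M(\omega _1,\mascB )$ and establish the a priori estimate \eqref{Eq:PseudoCont2}. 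Fix $\phi _1,\phi _2\in \Sigma _1(\rr d)\setminus 0$ and set $\Phi =W^{A}_{\phi _2,\phi _1}\in \Sigma _1(\rr {2d})\setminus 0$ (the $A$-Wigner distribution of two Gelfand--Shilov windows is again Gelfand--Shilov).

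\emph{Step 1 (STFT representation).} From \eqref{Eq:RankOneSymb}--\eqref{Eq:WignerAdef} together with Moyal's identity \eqref{Eq:Moyal} one has $(\op _A(\fka )f,g)_{L^2}=(2\pi )^{-d/2}(\fka ,W^{A}_{g,f})_{L^2(\rr {2d})}$. Taking $g=\phi _2(\cdot -x)e^{i\scal \cdot \xi }$ and expanding the resulting $A$-Wigner distribution by its phase-space covariance yields
\[
V_{\phi _2}(\op _A(\fka )f)(x,\xi )=C\int _{\rr {2d}}(V_\Phi \fka )(z,\zeta )\,e^{i\Psi }\,V_{\phi _1}f(y,\eta )\,dy\,d\eta ,
\]
with $z=(x+A(y-x),\eta +A^*(\xi -\eta ))$ and $\zeta =(\xi -\eta ,y-x)$, and $\Psi$ a real phase. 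These are exactly the four arguments of $\omega _0$ in (\ref{Eq:WeightPseudoRel})$'$.

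\emph{Step 2 (reduction to a convolution).} Put $F_1=\omega _1\,|V_{\phi _1}f|$ and $F_2=\omega _2\,|V_{\phi _2}(\op _A(\fka )f)|$, and let $G_0(\zeta )=\sup _{w\in \rr {2d}}|V_\Phi \fka (w,\zeta )|\,\omega _0(w,\zeta )$. Applying Theorem \ref{Thm:EquivNorms2} to $\fka \in M^{\infty ,r_0}_{(\omega _0)}$ (i.e.\ with $\mascB =L^{\infty ,r_0}(\rr {4d})$) gives $V_\Phi \fka \cdot \omega _0\in \sfW ^\infty (L^{\infty ,r_0}(\rr {4d}))$, whence $G_0\in \sfW ^\infty (\ell ^{r_0}(\zz {2d}))$ with $\nm {G_0}{\sfW ^\infty (\ell ^{r_0})}\asymp \nm \fka {M^{\infty ,r_0}_{(\omega _0)}}$. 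Taking absolute values in the formula, bounding $\omega _0(z,\zeta )|V_\Phi \fka (z,\zeta )|\le G_0(\zeta )$, and using (\ref{Eq:WeightPseudoRel})$'$ to replace $1/\omega _0(z,\zeta )$ by $\omega _1(y,\eta )/(\omega _2(x,\xi )v_0(x-y,\xi -\eta ))$, we obtain the pointwise bound $F_2\lesssim H*F_1$, where $H(W)=G_0(JW)/v_0(W)$ with $J(a,b)=(b,-a)$. The essential observation here is that $\zeta =J(X-Y)$ depends only on $X-Y$, while the base point $z$ ranges over all of $\rr {2d}$ for each fixed $X-Y$ and has thus been eliminated by the supremum; hence $H$ is a genuine convolution kernel and $Hv_0=G_0\circ J\in \sfW ^\infty (\ell ^{r_0})$, with norm $\asymp \nm \fka {M^{\infty ,r_0}_{(\omega _0)}}$.

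\emph{Step 3 (amalgam convolution and conclusion).} By Theorem \ref{Thm:EquivNorms2} with $r=1\ge r_0$ we have $V_{\phi _1}f\in \sfW ^1(\omega _1,\mascB )$, that is $F_1\in \sfW ^1(\mascB )$ with $\nm {F_1}{\sfW ^1(\mascB )}\asymp \nm f{M(\omega _1,\mascB )}$. Apply Corollary \ref{Cor:ConvWienerSp1} with trivial output weights and the Young exponents $p_1=1$, $p_2=\infty$, $p_0=\infty$, which satisfy \eqref{Eq:YoungCond}: the second factor $H$ lies in $\sfW ^\infty (v_0,\ell ^{r_0})$ precisely because $Hv_0\in \sfW ^\infty (\ell ^{r_0})$, and the $v_0$-weight built into the quasi-Banach convolution \eqref{Eq:DiscConvEst} is exactly the one supplied by the factor $1/v_0$ in $H$ — this is the role of the extra $v_0$ distinguishing (\ref{Eq:WeightPseudoRel})$'$ from \eqref{Eq:WeightPseudoRel}. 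We conclude $H*F_1\in \sfW ^\infty (\mascB )$ with $\nm {H*F_1}{\sfW ^\infty (\mascB )}\lesssim \nm \fka {M^{\infty ,r_0}_{(\omega _0)}}\nm f{M(\omega _1,\mascB )}$, so by solidity and $F_2\lesssim H*F_1$ and the embedding $\sfW ^\infty (\mascB )\hookrightarrow \mascB$ of Proposition \ref{Prop:BasicEmbWSp}(2) we get $V_{\phi _2}(\op _A(\fka )f)\cdot \omega _2\in \mascB$; by Definition \ref{Def:GenModSpace} this is exactly $\op _A(\fka )f\in M(\omega _2,\mascB )$ together with \eqref{Eq:PseudoCont2}. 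I expect the main obstacle to lie in Step 1--2: rigorously deriving the representation with the precise affine arguments $z,\zeta$, and verifying that the supremum over the base variable $z$ really collapses the kernel to a function of $X-Y$ so that Corollary \ref{Cor:ConvWienerSp1} becomes applicable, along with the careful matching of the $v_0$ bookkeeping.
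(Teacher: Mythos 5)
Your Steps 1--3 reproduce, essentially verbatim, the paper's own estimate: the representation you sketch in Step 1 is the paper's \eqref{Eq:STFTKernelPsDO}--\eqref{Eq:STFTKernelAction} (quoted there from \cite{AbCaTo}); your kernel $H(W)=G_0(JW)/v_0(W)$ is the paper's $H/v_0$ with $H(y,\eta )=\sup _{x,\xi}\big (|V_\Psi \fka (x,\xi ,\eta ,-y)|\,\omega _0(x,\xi ,\eta ,-y)\big )$; and the concluding chain (solidity, Corollary \ref{Cor:ConvWienerSp1} with admissible Young exponents, Theorem \ref{Thm:EquivNorms2} applied both to $f$ and, with $\mascB =L^{\infty ,r_0}$, to $\fka$, so that $\nm {G_0}{\sfW ^\infty (\ell ^{r_0})}\asymp \nm \fka {M^{\infty ,r_0}_{(\omega _0)}}$) is the same as in the paper. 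So the a priori bound \eqref{Eq:PseudoCont2} is correctly established --- but only for $f\in \Sigma _1(\rr d)$, where $\op _A(\fka )f$ is defined.

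The genuine gap is in your opening paragraph, where you dismiss the words ``uniquely extendable''. The claim that $\op _A(\fka )$ ``is already defined on all of $\Sigma _1'(\rr d)$ through the kernel \eqref{Eq:KernelPsDO}'' is false: a distribution kernel $K_{\fka ,A}\in \Sigma _1'(\rr {2d})$ defines a continuous operator from $\Sigma _1(\rr d)$ to $\Sigma _1'(\rr d)$ only (cf. Remark \ref{Rem:BijKernelsOps}); it cannot in general be applied to ultradistributions. For instance, if $K_{\fka ,A}=h\otimes \overline {g}$ with $g,h\in \Sigma _1'(\rr d)$, then $\op _A(\fka )f=(f,g)\, h$ requires the pairing $(f,g)$, which is undefined for generic $f\in \Sigma _1'(\rr d)$. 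Consequently, for $f\in M(\omega _1,\mascB )\setminus \Sigma _1(\rr d)$ the object $\op _A(\fka )f$ appearing throughout your Steps 1--3 has no meaning yet, so the representation formula of Step 1 cannot even be written down for such $f$; and since $\Sigma _1(\rr d)$ need not be dense in $M(\omega _1,\mascB )$ (e.g. when $\mascB$ is of $L^\infty$ type), an a priori bound on $\Sigma _1(\rr d)$ does not by itself produce, let alone uniquely determine, the extension. The paper closes exactly this gap before starting the estimate: it sets $\vartheta _j=\omega _j/v_0$, checks that \eqref{Eq:WeightPseudoRel}$'$ together with the evenness and submultiplicativity of $v_0$ yields \eqref{Eq:WeightPseudoRel} for the pair $(\vartheta _1,\vartheta _2)$, invokes Theorem \ref{Thm:PseudoCont1} (with $p=q=\infty$) to obtain the unique continuous extension $M^\infty _{(\vartheta _1)}(\rr d)\to M^\infty _{(\vartheta _2)}(\rr d)$, and then uses the embedding $M(\omega _1,\mascB )\hookrightarrow M^\infty _{(\vartheta _1)}(\rr d)$ of Proposition \ref{Prop:BasicEmbModSp1} to give $\op _A(\fka )f$ a well-defined, unique meaning for every $f\in M(\omega _1,\mascB )$; only then does the estimate show that this extension maps into $M(\omega _2,\mascB )$. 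Your proof needs this step (or a substitute, such as defining the extension directly by your Step 1 formula and proving consistency with $\op _A(\fka )$ on $\Sigma _1(\rr d)$ and uniqueness), and as written it is not supplied.
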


\par

For the proof we observe that if $\fka \in \Sigma _1'(\rr {2d})$
and $K_{\fka ,A}$ is given by \eqref{Eq:KernelPsDO}, then
straight-forward applications of the Fourier inversion formula
gives
\begin{equation}
\label{Eq:STFTKernelPsDO}
\begin{aligned}
(V_\Phi K_{\fka ,A} )(x,y,\xi ,\eta )
&=
(2\pi )^{-\frac d2}e^{i\scal {y-x}{A^*(\xi +\eta )-\eta}}
(V_\Psi \fka )(T_A(x,\xi ,\eta ,y)),
\\[1ex]
T_A(x,\xi ,\eta ,y)
&=
(x+A(y-x),A^*(\xi +\eta )-\eta ,\xi +\eta ,y-x),
\\[1ex]
\Psi
&=
K_{\Phi ,A},
\qquad
a\in \Sigma _1'(\rr {2d}),\ 
\Phi \in \Sigma _1(\rr {2d}).
\end{aligned}
\end{equation}
(See \cite[Lemma 3.12]{AbCaTo}.)
By Moyal's identity \eqref{Eq:Moyal},
it also follows that if in addition $\Psi$ is
chosen such that $\Phi$ in 
\eqref{Eq:STFTKernelPsDO} is given
by
\begin{equation}\label{Eq:TensorChoice}
\Phi =\phi \otimes \phi ,
\quad
\nm \phi{L^2}=1
\quad \text{and}\quad
0\le \phi \in \Sigma _1(\rr d),
\end{equation}
then
\begin{multline}
\label{Eq:STFTKernelAction}
\big (V_\phi (\op _A(\fka )f) \big )(x,\xi )
\\[1ex]
=
\iint _{\rr {2d}} \big ( V_\Phi K_{\fka ,A}\big )(x,y,\xi ,-\eta )
\cdot (V_\phi f)(y,\eta )\, dyd\eta .
\end{multline}

\par

\begin{proof}[Proof of Theorem
\ref{Thm:PseudoCont2}]
Let $\fka \in M^{\infty ,r_0}
_{(\omega _0)}(\rr {2d})$,
and
$$
\vartheta _j(x,\xi ) = \frac {\omega _j(x,\xi )}{v_0(x,\xi )},
\quad x,\xi \in \rr d, 
\quad
j=1,2.
$$
We claim that $\op _A(\fka )$
from $\Sigma _1(\rr d)$ to $\Sigma _1'(\rr d)$ extends uniquely to a continuous
operator from $M^\infty _{(\vartheta _1)}(\rr d)$ to $M^\infty _{(\vartheta _2)}(\rr d)$.

\par

In fact, since $v_0$ is even, we obtain
\begin{align*}
\frac {\vartheta _2(x,\xi )}{\vartheta _1(y,\eta )}
&=
\frac {\omega _2(x,\xi )v_0(y,\eta )}{\omega _1(y,\eta )v_0(x,\xi )}
\lesssim
\frac {\omega _2(x,\xi )}{\omega _1(y,\eta )}v_0(x-y,\xi -\eta )
\\[1ex]
&\lesssim
\omega _0(x+A(y-x),\eta +A^*(\xi -\eta ),\xi -\eta ,y-x), 
\quad x,\xi \in \rr d.
\end{align*}
Hence \eqref{Eq:WeightPseudoRel} holds with $\vartheta _j$
in place of $\omega _j$, at each occurrence, $j=1,2$. The
claim now follows from Theorem \ref{Thm:PseudoCont1}.

\par

Since $M(\omega _1,\mascB ) \hookrightarrow M^\infty _{(\vartheta _1)}(\rr d)$,
in view of Corollary \ref{Prop:BasicEmbModSp1}, it follows from the previous
claim that $\op _A(\fka )$ is uniquely defined as a continuous
operator from $M(\omega _1,\mascB )$ to $M^\infty _{(\vartheta _2)}(\rr d)$.
The result therefore follows if we prove \eqref{Eq:PseudoCont2}.

\par

Let $f\in M(\omega _1,\mascB )$, $\phi$, $\Phi$ and $\Psi$
be chosen such that
\eqref{Eq:STFTKernelPsDO} and \eqref{Eq:TensorChoice}
are fulfilled. Also let
\begin{align*}
F(x,\xi ) &= |V_\phi f(x,\xi )|\cdot \omega _1(x,\xi ),
\quad
G(x,\xi ) = |V_\phi (\op _A(\fka )f)(x,\xi )|\cdot \omega _2(x,\xi ),
\intertext{ for $x,\xi \in \rr d$ and}
H(y,\eta ) &= \sup _{x,\xi \in \rr d}
\left (|V_\Psi \fka (x,\xi ,\eta ,-y)|\cdot
\omega _0(x,\xi ,\eta ,-y)\right ), 
\quad x, \eta \in \rr d.
\end{align*}
Then \eqref{Eq:STFTKernelPsDO} and
\eqref{Eq:STFTKernelAction} gives
\begin{equation}\label{Eq:FGHEst}
\begin{aligned}
0\le G(X)
&\lesssim
\int _{\rr {2d}} H(X-Y)v_0(X-Y)^{-1}F(Y)\, dY
\\[1ex]
&= ((H/v_0)*F)(X),
\\[1ex]
X &= (x,\xi )\in \rr {2d},\ Y = (y,\eta )\in \rr {2d}.
\end{aligned}
\end{equation}

\par

If $p_0$, $p_1$ and $p_2$ satisfy \eqref{Eq:YoungCond}, then
a combination of Theorem \ref{Thm:EquivNorms2},
Corollary \ref{Cor:ConvWienerSp1} and
\eqref{Eq:FGHEst} gives
\begin{align*}
\nm {\op _A(\fka )f}{M(\omega _2,\mascB )}
&\asymp
\nm G{\sfW ^{p_0}(\mascB)}
\lesssim
\nm {(H/v_0)*F}{\sfW ^{p_0}(\mascB)}
\\[1ex]
&\lesssim
\nm {H/v_0}{\sfW ^{p_2}(v_0,\ell ^{r_0})}
\nm F{\sfW ^{p_1}(\mascB )}
\\[1ex]
&\lesssim
\nm {V_\Psi \fka}{\sfW ^{\infty ,p_2}(\omega _0,\ell ^{\infty ,r_0})}
\nm {V_\phi f}{\sfW ^{p_1}(\omega _1,\mascB )}
\\[1ex]
&\asymp
\nm {\fka}{M^{\infty ,r_0}_{(\omega _0)}}\nm f{M(\omega _1,\mascB )},
\end{align*}
which gives \eqref{Eq:PseudoCont2}, and thereby the result.
\end{proof}

\par

\begin{rem}
\label{Rem:OtherPseudoResults}
It seems that Theorem \ref{Thm:PseudoCont2}
is the most general result so far concerning
pseudo-differential operators with symbols in
weighted $M^{\infty ,r_0}$ spaces, when acting
on modulation spaces. On the other hand,
Theorem \ref{Thm:PseudoCont2} is not suitable,
or even applicable when dealing with
pseudo-differential operators with symbols in
weighted $M^{p,q}$ spaces, for \emph{general}
$p,q\in (0,\infty ]$. A classical result here is given in
\cite{GroHei2,Toft04}, which in extended form in \cite{Toft20}
state that if $\omega _j$ are the same as in
Theorem \ref{Thm:PseudoCont1}, $j=0,1,2$,
and $p,p_1,p_2,q,q_1,q_2\in [1,\infty ]$ satisfy
\begin{equation}
\label{Eq:GenLebCond1}
1-\frac 1{p}-\frac 1{q}
=
\frac 1{p_1}-\frac 1{p_2}
=
\frac 1{q_1}-\frac 1{q_2},
\quad
q\le p_2,q_2\le p,
\end{equation}
and $\fka \in M^{p,q}_{(\omega _0)}(\rr {2d})$,
then $\op _A(\fka )$ is continuous from
$M^{p_1,q_1}_{(\omega _1)}(\rr d)$ to
$M^{p_2,q_2}_{(\omega _2)}(\rr d)$. By combining
this result with the embeddings in Proposition
\ref{Prop:EmbModSp}, Cordero and Nicola
in \cite{CorNic2} were able
to improve the latter result by proving that the same
continuity properties hold after the condition
\eqref{Eq:GenLebCond1} is relaxed into
\begin{equation}
\label{Eq:GenLebCond2}
1-\frac 1{p}-\frac 1{q} \le \frac 1{p_1}-\frac 1{p_2},
\quad
1-\frac 1{p}-\frac 1{q}\le \frac 1{q_1}-\frac 1{q_2},
\quad
q\le p_2,q_2\le p.
\end{equation}
Furthermore, for trivial weights, they also proved that
\eqref{Eq:GenLebCond2} is sharp. That is
\eqref{Eq:GenLebCond2} is not only sufficient,
but also necessary in order for $\op _A(\fka )$
to be continuous from $M^{p_1,q_1}(\rr d)$ to
$M^{p_2,q_2}(\rr d)$, for every $\fka \in M^{p,q}(\rr {2d})$.

\par

The sufficiently part in \cite{CorNic2} were further
generalized in \cite{GuRaToUs} to the case of Orlicz
modulation spaces.

\par

There are also several results on Schatten-von
Neumann properties for
pseudo-differential operators with symbols in
modulation spaces, when acting on other
modulation spaces (see e.{\,}g.
\cite{GroHei1,Sjo,Toft04,Toft18,Toft22} and the references
therein).
\end{rem}

\par

\section{Continuity properties for
Toeplitz operators}
\label{sec7}

\par

In this section we make a review of some facts about
Toeplitz operators. Thereafter we deduce
continuity for Toeplitz operators with symbols
in suitable modulation spaces, when acting 
between other modulation spaces. As in
\cite{CorGro,Toft04,GroTof1,Toft10,ToBo08}, 
the main idea is to
express the Toeplitz operators as
pseudo-differential operators
with symbols being a convolution
between the Toeplitz symbol
and a convenient function.

\par

%

Let $\fka \in \Sigma _1 (\rr {2d})$. 
Then the Toeplitz 
operator $\tp _{\phi _1,\phi _2}(\fka)$,
with symbol $\fka$, and window functions 
$\phi _1,\phi _2\in
\Sigma _1 (\rr d)$, is the continuous
operator on $\Sigma _1(\rr d)$, defined by
the formula
\begin{equation}\label{Eq:ToepDef}
(\tp _{\phi _1,\phi _2}(\fka )f_1,f_2)
_{L^2(\rr d)}
= 
(\fka \cdot V_{\phi _1}f_1,
V_{\phi _2}f_2)_{L^2(\rr {2d})}
\end{equation}
when $f_1,f_2\in \Sigma _1 (\rr d)$.
The definition of $\tp _{\phi _1,\phi _2}(\fka )$
extends uniquely to a continuous operator from
$\Sigma _1'(\rr d)$ to $\Sigma _1(\rr d)$.
For convenience we write
$\tp _{\phi _1}(\fka )$ 
for
$\tp _{\phi _1,\phi _1}(\fka )$.

\par

We may interpret the operator in 
\eqref{Eq:ToepDef}
as a pseudo-differential operator, using
the fact that
\begin{equation}\label{Eq:ToeplWeyl}
\begin{aligned}
\operatorname{Tp}_{\phi _1,\phi _2}(\fka)
&=
\op ^w(\fka \ast u)
\quad \text{when}
\\[1ex]
u(X) &= (2\pi)^{-\frac d2}
W_{\phi _2,\phi _1}(-X),
\end{aligned}
\end{equation}
which follows by straight-forward application of
Fourier's inversion formula. By using
\eqref{Eq:ToepDef} and \eqref{Eq:ToeplWeyl},
the map
\begin{equation}
\label{Eq:ToeplMapSymbWind}
(\fka ,\phi _1,\phi _2)\mapsto \tp
_{\phi _1,\phi _2}(\fka )
\end{equation}
from $\Sigma _1(\rr {2d})\times
\Sigma _1(\rr d)\times \Sigma _1(\rr d)$
to $\maclL (\Sigma _1(\rr d),\Sigma _1(\rr d))$
extends in several ways, e.{\,}g. from
$\Sigma _1'(\rr {2d})\times
\Sigma _1(\rr d)\times \Sigma _1(\rr d)$
to $\maclL (\Sigma _1(\rr d),\Sigma _1(\rr d))$.
(cf. e.{\,}g. 
\cite{CorGro,Fol,GroTof1,Toft04,Toft20,ToBo08}).

\par

\begin{rem}
As in \cite{GroTof1,AbCoTo}, our
most part of our analysis
of Toeplitz operators is based on the
pseudo-differential operator representation 
given by \eqref{Eq:ToeplWeyl}.
Furthermore, any extension of the definition
of Toeplitz operators to cases which are not 
covered by straight-forward extensions
of the definition in \eqref{Eq:ToepDef},
is based on this representation.
Roughly speaking, in main part of our
analysis of Toeplitz operators, the
reader may consider \eqref{Eq:ToeplWeyl}
as the definition of Toeplitz operators.

\par

We observe that
this leads to situations in
which certain mapping properties for the
pseudo-differential operator
representation make sense, whereas similar 
interpretations are difficult or impossible
to make in the framework of \eqref{Eq:ToepDef}.
%
%
%
\end{rem}

\par

We shall combine \eqref{Eq:ToeplWeyl}
with Theorem \ref{Thm:MainConvMod}
and Theorem \ref{Thm:PseudoCont2} 
in various contexts to extend the 
definition of Toeplitz operators,
and to deduce
continuity properties for such
operators. Especially
we perform such extensions to
allow symbols to belong to suitable
modulation
spaces and the window functions
to belong to admissible
weighted $M^{r_0}$ spaces. Here
$r_0\in (0,1]$ is the same as in Theorem
\ref{Thm:MainConvMod} and
Theorem \ref{Thm:PseudoCont2}.

\par

The following lemma explains some
properties of the Wigner distribution
in the convolution in
\eqref{Eq:ToeplWeyl}.

\par

\begin{lemma}\label{Lemma:PropWigDistr}
Let $A\in \GL (d,\mathbf R)$, $p\in (0,\infty ]$,
$\phi _j,\psi _j
\in
\Sigma _1'(\rr d)$, $j=1,2$,
$\Phi =W_{\phi _2,\phi _1}^A$
and
$\Psi =W_{\psi _2,\psi _1}^A$.
Also let
$\vartheta _1,\vartheta _2\in
\mascP _E(\rr {2d})$
and
\begin{equation}
\label{Eq:STFTWignerWeight}
\omega (x,\xi ,\eta ,y)
=
\vartheta _1
(x+(I-A)y,\xi -A^*\eta )
\vartheta _2
(x-Ay,\xi +(I-A^*)\eta )
\end{equation}
Then the following is true:
\begin{enumerate}
\item it holds
\begin{multline*}
V_\Psi \Phi (x,\xi ,\eta ,y)
\\[1ex]
= e^{-i\scal y\xi}
\overline{V_{\psi _1}\phi _1
(x+(I-A)y,\xi -A^*\eta )}
V_{\psi _2}\phi _2
(x-Ay,\xi +(I-A^*)\eta ),
\end{multline*}
with equality in $\Sigma _1'(\rr {4d})$;

\vrum

\item if, more restrictive,
$\phi _j\in M^p_{(\vartheta _j)}(\rr d)$,
$j=1,2$, then $\Phi
\in M^p_{(\omega )}(\rr {2d})$, and
$$
\nm \Phi{M^p_{(\omega )}}
\asymp
\nm {\phi _1}{M^p_{(\vartheta _1)}}
\nm {\phi _2}{M^p_{(\vartheta _2)}}.
$$
\end{enumerate}
\end{lemma}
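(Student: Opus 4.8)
The plan is to reduce part~(1) to a direct computation for Gelfand--Shilov windows and functions, and to obtain part~(2) as a formal consequence of the factorization in~(1). As motivation, note that by \eqref{Eq:RankOneSymb} and \eqref{Eq:WignerAdef} the $A$-Wigner distribution $W^A_{f_1,f_2}$ is the $A$-symbol of the rank-one operator $g\mapsto (2\pi)^{-\frac d2}(g,f_2)f_1$, so its kernel \eqref{Eq:KernelPsDO} is the tensor product $(2\pi)^{-\frac d2}f_1\otimes\overline{f_2}$; this is the structural reason the four-variable transform $V_\Psi\Phi$ should split into a product of two ordinary transforms. Since $\phi_2\otimes\overline{\phi_1}\in\Sigma_1(\rr{2d})$ when $\phi_1,\phi_2\in\Sigma_1(\rr d)$ and $\fka\mapsto K_{\fka,A}$ is a homeomorphism of $\Sigma_1(\rr{2d})$ (Lemma~\ref{Lemma:SymbKernelHom}), we have $\Phi,\Psi\in\Sigma_1(\rr{2d})$ in the smooth case. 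Rather than transfer through \eqref{Eq:KernelPsDO}, I would compute directly: insert the Fourier-integral definition of both Wigner distributions into the integral form of $V_\Psi\Phi$ (see \eqref{Eq:STFTDef}) and interchange integrations, which is legitimate on $\Sigma_1$.

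The decisive steps are as follows. Writing $X=(x,\xi)$, $\Xi=(\eta,y)$ and the running base variable as $(u,v)$, the integral over the frequency component $v$ is a Fourier integral with phase linear in $v$; it produces a $\delta$-factor forcing the two inner Wigner integration variables $t,s$ to satisfy $s=t+y$. After eliminating $s$ one is left with a single double integral in $(t,u)$ in which the arguments of $\phi_2,\psi_2$ and of $\phi_1,\psi_1$ appear. I would then change variables, taking $p$ to be the argument of $\phi_2$ and $q$ the argument of $\phi_1$; the map $(t,u)\mapsto(p,q)$ has unit Jacobian, and the accumulated phase regroups exactly into $e^{-i\scal y\xi}$ times a $p$-dependent and a $q$-dependent exponential. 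The integrand then splits, and the two resulting integrals in $p$ and in $q$ are recognized, via \eqref{Eq:STFTDef}, as $V_{\psi_2}\phi_2(x-Ay,\xi+(I-A^*)\eta)$ and $\overline{V_{\psi_1}\phi_1(x+(I-A)y,\xi-A^*\eta)}$, respectively. This gives the identity for $\phi_j,\psi_j\in\Sigma_1$; the extension to $\phi_j,\psi_j\in\Sigma_1'(\rr d)$, with equality in $\Sigma_1'(\rr{4d})$, follows by density of $\Sigma_1$ in $\Sigma_1'$ together with the separate continuity of the Wigner distribution and of the short-time Fourier transform (Proposition~\ref{Prop:ExtSTFTSchwartz}).

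For part~(2) I would choose $\Psi=W^A_{\psi_2,\psi_1}$ with $\psi_1,\psi_2\in\Sigma_1(\rr d)\setminus 0$; then $\Psi\in\Sigma_1(\rr{2d})\setminus 0$ is an admissible window for $M^p_{(\omega )}(\rr{2d})$ and produces an equivalent quasi-norm (Proposition~\ref{Prop:InvModAndBanach}). Taking moduli in~(1) removes the unimodular phase,
$$
|V_\Psi\Phi(x,\xi,\eta,y)|
=\big|V_{\psi_1}\phi_1(x+(I-A)y,\xi-A^*\eta)\big|\cdot\big|V_{\psi_2}\phi_2(x-Ay,\xi+(I-A^*)\eta)\big|,
$$
and multiplying by $\omega$ in the factorized form \eqref{Eq:STFTWignerWeight} pairs each $\vartheta_j$ with the matching transform. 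The linear substitution $(x,\xi,\eta,y)\mapsto(P,Q)$, with $P=(x+(I-A)y,\xi-A^*\eta)$ and $Q=(x-Ay,\xi+(I-A^*)\eta)$, is invertible with unit Jacobian (both the $(x,y)$- and the $(\xi,\eta)$-blocks reduce by row operations to triangular matrices of determinant $\pm1$), so the $L^p(\rr{4d})$ quasi-norm of $|V_\Psi\Phi|\cdot\omega$ factors as the product of the $L^p(\rr{2d})$ quasi-norms of $|V_{\psi_1}\phi_1|\vartheta_1$ and $|V_{\psi_2}\phi_2|\vartheta_2$, the case $p=\infty$ being identical with suprema. This is the asserted equivalence, and the same substitution shows $\omega\in\mascP_E(\rr{4d})$ because $\vartheta_1,\vartheta_2$ are moderate.

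I expect the main obstacle to be the bookkeeping in part~(1): correctly carrying all four oscillatory factors through the elimination of $v$ (hence of $s$) and the change of variables $(t,u)\mapsto(p,q)$, and verifying that the phases regroup into precisely the $p$- and $q$-separated exponentials together with the single prefactor $e^{-i\scal y\xi}$. By comparison, the distributional extension in~(1) and the Jacobian and moderateness computations in~(2) are routine.
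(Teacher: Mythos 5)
Your proposal is correct: I checked the decisive steps — the $v$-integration producing $(2\pi)^d\delta (s-t-y)$, the unit Jacobian of $(t,u)\mapsto (p,q)$ and of $(x,\xi ,\eta ,y)\mapsto (P,Q)$, the regrouping of the phase into $e^{-i\scal y\xi}$ times $p$- and $q$-separated exponentials giving exactly $V_{\psi _2}\phi _2(x-Ay,\xi +(I-A^*)\eta )$ and $\overline{V_{\psi _1}\phi _1(x+(I-A)y,\xi -A^*\eta )}$, and the window-independence plus Fubini factorization in part (2) — and they all go through, the only point deserving slightly more care being the distributional extension in (1), where the right-hand side should be read as a tensor product composed with an invertible linear map and multiplied by a quadratic phase so that it is well defined on $\Sigma _1'(\rr {4d})$. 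The paper gives no proof of this lemma, deferring to \cite{Gro2} and to Proposition 2.4 and Lemma 2.6 in \cite{Toft20}, and your computation is essentially the argument of those references (the Weyl case of (1) is Gr{\"o}chenig's Lemma 14.5.1), carried out for general $A$.
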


\par

The result can essentially be found
in e.{\,}g. \cite{Gro2} (see also e.{\,}g. 
Proposition 2.4 and Lemma 2.6
in \cite{Toft20}).
The details are left for the reader.

%
%

\par

The function spaces to which the Toeplitz and
Weyl symbol of the
Toeplitz operator
$\tp _{\phi _1,\phi _2}(\fka)$ belong to,
are connected in the following way.


\par

\begin{thm}
\label{Thm:ToeplPseudoSymbClassMod}
Let $q,r_0,r\in (0,\infty ]$ be such that
\begin{equation}
\label{Eq:LebExpToeplToPseudo}
1\le \frac 1r\le \frac 1{r_0}
=
\frac 1{q}+\frac 1{r},
\quad \text{or}\quad
\frac 12 \le \frac 1r= \frac 1{r_0}
=1-\frac 1{2q},
\end{equation}
and let $\omega _0,\omega
\in \mascP _E(\rr {4d})$
and $\vartheta _1,\vartheta _2
\in \mascP _E(\rr {2d})$,
be such that
\begin{multline}
\label{Eq:WeylToeplWeightCondClass}
\omega_0(x_1-x_2, \xi_1-\xi_2,\eta ,y)
\\[1ex]
\lesssim 
\omega (x_1,\xi_1,\eta ,y)
\cdot \vartheta _1 (x_2 -{\textstyle{\frac y2}}
,\xi_2 + {\textstyle{\frac \eta 2}})
\cdot \vartheta _2 (x_2+{\textstyle{\frac y2}} ,
\xi_2 - {\textstyle{\frac \eta 2}}),
\end{multline}
where 
$x_j, y, \xi_j, \eta \in \rr d$, $j=1,2$.
Then the map \eqref{Eq:ToeplMapSymbWind}
from $M^{\infty ,q}_{(\omega )}(\rr {2d})
\times \Sigma _1(\rr d)
\times \Sigma _1(\rr d)$ to
$\op ^w(\Sigma _1'(\rr {2d}))$
is uniquely extendable to a
continuous map
from $M^{\infty ,q}_{(\omega )}(\rr {2d})
\times M^{r}_{(\vartheta _1)}(\rr d)
\times M^{r}_{(\vartheta _2)}(\rr d)$
to $\op ^w(M^{\infty ,r_0}_{(\omega _0)}
(\rr {2d}))$.
\end{thm}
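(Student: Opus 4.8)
The plan is to reduce the statement to a convolution estimate for the Weyl symbol and then treat the two exponent regimes in \eqref{Eq:LebExpToeplToPseudo} separately. By \eqref{Eq:ToeplWeyl} we have $\tp _{\phi _1,\phi _2}(\fka )=\op ^w(\fka *u)$ with $u(X)=(2\pi )^{-\frac d2}W_{\phi _2,\phi _1}(-X)$, and since $\fka \mapsto \op ^w(\fka )$ is injective (Remark \ref{Rem:BijKernelsOps}), it suffices to prove that the trilinear map $(\fka ,\phi _1,\phi _2)\mapsto \fka *u$ is continuous from $M^{\infty ,q}_{(\omega )}(\rr {2d})\times M^r_{(\vartheta _1)}(\rr d)\times M^r_{(\vartheta _2)}(\rr d)$ into $M^{\infty ,r_0}_{(\omega _0)}(\rr {2d})$, that is,
$$
\nm {\fka *u}{M^{\infty ,r_0}_{(\omega _0)}}
\lesssim
\nm \fka{M^{\infty ,q}_{(\omega )}}
\nm {\phi _1}{M^r_{(\vartheta _1)}}
\nm {\phi _2}{M^r_{(\vartheta _2)}}.
$$
I would first establish this for $\phi _1,\phi _2\in \Sigma _1(\rr d)$, where $u\in \Sigma _1(\rr {2d})$ and the map \eqref{Eq:ToeplMapSymbWind} is classical, and then extend to general windows by density (and, at the non-separable endpoints, by the a priori estimate together with a weak-$*$ argument).

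Next I would locate $u$ and match the weights. Applying Lemma \ref{Lemma:PropWigDistr}(2) with $A=\tfrac 12I$ gives $W_{\phi _2,\phi _1}\in M^r_{(\omega ')}$ with
$$
\omega '(x,\xi ,\eta ,y)=\vartheta _1(x+\tfrac y2,\xi -\tfrac \eta 2)\,\vartheta _2(x-\tfrac y2,\xi +\tfrac \eta 2)
$$
and $\nm {W_{\phi _2,\phi _1}}{M^r_{(\omega ')}}\asymp \nm {\phi _1}{M^r_{(\vartheta _1)}}\nm {\phi _2}{M^r_{(\vartheta _2)}}$. The reflection $X\mapsto -X$ then places $u$ in $M^r_{(\omega '')}$ with $\omega ''(Z,W)=\omega '(-Z,-W)$ and the same norm. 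The decisive bookkeeping step is that, after the substitution $x_2\mapsto -x_2$, $\xi _2\mapsto -\xi _2$, the hypothesis \eqref{Eq:WeylToeplWeightCondClass} is \emph{exactly} the convolution weight inequality \eqref{Eq:ModConvWeightsEst}, read in dimension $2d$, for the triple $(\omega _0;\omega ,\omega '')$, where the phase space point $X=(x,\xi )$ is the convolved variable and $(\eta ,y)$ is the inert parameter. Thus the weight hypothesis is precisely what lets the weights pass through the convolution.

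In the first regime, $1\le \frac 1r\le \frac 1{r_0}=\frac 1q+\frac 1r$ (so $r\le 1$), I would conclude directly from the convolution estimate \eqref{Eq:ConvEstClassicModSp}--\eqref{Eq:HolderYoungCond}, read in dimension $2d$, applied with $(p_1,q_1)=(\infty ,q)$, $(p_2,q_2)=(r,r)$ and $(p_0,q_0)=(\infty ,r_0)$. Indeed \eqref{Eq:HolderYoungCond} reduces to $\frac 1{r_0}=\frac 1q+\frac 1r$ in the second index, while in the first index $\frac 1{p_0}=\frac 1r-\max(1,\tfrac 1r)=0$ precisely because $\tfrac 1r\ge 1$; combined with the weight matching above this yields the claimed bound.

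The hard part is the second regime, $\frac 12\le \frac 1r=\frac 1{r_0}=1-\frac 1{2q}$ (so $1\le r\le 2$), which is genuinely out of reach of the generic convolution theorem: producing an $L^\infty$ norm in the position variable through Young's inequality forces the convolution partner to have first Lebesgue index $\le 1$, whereas $u\in M^{r,r}$ with $r>1$ cannot supply this. Here I would exploit that $u$ is a \emph{Wigner distribution}, not a generic element of $M^r$. Using the identity of Lemma \ref{Lemma:AssistConv}(2) in dimension $2d$ with a factorized window $\Psi =\Psi _1*\Psi _2$,
$$
V_\Psi (\fka *u)(X,\Xi )=(2\pi )^{d}\big (V_{\Psi _1}\fka (\cdot ,\Xi )*V_{\Psi _2}u(\cdot ,\Xi )\big )(X),
$$
and choosing the second window $\Psi _2$ itself to be a Wigner distribution $W_{\psi _2,\psi _1}$, the factorization of Lemma \ref{Lemma:PropWigDistr}(1) applies and writes $V_{\Psi _2}u$ as a product $g_1(L_1(X,\Xi ))\,g_2(L_2(X,\Xi ))$ of two window transforms $g_j=|V_{\psi _j}\phi _j|\vartheta _j\in L^r$ along the complementary linear coordinates $L_1,L_2$. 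After absorbing the weights as in the previous step and passing to coordinates $(u_1,v_1,u_2,v_2)$ in which $X=\tfrac 12(u_1+u_2,v_1+v_2)$, the estimate becomes a bilinear mixed norm bound for $g_1\otimes g_2$, which I would close by Hölder in $\Xi$ and Young's inequality for the cross correlation $\|g_1\star g_2\|_{L^{b}}\le \|g_1\|_{L^r}\|g_2\|_{L^r}$, valid since $r\le 2$. The factor $\tfrac 12$ in the constraint $\frac 1r=1-\frac 1{2q}$ originates precisely from this averaging (the $\tfrac 12$ in $X=\tfrac 12(u_1+u_2,\ldots )$) and the $L^2$ pairing forced by Cauchy--Schwarz. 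Carrying out this bilinear estimate with the weights, and verifying that the resulting symbol lands in $M^{\infty ,r_0}_{(\omega _0)}$, is the main technical obstacle; the routine density extension then completes the proof.
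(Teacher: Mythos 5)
Your reduction and your treatment of the first regime are essentially the paper's proof. The paper likewise writes $\tp _{\phi _1,\phi _2}(\fka )=\op ^w(\fka *u)$, uses Lemma \ref{Lemma:PropWigDistr} to place $u$ in $M^{r}_{(\omega _4)}$ with $\omega _4(X)=\omega _3(-X)$ (your reflected weight; the check that \eqref{Eq:WeylToeplWeightCondClass} then becomes \eqref{Eq:ModConvWeightsEst} read in dimension $2d$ is the same bookkeeping), applies the convolution estimate of Remark \ref{Rem:ConvClassicMod} with exactly your exponents $(p_1,q_1)=(\infty ,q)$, $(p_2,q_2)=(r,r)$, $(p_0,q_0)=(\infty ,r_0)$, and concludes with the same density argument, phrased via the operator space $\maclR =\op ^w(M^{\infty ,r_0}_{(\omega _0)}(\rr {2d}))$ with quasi-norm \eqref{Eq:OpSpecNormClass}. (Your weak-$*$ caveat is unnecessary: both regimes force $r<\infty$, so density always suffices.)

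The divergence is in the second regime, where the paper proves nothing itself: it observes $q,r,r_0\ge 1$ and invokes \cite[Lemma 5.5]{AbCoTo}. You instead attempt a self-contained proof, and your ingredients --- the factorization of $V_{\Psi _2}u$ into a product of two STFTs via Lemma \ref{Lemma:PropWigDistr}(1), reduction to a correlation $g_1\star g_2$, then H{\"o}lder and Young --- are the right ones, but the step you yourself flag as ``the main technical obstacle'' does not close as you set it up. If you aim H{\"o}lder at $L^{r_0}$ in the second STFT variable, you need $g_1\star g_2\in L^s$ with $\frac 1s=\frac 1{r_0}-\frac 1q=1-\frac 3{2q}$, while Young with two $L^r$ factors only yields $\frac 1s=\frac 2r-1=1-\frac 1q$; these agree only when $q=\infty$. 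The repair is to not aim at $L^{r_0}$ at all: from the pointwise bound $\sup _X |V_\Psi (\fka *u)(X,\Xi )|\, \omega _0(X,\Xi )\lesssim G_\omega (\Xi )\, (g_1\star g_2)(J\Xi )$, take H{\"o}lder to $L^1$, i.e. $1=\frac 1q+\frac 1{q'}$, and use Young $L^r*L^r\to L^{q'}$, which is admissible precisely when $\frac 2r=1+\frac 1{q'}=2-\frac 1q$, i.e. exactly the constraint $\frac 1r=1-\frac 1{2q}$. This places $\fka *u$ in $M^{\infty ,1}_{(\omega _0)}(\rr {2d})$, and the stated conclusion follows from the embedding $M^{\infty ,1}_{(\omega _0)}\hookrightarrow M^{\infty ,r_0}_{(\omega _0)}$ (Proposition \ref{Prop:InvModAndBanach}, using that $r_0\ge 1$ in this regime). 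With that one correction your regime-two argument is complete --- indeed more self-contained than the paper's --- but as written it is a gap; note also that the factor $\frac 12$ in $1-\frac 1{2q}$ comes from the two $L^r$ factors entering Young's inequality, not from the Jacobian averaging or a Cauchy--Schwarz pairing as you suggest.
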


\par

\begin{proof}
First suppose that the second condition
in \eqref{Eq:LebExpToeplToPseudo} holds.
Then $q,r,r_0\ge 1$, and the result
follows from \cite[Lemma 5.5]{AbCoTo}.

\par

Next suppose that the first condition in
\eqref{Eq:LebExpToeplToPseudo} holds,
and let $\phi _j\in
M^{r}_{(\vartheta _j)}(\rr d)$, $j=1,2$.
By Lemma \ref{Lemma:PropWigDistr}, 
$W_{\phi _2,\phi _1} \in
M ^{r} _{(\omega _3)} (\rr {2d})$,
or equivalently,
$ u \in M ^{r} _{(\omega _4)} (\rr {2d})$,
when
\begin{align} \label{eq:omega_3}
\omega _3(x,\xi ,\eta ,y)
=
\vartheta _1
(x+{\textstyle{\frac y2}},
\xi -{\textstyle{\frac \eta 2}})
\cdot
\vartheta _2
(x-{\textstyle{\frac y2}},
\xi +{\textstyle{\frac \eta 2}}),
\quad 
x,y, \xi, \eta \in \rr d, 
\end{align}
\begin{equation}
\label{Eq:omega_4}
 u(X)\equiv (2\pi)^{-\frac d2} \cdot 
W_{\phi _2,\phi _1}(-X)
\quad \text { and } \quad
\omega _4(X)\equiv \omega _3(-X), 
\quad
X \in \rr {2d}.
\end{equation}
Due to Remark \ref{Rem:ConvClassicMod}
it follows that $\fkb 
=\fka *u\in M^{\infty ,r_0}_{(\omega _0)}
(\rr {2d})$.

\par

Now suppose that $\phi _1,\phi _2
\in \Sigma _1(\rr d)$, and let
$$
\maclR
=
\sets {\op ^w(\fkb)}{\fkb \in
M^{\infty ,r_0}_{(\omega _0)}(\rr {2d})},
$$
with quasi-norm
\begin{equation}
\label{Eq:OpSpecNormClass}
\nm T{\maclR} = \nm {\fkb}
{M^{\infty ,r_0}_{(\omega _0)}},
\qquad
T=\op ^w(\fkb ).
\end{equation}
Then $\maclR$ is contained in the set of 
all linear and continuous operators from 
$\Sigma _1(\rr d)$
to $\Sigma _1'(\rr d)$.
By identifying linear and continuous 
operators with their distribution
kernels, using Schwartz kernel theorem
(see e.{\,}g. Remark 
\ref{Rem:BijKernelsOps}),
it follows from \eqref{Eq:OpSpecNormClass}
that $\maclR$ is  continuously
embedded in $\Sigma _1'(\rr {2d})$.

\par

Since $\phi _1,\phi _2\in \Sigma _1(\rr d)$ we have
$\tp _{\phi _1,\phi _2}(\fka )\in \maclR$
when $\fka \in M^{\infty ,q}_{(\omega )}(\rr {2d})$,
and
\begin{equation}
\label{Eq:ToeplSymbWindEstClass}
\nm {\tp _{\phi _1,\phi _2}(\fka )}{\maclR}
\lesssim
\nm {\fka}{M^{\infty ,q}_{(\omega )}}
\nm {\phi _1}{M^{r}(\vartheta _1)}
\nm {\phi _2}{M^{r}(\vartheta _2)}.
\end{equation}
In fact, by Remark \ref{Rem:ConvClassicMod} and
Lemma \ref{Lemma:PropWigDistr} we obtain
\begin{align}
\nm {\tp _{\phi _1,\phi _2}(\fka )}{\maclR}
&\asymp
\nm {\op ^w(\fka *\check W_{\phi _2,\phi _1})}{\maclR}
\notag
\\[1ex]
&\lesssim
\nm {\fka}{M^{\infty ,q}_{(\omega )}}
\nm {W_{\phi _2,\phi _1}}{M^{r}_{(\omega _4)}}
\notag
\\[1ex]
&=
\nm {\fka}{M^{\infty ,q}_{(\omega )}}
\nm {\phi _1}{M^{r}_{(\vartheta _1)}}
\nm {\phi _2}{M^{r}_{(\vartheta _2)}},
\label{Eq:ToeplSymbWindEstClass2}
\end{align}
and \eqref{Eq:ToeplSymbWindEstClass} 
follows. Here, $\check f$ is defined by 
$\check f(x)=f(- x)$ when
$x \in \rr d$ and $f$ is 
a distribution on $\rr d$.

\par

The extension of the map \eqref{Eq:ToeplMapSymbWind}
to a continuous map from
$M^{\infty ,q}_{(\omega )}(\rr {2d})
\times M^{r}_{(\vartheta _1)}(\rr d)
\times M^{r}_{(\vartheta _2)}(\rr d)$ to $\maclR$
now follows from Remark \ref{Rem:ConvClassicMod},
Lemma \ref{Lemma:PropWigDistr} and
\eqref{Eq:ToeplSymbWindEstClass2}.

\par

Since $r<\infty$ it follows that
$\Sigma _1(\rr d)$ is dense in
$M^{r}_{(\vartheta _j)}(\rr d)$, $j=1,2$.
Hence, a combination of \eqref{Eq:OpSpecNormClass} and
\eqref{Eq:ToeplSymbWindEstClass} shows that the map
\eqref{Eq:ToeplMapSymbWind} from
$M(\omega ,\mascB)\times \Sigma _1(\rr d)
\times \Sigma _1(\rr d)$ to $\maclR$, is uniquely
extendable to a continuous map from
$M(\omega ,\mascB)\times M^{r_0}_{(\vartheta _1)}(\rr d)
\times M^{r_0}_{(\vartheta _2)}(\rr d)$ to $\maclR$.
This gives the result.
\end{proof}

\par

\begin{thm}
\label{Thm:ToeplPseudoSymbMod}
Let 
$
\mascB
$
be an invariant QBF space on $\rr {4d}$
with respect to 
$r_0 \in (0,1]$
and 
$v_0 \in \mascP_E(\rr {4d} )$. 
Also let 
$\omega _0,\omega \in \mascP _E (\rr {4d})$, 
$\vartheta _1, \vartheta _2 \in \mascP_E(\rr {2d})$
be such that
\begin{multline}
\label{Eq:WeylToeplWeightCond}
\omega_0(x_1-x_2, \xi_1-\xi_2,\eta ,y)
v_0(x_2,\xi_2,0,0)
\\[1ex]
\lesssim 
\omega (x_1,\xi_1,\eta ,y)
\cdot \vartheta _1 (x_2 -{\textstyle{\frac y2}}
,\xi_2 + {\textstyle{\frac \eta 2}})
\cdot \vartheta _2 (x_2+{\textstyle{\frac y2}} ,
\xi_2 - {\textstyle{\frac \eta 2}}).
\end{multline}
Then the map \eqref{Eq:ToeplMapSymbWind}
from $M(\omega ,\mascB )\times \Sigma _1(\rr d)
\times \Sigma _1(\rr d)$ to $\op ^w(\Sigma _1'(\rr {2d}))$
is uniquely extendable to a continuous map
from $M(\omega ,\mascB )
\times M^{r_0}_{(\vartheta _1)}(\rr d)
\times M^{r_0}_{(\vartheta _2)}(\rr d)$
to $\op ^w(M(\omega _0,\mascB))$.
\end{thm}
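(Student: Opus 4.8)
The plan is to run the argument of Theorem~\ref{Thm:ToeplPseudoSymbClassMod} with only one substitution: replacing the classical convolution input (Remark~\ref{Rem:ConvClassicMod}) by the general modulation-space convolution estimate of Theorem~\ref{Thm:MainConvMod}. As there, I would base everything on the Weyl representation \eqref{Eq:ToeplWeyl}, writing $\tp_{\phi_1,\phi_2}(\fka)=\op^w(\fka*u)$ with $u(X)=(2\pi)^{-d/2}W_{\phi_2,\phi_1}(-X)$, and equip $\maclR=\op^w(M(\omega_0,\mascB))$ with the norm $\nm{\op^w(\fkb)}{\maclR}=\nm{\fkb}{M(\omega_0,\mascB)}$; by the kernel theorem (Remark~\ref{Rem:BijKernelsOps}) this $\maclR$ embeds continuously into $\Sigma_1'(\rr{2d})$. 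Since the symbols live on the phase space $\rr{2d}$, every application of the results of Sections~\ref{sec3}--\ref{sec5} is made with $d$ replaced by $2d$ and $\mascB$ a normal QBF space on $\rr{4d}$.

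I would first fix $\phi_1,\phi_2\in\Sigma_1(\rr d)$ and estimate $u$. By Lemma~\ref{Lemma:PropWigDistr}(2) with $A=\tfrac12 I$ and $p=r_0$, one has $W_{\phi_2,\phi_1}\in M^{r_0}_{(\omega_3)}(\rr{2d})$ with $\omega_3$ as in \eqref{eq:omega_3} and $\nm{W_{\phi_2,\phi_1}}{M^{r_0}_{(\omega_3)}}\asymp\nm{\phi_1}{M^{r_0}_{(\vartheta_1)}}\nm{\phi_2}{M^{r_0}_{(\vartheta_2)}}$; equivalently $u\in M^{r_0}_{(\omega_4)}(\rr{2d})$ with $\omega_4=\check\omega_3$ as in \eqref{Eq:omega_4}. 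The crucial observation is that, because $L^{r_0,r_0}=L^{r_0,r_0}_*=L^{r_0}$, the norms defining $M^{r_0,r_0}_{(\omega_4)}$ and $W^{r_0,r_0}_{(\omega_4)}$ coincide, so $M^{r_0}_{(\omega_4)}=W^{r_0,r_0}_{(\omega_4)}$; since Wiener-amalgam modulation spaces increase with the second index (Proposition~\ref{Prop:InvModAndBanach}), $W^{r_0,r_0}_{(\omega_4)}\hookrightarrow W^{r_0,\infty}_{(\omega_4)}$. Hence $u\in W^{r_0,\infty}_{(\omega_4)}(\rr{2d})$ with $\nm{u}{W^{r_0,\infty}_{(\omega_4)}}\lesssim\nm{\phi_1}{M^{r_0}_{(\vartheta_1)}}\nm{\phi_2}{M^{r_0}_{(\vartheta_2)}}$.

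Next I would apply Theorem~\ref{Thm:MainConvMod} (with $d$ replaced by $2d$) to the convolution $\fkb=\fka*u$, taking $\omega_1=\omega$ and $\omega_2=\omega_4/v$, so that $u\in W^{r_0,\infty}_{(\omega_2 v)}$. The point is that \eqref{Eq:WeylToeplWeightCond} is exactly the convolution weight condition \eqref{Eq:ModConvWeightsEst} in this situation: writing the phase-space position of $u$ as the reflected point $(-x_2,-\xi_2)$ (forced by the reflection in the definition of $u$), one has $\omega_4(-x_2,-\xi_2,\eta,y)=\omega_3(x_2,\xi_2,-\eta,-y)=\vartheta_1(x_2-\tfrac y2,\xi_2+\tfrac\eta2)\,\vartheta_2(x_2+\tfrac y2,\xi_2-\tfrac\eta2)$ and $v(-x_2,-\xi_2,0,0)=v_0(x_2,\xi_2,0,0)$ by evenness of $v_0$, so that the required inequality $\omega_0(X_1+X_2,\Xi)\,v(X_2,\Xi)\lesssim\omega(X_1,\Xi)\,\omega_4(X_2,\Xi)$ becomes precisely \eqref{Eq:WeylToeplWeightCond}. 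Consequently $\fka*u\in M(\omega_0,\mascB)$ with $\nm{\fka*u}{M(\omega_0,\mascB)}\lesssim\nm{\fka}{M(\omega,\mascB)}\,\nm{u}{W^{r_0,\infty}_{(\omega_4)}}$, and combining with the previous paragraph yields, for all $\phi_1,\phi_2\in\Sigma_1(\rr d)$, the trilinear bound $\nm{\tp_{\phi_1,\phi_2}(\fka)}{\maclR}\lesssim\nm{\fka}{M(\omega,\mascB)}\,\nm{\phi_1}{M^{r_0}_{(\vartheta_1)}}\,\nm{\phi_2}{M^{r_0}_{(\vartheta_2)}}$.

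Finally, since $r_0<\infty$, $\Sigma_1(\rr d)$ is dense in $M^{r_0}_{(\vartheta_j)}(\rr d)$ (Proposition~\ref{Prop:EmbModSp}), so this continuous trilinear estimate extends the map \eqref{Eq:ToeplMapSymbWind} uniquely to a continuous map $M(\omega,\mascB)\times M^{r_0}_{(\vartheta_1)}(\rr d)\times M^{r_0}_{(\vartheta_2)}(\rr d)\to\maclR=\op^w(M(\omega_0,\mascB))$, as claimed. I expect the only genuine obstacle to be the weight bookkeeping above---matching \eqref{Eq:WeylToeplWeightCond} to the reflection-adjusted convolution condition of Theorem~\ref{Thm:MainConvMod}---while the functional-analytic core, namely the identification $M^{r_0}=W^{r_0,r_0}$ together with $W^{r_0,r_0}\hookrightarrow W^{r_0,\infty}$ and the convolution estimate itself, is routine once the set-up is in place.
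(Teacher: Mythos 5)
Your proposal is correct and follows essentially the same route as the paper's proof: Weyl representation \eqref{Eq:ToeplWeyl}, Lemma \ref{Lemma:PropWigDistr} to place $u$ in $M^{r_0}_{(\omega_4)}\subseteq W^{r_0,\infty}_{(\omega_4)}$, Theorem \ref{Thm:MainConvMod} with $\omega_1=\omega$ and $\omega_2=\omega_4/v_0(\cdot,\cdot,0,0)$ (your reflection bookkeeping matching \eqref{Eq:WeylToeplWeightCond} to \eqref{Eq:ModConvWeightsEst} is exactly what the paper leaves implicit), the operator space $\maclR$ with transferred quasi-norm, and density of $\Sigma_1$ in $M^{r_0}_{(\vartheta_j)}$ for the unique extension. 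The only cosmetic difference is that you justify $M^{r_0}\hookrightarrow W^{r_0,\infty}$ via $M^{r_0}=W^{r_0,r_0}$ and monotonicity in the second index, whereas the paper cites Proposition \ref{Prop:EmbModSp}; both are valid.
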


\par

\begin{proof}
Due to Lemma \ref{Lemma:PropWigDistr} 
we have 
$W_{\phi _2,\phi _1} \in
M ^{r_0} _{(\omega _3)} (\rr {2d})$,
where $u, \omega _3,$ and $\omega _4$ are given in 
\eqref{eq:omega_3} and \eqref{Eq:omega_4}.
An application of Proposition \ref{Prop:EmbModSp}
gives
$ u \in M ^{r_0} _{(\omega _4)} (\rr {2d})
\subseteq W  ^{r_0, \infty} _{(\omega _4)} (\rr {2d})
$.
By letting
$$
\omega _1=\omega
\quad \text{and}\quad
\omega _2(x,\xi, \eta, y)
=
\frac{\omega _4(x,\xi, \eta, y)}
{v_0(x, \xi, 0, 0)},
\quad x,y,\xi ,\eta \in \rr d,
$$
in Theorem \ref{Thm:MainConvMod}, it follows that
$\fkb =\fka *u\in M(\omega_0, \mascB)$.

\par

Now suppose that $\phi _1,\phi _2\in \Sigma _1(\rr d)$,
and let
$$
\maclR
=
\sets {\op ^w(\fkb)}{\fkb \in M(\omega _0,\mascB)},
$$
with quasi-norm
\begin{equation}
\label{Eq:OpSpecNorm}
\nm T{\maclR} = \nm {\fkb}{M(\omega _0,\mascB)},
\qquad
T=\op ^w(\fkb ).
\end{equation}
Then $\maclR$ is contained in the set of all
linear and continuous operators from $\Sigma _1(\rr d)$
to $\Sigma _1'(\rr d)$. Moreover, due to Schwartz
kernel theorem it follows that $\maclR$ is continuously
embedded in $\Sigma _1'(\rr {2d})$.

\par

Since $\phi _1,\phi _2\in \Sigma _1(\rr d)$ we have
$\tp _{\phi _1,\phi _2}(\fka )\in \maclR$
when $\fka \in M(\omega ,\mascB)$, and
\begin{equation}
\label{Eq:ToeplSymbWindEst}
\nm {\tp _{\phi _1,\phi _2}(\fka )}{\maclR}
\lesssim
\nm {\fka}{M(\omega ,\mascB)}
\nm {\phi _1}{M^{r_0}(\vartheta _1)}
\nm {\phi _2}{M^{r_0}(\vartheta _2)}.
\end{equation}
In fact, by Theorem \ref{Thm:MainConvMod} and
Lemma \ref{Lemma:PropWigDistr} we obtain
\begin{align*}
\nm {\tp _{\phi _1,\phi _2}(\fka )}{\maclR}
&\asymp
\nm {\op ^w(\fka *\check W_{\phi _2,\phi _1})}{\maclR}
\\[1ex]
&\lesssim
\nm {\fka}{M(\omega ,\mascB)}
\nm {W_{\phi _2,\phi _1}}{M^{r_0}_{(\omega _4)}}
\\[1ex]
&=
\nm {\fka}{M(\omega ,\mascB)}
\nm {\phi _1}{M^{r_0}(\vartheta _1)}
\nm {\phi _2}{M^{r_0}(\vartheta _2)},
\end{align*}
and \eqref{Eq:ToeplSymbWindEst} follows.

\par

Since $\Sigma _1(\rr d)$ is dense in
$M^{r_0}_{(\vartheta _j)}(\rr d)$, $j=1,2$,
it follows from \eqref{Eq:OpSpecNorm} and
\eqref{Eq:ToeplSymbWindEst} that the map
\eqref{Eq:ToeplMapSymbWind} from
$M(\omega ,\mascB)\times \Sigma _1(\rr d)
\times \Sigma _1(\rr d)$ to $\maclR$, is uniquely
extendable to a continuous map from
$M(\omega ,\mascB)\times M^{r_0}_{(\vartheta _1)}(\rr d)
\times M^{r_0}_{(\vartheta _2)}(\rr d)$ to $\maclR$.
This gives the result.
\end{proof}

\par

\begin{rem}
\label{Rem:ToeplPseudoSymbMod}
Let $r_0$, $\mascB$, $\omega$, $\omega _0$ and
$\vartheta _j$, $j=1,2$,
be the same as in Theorem \ref{Thm:ToeplPseudoSymbMod}.
If 
$\fka \in M(\omega , \mascB)$ and
$\phi _j  \in M ^{r _0} _{(\vartheta _j)}(\rr d)$, 
$j=1,2$, 
then Theorem \ref{Thm:ToeplPseudoSymbMod} and its
proof show that
there is $\fkb \in M(\omega_0, \mascB)$ such that
\begin{align*}
    \tp _{\phi _1,\phi _2}(\fka)
    = 
    \op ^w(\fkb),
\end{align*}
and
\begin{align*} 
    \nm {\fkb}{M(\omega _0, \mascB)} 
    \lesssim
    \nm {\fka}{M(\omega, \mascB)}
    \nm {\phi_1}{M ^{r_0} _{(\vartheta _1)}}
    \nm {\phi_2}{M ^{r_0} _{(\vartheta _2)}}.
\end{align*}
\end{rem}

\par

\begin{thm}
\label{Thm:GenContResForToepl}
Let $p,q,r_0,r\in (0,\infty ]$ be such that
\eqref{Eq:LebExpToeplToPseudo} holds, and suppose
$
\mascB
$
is an invariant QBF space on $\rr {2d}$
with respect to 
$r_0$
and 
$v_0 \in \mascP_E(\rr {2d} )$,
$\omega \in \mascP _E (\rr {4d})$,
and
$\omega _1, \omega _2, \vartheta_1, \vartheta _2
\in \mascP _E (\rr {2d})$, 
are such that
\begin{multline}
\label{Eq:ToepOpCond2}
    \frac {\omega _2 (x-z, \xi - \zeta)} 
        {\omega _1 (x-y, \xi-\eta)}
    \cdot
    v_0(y-z, \eta-\zeta)
\\[1ex]
    \lesssim
    \omega (x,\xi, \eta-\zeta, z-y)
    \vartheta _1 (y,\eta )
    \vartheta _2 (z, \zeta)
\end{multline}
for $x,y, z, \xi, \eta, \zeta \in \rr d$. 
Also suppose
$
\phi _j \in 
M ^{r} _{(\vartheta _j)} (\rr d)$ 
and
$
\fka \in 
M ^{\infty ,q} _{(\omega)}( \rr {2d})
$, $j =1,2$.
Then 
$
\operatorname{Tp}_{\phi _1,\phi _2}(\fka)
$
from $\Sigma _1(\rr d)$ to $\Sigma _1'(\rr d)$
extends uniquely to a continuous map from 
$M(\omega_1, \mascB)$ 
to 
$M(\omega_2, \mascB)$,
and 
\begin{equation}
\label{Eq:ToeplitzCont}
\begin{gathered}
\nm {\tp _{\phi _1,\phi _2}(\fka)f}
{M(\omega _2,\mascB )}
\le
C\nm {\fka}{M^{\infty ,q}_{(\omega)}}
\nm f{M(\omega _1,\mascB )}
\nm {\phi _1}{M^r_{(\vartheta _1)}}
\nm {\phi _2}{M^r_{(\vartheta _2)}},
\\[1ex]
\fka \in M^{\infty ,q}_{(\omega )}(\rr {2d}),\ 
f\in M(\omega _1,\mascB ),\ 
\phi _j\in M^r_{(\vartheta _j)}(\rr d),\ j=1,2.
\end{gathered}
\end{equation}
for some constant $C>0$, which only depends on
$\mascB$, $\omega$, and $\omega _j$, $j=1,2$.
\end{thm}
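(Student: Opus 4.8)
The plan is to realize $\tp _{\phi _1,\phi _2}(\fka )$ as a Weyl pseudo-differential operator through \eqref{Eq:ToeplWeyl}, and then feed the resulting symbol into the continuity result for pseudo-differential operators, Theorem \ref{Thm:PseudoCont2}, taken with $A=\tfrac12 I$. Concretely, I would first invoke Theorem \ref{Thm:ToeplPseudoSymbClassMod} to transfer the (classical modulation space) Toeplitz symbol $\fka$ together with the windows $\phi _1,\phi _2$ into a Weyl symbol $\fkb =\fka *u$, and then apply Theorem \ref{Thm:PseudoCont2} to $\op ^w(\fkb )$; chaining the two resulting norm estimates produces \eqref{Eq:ToeplitzCont}. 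Since $\tfrac1r\ge\tfrac12$ in \eqref{Eq:LebExpToeplToPseudo} forces $r<\infty$, the space $\Sigma _1(\rr d)$ is dense in $M^r_{(\vartheta _j)}(\rr d)$, and the asserted unique continuous extensions (both in the windows and in the argument $f$) are already delivered by the two cited theorems.

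The technical heart is choosing the intermediate weight $\omega _0\in\mascP _E(\rr {4d})$ that makes both transfer conditions hold. I would set
\begin{equation*}
\omega _0(X,\Xi ,H,Y)
=
\frac{\omega _2\!\left(X-\tfrac Y2,\Xi +\tfrac H2\right)}
{\omega _1\!\left(X+\tfrac Y2,\Xi -\tfrac H2\right)}\,
v_0(-Y,H),
\qquad X,\Xi ,H,Y\in\rr d ,
\end{equation*}
which is moderate because it is built from the moderate weights $\omega _1,\omega _2,v_0$ by products, a quotient, and affine changes of the arguments. For $A=\tfrac12 I$ one has $x+A(y-x)=\tfrac{x+y}2$ and $\eta +A^*(\xi -\eta )=\tfrac{\xi +\eta}2$, so substituting $X=\tfrac{x+y}2$, $\Xi =\tfrac{\xi +\eta}2$, $H=\xi -\eta$, $Y=y-x$ into the displayed formula reproduces $\tfrac{\omega _2(x,\xi )v_0(x-y,\xi -\eta )}{\omega _1(y,\eta )}$ exactly. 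Hence \eqref{Eq:WeightPseudoRel}$'$ holds with equality, so Theorem \ref{Thm:PseudoCont2} applies to $\op ^w(\fkb )$ and yields continuity from $M(\omega _1,\mascB )$ to $M(\omega _2,\mascB )$ together with $\nm {\op ^w(\fkb )f}{M(\omega _2,\mascB )}\lesssim \nm {\fkb}{M^{\infty ,r_0}_{(\omega _0)}}\nm f{M(\omega _1,\mascB )}$.

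I expect the main obstacle to be verifying that, for this particular $\omega _0$, the Toeplitz-transfer hypothesis \eqref{Eq:WeylToeplWeightCondClass} of Theorem \ref{Thm:ToeplPseudoSymbClassMod} is a consequence of the theorem's assumption \eqref{Eq:ToepOpCond2}. This is pure change-of-variables bookkeeping: writing \eqref{Eq:WeylToeplWeightCondClass} with free variables $(x_1,x_2,\xi _1,\xi _2,H,Y)$ (the last two being the third and fourth weight slots), inserting the above $\omega _0$, and matching against \eqref{Eq:ToepOpCond2} under the invertible linear substitution
\begin{equation*}
x=x_1,\quad \xi =\xi _1,\quad y=x_2-\tfrac Y2,\quad z=x_2+\tfrac Y2,\quad \eta =\xi _2+\tfrac H2,\quad \zeta =\xi _2-\tfrac H2 ,
\end{equation*}
so that $\eta -\zeta =H$ and $z-y=Y$. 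A termwise check then shows that all six groups of arguments (those of $\omega _2$, $\omega _1$, $v_0$, $\omega$, $\vartheta _1$ and $\vartheta _2$) coincide, so \eqref{Eq:ToepOpCond2} is literally \eqref{Eq:WeylToeplWeightCondClass} after this substitution.

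Granting this matching, Theorem \ref{Thm:ToeplPseudoSymbClassMod} gives $\fkb =\fka *u\in M^{\infty ,r_0}_{(\omega _0)}(\rr {2d})$ with $\tp _{\phi _1,\phi _2}(\fka )=\op ^w(\fkb )$ and $\nm {\fkb}{M^{\infty ,r_0}_{(\omega _0)}}\lesssim \nm {\fka}{M^{\infty ,q}_{(\omega )}}\nm {\phi _1}{M^r_{(\vartheta _1)}}\nm {\phi _2}{M^r_{(\vartheta _2)}}$, where $u=(2\pi )^{-d/2}\check W_{\phi _2,\phi _1}$. Composing this with the pseudo-differential estimate from the previous step yields
\begin{equation*}
\nm {\tp _{\phi _1,\phi _2}(\fka )f}{M(\omega _2,\mascB )}
\lesssim
\nm {\fkb}{M^{\infty ,r_0}_{(\omega _0)}}\nm f{M(\omega _1,\mascB )}
\lesssim
\nm {\fka}{M^{\infty ,q}_{(\omega )}}\nm f{M(\omega _1,\mascB )}\nm {\phi _1}{M^r_{(\vartheta _1)}}\nm {\phi _2}{M^r_{(\vartheta _2)}},
\end{equation*}
which is \eqref{Eq:ToeplitzCont} and completes the argument, the constant depending only on $\mascB$, $\omega$ and $\omega _1,\omega _2$.
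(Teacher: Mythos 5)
Your proposal is correct and follows essentially the same route as the paper: you convert the Toeplitz operator to a Weyl operator via Theorem \ref{Thm:ToeplPseudoSymbClassMod}, your intermediate weight $\omega _0$ is exactly the paper's choice \eqref{Eq:AssignWeightFunc}, and you then apply Theorem \ref{Thm:PseudoCont2} with $A=\tfrac 12 I$ and chain the two norm estimates, just as the paper does. Your explicit change-of-variables check that \eqref{Eq:ToepOpCond2} transforms into \eqref{Eq:WeylToeplWeightCondClass} is simply a more detailed write-up of the step the paper states as "a combination of \eqref{Eq:ToepOpCond2} and \eqref{Eq:AssignWeightFunc} shows that \eqref{Eq:WeylToeplWeightCondClass} holds."
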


\par


\par

\begin{proof}
Let
\begin{equation}
\label{Eq:AssignWeightFunc}
\omega _0(x,\xi ,\eta ,y)
=
\frac {\omega _2(x-\frac y2,\xi +\frac \eta 2)
v_0(-y,\eta )}
{\omega _1(x+\frac y2,\xi -\frac \eta 2)}.
\end{equation}
A combination of \eqref{Eq:ToepOpCond2} and
\eqref{Eq:AssignWeightFunc} shows that 
\eqref{Eq:WeylToeplWeightCondClass} holds.
By Theorem \ref{Thm:ToeplPseudoSymbClassMod}
%
we obtain that $\tp _{\phi _1,\phi _2}(\fka )$
is equal to $\op ^w(\fkb )$, for some
$\fkb \in M^{\infty ,r_0}_{(\omega _0)}(\rr {2d})$.

\par

Since \eqref{Eq:AssignWeightFunc} gives
\eqref{Eq:WeightPseudoRel}$'$ in the Weyl
case $A=\frac 12\cdot I$,
the asserted continuity now follows from
Theorem \ref{Thm:PseudoCont2}. Moreover,
Theorem \ref{Thm:PseudoCont2}, 
\eqref{Eq:OpSpecNormClass}, and 
\eqref{Eq:ToeplSymbWindEstClass}
give
\begin{align*}
    \NM {\operatorname{Tp}_{\phi _1,\phi _2}
    (\fka) f}{M(\omega _2,\mascB )}
    &=
    \NM {\op ^w(\fkb) f}{M(\omega _2,\mascB )
    }
    \le
    C\nm {\fkb}{M^{\infty ,r_0}
    _{(\omega _0)}}
    \nm f {M(\omega _1,\mascB )}
    \\[1ex]
    &\le
    C \nm {\fka}{M ^{\infty, q} _{(\omega)}}
    \nm {\phi_1}{M ^{r} _{(\vartheta _1)}}
    \nm {\phi_2}{M ^{r} _{(\vartheta _2)}}
    \nm f{M(\omega _1,\mascB )},
\end{align*}
where the constant $C>0$ only depends on
$\mascB$ and $\omega, \omega _j$, $j=1,2$.
This gives the claim.
\end{proof}

\par

\begin{rem}
We observe that Theorems 
\ref{Thm:ToeplPseudoSymbClassMod} and
\ref{Thm:GenContResForToepl} generalize 
Proposition 1.34$'$
in \cite{AbCoTo} in several ways.
\end{rem}

\par

\section{Lifting properties
for modulation spaces}
\label{sec8}

\par

In this section we extend the lifting
properties given in \cite{GroTof1,AbCoTo}
to the extended class of modulation
spaces under consideration. 
In the first part we perform a general
review and explanations concerning
lifting properties on modulation spaces, given
in \cite{AbCoTo}. Thereafter
we deduce the extensions of these properties
to our extended class of modulation spaces.

\par

We recall from \cite{GroTof1,AbCoTo}
that the topological vector spaces $V_1$ and $V_2$ 
are said to possess
lifting property if there exists a "convenient" 
homeomorphisms
(a lifting) between them. For example, let
$p\in [1,\infty]$, $s\in \mathbf R$, and
set
$$
\eabs x=(1+|x|^2)^{\frac 12},
\qquad
x\in \rr d.
$$
Then the mappings
\begin{equation}
\label{Eq:SimpleLiftings}
f\mapsto \eabs \cdo ^s \cdot f
\quad \text{and}\quad
f\mapsto \eabs \Delta ^sf
\end{equation}
are homeomorphic from $L^p_s(\rr d)$ 
and
the Sobolev space $H^p_s(\rr d)$, respectively,
into $L^p(\rr d)=H^p_0(\rr d)$,
with inverses $f\mapsto \eabs \cdo ^{-s}\cdot f$ 
and $f\mapsto \eabs \Delta ^{-s}f$,
respectively. Here $L^p_s=L^p_{(\omega )}$
when $\omega =\eabs \cdo ^s$.

\par

By slightly more cumbersome arguments, one
deduces similar mapping properties for 
classical modulation spaces. That is,
suppose that $p,q\in [1,\infty ]$, 
$\omega _1(x,\xi )=\eabs x^s$,
and $\omega _2(x,\xi )=\eabs \xi ^s$. Then
the mappings in \eqref{Eq:SimpleLiftings}
are homeomorphisms from
$M^{p,q}_{(\omega _1)}(\rr d)$
and
$M^{p,q}_{(\omega _2)}(\rr d)$, respectively,
into $M^{p,q}(\rr d)$. (See e.{\,}g.
\cite[Theorem 2.2]{Toft04}.)

\par

We observe that the symbols for the
operators in \eqref{Eq:SimpleLiftings}
are given by $\eabs x^s$ and $\eabs \xi ^s$,
and thereby only depends purely on
the configuration variable $x$ or the
momentum variable $\xi$.
The situation becomes drastically more
complicated if the involved operators
depend on both $x$ and $\xi$ variables,
because of (heavy) interactions between
multiplications and differentiations.

\par

Nevertheless, in \cite{AbCoTo,GroTof1} one 
reach lifting results for modulation
spaces, with more general
pseudo-differential
operators and Toeplitz operators,
using delicate combinations of various
methods in time-frequency analysis, micro-local
analysis and spectral theory. 

\par

In order to explain these results, we need some
preparations.

\par

\subsection{Classes of smooth
symbols}

\par

Next we recall from \cite{AbCoTo,CaTo}
the definition of
spaces of smooth functions
which in several ways are suitable
as symbol classes for pseudo-differential
operators.

\par

\begin{defn}\label{Def:SymbClExp}
Let $s\ge 0$ and $\omega \in \mascP _E(\rr {d})$. 
\begin{enumerate}
\item The class $S^{(\omega )}(\rr {d})$
consists of all $f\in C^\infty (\rr {d})$
such that
$
\nm {\partial ^\alpha f}
{L^\infty _{(1/\omega )}}
$
is finite, for every $\alpha \in \nn d$.

\vrum

\item The class $\Gamma ^{(\omega )}_s(\rr {d})$
($\Gamma ^{(\omega )}_{0,s}(\rr {d})$) consists of all $f\in C^\infty (\rr {d})$ such that
$$
\sup _{\alpha \in \nn d}
\left (
\frac {\nm {\partial ^\alpha f}
{L^\infty _{(1/\omega )}}}
{h^{|\alpha |}\alpha !^s}
\right )
$$
is finite, for \emph{some} $h>0$
(for \emph{every} $h>0$).
\end{enumerate}
\end{defn}

\par

Evidently, we have
$$
\Gamma ^{(\omega )}_{0,s}(\rr {d})
\subsetneq
\Gamma ^{(\omega )}_{s}(\rr {d})
\subsetneq
S^{(\omega )}(\rr {d}),
\quad \text{when}\quad
\omega \in \mascP _E(\rr d).
$$
On the other hand, one usually
impose that $\omega$ should belong
to $\mascP _{s}(\rr d)$,
$\mascP _{s}^0(\rr d)$ and
$\mascP (\rr d)$ when considering
$\Gamma ^{(\omega )}_{0,s}(\rr {d})$,
$\Gamma ^{(\omega )}_s(\rr {d})$
and
$S^{(\omega )}(\rr {d})$, respectively.
It follows that the families
\begin{alignat}{3}
&\Gamma ^{(\omega )}_{0,s}(\rr {d})&
\quad &\text{when} & \quad
\omega &\in \mascP _{s}(\rr d),
\label{Eq:SmoothSymbClasses1}
\\[1ex]
&\Gamma ^{(\omega )}_{s}(\rr {d}) &
\quad &\text{when} & \quad
\omega &\in \mascP _{s}^0(\rr d),
\label{Eq:SmoothSymbClasses2}
\\[1ex]
\text{and}\quad
&S^{(\omega )}(\rr {d}) &
\quad &\text{when} & \quad
\omega &\in \mascP (\rr d),
\label{Eq:SmoothSymbClasses3}
\end{alignat}
do not cover each others. This follows
from Proposition 
\ref{Prop:SmoothSymbModSp} below in
combination with estimates of coefficients 
of the Gabor expansions for the involved
functions (see e.{\,}g. \cite{Gro2}).

\par

The following result shows that
the spaces in
\eqref{Eq:SmoothSymbClasses1}--\eqref{Eq:SmoothSymbClasses3}
are obtained
as unions and intersections of
suitable modulation spaces. We omit the
proof since the result follows by
combining (6.16) in \cite{Toft10} 
with Propositions 1.26 and 1.27
in \cite{AbCoTo}, and the fact that
$$
M^{p,q_1}_{(\omega )}(\rr d)
\subseteq
M^{p,q_2}_{(\omega )}(\rr d)
\subseteq
M^{p,q_1}_{(\omega _0)}(\rr d),
$$
when
$$
q_1\le q_2,
\quad \text{and}\quad
\omega _0(x,\xi )
\lesssim
\omega (x,\xi )\eabs {\xi}^{-s},
\quad
s>d\left (
\frac 1{q_1}-\frac 1{q_2}
\right ).
$$

\par

\begin{prop}
\label{Prop:SmoothSymbModSp}
Let $s\ge 1$, $q\in (0,\infty ]$,
$\omega _0\in \mascP (\rr d)$,
$\omega _1\in \mascP _{s}(\rr d)$
and
$\omega _2\in \mascP _{s}^0(\rr d)$.
Also let
\begin{multline*}
\omega _{0,r}(x,\xi )
=
\omega _0(x)\eabs \xi ^{-r},
\quad \text{and}\quad
\omega _{j,r}(x,\xi )
=
\omega _j(x)e^{-r|\xi |^{\frac 1{s}}},
\\[1ex]
j=1,2,\ r>0,\ x,\xi \in \rr d. 
\end{multline*}
Then
\begin{align*}
S^{(\omega )}(\rr {d})=\bigcap _{r>0}
M^{\infty ,q}_{(1/\omega _{0,r})}(\rr d),
\quad
\Gamma _{0,s}^{(\omega _1)}(\rr d)
&=
\bigcap _{r>0}
M^{\infty ,q}_{(1/\omega _{1,r})}(\rr d)
\intertext{and}
\Gamma _{s}^{(\omega _2)}(\rr d)
&=
\bigcup _{r>0}
M^{\infty ,q}_{(1/\omega _{2,r})}(\rr d).
\end{align*}
\end{prop}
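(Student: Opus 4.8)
The plan is to deduce all three identities from their $q=\infty$ versions,
$$
S^{(\omega )}(\rr d)=\bigcap _{r>0}M^{\infty}_{(1/\omega _{0,r})}(\rr d),
\quad
\Gamma _{0,s}^{(\omega _1)}(\rr d)=\bigcap _{r>0}M^{\infty}_{(1/\omega _{1,r})}(\rr d)
\quad \text{and}\quad
\Gamma _s^{(\omega _2)}(\rr d)=\bigcup _{r>0}M^{\infty}_{(1/\omega _{2,r})}(\rr d),
$$
in which $M^{\infty}=M^{\infty ,\infty}$. This baseline is exactly what (6.16) in \cite{Toft10} and Propositions 1.26 and 1.27 in \cite{AbCoTo} provide; these references characterise the smooth and Gevrey symbol classes through weighted decay of the short-time Fourier transform in the frequency variable. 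It therefore remains to check that each right-hand side is unchanged when $M^{\infty ,\infty}$ is replaced by $M^{\infty ,q}$ for an arbitrary $q\in (0,\infty ]$.

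The mechanism is the stated embedding, used with $p=\infty$:
$$
M^{\infty ,q_1}_{(\nu )}(\rr d)\subseteq M^{\infty ,q_2}_{(\nu )}(\rr d)\subseteq M^{\infty ,q_1}_{(\nu _0)}(\rr d),
\qquad
q_1\le q_2,\ \ \nu _0\lesssim \nu \eabs \xi ^{-s},\ \ s>d\Big (\tfrac 1{q_1}-\tfrac 1{q_2}\Big ).
$$
Fix $q_1\le q_2$ and an admissible $s$. The first inclusion gives $M^{\infty ,q_1}_{(1/\omega _{\cdot ,r})}\subseteq M^{\infty ,q_2}_{(1/\omega _{\cdot ,r})}$ for each $r$, hence the corresponding inclusions of the intersections and of the union. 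For the opposite inclusions I would put $\nu =1/\omega _{\cdot ,r}$ in the second embedding, so that the loss of integrability is compensated by the extra factor $\eabs \xi ^{-s}$ on the weight. For $S^{(\omega )}$ one has $\omega _{0,r}\eabs \xi ^{s}=\omega _{0,r-s}$, whence $M^{\infty ,q_2}_{(1/\omega _{0,r})}\subseteq M^{\infty ,q_1}_{(1/\omega _{0,r-s})}$; since $M^{p,q}_{(\omega )}$ decreases with $\omega$ (Proposition \ref{Prop:InvModAndBanach}), the intersection over $r>0$ equals the intersection over $r>-s$, and the two intersections coincide. For the two Gevrey classes the same reindexing applies after absorbing $\eabs \xi ^{s}$: since $\eabs \xi ^{s}\le C_\epsilon e^{\epsilon |\xi |^{1/s}}$ for every $\epsilon >0$, one gets $\omega _{j,r}\eabs \xi ^{s}\lesssim \omega _{j,r-\epsilon}$, hence $M^{\infty ,q_2}_{(1/\omega _{j,r})}\subseteq M^{\infty ,q_1}_{(1/\omega _{j,r-\epsilon})}$ for $j=1,2$.

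This closes the two intersection identities. For the union identity (that for $\Gamma _s^{(\omega _2)}$) one argues dually: an element of $\bigcup _{r>0}M^{\infty ,q_2}_{(1/\omega _{2,r})}$ lies in some $M^{\infty ,q_2}_{(1/\omega _{2,r_0})}$, hence, by the inclusion just obtained, in $M^{\infty ,q_1}_{(1/\omega _{2,r_0-\epsilon })}$, so it belongs to the union formed with the exponent $q_1$. The routine ingredients — monotonicity of $M^{p,q}_{(\omega )}$ in the weight, the elementary bound $\eabs \xi ^{s}\le C_\epsilon e^{\epsilon |\xi |^{1/s}}$, and the group structure of the weight classes (Remark \ref{Rem:WeightClGrpProp}) — I would treat briefly. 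The point demanding genuine care is the bookkeeping in the union: the shift $r_0\mapsto r_0-\epsilon$ must be made with $\epsilon \in (0,r_0)$ so that the new parameter remains strictly positive and the element stays inside the union over $r>0$, in contrast to the intersection cases, where negative parameters are harmless. Combining this $q$-independence with the $q=\infty$ baseline from \cite{Toft10,AbCoTo} gives the proposition.
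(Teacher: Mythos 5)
Your proposal is correct and follows essentially the same route as the paper: the paper omits the proof, stating only that the result follows by combining (6.16) in \cite{Toft10} and Propositions 1.26--1.27 in \cite{AbCoTo} with the embedding $M^{p,q_1}_{(\omega )}\subseteq M^{p,q_2}_{(\omega )}\subseteq M^{p,q_1}_{(\omega _0)}$ for $\omega _0\lesssim \omega \eabs \xi ^{-s}$, which is exactly your baseline-plus-$q$-independence scheme. Your reindexing computations ($\omega _{0,r}\eabs \xi ^{s}=\omega _{0,r-s}$, the absorption $\eabs \xi ^{s}\lesssim C_\epsilon e^{\epsilon |\xi |^{1/s}}$, and the restriction $\epsilon \in (0,r_0)$ in the union case) correctly supply the details the paper leaves implicit.
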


\par

The next proposition shows that
pseudo-differential operators
with symbols in the spaces
in
\eqref{Eq:SmoothSymbClasses1}--\eqref{Eq:SmoothSymbClasses3}
possess convenient mapping
properties on Gelfand-Shilov spaces.

\par

\begin{prop}
\label{Prop:PseudoContSmoothSymb}
Let $s\ge 1$ and $A\in \GL (d,\mathbf R)$.
\begin{enumerate}
\item If $\omega \in \mascP (\rr {2d})$
and $\fka \in S^{(\omega )}(\rr {2d})$,
then $\op _A(\fka )$ is continuous on
$\mascS (\rr d)$, and on $\mascS '(\rr d)$.

\vrum

\item If $\omega \in \mascP _s^0(\rr {2d})$
and $\fka \in \Gamma _s^{(\omega )}(\rr {2d})$,
then $\op _A(\fka )$ is continuous on
$\maclS _s (\rr d)$, and on $\maclS _s'(\rr d)$.

\vrum

\item If $\omega \in \mascP _s(\rr {2d})$
and $\fka \in \Gamma _{0,s}^{(\omega )}(\rr {2d})$,
then $\op _A(\fka )$ is continuous on
$\Sigma _s (\rr d)$, and on $\Sigma _s'(\rr d)$.
\end{enumerate}
\end{prop}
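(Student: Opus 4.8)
The plan is to reduce each of the three statements to the continuity of pseudo-differential operators with \emph{modulation space} symbols, Theorem \ref{Thm:PseudoCont1}, by writing both the smooth symbol classes and the ambient Gelfand--Shilov (or Schwartz) spaces as intersections or unions of weighted classical modulation spaces. I will describe the argument for (3); parts (1) and (2) follow \emph{mutatis mutandis} after substituting the appropriate weight classes.

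First I would take the (free) second Lebesgue index $q$ in Proposition \ref{Prop:SmoothSymbModSp} to be some fixed $r_0\in (0,1]$. By that proposition (applied with $d$ replaced by $2d$), the hypothesis $\fka\in \Gamma _{0,s}^{(\omega )}(\rr {2d})$ is equivalent to $\fka\in M^{\infty ,r_0}_{(1/\omega _r)}(\rr {2d})$ for every $r>0$, where $\omega _r(X,\Xi )=\omega (X)e^{-r|\Xi |^{1/s}}$, $X,\Xi \in \rr {2d}$. The key step is then pure weight arithmetic: given an arbitrary target weight $\omega _2\in \mascP _s(\rr {2d})$, I set $\omega _1=\omega _2\,\omega$, which again lies in $\mascP _s(\rr {2d})$ since this class is a group under multiplication (Remark \ref{Rem:WeightClGrpProp}). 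Using that $\omega _2$ and $\omega$ are $\mascP _s$-moderate, together with the subadditivity of $t\mapsto t^{1/s}$ for $s\ge 1$ and the boundedness of $A,A^*$, one checks that
\begin{equation*}
\frac{\omega _2(x,\xi )}{\omega _1(y,\eta )}
=\frac{\omega _2(x,\xi )}{\omega _2(y,\eta )\,\omega (y,\eta )}
\lesssim
\frac{e^{r(|\xi -\eta |^{1/s}+|y-x|^{1/s})}}{\omega (x+A(y-x),\eta +A^*(\xi -\eta ))}
\end{equation*}
for a suitable $r>0$; that is, \eqref{Eq:WeightPseudoRel} holds with $\omega _0=1/\omega _r$. (Here the displacement from $(y,\eta )$ to $(x+A(y-x),\eta +A^*(\xi -\eta ))$ equals $((I-A)(x-y),A^*(\xi -\eta ))$, whose $1/s$-power is dominated by $|x-y|^{1/s}+|\xi -\eta |^{1/s}$.) Choosing the symbol index $r$ large enough to absorb the moderateness exponents, Theorem \ref{Thm:PseudoCont1}, applied with both Lebesgue exponents equal to $\infty$, yields that $\op _A(\fka )\colon M^\infty _{(\omega _1)}(\rr d)\to M^\infty _{(\omega _2)}(\rr d)$ is continuous.

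To conclude continuity on $\Sigma _s(\rr d)=\Sigma _s^s(\rr d)$, I would invoke the identity $\Sigma _s^s(\rr d)=\bigcap M^\infty _{(\omega _2)}(\rr d)$, the intersection taken over $\omega _2\in \mascP _{s,s}(\rr {2d})=\mascP _s(\rr {2d})$, which is part of (the proof of) Proposition \ref{Prop:BasicEmbModSp2Beurl}, see \eqref{eq:mod-spaces-sigma_1}. Since $\Sigma _s\subseteq M^\infty _{(\omega _1)}$ and $\op _A(\fka )$ maps $M^\infty _{(\omega _1)}$ boundedly into $M^\infty _{(\omega _2)}$ for every $\omega _2$, it follows that $\op _A(\fka )(\Sigma _s)\subseteq \bigcap _{\omega _2}M^\infty _{(\omega _2)}=\Sigma _s$, and the bound $\nm {\op _A(\fka )f}{M^\infty _{(\omega _2)}}\lesssim \nm f{M^\infty _{(\omega _1)}}$ gives continuity for the projective limit topology. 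The continuity on $\Sigma _s'(\rr d)$ is obtained by the analogous union argument: using $\Sigma _s'(\rr d)=\bigcup M^{r_0}_{(\omega _1)}(\rr d)$ over $\omega _1\in \mascP _s(\rr {2d})$, I now fix a source weight $\omega _1$, set $\omega _2=\omega _1/\omega \in \mascP _s$, verify \eqref{Eq:WeightPseudoRel} in the same way, and apply Theorem \ref{Thm:PseudoCont1} with both Lebesgue exponents equal to $r_0$; the universal property of the inductive limit then yields continuity $\Sigma _s'\to \Sigma _s'$.

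Finally, parts (1) and (2) run along identical lines. For (2) one replaces $\mascP _s$ by $\mascP _s^0$ and uses that $\Gamma _s^{(\omega )}(\rr {2d})=\bigcup _r M^{\infty ,r_0}_{(1/\omega _r)}(\rr {2d})$ together with $\maclS _s=\maclS _s^s=\bigcap M^\infty _{(\omega _2)}$ over $\omega _2\in \mascP _{s,s}^0=\mascP _s^0$ (Proposition \ref{Prop:BasicEmbModSp2Roum}); here the symbol index $r$ is fixed, but the moderateness exponents of the $\mascP _s^0$-weights may be taken arbitrarily small and so are absorbed. For (1) one uses the polynomial classes $\mascP$, the identity $S^{(\omega )}=\bigcap _r M^{\infty ,r_0}_{(1/\omega _{0,r})}$ with $\omega _{0,r}(X,\Xi )=\omega (X)\eabs \Xi ^{-r}$, and $\mascS =\bigcap M^\infty _{(\omega _2)}$, $\mascS '=\bigcup M^{r_0}_{(\omega _2)}$ over $\omega _2\in \mascP$ from \eqref{Eq:UnionIntersectMod}. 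The main obstacle throughout is the careful bookkeeping in the weight inequality \eqref{Eq:WeightPseudoRel}---in particular matching the ``for some $r$'' / ``for every $r$'' / polynomial behaviour of the three weight classes against the corresponding union or intersection in Proposition \ref{Prop:SmoothSymbModSp}---while everything else is a formal consequence of Theorem \ref{Thm:PseudoCont1}.
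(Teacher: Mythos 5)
Your proposal is correct, but it takes a genuinely different route from the paper. The paper's proof is by citation: assertion (1) follows from H{\"o}rmander's Weyl calculus, \cite[Theorem 18.6.2]{Ho1}, with $g$ chosen as the constant Euclidean metric on $\rr {2d}$, while assertions (2) and (3) are quoted from Theorems 4.10 and 4.11 in \cite{CaTo}. You instead derive all three parts from material already displayed in the paper: Theorem \ref{Thm:PseudoCont1} (itself imported from \cite{Toft18}, so there is no circularity), Proposition \ref{Prop:SmoothSymbModSp}, and the characterizations of $\mascS$, $\maclS _s$, $\Sigma _s$ and their duals as intersections respectively unions of weighted modulation spaces. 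Your weight arithmetic is sound: the displacement $((I-A)(x-y),A^*(\xi -\eta ))$ is computed correctly, the inequality $|x-y|^{1/s}+|\xi -\eta |^{1/s}\le 2\,|(\xi -\eta ,y-x)|^{1/s}$ lets you dominate the moderateness factors by the weight $1/\omega _r$ evaluated at the arguments occurring in \eqref{Eq:WeightPseudoRel}, and your matching of the quantifiers (``some $r$'' for $\mascP _s$ and $\Gamma _s^{(\omega )}$, ``every $r$'' for $\mascP _s^0$ and $\Gamma _{0,s}^{(\omega )}$) against the unions and intersections in Proposition \ref{Prop:SmoothSymbModSp} is precisely the point where such an argument could fail, and you get it right in all three cases. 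In fact your scheme is the one the paper itself uses to prove the subsequent Proposition \ref{Prop:PseudoContModSmoothSymb} (there with Theorem \ref{Thm:PseudoCont2} in place of Theorem \ref{Thm:PseudoCont1}), so your argument exhibits Proposition \ref{Prop:PseudoContSmoothSymb} as a consequence of the paper's own machinery rather than of external symbolic calculi; what the paper's route buys is brevity, what yours buys is a unified, internally self-contained treatment. Two points you pass over quickly deserve mention: (i) the identities such as $\Sigma _s=\bigcap M^\infty _{(\omega )}$ and $\Sigma _s'=\bigcup M^{r_0}_{(\omega )}$ must hold \emph{topologically}, as projective respectively inductive limits, for the universal-property step to yield continuity on these spaces; the paper asserts this explicitly only for \eqref{Eq:UnionIntersectMod}, though it is available in the sources cited for \eqref{eq:mod-spaces-sigma_1}; and (ii) on the distribution spaces the operator is defined piecewise on the spaces $M^{r_0}_{(\omega _1)}$ by continuous extension from $\Sigma _1$, so consistency of these extensions across different weights must be checked; this follows from density of $\Sigma _1$ in $M^{r_0}_{(\omega _1)}\cap M^{r_0}_{(\omega _1')}=M^{r_0}_{(\omega _1+\omega _1')}$, cf. Proposition \ref{Prop:EmbModSp} (2). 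Both points are standard and consistent with the paper's level of rigor, so your proof can be regarded as complete.
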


\par

The assertion (1) in Proposition
\ref{Prop:PseudoContSmoothSymb} follows
by choosing $g$ in
\cite[Theorem 18.6.2]{Ho1}
as the constant Euclidean metric on
$W=\rr {2d}$. The assertions (2) and
(3) in Theorem \ref{Prop:PseudoContSmoothSymb}
follows from Theorems 4.10 and 4.11 in
\cite{CaTo}.

\par

Due to Proposition
\ref{Prop:BasicEmbModSp2Schw}--\ref{Prop:BasicEmbModSp2Beurl},
the following
result is in some sense an extension
of the previous proposition. Here the
involved weight functions should satisfy
\begin{equation}
\label{Eq:WeightCondPsedoMap}
\omega (x,\xi )
\lesssim
\frac {\omega _1(x,\xi )}{\omega _2(x,\xi )}.
\end{equation}

\par

\begin{prop}
\label{Prop:PseudoContModSmoothSymb}
Let $s\ge 1$, $A\in \GL (d,\mathbf R)$
and $\mascB$ be a normal QBF space on $\rr {2d}$
with respect to $r_0\in (0,1]$ and
$v_0\in \mascP _E(\rr {2d})$.
\begin{enumerate}
\item If
$\omega ,\omega _j\in \mascP (\rr {2d})$
satisfy \eqref{Eq:WeightCondPsedoMap},
$\fka \in S^{(\omega )}(\rr {2d})$ and,
more restrictive, $v_0\in \mascP (\rr {2d})$,
then $\op _A(\fka )$ is continuous from
$M(\omega _1,\mascB )$ to $M(\omega _2,\mascB )$.

\vrum

\item If
$\omega ,\omega _j\in \mascP _s^0(\rr {2d})$
satisfy \eqref{Eq:WeightCondPsedoMap},
$\fka \in \Gamma _s^{(\omega )}(\rr {2d})$ and,
more restrictive, $v_0\in \mascP _s^0(\rr {2d})$,
then $\op _A(\fka )$ is continuous from
$M(\omega _1,\mascB )$ to $M(\omega _2,\mascB )$.

\vrum

\item If
$\omega ,\omega _j\in \mascP _s(\rr {2d})$
satisfy \eqref{Eq:WeightCondPsedoMap},
$\fka \in \Gamma _{0,s}^{(\omega )}(\rr {2d})$ and,
more restrictive, $v_0\in \mascP _s(\rr {2d})$,
then $\op _A(\fka )$ is continuous from
$M(\omega _1,\mascB )$ to $M(\omega _2,\mascB )$.
\end{enumerate}
\end{prop}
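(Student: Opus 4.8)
The plan is to reduce each of the three cases to the already-established general continuity result, namely Theorem~\ref{Thm:PseudoCont2}. The key observation is that all three symbol classes $S^{(\omega)}$, $\Gamma_s^{(\omega)}$, and $\Gamma_{0,s}^{(\omega)}$ can be realized as unions or intersections of weighted $M^{\infty,q}$-spaces by Proposition~\ref{Prop:SmoothSymbModSp}. First I would fix $q=r_0$ (so that $M^{\infty,q}=M^{\infty,r_0}$, matching the symbol hypothesis in Theorem~\ref{Thm:PseudoCont2}). For case (1), Proposition~\ref{Prop:SmoothSymbModSp} gives
$$
\fka\in S^{(\omega)}(\rr{2d})=\bigcap_{r>0}M^{\infty,r_0}_{(1/\omega_{0,r})}(\rr{2d}),
\qquad
\omega_{0,r}(x,\xi)=\omega(x)\eabs{\xi}^{-r},
$$
so that $\fka\in M^{\infty,r_0}_{(\widetilde\omega_0)}(\rr{2d})$ with $\widetilde\omega_0=\omega_{0,r}$ for \emph{every} $r>0$, and analogously for cases (2) and (3) with $e^{-r|\xi|^{1/s}}$ in place of $\eabs{\xi}^{-r}$ and with the union (rather than intersection) in case (3). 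In each case it suffices to select a single sufficiently large $r>0$ for which the resulting weight $\widetilde\omega_0\in\mascP_E(\rr{4d})$ (after reinterpreting the $\rr{2d}$-weight as a weight on phase space $\rr{4d}$ in the natural way) satisfies the hypothesis~\eqref{Eq:WeightPseudoRel}$'$ relating $\omega_0,\omega_1,\omega_2$ and $v_0$.

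The central step is therefore to verify that the weight inequality~\eqref{Eq:WeightCondPsedoMap}, namely $\omega\lesssim\omega_1/\omega_2$, together with the polynomial (resp.\ subexponential, resp.\ Gevrey) decay factor in $\widetilde\omega_0$ and the moderateness of $v_0$, yields~\eqref{Eq:WeightPseudoRel}$'$ for some $r$. Concretely, in case (1) I would set $\omega_0=\widetilde\omega_0$ viewed on $\rr{4d}$ via $\omega_0(X,\eta,y)=\eabs{(\eta,y)}^{-r}/\omega(X)^{-1}$, reading off the weight from Proposition~\ref{Prop:SmoothSymbModSp}; the desired bound
$$
\frac{\omega_2(x,\xi)\,v_0(x-y,\xi-\eta)}{\omega_1(y,\eta)}
\lesssim
\omega_0\bigl(x+A(y-x),\,\eta+A^*(\xi-\eta),\,\xi-\eta,\,y-x\bigr)
$$
then follows because the left-hand side is controlled, by $\omega\lesssim\omega_1/\omega_2$ and moderateness, by $\omega(x,\xi)^{-1}$ times a factor of polynomial type in $(\xi-\eta,\,y-x)$, which for $r$ chosen large enough is dominated by the decaying factor $\eabs{(\xi-\eta,\,y-x)}^{-r}$ in $\omega_0$. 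The subexponential and Gevrey versions of this estimate, governing cases (2) and (3), are handled identically using~\eqref{Eq:Gsmodw} and~\eqref{Eq:Gsmodw2} in place of polynomial moderateness; the hypotheses $v_0\in\mascP_s^0$ resp.\ $v_0\in\mascP_s$ are exactly what make the $v_0$-factor absorbable. Once~\eqref{Eq:WeightPseudoRel}$'$ holds, Theorem~\ref{Thm:PseudoCont2} immediately gives that $\op_A(\fka)$ is continuous from $M(\omega_1,\mascB)$ to $M(\omega_2,\mascB)$, completing each case.

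The main obstacle I anticipate is precisely the bookkeeping in this weight calculation: one must correctly transport the two-dimensional phase-space weight of Proposition~\ref{Prop:SmoothSymbModSp} into a four-variable weight on $\rr{4d}$ respecting the substitution $T_A$, and then check that the decay rate $r$ can be chosen \emph{uniformly} to dominate the growth coming from $\omega_1/\omega_2$ and $v_0$ along the directions $(\xi-\eta,\,y-x)$. In case (3) there is the additional subtlety that $\Gamma_{0,s}^{(\omega)}$ is an \emph{intersection} while $\Gamma_s^{(\omega)}$ in case (2) is a \emph{union}; for the union one only needs that the membership holds for \emph{some} $r$, and one must confirm that this \emph{same} $r$ still makes the weight inequality valid, whereas for the intersections one has the freedom to take $r$ as large as needed. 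I would treat case (3) (Beurling/$\Gamma_{0,s}$, intersection, weights in $\mascP_s$) as the reference computation, since it requires the largest $r$, and then observe that cases (1) and (2) follow by the same argument with weaker decay demands.
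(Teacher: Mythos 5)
Your overall strategy is exactly the paper's: realize the symbol class through Proposition \ref{Prop:SmoothSymbModSp} as an intersection or union of weighted $M^{\infty ,r_0}$-spaces, verify \eqref{Eq:WeightPseudoRel}$'$ from \eqref{Eq:WeightCondPsedoMap} together with moderateness of $\omega ,\omega _1,\omega _2,v_0$, and then apply Theorem \ref{Thm:PseudoCont2}. Your matching of union versus intersection with the weight classes (Roumieu $\Gamma _s^{(\omega )}$ with $\mascP _s^0$, where moderateness holds for \emph{every} exponent so the $r$ handed to you by the union suffices; Beurling $\Gamma _{0,s}^{(\omega )}$ with $\mascP _s$, where the intersection lets you choose $r$ as large as needed) is also precisely the mechanism that makes the three cases work, and is implicit in the paper's proof of case (2).

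There is, however, a concrete error that breaks the central step as written. You set $\widetilde\omega _0=\omega _{0,r}$, i.e. on $\rr {4d}$ the weight $\omega (X)\eabs{(\eta ,y)}^{-r}$, whereas Proposition \ref{Prop:SmoothSymbModSp} --- as your own displayed formula states --- gives $\fka \in M^{\infty ,r_0}_{(1/\omega _{0,r})}$, whose weight is
$$
\frac 1{\omega _{0,r}(X,Y)}
=
\frac {\eabs Y^{r}}{\omega (X)},
$$
growing in $Y$. With your (reciprocal) choice, both required properties fail: the membership $\fka \in M^{\infty ,r_0}_{(\omega _0)}$ is no longer guaranteed, and the inequality \eqref{Eq:WeightPseudoRel}$'$ is false even for trivial weights (take $\omega =\omega _1=\omega _2=v_0\equiv 1$; then the left-hand side is $\asymp 1$, which is not $\lesssim \eabs{(\xi -\eta ,y-x)}^{-r}$). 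The same confusion appears in your justification: a polynomially \emph{growing} factor in $(\xi -\eta ,y-x)$ can never be "dominated by the decaying factor $\eabs{(\xi -\eta ,y-x)}^{-r}$"; it is dominated by the \emph{growing} factor $\eabs{(\xi -\eta ,y-x)}^{r}$ once $r$ is large, which is exactly what the correct weight provides. With $\omega _0=1/\omega _{0,r}$ the argument you outline closes and coincides with the paper's proof; the paper organizes it slightly differently by building a single weight $\omega _0(x,\xi ,\eta ,y)=v_0(-y,\eta )v_2(y,\eta )v_3(y,\eta )/\omega (x,\xi )$ out of the moderateness functions, which is dominated by every weight $1/\omega _{2,r}$ from Proposition \ref{Prop:SmoothSymbModSp} and therefore absorbs the whole union or intersection at once, rather than picking $r$ symbol by symbol.
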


\par

\begin{proof}
We only prove (2). The assertions (1)
and (3) follow by similar arguments
and are left for the reader.

\par

Since all involved weights are in
$\mascP _s^0(\rr {2d})$, it follows
from the definitions that for some
$v_1,v_2,v_3\in \mascP _s^0(\rr {2d})$, we have
\begin{align*}
\frac 1{\omega (x,\xi )}
&\lesssim
\frac 1{\omega (x,\eta )}\cdot v_1(0,\xi -\eta )
\\[1ex]
&\lesssim
\frac 1{\omega (x+A(y-x),\eta +A^*(\xi -\eta))}
\cdot v_2(y-x,\xi -\eta ),
\intertext{and}
\frac {\omega _2(x,\xi)}{\omega _1(x,\xi )}
&\lesssim
\frac {\omega _2(x,\xi)}{\omega _1(y,\eta )}
\cdot v_3(y-x,\xi -\eta ).
\end{align*}
A combination of these inequalities
gives \eqref{Eq:WeightPseudoRel}$'$
with $\omega _0\in \mascP _s^0(\rr {4d})$
given by
$$
\omega _0(x,\xi ,\eta ,y)
=
\frac {v_0(-y,\eta )v_2(y,\eta )v_3(y,\eta)}
{\omega (x,\xi )}.
$$
The result now follows by combining
Theorem \ref{Thm:PseudoCont2} with
Proposition \ref{Prop:SmoothSymbModSp}.
\end{proof}

\par

\subsection{Isomorphisms
of pseudo-differential operators}

\par

Several isomorphisms and lifting properties
rely on existence of
pairs of pseudo-differential
operators, which are inverses to each others
and with symbols in suitable
$\Gamma ^{(\omega )}_{s}$ classes. This
is, in the Roumieu case guaranteed by
the following result. (See Remark
\ref{Rem:IsomPseudoOtherCases}
below for the other cases.)

\par

\begin{thm}\label{Thm:Identification}
Let $s\ge 1$, $\mascB$ be a normal QBF space on
$\rr {2d}$ with respect to $r_0\in (0,1]$ and
$v_0\in \mascP _s^0(\rr {2d})$,
$A\in \GL (d,\mathbf R)$, and
$\omega ,\omega _0\in \mathscr P_s^0(\rr {2d})$. Then
there exist
$\fka \in \Gamma ^{(\omega _0)}_{s}(\rr {2d})$
and
$\fkb \in \Gamma ^{(1/\omega _0)}_{s}(\rr {2d})$
such that
\begin{equation}\label{Eq:abInverse1}
\op _A(\fka )\circ \op _A(\fkb )
=
\op _A(\fkb)\circ \op _A(\fka )
=
\operatorname {Id}_{\maclS _s'(\rr d)}.
\end{equation}
Furthermore, $\op _A(\fka )$ is a
homeomorphism from $M(\omega ,\mathscr B)$
onto $M(\omega /\omega _0,\mathscr B )$, for
every $\omega _0\in \mathscr P_s^0(\rr {2d})$.
\end{thm}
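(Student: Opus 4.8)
The plan is to separate the statement into two logically independent parts: (i) the construction of the inverse pair $\fka\in\Gamma^{(\omega _0)}_s(\rr{2d})$, $\fkb\in\Gamma^{(1/\omega _0)}_s(\rr{2d})$ realizing the operator identities \eqref{Eq:abInverse1}, which is a statement purely about the symbolic calculus and does not involve $\mascB$; and (ii) the homeomorphism property on $M(\omega ,\mascB )$, which I would derive from (i) together with the continuity result Proposition \ref{Prop:PseudoContModSmoothSymb}. The genuinely new content is (ii), while (i) is the analytically deepest ingredient and can be imported from the Roumieu Gelfand--Shilov lifting machinery of \cite{CaTo,AbCoTo} (with the Schwartz/Banach prototype in \cite{GroTof1}).

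For (i) I would first replace $\omega _0$ by a smooth elliptic representative: since $\omega _0\in\mascP _s^0(\rr{2d})$ one may choose $\fka\in\Gamma^{(\omega _0)}_s(\rr{2d})$ with $\fka\asymp\omega _0$ (for instance by convolving $\omega _0$ with a Gelfand--Shilov mollifier), whence, by ellipticity and the Fa\`a di Bruno formula, $1/\fka\in\Gamma^{(1/\omega _0)}_s(\rr{2d})$; this identifies the target class for $\fkb$. Because the relevant symbol classes are modelled on the constant metric, an ordinary parametrix iteration does \emph{not} lower the order, so the existence of a genuine two-sided inverse $\op _A(\fkb)$ with $\fkb\in\Gamma^{(1/\omega _0)}_s$ must instead be obtained from the inverse-closedness (spectral invariance) of the associated weighted Sj\"ostrand-type symbol algebra. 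This is precisely where the hypothesis $\omega _0\in\mascP _s^0$ enters: for $s\ge 1$ every weight in $\mascP _s^0$ is a GRS weight (indeed $\mascP _s^0\subseteq\mascP _E^0$, since $|Y|^{1/s}\le |Y|$ for $|Y|\ge 1$), which is the exact condition guaranteeing inverse-closedness. I expect this to be the main obstacle; it is carried out following \cite{CaTo,AbCoTo}. That $\op _A(\fka)\op _A(\fkb)=\op _A(\fkb)\op _A(\fka)=\operatorname{Id}$ holds on all of $\maclS _s'(\rr d)$, and not merely on $L^2$, then follows from the continuity of $\op _A(\fka)$ and $\op _A(\fkb)$ on $\maclS _s'(\rr d)$ provided by Proposition \ref{Prop:PseudoContSmoothSymb}(2).

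Granting (i), part (ii) is quick. Since $\mascP _s^0(\rr{2d})$ is a group under multiplication (Remark \ref{Rem:WeightClGrpProp}), we have $\omega/\omega _0\in\mascP _s^0(\rr{2d})$, and $v_0\in\mascP _s^0(\rr{2d})$ by hypothesis. Applying Proposition \ref{Prop:PseudoContModSmoothSymb}(2) with the role of its symbol weight played by $\omega _0$ and with $\omega _1=\omega$, $\omega _2=\omega/\omega _0$, condition \eqref{Eq:WeightCondPsedoMap} reads $\omega _0\lesssim\omega/(\omega/\omega _0)=\omega _0$, which holds trivially, so $\op _A(\fka)$ is continuous from $M(\omega ,\mascB )$ to $M(\omega/\omega _0,\mascB )$. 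Symmetrically, with symbol weight $1/\omega _0$, $\omega _1=\omega/\omega _0$ and $\omega _2=\omega$, condition \eqref{Eq:WeightCondPsedoMap} becomes $1/\omega _0\lesssim(\omega/\omega _0)/\omega=1/\omega _0$, so $\op _A(\fkb)$ is continuous from $M(\omega/\omega _0,\mascB )$ to $M(\omega ,\mascB )$.

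Finally I would combine the two continuities with the inverse relations from (i). Both $M(\omega ,\mascB )$ and $M(\omega/\omega _0,\mascB )$ embed continuously into $\maclS _s'(\rr d)$ by Proposition \ref{Prop:BasicEmbModSp2Roum} (using $\mascP _s^0\subseteq\mascP _{s,s}^0$ for $s\ge 1$, by subadditivity of $t\mapsto t^{1/s}$), so the identities \eqref{Eq:abInverse1}, valid on $\maclS _s'(\rr d)$, restrict to these modulation spaces. Hence $\op _A(\fka)\colon M(\omega ,\mascB )\to M(\omega/\omega _0,\mascB )$ is a continuous bijection whose inverse $\op _A(\fkb)$ is continuous, i.e.\ a homeomorphism, which is the assertion. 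The only bookkeeping point requiring care is that the symbols $\fka$ and $\fkb$ produced in (i) are the \emph{same} objects used throughout, so that the operator identities and the modulation-space continuities refer to one and the same pair.
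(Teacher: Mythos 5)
Your proposal follows essentially the same route as the paper: part (i) is imported from \cite[Theorem 4.1]{AbCoTo} (your sketch of the elliptic regularization and spectral-invariance machinery is a correct account of what lies inside that citation, but you ultimately defer to the same source), and part (ii) is obtained exactly as in the paper by combining Proposition \ref{Prop:BasicEmbModSp2Roum}, Proposition \ref{Prop:PseudoContModSmoothSymb} and \eqref{Eq:abInverse1}, with your verification of condition \eqref{Eq:WeightCondPsedoMap} being the trivial check $\omega _0\lesssim \omega _0$ in both directions. The argument is correct.
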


\par

\begin{proof}
The existence of
$\fka \in \Gamma ^{(\omega _0)}_s(\rr {2d})$ 
and
$\fkb \in \Gamma ^{(1/\omega _0)}_s(\rr {2d})$
such that \eqref{Eq:abInverse1} holds, follows
from \cite[Theorem 4.1]{AbCoTo}. The asserted
homeomorphism properties now follow by combining
Proposition \ref{Prop:BasicEmbModSp2Roum},
Proposition \ref{Prop:PseudoContModSmoothSymb}
and \eqref{Eq:abInverse1}.
%
%
\end{proof}


\par

If $\fka ,\fkb \in \Sigma _1'(\rr {2d})$
are such that $\op _A(\fka )\circ \op _A(\fkb )$
is well-defined as continuous map from
$\Sigma _1(\rr d)$ to $\Sigma _1'(\rr d)$,
then the $A$-twisted product $\fka \wpr _A \fkb$
of $\fka$ and $\fkb$ is defined by the formula
$$
\op _A(\fka \wpr _A \fkb )
=
\op _A(\fka )\circ \op _A(\fkb ).
$$
It follows that $\fka \wpr _A \fkb$
is uniquely defined as an element
in $\Sigma _1'(\rr {2d})$, in view of
Remark \ref{Rem:BijKernelsOps}.
Since the Weyl case is peculiar
interesting, we put $\wpr _A=\wpr$,
when $A=\frac 12\cdot I$

\par

From the fact that
the symbol for the identity operator
equals $1$, it follows that
\eqref{Eq:abInverse1} is equivalent to
\begin{equation}\tag*{(\ref{Eq:abInverse1})$'$}
\fka \wpr _A \fkb 
=
\fkb \wpr _A \fka 
=
1.
\end{equation}

\par

We also have the following  extension
of the second parts of Theorems 4.1 and 4.2
in \cite{AbCoTo}.

\par

\begin{prop}
\label{Prop:PseudoIsomEquiv}
Let $s\ge 1$,
$A\in \GL (d,\mathbf R)$,
$\omega
\in \mathscr P_s^0(\rr {2d})$, and
$\fka \in \Gamma ^{(\omega )}_s(\rr {2d})$.
Then the
following conditions are equivalent:
\begin{enumerate}
\item $\op _A(\fka )$ is an 
isomorphism from $M^2_{(\omega _1)}(\rr d)$ to $M^2_{(\omega
_1/\omega )} (\rr d)$ for some $\omega _1\in \mathscr P_{s}^0(\rr
{2d})$;

\vrum

\item $\op _A(\fka )$
is an isomorphism from $M(\omega
_2 ,\mathscr B)$ to $M(\omega _2/\omega ,\mathscr B)$ for
every $\omega _2\in \mathscr P_{s}^0(\rr {2d})$
and normal QBF space $\mascB$ on $\rr {2d}$
with respect to $r_0\in (0,1]$ and
$v_0\in \mascP _s^0(\rr {2d})$
\end{enumerate}
If {\rm{(1)}} or {\rm{(2)}} hold, then
the inverse of $\op _A(\fka )$
is given by $\op _A(\fkb )$ for some
$\fkb \in \Gamma ^{(1/\omega )}_{s}(\rr {2d})$,
and \eqref{Eq:abInverse1} holds.
%
%
\end{prop}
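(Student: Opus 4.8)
The plan is to prove the two implications $(2)\Rightarrow (1)$ and $(1)\Rightarrow (2)$, and to extract the final clause as a by-product of the second implication. The implication $(2)\Rightarrow (1)$ is immediate: the space $L^2(\rr d)=M^2(\rr d)$ is a normal QBF space of order $r_0=1$ with $v_0\equiv 1\in \mascP _s^0(\rr {2d})$, so choosing $\mascB =L^2$ and $\omega _2=\omega _1$ in $(2)$ reduces it to $(1)$.

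For the substantial direction $(1)\Rightarrow (2)$, I would first invoke the Hilbert space version of the statement, namely the second parts of \cite[Theorems 4.1 and 4.2]{AbCoTo}: since $\fka \in \Gamma _s^{(\omega )}(\rr {2d})$ and $\op _A(\fka )$ is an isomorphism from $M^2_{(\omega _1)}(\rr d)$ onto $M^2_{(\omega _1/\omega )}(\rr d)$ for some $\omega _1$, there exists $\fkb \in \Gamma _s^{(1/\omega )}(\rr {2d})$ such that \eqref{Eq:abInverse1} holds. This already supplies the inverse symbol asserted in the final clause, together with the operator identity $\op _A(\fka )\circ \op _A(\fkb )=\op _A(\fkb )\circ \op _A(\fka )=\operatorname{Id}_{\maclS _s'(\rr d)}$.

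Next I would transport this identity to the general modulation spaces by means of Proposition \ref{Prop:PseudoContModSmoothSymb}(2). Fix $\omega _2\in \mascP _s^0(\rr {2d})$ and a normal QBF space $\mascB$ with respect to $r_0\in (0,1]$ and $v_0\in \mascP _s^0(\rr {2d})$. Since $\mascP _s^0(\rr {2d})$ is a group under multiplication, $\omega _2/\omega \in \mascP _s^0(\rr {2d})$. Applying Proposition \ref{Prop:PseudoContModSmoothSymb}(2) to $\fka$, with the pair $(\omega _2,\omega _2/\omega )$ in the roles of $(\omega _1,\omega _2)$ there, shows that $\op _A(\fka )$ is continuous from $M(\omega _2,\mascB )$ to $M(\omega _2/\omega ,\mascB )$; here \eqref{Eq:WeightCondPsedoMap} reduces to $\omega \lesssim \omega _2/(\omega _2/\omega )=\omega$ and thus holds trivially. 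Applying the same proposition to $\fkb \in \Gamma _s^{(1/\omega )}$, with the pair $(\omega _2/\omega ,\omega _2)$, where \eqref{Eq:WeightCondPsedoMap} reads $1/\omega \lesssim (\omega _2/\omega )/\omega _2=1/\omega$, shows that $\op _A(\fkb )$ is continuous from $M(\omega _2/\omega ,\mascB )$ to $M(\omega _2,\mascB )$.

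Finally, since $M(\omega _2,\mascB )$ and $M(\omega _2/\omega ,\mascB )$ are continuously embedded in $\maclS _s'(\rr d)$ by Proposition \ref{Prop:BasicEmbModSp2Roum}, the two continuous maps just produced are restrictions of the operators $\op _A(\fka )$ and $\op _A(\fkb )$ acting on $\maclS _s'(\rr d)$. Consequently \eqref{Eq:abInverse1} restricts to give $\op _A(\fka )\circ \op _A(\fkb )=\operatorname{Id}$ on $M(\omega _2/\omega ,\mascB )$ and $\op _A(\fkb )\circ \op _A(\fka )=\operatorname{Id}$ on $M(\omega _2,\mascB )$, so $\op _A(\fka )$ is an isomorphism from $M(\omega _2,\mascB )$ onto $M(\omega _2/\omega ,\mascB )$ with inverse $\op _A(\fkb )$. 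As $\omega _2$ and $\mascB$ are arbitrary, this establishes $(2)$. I expect the only genuinely hard input to be the spectral-invariance statement producing $\fkb \in \Gamma _s^{(1/\omega )}$, which is carried entirely by \cite{AbCoTo}; the remaining steps are the routine verification of \eqref{Eq:WeightCondPsedoMap} and the density-free restriction of the operator identity from $\maclS _s'(\rr d)$ to the modulation spaces.
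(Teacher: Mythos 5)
Your proposal is correct and follows essentially the same route as the paper: the implication (2)$\Rightarrow$(1) by specializing $\mascB$ to $L^2(\rr {2d})$ and $\omega _2=\omega _1$, then (1)$\Rightarrow$(2) by invoking \cite[Theorem 4.1]{AbCoTo} to produce $\fkb \in \Gamma ^{(1/\omega )}_{s}(\rr {2d})$ with \eqref{Eq:abInverse1}, followed by Proposition \ref{Prop:PseudoContModSmoothSymb} to transfer continuity to the spaces $M(\omega _2,\mascB )$ and $M(\omega _2/\omega ,\mascB )$. Your write-up merely makes explicit what the paper leaves implicit — the trivial verification of \eqref{Eq:WeightCondPsedoMap} in both directions and the restriction of the operator identity from $\maclS _s'(\rr d)$ to the modulation spaces via Proposition \ref{Prop:BasicEmbModSp2Roum} — so there is nothing to correct.
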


\par

\begin{proof}
Evidently, (2) implies (1). Suppose that
(1) holds. Then \cite[Theorem 4.1]{AbCoTo}
shows that the inverse is given by
$\op _A(\fkb)$, for some
$\fkb \in \Gamma ^{(1/\omega )}_{s}(\rr {2d})$,
and \eqref{Eq:abInverse1} holds.
The assertion (2) now follows from
Proposition \ref{Prop:PseudoContModSmoothSymb},
giving the result.
\end{proof}

\par

\begin{rem}
\label{Rem:IsomPseudoOtherCases}
For $s>1$, Theorem \ref{Thm:Identification}
and Proposition \ref{Prop:PseudoIsomEquiv}
hold true also with $\mascP _s$,
$\Gamma ^{(\omega )}_{0,s}$,
$\Gamma ^{(1/\omega )}_{0,s}$ and
$\Sigma _s'$, or with
$\mascP$,
$S^{(\omega )}$,
$S^{(1/\omega )}$ and
$\mascS '$,
in place of $\mascP _s^0$,
$\Gamma ^{(\omega )}_{s}$,
$\Gamma ^{(1/\omega )}_{s}$ and
$\maclS _s'$, respectively, at each
occurrence.
These cases follow by similar arguments,
using \cite[Theorem 4.2]{AbCoTo} and
Proposition \ref{Prop:BasicEmbModSp2Beurl},
or \cite[Theorem 2.1]{GroTof1} and
Proposition \ref{Prop:BasicEmbModSp2Schw},
in place of \cite[Theorem 4.1]{AbCoTo}
and Proposition \ref{Prop:BasicEmbModSp2Roum}.
The details are left for the reader.
\end{rem}

\par

\subsection{Liftings with Toeplitz operators}

\par

Our next goal is to
extend Theorems 0.1$'$, 5.1$'$ and 5.2$'$
in \cite{AbCoTo}. (See Theorems
\ref{Thm:ToeplLift1} and \ref{Thm:ToeplLift2}
below.)

\par

For convenience, we put
$$
\splM ^{p,q}_{(\omega )}(\rr {2d})
=
M^{p,q}_{(\omega _0)}(\rr {2d})
\quad \text{when}\quad
\omega (x,\xi ,y,\eta )
=
\omega _0(x,\xi ,-2\eta ,2y),
$$
and note that in some of our lifting results,
involved weights satisfy conditions
of the form
\begin{equation}\label{Eq:omega0t}
\omega _{0,t}(X,Y)=v_1(2Y)^{t-1}\omega _0(X) 
\qquad \text{ for } X,Y
\in \rr {2d}, \ t\in [0,1]
\end{equation}
(see \cite{AbCoTo}).

\begin{rem}
The space
$\splM ^{p,q}_{(\omega )}(\rr {2d})$ 
is usually called 
symplectic modulation space, which
appear when using the symplectic
short-time Fourier transform in place
of (ordinary) short-time Fourier transform
in the definition of the modulation space
norm.
We refer
to \cite[Section 3]{Toft18}
and the references therein,
for details.
\end{rem}

\par


\begin{thm}
\label{Thm:ToeplLift1}
Let $s\ge 1$, $\omega ,\omega _0,v\in
\mascP _{s}^0(\rr {2d})$ be
such that $\omega _0$ is $v$-moderate,
and let $\mascB$ be a normal QBF space on
$\rr {2d}$ with respect to $v_0 \equiv 1$.
Also suppose that one of the following
conditions hold:
\begin{enumerate}
\item $\phi \in M^2_{(v)}(\rr d)\setminus 0$, and
that in addition $\omega _0
\in \Gamma ^{(\omega _0)}_{s}(\rr {2d})$;

\vrum

\item $\phi \in \maclS _s(\rr d)\setminus 0$.
\end{enumerate}
Then $\tp _\phi (\omega _0)$ is an
isomorphism from $M(\omega ,\mathscr B)$ to 
$M(\omega /\omega _0,\mathscr B)$.
\end{thm}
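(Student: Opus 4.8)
The plan is to realize $\tp _\phi (\omega _0)$ as a Weyl pseudo-differential operator with elliptic Shubin-type symbol, and then to push the resulting Hilbert-space isomorphism up to all $M(\cdot ,\mascB )$ by means of Proposition \ref{Prop:PseudoIsomEquiv}. By \eqref{Eq:ToeplWeyl} with $\phi _1=\phi _2=\phi$ we have
\begin{equation*}
\tp _\phi (\omega _0)=\op ^w(\fka ),
\qquad
\fka =\omega _0*u,
\quad
u(X)=(2\pi )^{-\frac d2}W_{\phi ,\phi }(-X).
\end{equation*}
Since $\omega _0\in \mascP _s^0(\rr {2d})$, the goal is to show that $\fka \in \Gamma ^{(\omega _0)}_{s}(\rr {2d})$ and that $\op ^w(\fka )$ is an isomorphism from $M^2_{(\omega _0^{1/2})}(\rr d)$ onto $M^2_{(\omega _0^{-1/2})}(\rr d)$; here $\omega _0^{1/2}\in \mascP _s^0(\rr {2d})$ because $\mascP _s^0$ is a group under multiplication and the defining bound \eqref{Eq:Gsmodw2} is stable under taking square roots. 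Once both facts are in hand, condition (1) of Proposition \ref{Prop:PseudoIsomEquiv} holds (with the proposition's weight equal to $\omega _0$ and $\omega _1=\omega _0^{1/2}$), and its condition (2) delivers the conclusion for our $\omega$ and $\mascB$, since $v_0\equiv 1\in \mascP _s^0(\rr {2d})$.

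For the symbol-class membership I would treat the two hypotheses separately, following the analysis behind Theorems 5.1$'$ and 5.2$'$ in \cite{AbCoTo}. In case (2), $\phi \in \maclS _s(\rr d)$ forces $u\in \maclS _s(\rr {2d})$, whence $\partial ^\alpha \fka =\omega _0*\partial ^\alpha u$; the Gelfand--Shilov decay of $\partial ^\alpha u$ together with the $v$-moderateness of $\omega _0$ yields $|\partial ^\alpha \fka (X)|\lesssim h^{|\alpha |}\alpha !^s\,\omega _0(X)$ for some $h>0$, i.e. $\fka \in \Gamma ^{(\omega _0)}_{s}$. In case (1) the window lies only in $M^2_{(v)}(\rr d)$, so $u$ need not be smooth; here the extra hypothesis $\omega _0\in \Gamma ^{(\omega _0)}_{s}$ is precisely what allows me to transfer all derivatives onto the weight via $\partial ^\alpha \fka =(\partial ^\alpha \omega _0)*u$, after which the bound $|\partial ^\alpha \omega _0|\lesssim h^{|\alpha |}\alpha !^s\omega _0$ and the integrability of $u$ against the relevant moderate weight (controlled through Lemma \ref{Lemma:PropWigDistr} applied to $W_{\phi ,\phi }$) again give $\fka \in \Gamma ^{(\omega _0)}_{s}$.

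The $L^2$-isomorphism is where positivity enters: for $f\in \Sigma _1(\rr d)$,
\begin{equation*}
(\tp _\phi (\omega _0)f,f)_{L^2(\rr d)}
=
\int _{\rr {2d}}\omega _0(X)\,|V_\phi f(X)|^2\,dX
\asymp
\nm f{M^2_{(\omega _0^{1/2})}}^2,
\end{equation*}
the equivalence following from the change-of-window estimate \eqref{Eq:change-of-window} and the moderateness of $\omega _0$. Thus the Hermitian form attached to $\tp _\phi (\omega _0)$ is bounded and coercive on $M^2_{(\omega _0^{1/2})}(\rr d)$, whose dual is $M^2_{(\omega _0^{-1/2})}(\rr d)$ (Proposition \ref{Prop:ModDuality}); a Lax--Milgram argument, or directly \cite[Theorems 5.1$'$, 5.2$'$]{AbCoTo} in the Hilbert case, shows that $\op ^w(\fka )$ is an isomorphism between these two spaces. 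Combining this with the symbol membership and invoking Proposition \ref{Prop:PseudoIsomEquiv} gives that $\op ^w(\fka )=\tp _\phi (\omega _0)$ is an isomorphism from $M(\omega ,\mascB )$ onto $M(\omega /\omega _0,\mascB )$, as claimed. I expect the main obstacle to be the symbol-class membership in case (1): the non-smoothness of $u$ has to be compensated entirely by the regularity assumption on $\omega _0$, while one still controls the convolution of an elliptic weight with the (merely distributional) Wigner transform of an $M^2_{(v)}$-window.
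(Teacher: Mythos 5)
Your overall route is the same as the paper's: write $\tp _\phi (\omega _0)=\op ^w(\fka )$ with $\fka =\omega _0*u$ via \eqref{Eq:ToeplWeyl}, verify $\fka \in \Gamma ^{(\omega _0)}_{s}(\rr {2d})$, establish the Hilbert-space isomorphism $M^2_{(\omega _0^{1/2})}(\rr d)\to M^2_{(\omega _0^{-1/2})}(\rr d)$, and lift to all $M(\omega ,\mascB )$ through Proposition \ref{Prop:PseudoIsomEquiv}; this is exactly how the paper combines Theorem \ref{Thm:Identification}, Proposition \ref{Prop:PseudoIsomEquiv}, Lemma \ref{Lem:BijectionHilbCase} and Lemma \ref{Lem:Aomegaproperties}. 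Your treatment of case (2) is sound: $\phi \in \maclS _s(\rr d)$ gives $u\in \maclS _s(\rr {2d})$, derivatives may be placed on $u$, and since $\maclS _s(\rr d)\subseteq M^1_{(v)}(\rr d)$ all the convolution estimates close.

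The gaps are in case (1), and they sit precisely at the two points where the paper cites \cite{AbCoTo} instead of arguing directly. First, the symbol membership: from $\partial ^\alpha \fka =(\partial ^\alpha \omega _0)*u$ and $|\partial ^\alpha \omega _0|\lesssim h^{|\alpha |}\alpha !^s\omega _0$ you still need $(\omega _0*|u|)(X)\lesssim \omega _0(X)$, i.e. $u\in L^1_{(v)}(\rr {2d})$. Lemma \ref{Lemma:PropWigDistr} with $\phi \in M^2_{(v)}(\rr d)$ only yields $W_{\phi ,\phi}\in M^2_{(\cdot )}(\rr {2d})$, an $L^2$-type bound, and $M^2$-membership does not give $L^1_{(v)}$-integrability; in fact $W_{\phi ,\phi}\in L^1_{(v)}$ essentially forces $\phi$ to have $M^1_{(v)}$-regularity, which fails for general $\phi \in M^2_{(v)}$. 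Second, the coercivity: the equivalence $\nm {V_\phi f}{L^2_{(\omega _0^{1/2})}}\asymp \nm f{M^2_{(\omega _0^{1/2})}}$ does \emph{not} follow from \eqref{Eq:change-of-window} and moderateness of $\omega _0$, because running that pointwise estimate through Young's inequality requires the factor $|V_\phi \psi |$ (with $\psi$ a fixed nice window) to lie in $L^1_{(v)}$, i.e. again $\phi \in M^1_{(v)}$; for two $L^2$-factors Young only produces an $L^\infty$ bound. The validity of both statements for windows that are merely in $M^2_{(v)}$ is the genuinely hard content of \cite{GroTof1,AbCoTo} (Lemma 5.4 and Proposition 5.6 in \cite{AbCoTo}), which the paper imports as Lemmas \ref{Lem:BijectionHilbCase} and \ref{Lem:Aomegaproperties}. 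Your fallback of invoking \cite[Theorems 5.1$'$, 5.2$'$]{AbCoTo} would repair both steps and reduce your argument to the paper's proof, but then the two sketched "self-contained" justifications should be removed, since as written they would fail.
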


\par

\begin{thm}
\label{Thm:ToeplLift2}
Let $s> 1$, $0\le t\le 1$,
$q_0\in [1,\infty]$,
$\omega ,\omega _0,v,v_1\in
\mascP _{s}(\rr {2d})$ be such that
$\omega _0$ is
$v$-moderate and  $\omega $ is
$v_1$-moderate, and let
$\mascB$ be an invariant BF space
with respect to $v_0 \equiv 1$. Set
$$
r_0=\frac {2q_0}{(2q_0-1)},\quad
\vartheta =\omega _0^{1/2},
$$
and
let $\omega _{0,t}$ be the same as in 
\eqref{Eq:omega0t}.
If  $\phi \in M^{r_0}_{(v_1^tv)}(\rr d)$
and $\omega _0\in \splM ^{\infty ,q_0}
_{(1/\omega _{0,t})}(\rr {2d})$,  then
$\tp _\phi (\omega _0)$ is
an isomorphism from $M(\vartheta
\omega ,\mascB )$ to
$M(\omega /\vartheta,\mascB )$.
\end{thm}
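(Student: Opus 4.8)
The plan is to realize $\tp _\phi (\omega _0)$ as an elliptic Weyl operator with a Gevrey-regular symbol, and then to transfer its invertibility from the $L^2$-based setting to the broad space $M(\vartheta \omega ,\mascB )$ by means of Proposition \ref{Prop:PseudoIsomEquiv}. First I would use \eqref{Eq:ToeplWeyl} to write $\tp _\phi (\omega _0)=\op ^w(\fkb )$ with $\fkb =\omega _0*u$ and $u(X)=(2\pi )^{-\frac d2}W_{\phi ,\phi }(-X)$, $X\in \rr {2d}$. The central and most delicate task is to show that $\fkb \in \Gamma ^{(\omega _0)}_{0,s}(\rr {2d})$. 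By Proposition \ref{Prop:SmoothSymbModSp} this is equivalent to the family of bounds $\fkb \in M^{\infty ,q}_{(1/\widetilde \omega _r)}(\rr {2d})$ for every $r>0$ (with $q$ as in that proposition), where $\widetilde \omega _r(X,\Xi )=\omega _0(X)e^{-r|\Xi |^{1/s}}$, $X,\Xi \in \rr {2d}$.

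To establish this I would combine Lemma \ref{Lemma:PropWigDistr}, which places the Wigner distribution $W_{\phi ,\phi }$, and hence $u$, in a suitable weighted $M^{r_0}$-space once $\phi \in M^{r_0}_{(v_1^tv)}(\rr d)$, with the classical-modulation-space convolution estimate recalled in Remark \ref{Rem:ConvClassicMod}. The exponent identity $r_0=2q_0/(2q_0-1)$ is precisely the second alternative in \eqref{Eq:LebExpToeplToPseudo}, so that the Young--H\"older bookkeeping \eqref{Eq:HolderYoungCond} is satisfied, and the symplectic-modulation hypothesis $\omega _0\in \splM ^{\infty ,q_0}_{(1/\omega _{0,t})}(\rr {2d})$ is what forces the convolution $\omega _0*u$ into the weighted $M^{\infty ,q}$-spaces above for all $r>0$. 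This is exactly the symbol-level computation carried out in \cite[Theorem 5.2$'$]{AbCoTo}, which I would quote rather than repeat; alternatively it can be organized as repeated applications of Theorem \ref{Thm:ToeplPseudoSymbClassMod}.

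Next I would record the ellipticity of the symbol. Since $\omega _0$ is a positive $v$-moderate weight and $\phi \neq 0$, the symbol $\fkb =\omega _0*u$ is elliptic in the $\Gamma ^{(\omega _0)}_{0,s}$-scale, and by \cite[Theorem 5.2$'$]{AbCoTo} the operator $\tp _\phi (\omega _0)$ is an isomorphism from $M^2_{(\omega _1)}(\rr d)$ onto $M^2_{(\omega _1/\omega _0)}(\rr d)$ for a suitable $\omega _1\in \mascP _s(\rr {2d})$. In the notation of Proposition \ref{Prop:PseudoIsomEquiv} this is condition (1), applied to $\fka =\fkb $ and symbol weight $\omega _0$, in the Beurling variant of Remark \ref{Rem:IsomPseudoOtherCases} (which is available because $s>1$ and $\omega _0\in \mascP _s$). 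Consequently Proposition \ref{Prop:PseudoIsomEquiv}(2) yields that $\op ^w(\fkb )=\tp _\phi (\omega _0)$ is an isomorphism from $M(\omega _2,\mascB )$ onto $M(\omega _2/\omega _0,\mascB )$ for every $\omega _2\in \mascP _s(\rr {2d})$ and every normal QBF space $\mascB $ with respect to $r_0\in (0,1]$ and $v_0\in \mascP _s^0(\rr {2d})$; an invariant BF space with $v_0\equiv 1$ is the case $r_0=1$. Taking $\omega _2=\vartheta \omega =\omega _0^{1/2}\omega \in \mascP _s(\rr {2d})$ gives $\omega _2/\omega _0=\omega /\omega _0^{1/2}=\omega /\vartheta $, which is the asserted isomorphism $\tp _\phi (\omega _0)\colon M(\vartheta \omega ,\mascB )\to M(\omega /\vartheta ,\mascB )$.

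The hard part will be the first step: neither $\omega _0$ nor $u$ is smooth a priori, so the Gevrey-$s$ regularity and the correct weight $\omega _0$ of the Weyl symbol $\fkb =\omega _0*u$ must be recovered entirely from the interplay between the window regularity $\phi \in M^{r_0}_{(v_1^tv)}$, the symplectic-modulation condition on $\omega _0$, and the exponent identity $r_0=2q_0/(2q_0-1)$. Once $\fkb $ is placed in $\Gamma ^{(\omega _0)}_{0,s}$ and the $L^2$-isomorphism is in hand, the passage to the broad family $M(\omega ,\mascB )$ is supplied entirely by Proposition \ref{Prop:PseudoIsomEquiv}, which is precisely where the norm equivalences of Section \ref{sec3} enter (through Proposition \ref{Prop:PseudoContModSmoothSymb}).
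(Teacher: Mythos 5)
There is a genuine gap, and it sits exactly where you predicted the difficulty would be: your central claim that $\fkb =\omega _0*u\in \Gamma ^{(\omega _0)}_{0,s}(\rr {2d})$ is not obtainable from the hypotheses of Theorem \ref{Thm:ToeplLift2} — in general it is false. The assumptions give only $\omega _0\in \splM ^{\infty ,q_0}_{(1/\omega _{0,t})}(\rr {2d})$ and $\phi \in M^{r_0}_{(v_1^tv)}(\rr d)$; by Lemma \ref{Lemma:PropWigDistr} and the convolution estimates (Remark \ref{Rem:ConvClassicMod}, Theorem \ref{Thm:ToeplPseudoSymbClassMod}) this places $\fkb$ only in a Sj{\"o}strand-type symplectic modulation space $\splM ^{\infty ,1}_{(\omega _1)}(\rr {2d})$, whose elements need not even be differentiable. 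By contrast, membership in $\Gamma ^{(\omega _0)}_{0,s}$ is, via Proposition \ref{Prop:SmoothSymbModSp}, equivalent to $\fkb \in M^{\infty ,q}_{(1/\omega _{0,r})}$ for \emph{every} $r>0$, i.e. to short-time Fourier decay of order $e^{-r|\,\cdot \,|^{1/s}}$ in the frequency variables for all $r>0$; the hypothesis on $\omega _0$ only controls $V\omega _0$ against the factor $v_1(2Y)^{1-t}$, which is far weaker, and no choice of $q_0$, $t$ repairs this. This is precisely the dividing line between Theorem \ref{Thm:ToeplLift1} (where the extra hypotheses $\omega _0\in \Gamma ^{(\omega _0)}_{s}$ or $\phi \in \maclS _s$ are imposed exactly so that the Gevrey/ellipticity route through Lemma \ref{Lem:Aomegaproperties} and Proposition \ref{Prop:PseudoIsomEquiv} works) and Theorem \ref{Thm:ToeplLift2} (which is designed for symbols that are \emph{not} in any $\Gamma$-class). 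Your appeal to \cite[Theorem 5.2$'$]{AbCoTo} is also a mis-citation: that result is the Lebesgue-space special case of the very theorem being proved, and it does not proceed through Gevrey classes either.

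The paper's proof avoids $\Gamma$-classes entirely. It obtains the $M^2_{(\vartheta )}\to M^2_{(1/\vartheta )}$ isomorphism from Lemma \ref{Lem:BijectionHilbCase} (your reduction $\phi \in M^{r_0}_{(v_1^tv)}\subseteq M^2_{(v)}$ is fine for this); writes $\tp _\phi (\omega _0)=\op ^w(\fkb )$ with $\fkb \in \splM ^{\infty ,1}_{(\omega _1)}$ via Theorem \ref{Thm:ToeplPseudoSymbClassMod}; invokes Lemma \ref{Lemma:CorWeyl2} to conclude that the inverse (which a priori exists only at the $M^2$ level) equals $\op ^w(\fkc )$ with $\fkc \in \splM ^{\infty ,1}_{(\omega _2)}$; gets continuity of both $\op ^w(\fkb ):M(\omega \vartheta ,\mascB )\to M(\omega /\vartheta ,\mascB )$ and $\op ^w(\fkc ):M(\omega /\vartheta ,\mascB )\to M(\omega \vartheta ,\mascB )$ from the non-smooth-symbol result Theorem \ref{Thm:PseudoCont2}; and finally uses the Weyl product estimate (Proposition \ref{Prop:Weylprodmod}, with the weight computation $\omega _1(X-Y+Z,Z)\omega _2(X+Z,Y-Z)\gtrsim \omega _3(X,Y)$) to make sense of $\fkb \wpr \fkc$ and $\fkc \wpr \fkb$ and to conclude from the $M^2$-level inversion that both equal $1$, so that the identity on $M(\omega /\vartheta ,\mascB )$ factors through $M(\omega \vartheta ,\mascB )$. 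If you want to salvage your write-up, replace the Gevrey step by this $\splM ^{\infty ,1}$/Weyl-product argument; the rest of your weight bookkeeping ($\omega _2=\vartheta \omega$, $\omega _2/\omega _0=\omega /\vartheta$) then goes through.
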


\par

We observe that Theorem
\ref{Thm:ToeplLift2Intro}
follows by choosing
$q_0=\infty$
and $t=1$ in Theorem
\ref{Thm:ToeplLift2}.

\par

\begin{rem}
If we choose $\mascB$ to be a mixed
norm space of Lebesgue type in
Theorem \ref{Thm:ToeplLift2},
we obtain
\cite[Theorem 5.2$'$]{AbCoTo}
as a special case. Hence
Theorem \ref{Thm:ToeplLift2},
is more general than
\cite[Theorem 5.2$'$]{AbCoTo}.

\par

At first glance, it might seem that
Theorems \ref{Thm:Identification}
is a pure restatement of
\cite[Theorem 4.1]{AbCoTo}, and that
Theorem \ref{Thm:ToeplLift1}
is a pure unification of
Theorems 0.1$'$ and Theorem 5.1$'$ in
\cite{AbCoTo}, since they are almost
literary
the same. However, this is an illusion,
because the definition of
\emph{normal QBF space}
in \cite{AbCoTo} is more restrictive
compared to Theorems
\ref{Thm:Identification} and
\ref{Thm:ToeplLift1}.
In fact, in
\cite{AbCoTo}, the only normal QBF spaces
which fails to be Banach spaces, are
suitable mixed norm spaces of Lebesgue
types, while in Theorems
\ref{Thm:Identification} and
\ref{Thm:ToeplLift1}, normal QBF spaces
are of such general kinds, given in
Definition \ref{Def:BFSpaces}. For example,
in Theorems \ref{Thm:Identification}
and \ref{Thm:ToeplLift1}, we may choose
$\mascB$ such that the $M(\omega ,\mascB )$
is any kind of quasi-Banach Orlicz modulation
space, given in e.{\,}g. \cite{TofUst}.
\end{rem}

\par

We need some preparations for the proofs.
The following lemma is a restatement of
\cite[Lemma 5.4]{AbCoTo}. The proof is therefore
omitted.

\par

\begin{lemma}
\label{Lem:BijectionHilbCase}
Let $s\ge 1$,  $\omega _0,v\in
\mascP _{s}(\rr {2d})$ be such
that $\vartheta = \omega _0^{1/2}$
is $v$-moderate. Assume that
$\phi \in M^2_{(v)}$. Then
$\tp _\phi (\omega _0)$ is an
isomorphism from
$M^2_{(\vartheta )}(\rr d)$
onto $M^2_{(1/\vartheta )}(\rr d)$.
\end{lemma}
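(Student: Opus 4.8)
The plan is to exploit the Hilbert-space structure of the $L^2$-based modulation spaces together with the positivity of the Toeplitz operator. The starting point is the factorisation
\[
\tp _\phi (\omega _0) = V_\phi ^*\circ M_{\omega _0}\circ V_\phi ,
\]
where $M_{\omega _0}$ is multiplication by $\omega _0$ on $\rr {2d}$ and $V_\phi ^*$ is the adjoint short-time Fourier transform \eqref{Eq:STFTAdjointFormula}. Indeed, since $\omega _0=\vartheta ^2$, the defining identity \eqref{Eq:ToepDef} gives, for $f,g\in \Sigma _1(\rr d)$,
\[
(\tp _\phi (\omega _0)f,g)_{L^2(\rr d)}
=
(\omega _0\, V_\phi f,V_\phi g)_{L^2(\rr {2d})}
=
(V_\phi f\cdot \vartheta ,V_\phi g\cdot \vartheta )_{L^2(\rr {2d})}.
\]
The right-hand side is exactly the sesqui-linear form attached to the functional $f\mapsto \nm {V_\phi f\cdot \vartheta}{L^2}$, and by Proposition \ref{Prop:Moyal} the identity extends to $f\in M^2_{(\vartheta )}(\rr d)$ and $g$ in a dense subspace. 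Thus the whole statement reduces to identifying $\tp _\phi (\omega _0)$ with a Riesz map.

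The key technical step I would isolate is the window comparison
\[
\nm {V_\phi f\cdot \vartheta}{L^2}\asymp \nm f{M^2_{(\vartheta )}},
\qquad f\in M^2_{(\vartheta )}(\rr d),
\]
that is, the admissibility of $\phi$ as a window for $M^2_{(\vartheta )}(\rr d)$. For the trivial weight $\vartheta \equiv 1$ this is precisely Moyal's identity \eqref{Eq:Moyal}, which is valid for \emph{every} window in $L^2=M^2$. For the weighted case I would write $V_\phi f=\nm \psi{L^2}^{-2}V_\phi V_\psi ^*V_\psi f$ for a fixed $\psi \in \Sigma _1(\rr d)\setminus 0$ (see \eqref{Eq:IdentSTFTAdj}), and control the resulting operator, whose kernel is governed by $V_\phi \psi$, against the weight by means of \eqref{Eq:change-of-window} and the $v$-moderateness of $\vartheta$, using $\phi \in M^2_{(v)}(\rr d)$.

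Granting this comparison, the conclusion follows from duality and the Riesz representation theorem. By Proposition \ref{Prop:ModDuality} the space $M^2_{(1/\vartheta )}(\rr d)$ is the dual of $M^2_{(\vartheta )}(\rr d)$ through the $L^2$-form, and the comparison shows that $\langle f,g\rangle := (V_\phi f\cdot \vartheta ,V_\phi g\cdot \vartheta )_{L^2}$ is an inner product on $M^2_{(\vartheta )}(\rr d)$ equivalent to its canonical one. Since $(\tp _\phi (\omega _0)f,g)_{L^2}=\langle f,g\rangle $, the functional $g\mapsto \langle f,g\rangle $ is bounded on $M^2_{(\vartheta )}(\rr d)$, hence represented by a unique element of $M^2_{(1/\vartheta )}(\rr d)$, which is $\tp _\phi (\omega _0)f$; Cauchy--Schwarz (with equality at $g=f$) then gives $\nm {\tp _\phi (\omega _0)f}{M^2_{(1/\vartheta )}}\asymp \nm f{M^2_{(\vartheta )}}$, so $\tp _\phi (\omega _0)$ is bounded and bounded below. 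Surjectivity follows because every $h\in M^2_{(1/\vartheta )}(\rr d)$ defines a bounded functional $g\mapsto (h,g)_{L^2}$, which the Riesz theorem realises as $\langle f,\cdot \rangle =(\tp _\phi (\omega _0)f,\cdot )_{L^2}$ for some $f$, forcing $\tp _\phi (\omega _0)f=h$. Hence $\tp _\phi (\omega _0)$ is an isomorphism.

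The main obstacle is the window comparison of the second paragraph, and it is genuinely the crux. It does \emph{not} follow from Theorem \ref{Thm:EquivNorms2}, whose admissible windows lie in $M^{r_0}_{(v_0v)}\subseteq M^1_{(v)}$, nor from the Toeplitz continuity Theorem \ref{Thm:GenContResForToepl}, whose exponent condition \eqref{Eq:LebExpToeplToPseudo} forces window indices at most $1$. The enlargement to the strictly larger class $M^2_{(v)}(\rr d)$ is a special feature of the self-dual Hilbert exponent $p=2$: one cannot afford the $L^1$-reproducing kernel underlying the general convolution estimates, and must instead extract the bound from Moyal's identity together with the $L^2$-level control of the (twisted) reproducing formula. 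I expect this transfer from the unweighted Moyal identity to the $v$-moderate weighted setting to be the only non-routine part of the argument.
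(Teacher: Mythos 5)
Your reduction of the lemma to the two-sided window equivalence $\nm {V_\phi f\cdot \vartheta}{L^2}\asymp \nm f{M^2_{(\vartheta )}}$, followed by the Riesz/Lax--Milgram argument through the duality of Proposition \ref{Prop:ModDuality}, is correct as far as it goes; indeed boundedness and coercivity of the form $(\tp _\phi (\omega _0)f,g)_{L^2}$ are exactly the two sides of that equivalence, so the lemma and the equivalence stand or fall together. But this is precisely where the proposal stops being a proof: the equivalence for windows $\phi \in M^2_{(v)}(\rr d)$ is the entire mathematical content of the lemma, and the method you sketch for it cannot work. Writing $V_\phi f=\nm \psi {L^2}^{-2}V_\phi V_\psi ^*V_\psi f$ and estimating via \eqref{Eq:change-of-window} and the $v$-moderateness of $\vartheta$ yields the pointwise bound $|V_\phi f|\, \vartheta \lesssim (|V_\psi f|\, \vartheta )*(|V_\phi \psi |\, v)$; to return to $L^2$ one must apply Young's inequality $L^2*L^1\to L^2$, which requires $V_\phi \psi \cdot v\in L^1(\rr {2d})$, i.e. $\phi \in M^1_{(v)}(\rr d)$ --- strictly stronger than the hypothesis. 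With $V_\phi \psi \cdot v$ only in $L^2$, Young (or Cauchy--Schwarz, or the Schur test) gives $L^2*L^2\to L^\infty$, never $L^2$, and no rearrangement of the convolution/moderateness argument repairs this, because taking absolute values discards exactly the phase cancellation that makes the $p=2$ case true. Your own final paragraph concedes the point, so the proposal is internally inconsistent: the mechanism announced in the second paragraph is the one the last paragraph rules out, and its proposed replacement ("Moyal's identity together with the $L^2$-level control of the (twisted) reproducing formula") is never instantiated.

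For comparison with the paper: the paper gives no proof either --- Lemma \ref{Lem:BijectionHilbCase} is quoted as a restatement of \cite[Lemma 5.4]{AbCoTo}, which in turn rests on the isomorphism theory for Toeplitz operators with Hilbert-space windows developed in \cite{GroTof1,ToBo08}. There, the passage from $M^1_{(v)}$ windows to $M^2_{(v)}$ windows is a substantive theorem whose proof exploits the operator-theoretic structure (positivity, Hilbert--Schmidt/twisted-convolution composition) rather than pointwise convolution estimates. So what you have actually established is the correct but soft statement that the lemma is equivalent to window-class invariance of the $M^2_{(\vartheta )}$-norm over $\phi \in M^2_{(v)}(\rr d)\setminus 0$; the non-routine part --- the only part the literature needs a real argument for --- is left unproved.
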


\par

The proof of the next result
is omitted, because it follows from
\cite[Proposition 5.6]{AbCoTo}
and its proof.

\par

\begin{lemma}
\label{Lem:Aomegaproperties}
Suppose $s\ge 1$, $\omega _0\in \mascP _{s}^0(\rr {2d})$ is such that
$\omega _0\in \Gamma ^{(\omega _0)}_{s}(\rr {2d})$,
$v \in \mascP _{s}^0(\rr {2d})$ is submultiplicative,  and that
$\omega _0 ^{1/2}$ is $v$-moderate.
If $\phi _1,\phi _2\in M^2_{(v)}(\rr d)$,
then $\tp _{\phi _1,\phi _2}
(\omega _0) =\op ^w(\fkb )$ for some
$\fkb \in \Gamma ^{(\omega _0)}_s(\rr {2d})$.
\end{lemma}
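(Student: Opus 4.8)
The final statement to prove is Lemma~\ref{Lem:Aomegaproperties}, which asserts that for $\phi_1,\phi_2\in M^2_{(v)}(\rr d)$, the Toeplitz operator $\tp_{\phi_1,\phi_2}(\omega_0)$ equals $\op^w(\fkb)$ for some Weyl symbol $\fkb\in\Gamma^{(\omega_0)}_s(\rr{2d})$.

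The plan is to use the representation of Toeplitz operators as Weyl operators provided by \eqref{Eq:ToeplWeyl}, namely $\tp_{\phi_1,\phi_2}(\omega_0)=\op^w(\omega_0\ast u)$ where $u(X)=(2\pi)^{-d/2}W_{\phi_2,\phi_1}(-X)$. Thus the whole problem reduces to showing that the convolved symbol $\fkb=\omega_0\ast u$ belongs to the symbol class $\Gamma^{(\omega_0)}_s(\rr{2d})$. First I would record that, by Lemma~\ref{Lemma:PropWigDistr}~(2) applied in the Weyl case $A=\tfrac12 I$, the hypothesis $\phi_1,\phi_2\in M^2_{(v)}(\rr d)$ together with the $v$-moderateness of $\vartheta=\omega_0^{1/2}$ forces $W_{\phi_2,\phi_1}$ (and hence $u$) to lie in a weighted $M^2$-space whose weight controls decay by the submultiplicative weight $v$; since $v\in\mascP_s^0(\rr{2d})$, the window factor decays faster than any $e^{-r|\cdot|^{1/s}}$ in a suitable averaged sense.

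The core step is then to verify the defining derivative estimates of $\Gamma^{(\omega_0)}_s$, i.e.\ to bound $\nm{\partial^\alpha(\omega_0\ast u)}{L^\infty_{(1/\omega_0)}}$ by $h^{|\alpha|}\alpha!^s$ for some $h>0$. Because the convolution is against the fixed smooth rapidly decaying kernel $u$, one may differentiate under the convolution and transfer all derivatives onto $\omega_0$ only insofar as $\omega_0$ is smooth; but $\omega_0$ need only be a moderate weight, so instead I would exploit the assumption $\omega_0\in\Gamma^{(\omega_0)}_s(\rr{2d})$ directly. The estimate $|\partial^\alpha\omega_0(X-Y)|\lesssim h^{|\alpha|}\alpha!^s\,\omega_0(X-Y)\lesssim h^{|\alpha|}\alpha!^s\,\omega_0(X)v(Y)$, combined with the (faster-than-$s$-Gevrey) decay of $u$ against $v$, yields
\begin{equation*}
|\partial^\alpha(\omega_0\ast u)(X)|
=\Big|\int \partial^\alpha\omega_0(X-Y)\,u(Y)\,dY\Big|
\lesssim h^{|\alpha|}\alpha!^s\,\omega_0(X)\int v(Y)|u(Y)|\,dY,
\end{equation*}
and the integral is finite precisely because $u\in M^2_{(\omega)}$ for a weight dominating $v$, which embeds into $L^1_{(v)}$ by the weight-comparison and embedding results (Proposition~\ref{Prop:BasicEmbModSp1} and the embeddings into $L^1$-type spaces as in Corollary~\ref{cor:relation-to-Sigma}). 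This gives exactly the $\Gamma^{(\omega_0)}_s$ bound with the same Gevrey order $s$ and a constant $h$ inherited from $\omega_0$.

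I expect the main obstacle to be the integrability of the tail $\int v(Y)|u(Y)|\,dY$: one must arrange the window regularity so that $u$ decays against the submultiplicative weight $v$ fast enough to absorb the loss $v(Y)$ coming from moderateness of $\omega_0$, and simultaneously so that differentiating $\omega_0$ does not destroy the weighted-$L^\infty$ control. Since $\phi_1,\phi_2\in M^2_{(v)}$ only guarantees $L^2$-type decay rather than pointwise Schwartz decay, the cleanest route is to phrase everything through the modulation-space estimate of Lemma~\ref{Lemma:PropWigDistr} and the embedding $M^2_{(\cdot)}\hookrightarrow L^1_{(v)}$ rather than pointwise bounds on $u$. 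Because the statement is asserted to follow directly from \cite[Proposition~5.6]{AbCoTo} and its proof, I would cite that result for the precise symbol-class bookkeeping, using the ingredients above to confirm that the normal-QBF generality here introduces no new difficulty beyond the $M^2_{(v)}$ window hypothesis already treated there.
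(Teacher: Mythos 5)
There is a genuine gap at the core analytic step of your argument. You reduce everything to the absolute-convergence estimate
$|\partial^\alpha(\omega_0*u)(X)|\lesssim h^{|\alpha|}\alpha!^s\,\omega_0(X)\int v(Y)^2|u(Y)|\,dY$
(note in passing that $\omega_0^{1/2}$ being $v$-moderate gives $\omega_0(X-Y)\lesssim\omega_0(X)v(Y)^2$, not $v(Y)$), and you justify finiteness of the integral by claiming that $u\in M^2_{(\omega)}$ for a suitable weight and that this space ``embeds into $L^1_{(v)}$'' by weight comparison. No such embedding exists: a weighted $M^2$-space never embeds into a weighted $L^1$-space merely because one weight dominates the other; lowering the Lebesgue exponent requires extra weight separation which is unavailable here. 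Concretely, take $v\equiv 1$ (admissible: it is submultiplicative and lies in $\mascP_s^0$), so the window hypothesis is just $\phi_1,\phi_2\in L^2(\rr d)$. It is classical that then $W_{\phi_2,\phi_1}\notin L^1(\rr{2d})$ in general (e.g.\ $\phi_1=\phi_2=\chi_{[0,1]^d}$, whose Wigner distribution decays only like $|\xi|^{-1}$ in frequency), so $\int v(Y)^2|u(Y)|\,dY=\infty$ and $(\partial^\alpha\omega_0)*u$ is not an absolutely convergent integral. Your remark that the window factor ``decays faster than any $e^{-r|\cdot|^{1/s}}$ in a suitable averaged sense'' is likewise unfounded: $v\in\mascP_s^0$ constrains the \emph{growth} of $v$ and imposes no decay at all on $u$ when $v$ is small or trivial. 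The lemma is still true in this degenerate case — by Moyal's identity $\tp_{\phi_1,\phi_2}(1)=(\phi_2,\phi_1)_{L^2}\operatorname{Id}$, with constant Weyl symbol — but that very example shows $\fkb$ cannot be obtained from an absolutely convergent pointwise convolution, so your route cannot be repaired by bookkeeping.

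The mechanism that actually makes $\phi_j\in M^2_{(v)}$ sufficient (and which the proof of \cite[Proposition 5.6]{AbCoTo}, cited by the paper, exploits) works on the STFT side rather than pointwise. By Lemma \ref{Lemma:PropWigDistr}~(1), $V_\Psi W_{\phi_2,\phi_1}(Z,Y)$ factors, up to a phase, as a product of $V_{\psi_1}\phi_1$ and $V_{\psi_2}\phi_2$ evaluated at the images of $(Z,Y)$ under a linear bijection of $\rr{4d}$. Hence, for each \emph{fixed} $Y=(\eta,y)$, Cauchy--Schwarz in $Z$ together with submultiplicativity of $v$ gives
\begin{equation*}
\int_{\rr{2d}} v(Z)^2\,|V_\Psi u(Z,Y)|\,dZ
\lesssim
v(Y/2)^2\,\nm{\phi_1}{M^2_{(v)}}\nm{\phi_2}{M^2_{(v)}} ,
\end{equation*}
i.e.\ the two $L^2$-conditions combine into an $L^1$-bound in half the variables, with only $\mascP_s^0$-growth in the remaining variable $Y$. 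On the other hand, $\omega_0\in\Gamma^{(\omega_0)}_s$ gives, via Proposition \ref{Prop:SmoothSymbModSp}, a bound of the form $|V_{\Phi}\omega_0(Z,Y)|\lesssim\omega_0(Z)e^{-2r|Y|^{1/s}}$ for some $r>0$. Inserting both bounds into the convolution formula for the STFT of $\omega_0*u$ (as in Lemma \ref{Lemma:AssistConv}~(2)) yields $|V(\omega_0*u)(X,Y)|\lesssim\omega_0(X)e^{-r|Y|^{1/s}}$, that is $\fkb\in M^{\infty,q}_{(1/\omega_{0,r})}\subseteq\Gamma^{(\omega_0)}_s$. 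This passage through the STFT and the half-variable Cauchy--Schwarz is exactly the idea missing from your proposal; without it, differentiating under the convolution fails at the first step.
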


\par

\begin{proof}[Proof of Theorem
\ref{Thm:ToeplLift1}]
The results follow by combining
Theorem \ref{Thm:Identification},
Proposition \ref{Prop:PseudoIsomEquiv},
Lemma \ref{Lem:BijectionHilbCase}
and Lemma \ref{Lem:Aomegaproperties}. 
Note for example that if (2) is fulfilled,
then the Weyl symbol for
$\tp _\phi (\omega _0)$ belongs to
$\Gamma ^{(\omega _0)}_{s}(\rr {2d})$,
in view of \eqref{Eq:ToeplWeyl}.
\end{proof}


\par

We need some further preparations
for the proof of Theorem \ref{Thm:ToeplLift2}.
First we have the following result concerning 
mapping  properties of
modulation spaces under the Weyl product.
We omit the proof because the result
is a special case of
\cite[Theorem 2.1]{ChToWa}.

\par

\begin{prop}\label{Prop:Weylprodmod}
Let $r\in (0,1]$ and
$\omega _0,\omega _1,\omega _2
\in \mathscr P_{E}(\rr {2d}
\oplus \rr {2d})$
be such that
\begin{equation}\label{Eq:weightprodmod}
\omega _0(X,Y)
\le
C\omega _1(X-Y+Z,Z)\omega _2(X+Z,Y-Z),
\end{equation}
for some constant $C>0$ which is
independent of
$X,Y,Z\in \rr {2d}$.
Then the map $(\fka ,\fkb )
\mapsto \fka \wpr \fkb$ from
$\Sigma _1(\rr {2d}) \times \Sigma _1(\rr {2d})$
to $\Sigma _1(\rr {2d})$ extends uniquely to a
continuous mapping from $\splM
^{\infty ,r}_{(\omega _1)}(\rr {2d})\times
\splM ^{\infty
,r}_{(\omega _2)}(\rr {2d})$ to
$\splM ^{\infty ,r}_{(\omega
_0)}(\rr {2d})$. 
\end{prop}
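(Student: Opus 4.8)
The plan is to establish the quasi-norm estimate
$$
\nm {\fka \wpr \fkb}{\splM ^{\infty ,r}_{(\omega _0)}}
\lesssim
\nm {\fka}{\splM ^{\infty ,r}_{(\omega _1)}}\,
\nm {\fkb}{\splM ^{\infty ,r}_{(\omega _2)}}
$$
first for $\fka ,\fkb \in \Sigma _1(\rr {2d})$ and then to produce the claimed continuous extension. Since $r\le 1$ and the first slot carries $L^\infty$, the space $\Sigma _1(\rr {2d})$ is \emph{not} dense in $\splM ^{\infty ,r}$, so I would not obtain uniqueness of the extension by density. Instead I would use that $\fka \wpr \fkb$ is already well-defined on $\Sigma _1'(\rr {2d})\times \Sigma _1'(\rr {2d})$ whenever $\op ^w(\fka )\circ \op ^w(\fkb )$ makes sense as a map $\Sigma _1(\rr d)\to \Sigma _1'(\rr d)$ (Remark \ref{Rem:BijKernelsOps}); any continuous extension of the bilinear map is then forced to agree with this composition, which gives both existence and uniqueness.

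The decisive tool is the identity expressing the symplectic short-time Fourier transform of a Weyl product as a $\sigma$-twisted convolution of the symplectic short-time Fourier transforms of the two factors. Choosing windows $\Phi _1,\Phi _2\in \Sigma _1(\rr {2d})\setminus 0$ and letting $\Phi$ be the product window dictated by the Weyl calculus (the Weyl product $\Phi _1\wpr \Phi _2$, equivalently a $\sigma$-twisted convolution of their symplectic Fourier transforms, whose nonvanishing must be verified), one obtains a pointwise formula
$$
V^\sigma _{\Phi}(\fka \wpr \fkb )(X,Y)
=
c\int _{\rr {2d}}
e^{i\Psi (X,Y,Z)}\,
V^\sigma _{\Phi _1}\fka (X-Y+Z,Z)\,
V^\sigma _{\Phi _2}\fkb (X+Z,Y-Z)\, dZ,
$$
with a real phase $\Psi$. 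The crucial observation is that the evaluation points $(X-Y+Z,Z)$ and $(X+Z,Y-Z)$ occurring here are precisely those in the weight hypothesis \eqref{Eq:weightprodmod}; this is exactly why that condition is the natural one. Passing to the symplectic Fourier transform, under which $\wpr$ becomes the twisted convolution $\asts$ and $\splM$ becomes an ordinary modulation space, streamlines the bookkeeping for this identity.

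With the identity in hand I would set
$$
F_1(U,V)=|V^\sigma _{\Phi _1}\fka (U,V)|\,\omega _1(U,V),
\qquad
F_2(U,V)=|V^\sigma _{\Phi _2}\fkb (U,V)|\,\omega _2(U,V),
$$
take absolute values, and use \eqref{Eq:weightprodmod} to move the weights inside the integral, obtaining
$$
|V^\sigma _{\Phi}(\fka \wpr \fkb )(X,Y)|\,\omega _0(X,Y)
\lesssim
\int _{\rr {2d}} F_1(X-Y+Z,Z)\,F_2(X+Z,Y-Z)\, dZ .
$$
It then remains to bound the $L^\infty _X L^r_Y$ quasi-norm of the right-hand side by $\nm {F_1}{L^\infty _X L^r_Y}\,\nm {F_2}{L^\infty _X L^r_Y}$, which are the two factor norms. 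Fixing $X$, I would first estimate the first variable of $F_1$ by its supremum, turning it into a function of $Z$ with $L^r$-norm equal to $\nm {F_1}{L^\infty L^r}$; then the continuous $r$-subadditivity of $\|\cdot \|_{L^r}^r$ valid for $r\le 1$ (the mechanism underlying \eqref{Eq:DiscConvEst}) yields $\|\,\cdot\,(X,\cdot)\|_{L^r_Y}^r \lesssim \int \!(\cdots)\,dZ$, after which bounding the first variable of $F_2$ in $L^\infty$ closes the estimate uniformly in $X$.

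The main obstacle is twofold. First, deriving the twisted-convolution identity above with the correct phase and window combination, and checking that the product window is a nonzero element of $\Sigma _1(\rr {2d})$; this structural step is the heart of the matter and is where the symplectic-Fourier viewpoint is most useful. Second, the mixed-norm estimate in the quasi-Banach regime $r\le 1$: Young's inequality is unavailable in its Banach form, so it must be replaced by its $r$-subadditive counterpart while preserving the $L^\infty$ control in the first variable throughout the convolution. Since the assertion is a special case of \cite[Theorem 2.1]{ChToWa}, I would ultimately either invoke that theorem directly or carry out the two steps above in this restricted setting.
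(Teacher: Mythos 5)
Your fallback route --- invoking \cite[Theorem 2.1]{ChToWa} --- is exactly what the paper does: it omits the proof entirely and cites that theorem, so on that route your proposal and the paper coincide. Your structural analysis is also sound: the twisted-convolution identity for the symplectic short-time Fourier transform of a Weyl product, with evaluation points $(X-Y+Z,Z)$ and $(X+Z,Y-Z)$, is indeed the engine behind that theorem and is precisely why the hypothesis \eqref{Eq:weightprodmod} has the form it does; likewise your point that $\Sigma _1(\rr {2d})$ is not dense in $\splM ^{\infty ,r}$, so uniqueness of the extension cannot come from density, is correct.

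However, the direct argument you sketch has a genuine gap at its final, quasi-Banach step. After replacing the first variables of $F_1$ and $F_2$ by suprema, what remains is to bound $\| G_1 * G_2\| _{L^r}$ by $\| G_1\| _{L^r}\, \| G_2\| _{L^r}$, where $G_i(Z)=\sup _U F_i(U,Z)$, and the ``continuous $r$-subadditivity'' you invoke for this is false when $r<1$: both Minkowski's integral inequality and Young's inequality $L^r * L^r\subseteq L^r$ fail in that range. Indeed, an $L^r$ function with $r<1$ need not even be locally integrable (e.g. $|x|^{-2d}\chi _{|x|<1}$ on $\rr {2d}$), so $G_1*G_2$ can be identically infinite while both factors have finite quasi-norm. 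The inequality behind \eqref{Eq:DiscConvEst} is genuinely discrete ($\ell ^r * \ell ^r \subseteq \ell ^r$), and to exploit it you must first discretize: use the norm equivalences of Section \ref{sec3} (Theorem \ref{Thm:EquivNorms2}, Proposition \ref{Prop:EquivNorms2}) to replace the $L^{\infty ,r}$ quasi-norms of the short-time Fourier transforms by Wiener amalgam quasi-norms of type $\sfW ^\infty (\ell ^{\infty ,r})$ --- this is where the special structure of STFTs, as opposed to arbitrary $L^{\infty ,r}$ functions, enters --- then split the $Z$-integration into unit cubes, bound each local piece by a local supremum, and only then apply the discrete convolution inequality. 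This is exactly the pattern used elsewhere in the paper (Proposition \ref{Prop:ConvWienerSp} and the proof of Theorem \ref{Thm:PseudoCont2}) and in \cite{ChToWa}; without it your estimate collapses precisely in the range $r<1$ that makes the proposition nontrivial. (A smaller point: the $M^{\infty ,r}$ quasi-norm is the $L^r_Y$ norm of $\sup _X$, not $\sup _X$ of the $L^r_Y$ norm; since your two-supremum replacement makes the bound independent of $X$ this ordering issue happens to be harmless here, but the phrasing ``fixing $X$ \dots\ uniformly in $X$'' computes the weaker of the two mixed norms.)
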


\par

The next lemma follows from
\cite[Lemma 5.7]{AbCoTo} and its proof.
The proof is therefore omitted.

\par

\begin{lemma}
\label{Lemma:CorWeyl}
Let $s\ge 1$, $r\in (0,1]$,
$\omega _0, v, v_1 \in \mathscr{P}_{s}^0
(\rr {2d}\oplus \rr {2d})$ be such that
$\omega _0$ is $v$-moderate.
Set $\vartheta = \omega _0^{1/2}$, and  
\begin{align}
\omega _1(X,Y)&=\frac{v(2Y)^{1/2}v_1(2Y)}
{\vartheta (X+Y) \vartheta(X-Y)},
\label{Eq:Tomega}
\\[1ex]
\omega _2(X,Y) &=\vartheta (X-Y)\vartheta(X+Y)v_1(2Y),
\notag
\\[1ex]
v_2(X,Y) &= v_1(2Y), 
\quad X,Y \in \rr {2d}.
\end{align}
Then 
\begin{align}
\Gamma ^{(1/\vartheta )}_{s}
\wpr
\splM ^{\infty ,r} _{(\omega _1)}
\wpr
\Gamma ^{(1/\vartheta )}_{s}
&\subseteq
\splM ^{\infty , 1} _{(v_2)},
\label{Eq:ch123}
\\[1ex]
\Gamma ^{(1/\vartheta )}_{s}
\wpr
\splM ^{\infty , r} _{(v_2)}
\wpr
\Gamma ^{(1/\vartheta )}_{s}
&\subseteq
\splM ^{\infty , r}
_{(\omega _2)}.
\label{Eq:ch124}
\end{align}

\par

The same holds true with $\mascP _{s}$ and
$\Gamma _{0,s}^{(1/\vartheta )}$
in place of $\mascP _{s}^0$ and
$\Gamma _{s}^{(1/\vartheta )}$
respectively, at each occurrence.
\end{lemma}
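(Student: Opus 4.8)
The plan is to reduce both \eqref{Eq:ch123} and \eqref{Eq:ch124} to the continuity of the Weyl product established in Proposition \ref{Prop:Weylprodmod}, after re-expressing the smooth factors $\Gamma ^{(1/\vartheta )}_{s}(\rr {2d})$ as symplectic modulation spaces. First I would invoke Proposition \ref{Prop:SmoothSymbModSp} in the Roumieu case: for each $\fka \in \Gamma ^{(1/\vartheta )}_{s}(\rr {2d})$ it provides a parameter $\rho >0$ with $\fka \in \splM ^{\infty ,r}_{(\mu _\rho )}(\rr {2d})$, where $\mu _\rho$ is the symplectic form (via the identity $\splM ^{p,q}_{(\omega )}=M^{p,q}_{(\omega _0)}$) of $\vartheta ^{-1}$ tensored with a factor decaying like $e^{-\rho |2Y|^{1/s}}$ in the dual phase-space variable $Y$. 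Since the union in Proposition \ref{Prop:SmoothSymbModSp} is taken over all $q\in (0,\infty ]$, I may realize the Gamma factors at the same second exponent $r$ as the middle factor, which is exactly the compatibility required by Proposition \ref{Prop:Weylprodmod}. The rate $\rho$ is at my disposal, and this freedom is the whole point of using the $\Gamma$-classes rather than fixed modulation spaces.

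Using associativity of $\wpr$, I would then apply Proposition \ref{Prop:Weylprodmod} twice to the grouped product $(\fka _1\wpr \fkb )\wpr \fka _2$, each time verifying the submultiplicativity-type bound \eqref{Eq:weightprodmod}; since that proposition allows me to target any output weight satisfying the inequality, I would aim the composed two-step estimate directly at $v_2$. The essential algebraic mechanism is that the two $\vartheta ^{-1}$ weights carried by $\fka _1,\fka _2$ are tailored to cancel the factor $\vartheta (X+Y)\vartheta (X-Y)$ sitting in $\omega _1$ from \eqref{Eq:Tomega} (respectively to reproduce it in $\omega _2$ for \eqref{Eq:ch124}) once the shifted arguments $X\mapsto X-Y+Z$ and $X\mapsto X+Z$ are substituted; the residual dependence on $X$ telescopes, leaving a weight that depends only on $Y$, in agreement with $v_2(X,Y)=v_1(2Y)$. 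The moderateness of $v$ and $v_1$ (both in $\mascP _s^0(\rr {2d}\oplus \rr {2d})$) controls the discrepancy between $\vartheta$ evaluated at these shifted points, while the rapid decay $e^{-\rho |\cdot |^{1/s}}$ of $\mu _\rho$ is chosen large enough to absorb the resulting $v(2Y)^{1/2}$ and moderateness factors. This produces the membership in $\splM ^{\infty ,r}_{(v_2)}$ for \eqref{Eq:ch123} and in $\splM ^{\infty ,r}_{(\omega _2)}$ for \eqref{Eq:ch124}.

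The index $1$ in \eqref{Eq:ch123} then costs nothing extra: since $r\le 1$ and modulation spaces increase with the second Lebesgue exponent (Proposition \ref{Prop:EmbModSp}), one has $\splM ^{\infty ,r}_{(v_2)}\hookrightarrow \splM ^{\infty ,1}_{(v_2)}$, so the product automatically lies in $\splM ^{\infty ,1}_{(v_2)}$, whereas in \eqref{Eq:ch124} no improvement is needed and the exponent stays at $r$. Finally, the Beurling variant (with $\mascP _{s}$, $\Gamma ^{(1/\vartheta )}_{0,s}$ and $\Sigma _s'$) follows by the identical bookkeeping: the only change is that Proposition \ref{Prop:SmoothSymbModSp} now places each $\Gamma ^{(1/\vartheta )}_{0,s}$ symbol in $\splM ^{\infty ,r}_{(\mu _\rho )}$ for \emph{every} $\rho >0$, which is precisely what the choice of a large decay rate in the previous step demands.

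The main obstacle is the weight arithmetic in \eqref{Eq:weightprodmod}. Checking that the two shifted convolutions of $\mu _\rho$, $\omega _1$ and $\mu _\rho$ collapse to a constant multiple of $v_2$ — that is, that all $X$-dependence and all $\vartheta$-factors cancel and only the $Y$-factor $v_1(2Y)$ survives — is where the precise form of $\omega _1$ in \eqref{Eq:Tomega} is indispensable, and this is the step that must be carried out with care.
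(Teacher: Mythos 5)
Your plan is essentially the paper's own route: the paper disposes of this lemma by deferring to \cite[Lemma 5.7]{AbCoTo}, and that lemma is proved exactly as you propose, by realizing the $\Gamma$-factors inside symplectic modulation spaces via Proposition \ref{Prop:SmoothSymbModSp} and then applying the Weyl-product estimate of Proposition \ref{Prop:Weylprodmod} twice. One point in your setup must be corrected, though, because as literally written it would make the key step fail: the modulation weight attached to $\Gamma _s^{(1/\vartheta )}$ is the \emph{reciprocal} of the one you describe. Proposition \ref{Prop:SmoothSymbModSp} (together with the symplectic identification) gives
\[
\Gamma _s^{(1/\vartheta )}(\rr {2d})
=
\bigcup _{\rho >0}\splM ^{\infty ,r}_{(\mu _\rho )}(\rr {2d}),
\qquad
\mu _\rho (X,Y)=\vartheta (X)\, e^{\rho |2Y|^{1/s}},
\]
a weight \emph{growing} in $Y$; with your literal choice $\vartheta (X)^{-1}e^{-\rho |2Y|^{1/s}}$, the right-hand side of \eqref{Eq:weightprodmod} tends to $0$ as $|Z|\to \infty$ (already when $\vartheta =v=v_1\equiv 1$), so no admissible output weight satisfies the hypothesis and Proposition \ref{Prop:Weylprodmod} yields nothing. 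With the corrected $\mu _\rho$, the weight arithmetic you flagged as the main obstacle does close, and it is precisely the cancellation you anticipate, each $\Gamma$-factor contributing a factor $\vartheta$ at a shifted point. Concretely, using that $\vartheta$ is $v^{1/2}$-moderate, that $v,v_1$ are submultiplicative, and that $v,v_1\lesssim e^{\epsilon |\cdot |^{1/s}}$ (for every $\epsilon >0$ in the Roumieu case, where only \emph{some} $\rho >0$ is available; for some fixed rate in the Beurling case, where \emph{every} $\rho >0$ is available), one finds for \eqref{Eq:ch123} that $\mu _{\rho _1}(X-Y+Z,Z)\, \omega _1(X+Z,Y-Z)\gtrsim v(2Y)^{1/2}v_1(2Y)/\vartheta (X+Y)=:\omega '(X,Y)$, and then $\omega '(X-Y+Z,Z)\, \mu _{\rho _2}(X+Z,Y-Z)\gtrsim v_1(2Y)=v_2(X,Y)$, after which $\splM ^{\infty ,r}_{(v_2)}\hookrightarrow \splM ^{\infty ,1}_{(v_2)}$ since $r\le 1$; for \eqref{Eq:ch124} the same two steps give the intermediate weight $\vartheta (X-Y)v_1(2Y)$ and then $\omega _2$. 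The remaining bookkeeping in your proposal (taking all factors with the same second exponent $r$, and the pairing of Roumieu classes with $\mascP _s^0$ versus Beurling classes with $\mascP _s$) is correct.
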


\par

We also need the following restatement
of \cite[Lemma 5.8]{AbCoTo}. The proof is 
omitted.

\par

\begin{lemma}\label{Lemma:CorWeyl2}
Let $s$, $\omega _1$, $\omega _2$ 
and $\vartheta$ be 
the same as in Lemma \ref{Lemma:CorWeyl}.
Also let $p,q\in [1,\infty ]$
and $\fkb \in \splM ^{\infty ,1}_{(\omega _1)}(\rr {2d})$. Then the
following is true:
\begin{enumerate}
\item $\op ^w(\fkb )$ is continuous from $M^{p,q}
_{(\vartheta )}(\rr d)$ to $M^{p,q} _{(1/\vartheta )}(\rr d)$;

\vrum

\item if in addition $\op ^w(\fkb )$ is an 
isomorphism from 
$M^{2} _{(\vartheta )}(\rr d)$ to $M^{2}
_{(1/\vartheta )}(\rr d)$, then its inverse $\op ^w(\fkb )^{-1}$
equals $\op ^w(\fkc )$ for some
$\fkc \in \splM ^{\infty ,1}
_{(\omega _2)}(\rr {2d})$.
\end{enumerate}
\end{lemma}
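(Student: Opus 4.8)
The plan is to treat the two assertions separately: assertion (1) follows directly from the general pseudo-differential continuity theorem, while assertion (2) is a conjugation-to-$L^2$ argument whose core is the spectral invariance of a weighted Sj\"ostrand algebra.

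\textbf{Assertion (1).} By the definition of the symplectic modulation space, $\fkb \in \splM ^{\infty ,1}_{(\omega _1)}(\rr {2d})$ is the same as $\fkb \in M^{\infty ,1}_{(\omega _1^\flat )}(\rr {2d})$, where $\omega _1^\flat$ is obtained from $\omega _1$ through the substitution that defines $\splM$. I would then apply Theorem \ref{Thm:PseudoCont1} in the Weyl case $A=\tfrac 12 I$ with $r_0=1$, source weight $\vartheta$ and target weight $1/\vartheta$; this is legitimate since $p,q\in [1,\infty ]$ and $\fkb \in M^{\infty ,1}_{(\omega _1^\flat )}$. The only point requiring verification is the weight inequality \eqref{Eq:WeightPseudoRel} with symbol weight $\omega _1^\flat$. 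Evaluating the right-hand side at the Weyl midpoint and using the explicit form \eqref{Eq:Tomega} of $\omega _1$, the arguments of $\vartheta$ collapse to $(x,\xi )$ and $(y,\eta )$, so the inequality reduces to $1\lesssim v(Z)^{1/2}v_1(Z)$ with $Z=(y-x,\eta -\xi )$. As $v$ and $v_1$ are submultiplicative (hence bounded below by a positive constant), this holds, and the continuity of $\op ^w(\fkb )\colon M^{p,q}_{(\vartheta )}\to M^{p,q}_{(1/\vartheta )}$ follows.

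\textbf{Assertion (2).} Here I would conjugate $T=\op ^w(\fkb )$ down to $L^2=M^2$ by an elliptic lift and then invert inside an algebra. Since $\omega _0\in \mascP _s^0$ and $\mascP _s^0$ is a group stable under roots, $\vartheta =\omega _0^{1/2}$ and $1/\vartheta$ lie in $\mascP _s^0$; Theorem \ref{Thm:Identification} (with its weight $\omega _0$ chosen equal to $1/\vartheta$, and $\mascB =L^2$) produces $\mathfrak e\in \Gamma ^{(1/\vartheta )}_s(\rr {2d})$ such that $\op ^w(\mathfrak e)$ is simultaneously an isomorphism $M^2\to M^2_{(\vartheta )}$ and $M^2_{(1/\vartheta )}\to M^2$. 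Setting $S=\op ^w(\mathfrak e)\circ T\circ \op ^w(\mathfrak e)$, which traverses $M^2\to M^2_{(\vartheta )}\to M^2_{(1/\vartheta )}\to M^2$, makes $S$ an isomorphism of $L^2$ with Weyl symbol $\mathfrak g=\mathfrak e\wpr \fkb \wpr \mathfrak e$. By \eqref{Eq:ch123} taken with $r=1$ we obtain $\mathfrak g\in \splM ^{\infty ,1}_{(v_2)}(\rr {2d})$, with $v_2(X,Y)=v_1(2Y)$.

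The crux is that $\splM ^{\infty ,1}_{(v_2)}$ is inverse-closed. Since $v_2$ depends only on the frequency variable $Y$ and $v_1$ is a submultiplicative GRS weight, $v_2$ is a submultiplicative GRS weight on $\rr {2d}\oplus \rr {2d}$, so $\splM ^{\infty ,1}_{(v_2)}$ is a Banach $*$-algebra under the Weyl product $\wpr$ that is spectrally invariant in $\maclL (L^2)$ (the weighted version of Gr\"ochenig's Wiener lemma for the Sj\"ostrand class). Because $S$ is invertible on $L^2$, its inverse is again a Weyl operator $S^{-1}=\op ^w(\mathfrak h)$ whose symbol $\mathfrak h\in \Sigma _1'(\rr {2d})$ exists by Remark \ref{Rem:BijKernelsOps} and is upgraded to $\mathfrak h\in \splM ^{\infty ,1}_{(v_2)}$ by the algebra property. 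Finally, from $S=\op ^w(\mathfrak e)T\op ^w(\mathfrak e)$ one gets $T^{-1}=\op ^w(\mathfrak e)\,S^{-1}\op ^w(\mathfrak e)=\op ^w(\fkc )$ with
$$
\fkc =\mathfrak e\wpr \mathfrak h\wpr \mathfrak e\in \Gamma ^{(1/\vartheta )}_s\wpr \splM ^{\infty ,1}_{(v_2)}\wpr \Gamma ^{(1/\vartheta )}_s\subseteq \splM ^{\infty ,1}_{(\omega _2)}(\rr {2d}),
$$
by \eqref{Eq:ch124} with $r=1$, which is exactly the claim. The main obstacle is precisely the inverse-closedness used in this last paragraph: reducing the inversion of $T$ to inversion inside the weighted symplectic Sj\"ostrand class and knowing that $L^2$-invertibility of $\op ^w(\mathfrak g)$ forces the inverse symbol to remain in that same class. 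This is where the GRS condition $v_1\in \mascP _s^0$ is indispensable; everything else is bookkeeping with the lifting isomorphisms of Theorem \ref{Thm:Identification} and the two Weyl-product inclusions of Lemma \ref{Lemma:CorWeyl}.
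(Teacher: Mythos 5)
Your proposal is correct and follows essentially the same route as the source to which the paper defers: the paper omits the proof of Lemma \ref{Lemma:CorWeyl2}, citing \cite[Lemma 5.8]{AbCoTo}, and that proof is precisely your scheme --- part (1) by checking that the symplectic-to-ordinary weight substitution turns \eqref{Eq:WeightPseudoRel} (Weyl case, $r_0=1$) into $1\lesssim v(Z)^{1/2}v_1(Z)$, which holds since submultiplicative weights are bounded below by $1$, so that Theorem \ref{Thm:PseudoCont1} applies; and part (2) by conjugating with the elliptic lifts of Theorem \ref{Thm:Identification}, invoking spectral invariance of $\splM ^{\infty ,1}_{(v_2)}$ in $\maclL (L^2)$ (Gr\"ochenig's Wiener-type lemma for weighted Sj\"ostrand classes, applicable since $v_1\in \mascP _s^0\subseteq \mascP _E^0$ is a submultiplicative GRS weight), and returning via the inclusions \eqref{Eq:ch123}--\eqref{Eq:ch124}. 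The only point worth flagging is that the spectral-invariance step is an external theorem proved neither here nor in \cite{AbCoTo}, so your citation of it is exactly as legitimate as the paper's own treatment.
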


\par

\begin{proof}[Proof of Theorem 
\ref{Thm:ToeplLift2}]
We shall mainly follow the proof of
\cite[Theorem 5.2$'$]{AbCoTo}.

\par

Let $\phi \in M ^{r_0} _{(v_1^t v)}
(\rr d) \subseteq M^2_{(v)}(\rr d)$.
First we note that the Toeplitz
operator $\tp _\phi (\omega _0)$ is an 
isomorphism from
$M^2_{(\vartheta )}(\rr d)$ to 
$M^2_{(1/\vartheta )}(\rr d)$ 
in view of Lemma 
\ref{Lem:BijectionHilbCase}.
With $\omega _1$ defined in
\eqref{Eq:Tomega}, Remark
\ref{Rem:BijKernelsOps} and Theorem
\ref{Thm:ToeplPseudoSymbClassMod}
imply that there exist
$\fkb \in \splM ^{\infty ,1}_{(\omega 
_1)}(\rr {2d})$ and
$\fkc \in \maclS _s'(\rr {2d})$
such that
$$
\tp _\phi (\omega _0) = \op ^w(\fkb )\quad 
\text{and}\quad \tp _\phi (\omega
_0)^{-1}=\op ^w(\fkc ) \, .
$$

For $X,Y \in \rr {2d}$, let
\begin{equation}
\label{Eq:dmod1A}
\omega _2(X,Y)
=
\vartheta (X-Y)\vartheta (X+Y) v_1(2Y)
\quad \text{and}\quad  
\omega _3(X,Y) = \frac {\vartheta (X+Y)}
{\vartheta (X-Y)}.
\end{equation}

\par

By Theorem \ref{Thm:PseudoCont2}
and Lemma \ref{Lemma:CorWeyl2}
it follows that
$\fkc
\in
\splM ^{\infty ,1}_{(\omega _2)}(\rr {2d})$,
and that the mappings
\begin{align}
\op ^w(\fkb )\, &:\, 
M(\omega \vartheta ,\mascB )
\to
M(\omega /\vartheta ,\mascB )
\label{op(b)(d)}
\intertext{and}
\op ^w(\fkc )\, &:\,
M(\omega /\vartheta ,\mascB )
\to
M(\omega \vartheta ,\mascB )
\end{align}
are continuous.


\par

This gives
\begin{eqnarray*}
\lefteqn{\omega _1(X-Y+Z,Z)\omega _2(X+Z,Y-Z)}
\\[1ex]
&=& \Big (\frac {v(2Z)^{1/2}v_1(2Z)}
{\vartheta (X-Y+2Z)\vartheta
(X-Y)}\Big )
\cdot
\big (\vartheta (X-Y+2Z)\vartheta (X+Y)
v_1(2(Y-Z)) \big ) 
\\[1ex]
&= &  \frac {v(2Z)^{1/2}v_1(2Z)v_1(2(Y-Z))
\,  \vartheta (X+Y)}{\vartheta (X-Y)}
\\[1ex]
&\gtrsim &
\frac {\vartheta (X+Y)}
{\vartheta (X-Y)} =\omega _3(X,Y)\, 
\qquad X,Y,Z \in \rr {2d}.
\end{eqnarray*}
Therefore Proposition \ref{Prop:Weylprodmod}  
shows that $\fkb \wpr \fkc \in \splM
^{\infty ,1}_{(\omega _3)}$. Since
$\op ^w(\fkb )$ is an isomorphism from
$M^2_{(\vartheta )}$ to $M^2_{(1/\vartheta )}$ 
with inverse $\op ^w(\fkc )$, it follows that 
$\fkb \wpr \fkc =1$ and that the constant
symbol $1$ belongs to
$\splM ^{\infty ,1}_{(\omega _3)}$. By similar 
arguments it follows that
$\fkc \wpr \fkb =1$. Therefore the identity 
operator $\mathrm{Id}= \op ^w (\fkb )
\circ \op ^w(\fkc ) $ on
$M(\omega /\vartheta ,\mascB )$ factors through
$M(\omega \vartheta ,\mascB )$,
and thus $\op ^w(\fkb )=
\tp _\phi (\omega _0)$ is an isomorphism
from $M(\omega \vartheta ,\mascB )$ to 
$M(\omega /\vartheta ,\mascB )$
with inverse $\op ^w(\fkc )$.
This gives the result.
\end{proof}

\par

\appendix

\par

\section{Proofs of some results
from Section \ref{sec4}}
\label{App:A}

\par

In this appendix we give proofs of
Proposition \ref{Prop:ComplModSp}
and the compactness 
statements (2) and (4) of
Theorem \ref{thm:CompAndContProp}.

\par

The proof of Proposition \ref{Prop:ComplModSp}
is based on the following
lemma, concerning continuity of
the operator
\begin{equation}\label{Eq:DefTFProj}
P_{\phi _1,\phi _2}
\equiv 
\big ( \nm {\phi _1}{L^2}
\nm {\phi _2}{L^2} \big )^{-1}
V_{\phi _2}\circ V_{\phi _1}^*,
\end{equation}
when $\phi _1,\phi _2\in L^2(\rr d)
\setminus 0$.

\par

\begin{prop}\label{Prop:TFProjCont}
Let $\mascB$ be a normal
QBF space on $\rr {2d}$ of order
$r_0\in (0,1]$ and
$v_0\in \mascP _E(\rr {2d})$,
$\omega ,v\in \mascP _E(\rr {2d})$
be such that $\omega$ is $v$-moderate,
$\phi _1,\phi _2
\in M^{r_0}_{(v_0v)}(\rr d)\setminus 0$,
and let $r\in [1,\infty ]$. Also let
$P_{\phi _1,\phi _2}$ be as in
\eqref{Eq:DefTFProj}. Then
$$
P_{\phi _1,\phi _2}:
\Sigma _1(\rr {2d})\to \Sigma _1'(\rr {2d})
$$
is uniquely extendable to a continuous
map
$$
P_{\phi _1,\phi _2}:
\sfW ^r(\omega ,\mascB)
\to 
V_{\phi _2}(M(\omega ,\mascB))
\hookrightarrow
\sfW ^\infty (\omega ,\mascB).
$$
\end{prop}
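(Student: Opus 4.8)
The plan is to factor $P_{\phi _1,\phi _2}$ from \eqref{Eq:DefTFProj} into the two maps $V_{\phi _1}^*$ and $V_{\phi _2}$ and treat them separately. Note first that $V_{\phi _2}$ maps $M(\omega ,\mascB )$ continuously into $\sfW ^\infty (\omega ,\mascB )$ with image $V_{\phi _2}(M(\omega ,\mascB ))$: since $\phi _2\in M^{r_0}_{(v_0v)}(\rr d)\setminus 0$, Theorem \ref{Thm:EquivNorms2} (with $r_1=r_2=\infty$) gives $\nm {V_{\phi _2}g}{\sfW ^\infty (\omega ,\mascB )}\asymp \nm g{M(\omega ,\mascB )}$ for every $g\in M(\omega ,\mascB )$, which is exactly the embedding $V_{\phi _2}(M(\omega ,\mascB ))\hookrightarrow \sfW ^\infty (\omega ,\mascB )$ appearing in the statement. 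The scalar factor $(\nm {\phi _1}{L^2}\nm {\phi _2}{L^2})^{-1}$ being harmless, the whole assertion thus reduces to proving that $V_{\phi _1}^*$ extends to a continuous map from $\sfW ^r(\omega ,\mascB )$ into $M(\omega ,\mascB )$.

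For this I would fix $F\in \Sigma _1(\rr {2d})$, set $g=V_{\phi _1}^*F$, and use the pointwise majorisation \eqref{Eq:adjoint-convolution} in the form $|V_{\phi _2}g|=|(V_{\phi _2}\circ V_{\phi _1}^*)F|\le |F|*\Phi$, with $\Phi =|V_{\phi _2}\phi _1|$. The convolution kernel $\Phi$ lies in the correct space: as both $\phi _1,\phi _2\in M^{r_0}_{(v_0v)}(\rr d)$, Theorem \ref{Thm:EquivNorms2} applied to the classical space $M^{r_0}_{(v_0v)}(\rr d)=M(v_0v,L^{r_0})$ with window $\phi _2$ shows, for every $s\in [r_0,\infty ]$, that $\Phi \in \sfW ^s(v_0v,\ell ^{r_0}(\zz {2d}))$ with $\nm \Phi{\sfW ^s(v_0v,\ell ^{r_0})}\asymp \nm {\phi _1}{M^{r_0}_{(v_0v)}}$ (here $v$ is taken submultiplicative, so that the window weight in Theorem \ref{Thm:EquivNorms2} is precisely $v_0v$). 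Now let $r'$ be the conjugate exponent, $1/r+1/r'=1$, so $r'\in [1,\infty ]\subseteq [r_0,\infty ]$. I would invoke Corollary \ref{Cor:ConvWienerSp1} with $p_1=r$, $p_2=r'$, $p_0=\infty$ (satisfying \eqref{Eq:YoungCond}) and weights $\omega _1=\omega _0=\omega$, $\omega _2=v$; the required estimate \eqref{Eq:PureConvWeightsEst}, $\omega (X_1+X_2)\lesssim \omega (X_1)v(X_2)$, is just the $v$-moderateness of $\omega$. Choosing $s=r'$ above, this yields
\begin{equation*}
\begin{aligned}
\nm {V_{\phi _2}g}{\sfW ^\infty (\omega ,\mascB )}
&\le
\nm {|F|*\Phi}{\sfW ^\infty (\omega ,\mascB )}
\lesssim
\nm F{\sfW ^r(\omega ,\mascB )}\,
\nm \Phi{\sfW ^{r'}(v_0v,\ell ^{r_0})}
\\[1ex]
&\asymp
\nm {\phi _1}{M^{r_0}_{(v_0v)}}
\nm F{\sfW ^r(\omega ,\mascB )}.
\end{aligned}
\end{equation*}

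Since $g=V_{\phi _1}^*F\in \Sigma _1'(\rr d)$ and $V_{\phi _2}g\in \sfW ^\infty (\omega ,\mascB )$, Theorem \ref{Thm:EquivNorms2} read in the reverse direction (from the Wiener amalgam side back to the modulation side) gives $g\in M(\omega ,\mascB )$ together with $\nm g{M(\omega ,\mascB )}\lesssim \nm {\phi _1}{M^{r_0}_{(v_0v)}}\nm F{\sfW ^r(\omega ,\mascB )}$. This is the desired continuity of $V_{\phi _1}^*$ on the dense subspace $\Sigma _1(\rr {2d})$, and combined with the factorisation it gives $P_{\phi _1,\phi _2}F=(\nm {\phi _1}{L^2}\nm {\phi _2}{L^2})^{-1}V_{\phi _2}g\in V_{\phi _2}(M(\omega ,\mascB ))$. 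For $r<\infty$ the continuous extension follows by density, while the endpoint $r=\infty$ follows from the embedding $\sfW ^\infty (\omega ,\mascB )\hookrightarrow \sfW ^1(\omega ,\mascB )$ together with the already established case $r=1$ (whose kernel hypothesis requires $\Phi \in \sfW ^\infty (v_0v,\ell ^{r_0})$, covered by the case $s=\infty$).

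The step I expect to be most delicate is the very definition of $V_{\phi _1}^*F$ (and of the auxiliary $V_{\phi _2}\phi _1$) when the windows are only assumed to lie in $M^{r_0}_{(v_0v)}(\rr d)$ rather than in $\Sigma _1(\rr d)$. The duality formula \eqref{Eq:STFTAdjointFormula} pairs $F$ against $V_{\phi _1}\psi$, and for a non-smooth window this quantity is merely an element of $\Sigma _1'$, so the continuity \eqref{Eq:STFTAdjCont} does not apply verbatim. The way around this is to exploit that $F\in \sfW ^r(\omega ,\mascB )\subseteq \Sigma _1'(\rr {2d})$ for $r\ge 1$ (Corollary \ref{cor:relation-to-Sigma}) carries strictly more than bare ultradistribution regularity, so that $V_{\phi _1}^*$ can be made sense of through the extended mapping properties of the adjoint short-time Fourier transform; once this is granted, the pointwise bound \eqref{Eq:adjoint-convolution} is legitimate and the convolution estimate above carries the proof through.
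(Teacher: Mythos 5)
Your core estimate is essentially the paper's own: you bound $|V_{\phi _2}(V_{\phi _1}^*F)|$ by $|F|*|V_{\phi _2}\phi _1|$, place the kernel $|V_{\phi _2}\phi _1|$ in $\sfW ^{s}(v_0v,\ell ^{r_0}(\zz {2d}))$ via Theorem \ref{Thm:EquivNorms2}, apply Corollary \ref{Cor:ConvWienerSp1}, and convert back with Theorem \ref{Thm:EquivNorms2}; the different exponent bookkeeping ($p_0=\infty$ with $p_2=r'$, versus the paper's $p_0=r$ with $p_2=1$) is immaterial. The genuine gap is your extension mechanism. You prove the inequality only for $F\in \Sigma _1(\rr {2d})$ and then assert that ``for $r<\infty$ the continuous extension follows by density''. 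But $\Sigma _1(\rr {2d})$ is \emph{not} dense in $\sfW ^r(\omega ,\mascB )$ for general normal QBF spaces $\mascB$, even when $r=1$: take $\mascB =L^\infty (\rr {2d})$ (normal, with $r_0=1$, $v_0\equiv 1$, $\mascB _0=L^\infty$) and $F=1/\omega$, so that $\nm {F\omega }{L^1(j+Q)}=1$ for every $j\in \zz {2d}$; since each $\psi \in \Sigma _1(\rr {2d})$ satisfies $\nm {\psi \omega }{L^1(j+Q)}\to 0$ as $|j|\to \infty$ (by \eqref{Eq:weight1} and \eqref{Eq:GSFtransfChar}), one gets $\nm {F-\psi }{\sfW ^1(\omega ,\mascB )}\ge 1$ for all such $\psi$. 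So no density argument is available, and your endpoint case $r=\infty$ inherits the same defect, since it rests on the case $r=1$.

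The paper avoids this entirely: it never extends $P_{\phi _1,\phi _2}$ by approximation, but instead \emph{defines} the extension through the exact twisted-convolution identity \eqref{Eq:FormulaTFProj}, $P_{\phi _1,\phi _2}F=(\nm {\phi _1}{L^2}\nm {\phi _2}{L^2})^{-1}\, V_{\phi _2}\phi _1*_VF$, using that Corollary \ref{Cor:ConvWienerSp1} already supplies an extension of the (twisted) convolution to $\sfW ^{r}(\omega ,\mascB )\times \sfW ^{1}(v_0v,\ell ^{r_0}(\zz {2d}))$ --- an extension constructed in the proof of Proposition \ref{Prop:ConvWienerSp} by splitting the factors and the phase into nonnegative parts, with no recourse to density. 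Then $f\equiv (\nm {\phi _1}{L^2}\nm {\phi _2}{L^2})^{-1}V_{\phi _1}^*F$ satisfies $V_{\phi _2}f=P_{\phi _1,\phi _2}F\in \sfW ^r(\omega ,\mascB )$, and Theorem \ref{Thm:EquivNorms2} turns this into $f\in M(\omega ,\mascB )$ together with $V_{\phi _2}f\in \sfW ^\infty (\omega ,\mascB )$. This also disposes of the issue you flagged as delicate (the meaning of $V_{\phi _1}^*F$ for rough $F$ and non-smooth windows): once the extension is defined by the convolution formula rather than by duality against $V_{\phi _1}\psi$, your pointwise bound \eqref{Eq:adjoint-convolution} is simply the modulus of \eqref{Eq:FormulaTFProj}, and the remainder of your argument goes through verbatim.
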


\par

For the proof we observe that
$P_{\phi _1,\phi _2}$ in
\eqref{Eq:DefTFProj} can be written as
\begin{align}
P_{\phi _1,\phi _2}F
&=
\big ( \nm {\phi _1}{L^2}
\nm {\phi _2}{L^2} \big )^{-1}
V_{\phi _2}\phi _1 *_V F,
\label{Eq:FormulaTFProj}
\intertext{where $*_V$ is the
twisted convolution, given by}
(F*_VG)(X)
&\equiv
(2\pi )^{-\frac d2}
\int _{\rr {2d}}F(X-Y)G(Y)
e^{i\kappa _1(X,Y)}\, dY,
\label{Eq:TwistConv}
\\[1ex]
&=
(2\pi )^{-\frac d2}
\int _{\rr {2d}}F(Y)G(X-Y)
e^{i\kappa _2(X,Y)}\, dY,
\intertext{with}
\kappa _1(X,Y) &= \scal y{\eta -\xi}
\quad \text{and}\quad
\kappa _2(X,Y) = \scal {y-x}\eta ,
\\
X &=(x,\xi )\in \rr {2d},\quad 
Y=(y,\eta )\in \rr {2d}.
\notag
\end{align}
This follows by a straight-forward application
of Fourier's inversion formula (see e.{\,}g.
the proof of \cite[Proposition 11.3.2]{Gro2}).
Especially we observe that
\eqref{Eq:FormulaTFProj} implies
that
\begin{equation}\label{Eq:TFProjEst}
|(P_{\phi _1,\phi _2}F)(X)|
\le
\big ( \nm {\phi _1}{L^2}
\nm {\phi _2}{L^2} \big )^{-1}
\big (
|V_{\phi _2}\phi _1| * |F|
\big )(X).
\end{equation}

\par

\begin{proof}[Proof of
Proposition \ref{Prop:TFProjCont}]
Let $F\in \sfW ^r(\omega ,\mascB)$. Since
$V_{\phi _2}\phi _1
\in
\sfW ^1(v_0v,\ell ^{r_0}(\zz {2d}))$ in view
of Theorem \ref{Thm:EquivNorms2}, it follows
from \eqref{Eq:FormulaTFProj}, Corollary \ref{Cor:ConvWienerSp1}
that
$P_{\phi _1,\phi _2}F
\in \sfW ^r(\omega ,\mascB)$.
Let $f
\equiv
(\nm {\phi _1}{L^2}\nm {\phi _2}
{L^2})^{-1}
V_{\phi _1}^*F$. 
Then
$$
V_{\phi _2}f = P_{\phi _1,\phi _2}F
\in \sfW ^r(\omega ,\mascB).
$$
By Theorem \ref{Thm:EquivNorms2} it
follows that $V_{\phi _2}f
\in \sfW ^r(\omega ,\mascB)$
is the same as $f\in M(\omega ,\mascB )$,
and $V _{\phi_2} f
\in \sfW ^{\infty} (\omega, \mascB)$,
and the result follows.
\end{proof}

\par

\begin{proof}[Proof of Proposition
\ref{Prop:ComplModSp}]
That $M(\omega ,\mascB)$ is of the same order as $\mascB$ follows from Definition \ref{Def:GenModSpace}, so we proceed with proving completeness.

Let $\phi \in \Sigma _1(\rr d)$
be such that $\nm \phi{L^2}=1$,
and let $\{ f_j\} _{j=1}^\infty$
be a Cauchy sequence in
$M(\omega ,\mascB)$.
Then Theorem \ref{Thm:EquivNorms2}
implies that
$$
\lim _{j,k\to \infty}
\nm {V_\phi f_j-V_\phi f_k}
{\sfW ^\infty (\omega ,\mascB )}=0.
$$
Since $\sfW ^\infty (\omega ,\mascB )$
is complete, there exists a unique
$F\in \sfW ^\infty (\omega ,\mascB )$
such that
$$
\lim _{j\to \infty}
\nm {V_\phi f_j-F}
{\sfW ^\infty (\omega ,\mascB )}
=0.
$$
Let $f=V_\phi ^*F$. By Proposition
\ref{Prop:TFProjCont} it follows
that $f\in M(\omega ,\mascB)$.
Since $P_{\phi ,\phi}\circ V_\phi =V_\phi$,
another application of Proposition
\ref{Prop:TFProjCont} shows that
\begin{align*}
\nm {f_j-f}{M(\omega ,\mascB)}
&\asymp
\nm {V_\phi f_j-V_\phi f}
{\sfW ^\infty (\omega ,\mascB )}
=
\nm {V_\phi f_j-P_{\phi ,\phi} F}
{\sfW ^\infty (\omega ,\mascB )}
\\[1ex]
&=
\nm {P_{\phi ,\phi}(V_\phi f_j- F)}
{\sfW ^\infty (\omega ,\mascB )}
\lesssim
\nm {V_\phi f_j- F}
{\sfW ^\infty (\omega ,\mascB )}
\to 0,
\end{align*}
as $j\to \infty$. This shows that
$$
\lim _{j\to \infty}
\nm {f_j-f}
{M(\omega ,\mascB )}=0,
$$
and the asserted completeness
follows. This gives the result.
\end{proof}

\par


\par

For the proof of (2) and (4) in Theorem 
\ref{thm:CompAndContProp},
we need the following lemma. 
We omit the proof because the result 
follows from the proof of Lemma 3.10 in 
\cite{PfTo19}.

\par

\begin{lemma}\label{Lemma:UniformSeq}
Let
$\phi_0 (x)=\pi ^{-\frac d4}e^{-\frac 12\cdot |x|^2}$,
$x\in \rr d$, $\omega \in \mascP _E (\rr {2d})$ and
let $\{ f_j\} _{j=1}^\infty \subseteq \Sigma _1'(\rr d)$
be a bounded set in $M^{\infty}_{(\omega)}(\rr d)$. Then there is a
subsequence $\{ f_{j_k}\} _{k=1}^\infty$ of $\{ f_j\}
_{j=1}^\infty$ such that $\{ V_{\phi_0} f_{j_k}\} _{k=1}^\infty$
is locally uniformly convergent.
\end{lemma}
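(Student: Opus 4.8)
The plan is to prove the lemma by an Arzel\`a--Ascoli argument applied to the family of smooth functions $V_{\phi_0}f_j$ on $\rr {2d}$ (recall that these are $C^\infty$, even real-analytic, by Proposition \ref{Prop:ExtSTFTSchwartz} and \eqref{Eq:STFTTempDistSmooth}), extracting a locally uniformly convergent subsequence by a diagonal procedure over an exhaustion of $\rr {2d}$ by compact sets. Two ingredients are needed: a uniform (in $j$) local bound on $V_{\phi_0}f_j$, and a uniform local bound on its gradient, the latter yielding equicontinuity.

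First I would record the pointwise bound coming from the hypothesis. Boundedness of $\{f_j\}$ in $M^\infty_{(\omega)}(\rr d)$ means $M\equiv \sup_j \nm {f_j}{M^\infty_{(\omega)}}<\infty$, that is $\abp {V_{\phi_0}f_j(X)}\le M/\omega (X)$ for all $X\in \rr {2d}$ and all $j$. By \eqref{Eq:weight1}, $\omega$ is bounded from above and below on every compact set, so $\{V_{\phi_0}f_j\}$ is uniformly bounded on each compact subset of $\rr {2d}$.

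Next comes the gradient estimate, which is the technical heart. Differentiating the integral formula (\ref{Eq:STFTDef})$''$ and using that $\phi_0$ is real and even, one obtains $\partial _{x_k}V_{\phi_0}f_j=-V_{\partial _k\phi_0}f_j$ and $\partial _{\xi_k}V_{\phi_0}f_j(x,\xi)=-i\,V_{\psi_k}f_j(x,\xi)-ix_k\,V_{\phi_0}f_j(x,\xi)$, where $\psi_k(z)=z_k\phi_0(z)$. Both $\partial _k\phi_0$ and $\psi_k$ belong to $\Sigma _1(\rr d)$, so it suffices to bound $\abp {V_\psi f_j}$ for a fixed window $\psi \in \Sigma _1(\rr d)$, uniformly in $j$ and locally uniformly in $X$. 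For this I would invoke the change-of-window inequality \eqref{Eq:change-of-window}, giving $\abp {V_\psi f_j}\le \nm {\phi_0}{L^2}^{-2}(\abp {V_{\phi_0}f_j}*\abp {V_\psi \phi_0})$. Combining this with $\abp {V_{\phi_0}f_j(Y)}\le M/\omega (Y)$ and the moderateness of $\omega$ (so that $1/\omega (X-Y)\lesssim v(Y)/\omega (X)$ for the submultiplicative weight $v$ controlling $\omega$), together with the rapid Gaussian decay of $V_\psi \phi_0\in \Sigma _1(\rr {2d})$ (Proposition \ref{Prop:ExtSTFTSchwartz}), yields $\abp {V_\psi f_j(X)}\lesssim M/\omega (X)$ with implicit constant independent of $j$, since $\int v(Y)\abp {V_\psi \phi_0(Y)}\,dY<\infty$. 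Together with the local boundedness of the factor $x_k$, this shows that $\nabla V_{\phi_0}f_j$ is uniformly bounded on each compact set, uniformly in $j$.

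These two bounds make $\{V_{\phi_0}f_j\}$ uniformly bounded and, via the mean value theorem, equicontinuous on every compact subset of $\rr {2d}$. Fixing an increasing sequence of closed balls $B_1\subseteq B_2\subseteq \cdots$ with $\bigcup _n B_n=\rr {2d}$ and applying Arzel\`a--Ascoli on each $B_n$, a standard diagonal extraction produces a subsequence $\{f_{j_k}\}$ for which $\{V_{\phi_0}f_{j_k}\}$ converges uniformly on every $B_n$, hence locally uniformly on $\rr {2d}$. The main obstacle is the gradient estimate: one must verify that expressing the derivatives of the short-time Fourier transform through short-time Fourier transforms with the auxiliary Schwartz windows $\partial _k\phi_0$ and $\psi_k$, and then controlling these by \eqref{Eq:change-of-window}, indeed produces bounds uniform in $j$; the moderateness of $\omega$ and the Gaussian decay of the kernels $V_\psi \phi_0$ are precisely what render the resulting convolution integrals finite and locally uniform.
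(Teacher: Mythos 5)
Your proposal is correct. Note that the paper itself does not prove this lemma: it is stated with the remark that it ``follows from the proof of Lemma 3.10 in \cite{PfTo19}'', so there is no in-paper argument to compare against line by line. Your argument is a complete, self-contained proof along the standard lines, and all the key steps check out: the pointwise bound $|V_{\phi_0}f_j(X)|\le M/\omega (X)$ from the $M^\infty _{(\omega )}$-bound together with \eqref{Eq:weight1}; the derivative identities $\partial _{x_k}V_{\phi_0}f_j=-V_{\partial _k\phi _0}f_j$ and $\partial _{\xi _k}V_{\phi_0}f_j=-iV_{\psi _k}f_j-ix_kV_{\phi_0}f_j$ (which are legitimate for $f_j\in \Sigma _1'$ by differentiating the duality formula \eqref{Eq:STFTDef}, not only the integral formula (\ref{Eq:STFTDef})$''$ --- a purely presentational point); the window-change estimate \eqref{Eq:change-of-window} combined with $v$-moderateness of $\omega$ and the super-exponential decay of $V_{\psi}\phi _0\in \Sigma _1(\rr {2d})$ from \eqref{Eq:GSFtransfChar}, which makes $\int v(Y)|V_\psi \phi _0(Y)|\, dY<\infty$; and finally Arzel\`a--Ascoli with a diagonal extraction. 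One remark worth making: your route works verbatim for any window in $\Sigma _1(\rr d)\setminus 0$, so it does not explain why the lemma singles out the Gaussian. The Gaussian window admits an alternative proof via the Bargmann transform: $V_{\phi _0}f_j$ equals an entire function times a fixed Gaussian factor, the $M^\infty _{(\omega )}$-bound gives local uniform bounds on these entire functions, and Montel's theorem immediately yields a locally uniformly convergent subsequence. Either approach is valid; yours trades the complex-analytic machinery for elementary gradient estimates, at the cost of the explicit window-change computation.
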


\begin{proof}[Proof of Theorem
\ref{thm:CompAndContProp} {\rm{(2)}} and
{\rm{(4)}}]
Let 
$
\phi (x)
=\pi ^{-\frac d4}
e^{-\frac 12\cdot |x|^2}
$,
$x\in \rr d$,
be the standard Gaussian.

\par 

To show (2) we let 
$
\{f_j\}_{j=1}^{\infty} 
\subseteq 
M(\omega _1,\mascB )
$
be bounded. We need to show that 
$\{f_j\}_{j=1}^{\infty}$ 
has a subsequence, which converges in 
$M(\omega _2,\mascB )$.
By the assumptions there is a sequence of increasing balls $B_k$,
$k\in \mathbf Z_+$,
centered at the origin with radius tending to $+\infty$ as $k\to
\infty$ such that
\begin{equation}\label{n.1}
\frac{\omega_2(x,\xi)}{\omega_1(x,\xi)}\le\frac{1}{k},\quad
\text{when}\quad (x,\xi )\in \rr {2d}\setminus B_k.
\end{equation}

\par

Proposition \ref{Prop:BasicEmbModSp1} gives the boundedness of 
$\{f_j\}_{j=1}^{\infty}$ in 
$M ^{\infty} _{(\omega _1 /v _0)} (\rr d)$.
Hence by Lemma \ref{Lemma:UniformSeq}
there is subsequence 
$\{ h_j\}_{j=1}^\infty$ of 
$\{ f_{j}\}_{j=1}^\infty$
such that $\{ V_\phi h_j\}_{j=1}^\infty$ 
converges uniformly on any $B_k$,
and converges on the whole $\rr {2d}$.

\par

 Let 
$\chi _k$ be the characteristic function
of $B_k$, $k\ge 1$. 
By a straight forward calculation we get 
$
\chi _k 
\in 
W ^{\infty}(\omega_2 v _0, \ell ^{r_0}
(\zz d))
\subseteq
\mascB _{(\omega_2)}
$,  
where the last inclusion follows 
form Corollary \ref{Cor:BasicEmbWSp}.
Due to \eqref{n.1} and from the fact that $\{h_j\} _{j=1}^{\infty}$ is a bounded sequence in $M(\omega _1, \mascB)$ we have
\begin{multline}\label{estimate}
\begin{aligned}
\| &h_{m_1} - h_{m_2} \| ^{r_0} _{M(\omega_2,\mascB )} = \nm
{V_\phi h_{m_1}-V_\phi h_{m_2}}{\mascB _{(\omega _2)}} ^{r_0}
\\[1ex]
&\le
\|(V_\phi h_{m_1}-V_\phi h_{m_2})\chi _{k}\|_{\mascB _{(\omega_2)}} ^{r_0}+
{\|V_\phi h_{m_1}-V_\phi h_{m_2}\|_{\mascB _{(\omega_1)}}^{r_0}} /{k ^{r_0}}
\\[1ex]
&\le
\|(V_\phi h_{m_1}-V_\phi h_{m_2})\chi _{k}\|_{\mascB _{(\omega_2)}} ^{r_0}+
{2C}/{k ^{r_0}}, 
\qquad m_1, m_2 \in \mathbb{N}.
\end{aligned}
\end{multline}

\par

In order to make the right-hand side arbitrarily small, we choose $k$ large enough. Note that $V_\phi h_1, V_\phi h_2,\dots$ is a sequence
of bounded continuous functions converging uniformly on the compact set
$\overline B_k$. Since $\omega_2$ is a weight and $\mascB$ is a normal QBF space we obtain that
\begin{align*}
    &\|\left( V_{\phi}h_{m_1}-V_{\phi}h_{m_2} \right) \chi_{k}
    \|_{\mascB_{(\omega_2)}} ^{r_0}
    =\|\left( V_{\phi}h_{m_1}-V_{\phi}h_{m_2} \right)\omega_2 \chi_{k}
    \|_{\mascB} ^{r_0}
    \\ 
    &\qquad\qquad \lesssim
    \left (
    \sup_{(x,\xi) \in B_k} |\left(
    V_{\phi}h_{m_1}(x,\xi)-V_{\phi}h_{m_2}(x,\xi) \right)
    \omega_2(x, \xi)|
    \right ) ^{r_0}
    \|\chi_{k}\|_{\mascB} ^{r_0}
    \\
    &\qquad \qquad \lesssim 
    \left(
    \sup_{(x,\xi) \in B_k}
    |V_{\phi}h_{m_1}(x,\xi)-V_{\phi}h_{m_2}(x,\xi)|
    \right)^{r_0}
\end{align*}
where $\|\left( V_{\phi}h_{m_1}-V_{\phi}h_{m_2} \right) \chi_{k}
    \|_{\mascB_{(\omega_2)}}$
    tends to zero as $m_1$ and $m_2$ tend to infinity which proves (2).

\medspace

To show (4) we let $v_0$ be bounded 
and assume that the embedding $i$ in
\eqref{Embedding} is compact. 
Due to (2) it remains to prove that 
$\omega _2/\omega _1$ turns to zero at infinity.
We intend to prove this claim by 
contradiction. 
Hence we suppose the existence of a sequence 
$(x_k,\xi _k) \in \rr {2d}$ with 
$|(x_k,\xi _k)| \rightarrow \infty$ if
$k \rightarrow \infty$ and a $C>0$ fulfilling
\begin{equation}\label{Eq:ContradIneq}
    \frac{\omega_2(x_k,\xi _k)}{\omega _1(x_k,\xi _k)}
    \geq C \qquad \text{for all } k \in \mathbf {N}.
\end{equation}
Then it follows by straight forward computations, c.f. the proof of Theorem 3.7. in \cite{PfTo19}, that 
$\{f_k \}_{k=1}^{\infty}$ defined by
\begin{equation} \label{eq:Def_fk}
   f_k
   = \frac 1 {\omega _1(X_k)} 
   e ^{i \scal \cdo {\xi _k} } 
   \phi(\cdo - x _k), 
   \qquad X_k=(x_k, \xi_k), 
   k \in \mathbb{N}.
\end{equation}
is bounded in $M(\omega _1, \mascB)$.
Since the embedding $i$ is compact, 
$\{f_k \}_{k=1}^{\infty}$ is precompact in $M(\omega _2, \mascB)$.

\par

From $V _{\phi} \phi \in \Sigma _1 (\rr d)$, \eqref{Eq:GSFtransfChar} 
and $\omega _1\gtrsim e^{-r_0|\cdo |}$ for some $r_0>0$,
c.f. \eqref{Eq:weight1}, 
we obtain 
$$
\int \phi(x)\overline{f_k(x)}\, dx =
\frac{1}{\omega_1(x_k,\xi _k)}(V_{\phi}\phi)(x_k,\xi _k)\to 0,
$$
as $k \to \infty$. This implies that $f_k$ tends to zero in
$\Sigma_1 '(\rr d)$.
 Hence the only possible limit point in
$M(\omega_2, \mascB)$ of $\{ f_k \} _{k=1}^\infty$ is zero.

\par

Since $\{ f_k \} _{k=1}^\infty$ is precompact in $M(\omega_2, \mascB)$,
there is a subsequence 
$\{ f_{k_j} \} _{j=1}^\infty$
which
converges to zero
in $M(\omega_2, \mascB)$.
Due to Proposition
\ref{Prop:BasicEmbModSp1} 
we have 
$M(\omega_2, \mathscr{B}) \hookrightarrow M^\infty_{(\omega_2/ v_0)}$
and consequently we get by the boundedness of $v_0$
\begin{equation}
\sup_{X\in\rr {2d}}\omega_2(X)|(V_\phi (f_{k_j}))(X)|\le
C\|f_{k_j}\|_{M(\omega_2, \mascB)}\to 0
\end{equation}
as $j\to \infty$.

\par

 Taking $X=X_{k_j}$ in the previous inequality we obtain
\begin{multline}
\frac{\omega_2(X_{k_j})}{\omega_1(X_{k_j})}
= 
(2\pi )^{\frac d2}\, \frac{\omega_2(X_{k_j})}{\omega_1(X_{k_j})}|(V_\phi \phi)(0)|
\\
=
(2\pi )^{\frac d2}\, \frac{\omega_2(X_{k_j})}{\omega_1(X_{k_j})}|(V_\phi (e^{i\langle \cdo
,\xi _{k_j}\rangle}\phi (\cdo -x_{k_j}))(X_{k_j})|
\\
=
(2\pi )^{\frac d2}\, \omega_2(X_{k_j})|(V_\phi (f_{k_j}))(X_{k_j})|
\to 0,
\end{multline}
which contradicts \eqref{Eq:ContradIneq} and proves (4).
\end{proof}

\par

\end{document}